\title{Auslander algebras, flag combinatorics and quantum flag varieties}
\author{Bernt Tore Jensen}
\address{BTJ: Department of Mathematical Sciences, Norwegian University of Science and Technology, 
Gj\o vik, Teknologvn. 22, 2815 Gj\o vik, Norway}
\email{bernt.jensen@ntnu.no}
\author{Xiuping Su}
\address{XS: Department of Mathematical Sciences, University of Bath, BA2 7AY, U. K.}
\email{xs214@bath.ac.uk}
  \tikzset{equals/.style={double=none, double distance=2pt}, 
     cdmap/.style={black, thick, -angle 60}} 
\numberwithin{equation}{section}
\theoremstyle{definition}
\newtheorem{theorem}{Theorem}[section]
\newtheorem{proposition}[theorem]{Proposition}
\newtheorem{lemma}[theorem]{Lemma}
\newtheorem{corollary}[theorem]{Corollary}
\newtheorem{remark}[theorem]{Remark}
\newtheorem{example}[theorem]{Example}
\theoremstyle{definition}
\newtheorem{definition}[theorem]{Definition}
\numberwithin{equation}{section}
\numberwithin{figure}{section}
\newtheorem{Theorem}{Theorem}
\newcommand{\mat}[1]{\operatorname{\mathsf{#1}}}
\newcommand{\cm}{\mathsf{m}} 
\newcommand{\cn}{\mathsf{n}}
\newcommand{\proj}{\operatorname{proj}}
\newcommand{\flag}{\operatorname{Fl}}
\newcommand{\ZZ}{\mathbb{Z}}
\newcommand{\CC}{\mathbb{C}}
\newcommand{\aus}{D}
\newcommand{\ol}[1]{\overline{#1}}
\newcommand{\qcl}[1]{\mathcal{A}_q{(#1)}}
\newcommand{\qminor}[1]{\Delta^q_{#1}}
\newcommand{\setminors}{\backslash}
\newcommand{\Dual}{\mathsf{D}}
\newcommand{\stHom}{\underline{\mathrm{Hom}}}
\newcommand{\Hom}{\mathrm{Hom}}
\newcommand{\gl}{\operatorname{GL}}
\newcommand{\End}{\operatorname{End}}
\newcommand{\sub}{\operatorname{Sub}}
\renewcommand{\mod}{\operatorname{mod}}
\newcommand{\fac}{\operatorname{Fac}}
\newcommand{\Gr}{\operatorname{Gr}}
\newcommand{\rad}{\operatorname{rad}}
\newcommand{\e}{\operatorname{e}}
\newcommand{\dvector}{{\underline{\operatorname{dim}}}}
\newcommand{\dv}[1]{{\bf #1}}
\newcommand{\minus}{\backslash}
\newcommand{\add}{\operatorname{add}}
\newcommand{\supp}{\operatorname{Supp}}
\newcommand{\Ext}{\operatorname{Ext}}
\newcommand{\ext}{\operatorname{ext}}
\newcommand{\cok}{\operatorname{Cok}}
\renewcommand{\ker}{\operatorname{Ker}}
\newcommand{\im}{\operatorname{Im}}
\renewcommand{\top}{\operatorname{top}}
\newcommand{\soc}{\operatorname{soc}}
\newcommand{\charT}{\mathbb{T}}
\newcommand{\lra}{\longrightarrow}
\newcommand{\ra}{\rightarrow}
\newcommand{\sdp}{\times\kern-.2em\vrule height1.1ex depth-.05ex}
\newcommand{\epi}{\lra \kern-.8em\ra}
\newcommand{\g}{\hat{g}}
\renewcommand{\l}{\mathfrak{l}}
\newcommand{\n}{\mathfrak{n}}
\newcommand{\f}{\hat{f}}
\newcommand{\M}{\hat{M}}
\newcommand{\N}{\hat{N}}
\renewcommand{\L}{\hat{L}}
\newcommand{\T}{\hat{T}}
\newcommand{\funF}{\mathsf{F}}
\newcommand{\WS}{\mathcal{S}}
\newcommand{\maxWS}{\mathcal{S}}
\newcommand{\maxNC}{\maxWS}
\newcommand{\Irr}{\operatorname{Irr}}
\newcommand{\grothE}{K(\Dfiltered)_{\mathcal{E}}}
\newcommand{\grothEJ}{K(\Dfiltered(\setJ))_{\mathcal{E}}}
\newcommand{\Dfiltered}{\mathcal{F}_{\Delta}}
\newcommand{\pad}{\operatorname{pad}}
\newcommand{\verteq}{\rotatebox{90}{$\,=$}}
\newcommand{\wtl}{\mathfrak{L}}
\newcommand{\X}{\hat{X}}
\newcommand{\projd}{\operatorname{proj.dim}}
\newcommand{\CM}{\operatorname{CM}}
\newcommand{\projfun}{\pi}
\newcommand{\matadj}{\mat{L}^{\mathrm{ad}}}
\newcommand{\row}{\mathrm{row}}
\newcommand{\rootlat}{\mathcal{R}}
\newcommand{\fln}{\hat{\rho}}
\newcommand{\charfl}{\hat{\Phi}}
\newcommand{\setJ}{\mathsf{J}}
\newcommand{\setK}{\mathsf{K}}
\newcommand{\setL}{\mathsf{L}}
\newcommand{\cat}{\mathcal{C}}
\newcommand{\qflag}{\CC_q[\flag]}
\newcommand{\pqflag}{\CC_q[\flag(\setJ)]}
\newcommand{\qnil}{A_q(\mathfrak{n})}
\newcommand{\qnilw}{A_q(\mathfrak{n}_\setK)}
\newcommand{\Cq}{\CC[q^{\frac{1}{2}}, q^{-\frac{1}{2}}]}
\newcommand{\bfi}{\mathbf{i}}
\newcommand{\bfj}{\mathbf{j}}
\newcommand{\intw}[2]{\left[#1, #2 \right]_{\mathrm{w}}}
\newcommand{\FIng}{\mathsf{I}}
\newcommand{\mapfl}{\delta}
\newcommand{\gr}{\operatorname{Gr}}
\newcommand{\typeA}{\mathbb{A}}
\newcommand{\inv}{\mathrm{inv}}
\newcommand{\qclusalg}{ \qcl{\mat{B}, \mat{L}}} 
\newcommand{\qclusalgT}{ \qcl{\mat{B}_T, \mat{L}_T}} 
\newcommand{\qclusalgbar}{ \qcl{\ol{\mat{B}}, \ol{\mat{L}}}}
\newcommand{\charC}{\kappa}
\newcommand{\Dnabla}{\mathcal{F}_{\nabla}}
\newcommand{\GP}{\operatorname{GP}}
\newcommand{\qnu}{\CC[\nu]_q}
\newcommand{\liftV}{U}
\begin{document}
\maketitle
\begin{abstract} 
Let $D$ be the Auslander algebra of $\CC[t]/(t^n)$. This is a quasi-hereditary algebra with 
the subcategory of good modules denoted by $\Dfiltered$. For any $\setJ\subseteq[1, n-1]$, 
we construct a subcategory $\Dfiltered(\setJ)$ of $\Dfiltered$ with an exact structure 
$\mathcal{E}$. We show that, under $\mathcal{E}$, $\Dfiltered(\setJ)$ is  Frobenius 
stably 2-Calabi--Yau and admits a cluster structure.
Furthermore, $\Dfiltered(\setJ)$  provides an (additive) categorification of the cluster structure on the coordinate 
ring $\CC[\flag(\setJ)]$ of the (partial) flag variety $\flag(\setJ)$.

We apply $\Dfiltered(\setJ)$ to study  flag combinatorics and the quantum cluster 
structure on $\flag(\setJ)$. Intriguingly,  weak and strong separation, 
defined by Leclerc--Zelevinsky, admits homological characterisations: weak separation is 
characterised by the vanishing of the extension groups $\ext^1(-, -)$ 
under $\mathcal{E}$, while strong separation is detected by the vanishing of the extension groups $\Ext^1(-,-)$ 
inherited from $\mod D$. 
Furthermore, we give a categorical interpretation of the quasi-commutation rules of 
quantum minors. In addition, we 
show that the combinatorial operations of flips and geometric exchanges 
correspond to certain mutations of cluster tilting objects in $\Dfiltered(\setJ)$, and that
the resulting relations are quantum Pl\"{u}cker and incidence relations. 

Let $\setK=[n-1]\backslash \setJ$. By carefully choosing a reduced word of 
$w=w_0w_0^{\setK}$,  we construct modules giving rise to Geiss--Leclerc--Schr\"{o}er's 
dual PBW generators for $\qnilw$ such that 
they all have a simple top and therefore their duals  lift to rank one modules 
in $\Dfiltered(\setJ)$. 
In particular, the associated cluster variables are quantum minors.
Finally, we show  that $\CC_q[\flag(\setJ)]$ is a quantum 
cluster algebra over $\Cq$. 
Moreover, the difference between the quasi-commutation rules in $\CC_q[\flag(\setJ)]$
and $\qnilw$ is governed by a bilinear form defined on the Grothendieck group  $\grothEJ$. 
\end{abstract}

\setcounter{tocdepth}{1}
\tableofcontents

\section{Introduction}
Cluster algebras were introduced by Fomin--Zelevinsky in \cite{FZ1}. Since then much 
effort has been invested into uncovering  (quantum) cluster structures on 
coordinate rings.
Scott \cite{Scott} discovered a cluster structure on the coordinate ring $\CC[\gr(k, n)]$ 
of the Grassmannian $\gr(k, n)$, extending Fomin--Zelevinsky's result on $\CC[\gr(2, n)]$ \cite{FZ1}. 
Thereafter, many more cluster structures on (quantum) coordinate rings have been 
revealed via categorification. 
Using subcategories of the module categories of preprojective algebras, 
Geiss--Leclerc--Schr\"{o}er \cite{GLS08, GLS11} 
discovered a cluster structure on the coordinate rings of unipotent cells. 
Leclerc \cite{Lec} further extended results in \cite{GLS11} to study cluster 
structures on Richardson varieties.
Jensen--King--Su \cite{JKS1} constructed a Grassmannian cluster category $\CM C$ 
and used it to provide an additive categorification of the cluster algebra 
$\CC[\gr(k, n)]$. This construction, along with the associated categorification,  
was generalised by Iyama--Demonet \cite{DI} to obtain the cluster structure on flag 
varieties of finite simply-laced types. More recently, Jensen--Riordan--Su applied 
Grassmannian cluster subcategories of the form $\GP B$ to study cluster structures on 
positroid varieties \cite{JRS}. Their work offers a new proof of Galashin--Lam's Theorem \cite{GL}, 
which confirms Muller--Speyer's conjectures \cite{MS}. Prior to \cite{JRS}, for the connected case, Pressland 
gave a categorification of the cluster structure arising from Galashin--Lam's result.

In the quantum cluster setting, Geiss--Leclerc--Schr\"{o}er \cite{GLS13} applied subcategories 
$\mathcal{C}_w$ of module categories of preprojective algebras 
introduced in \cite{GLS11} to study the quantum coordinate ring of the 
unipotent cell $N_w$ associated to a Weyl group element $w$. They proved  that 
\[
\mathcal{A}_q(\mathcal{C}_w)\otimes_{\Cq} \CC(q^{\frac{1}{2}}) \cong 
\mathcal{A}_q(n(w))\otimes_{\Cq} \CC(q^{\frac{1}{2}}),
\]
where $\mathcal{A}_q(n(w))$ is the integral form of the quantum deformation of $\CC[N(w)]$ 
and $\mathcal{A}_q(\mathcal{C}_w)$ is a quantum cluster algebra constructed from 
$\mathcal{C}_w$. This result was improved by Kang--Kashiwara--Kim--Oh \cite[Cor. 11.2.8]{KKKO} 
to an isomorphism over $\ZZ[q, q^{-1}]$ (see also Goodearl--Yakimov's work \cite[Thm. B]{GY}).

Working over $\CC(q^{\frac{1}{2}})$, Grabowski--Launois \cite{GL} showed that the 
quantum Grassmannian coordinate ring $\CC_q[\gr(k, n)]$ is a quantum cluster algebra, 
via homogenisation. Jensen--King--Su \cite{JKS2}, exploiting the advantage of categorification,  
constructed quantum seeds from cluster tilting objects. For cluster tilting objects of rank one modules, 
they showed that the quantum cluster algebra  is isomorphic to  $\CC_q[\gr(k, n)]$ over 
$\CC(q^{\frac{1}{2}})$, thereby providing an alternative proof of the result in \cite{GL}.

 In this paper,  we construct and apply subcategories $\Dfiltered(\setJ)$ of 
 good modules of Auslander 
algebras to study the (quantum) cluster structures on flag varieties $\flag(\setJ)$ of 
type $\mathbb{A}$, 
where $\setJ\subseteq [n-1]=\{1, \dots, n-1\}$. 
In the classical case, a cluster character is explicitly constructed. In the quantum case, 
we emphasise that 
the relation between their quantum coordinate rings $\CC_q[\flag(\setJ)]$ and 
$\qnilw$ is not straightforward (see for instance \cite{GL, JKS2}), 
unlike their classical counterparts where $\CC[N_\setK]$ is a quotient 
of  $\CC[\flag(\setJ)]$ by setting some minors to be 1, where $\setK$ is the complement 
of $\setJ$.
In particular, the quasi-commutation rules of the corresponding minors are different. 
We show that $\CC_q[\flag(\setJ)]$ is a cluster algebra over $\Cq$ and 
formulate a precise relation between the quantum cluster structures on $N_\setK$ and $\flag(\setJ)$ via categorification. 

\subsection{Auslander algebras and the Dlab--Ringel Theorem} 
Throughout, \emph{all modules are assumed to be finite dimensional, unless otherwise stated}. 
To introduce our category $\Dfiltered(\setJ)$, 
we briefly recall relevant definitions and results \cite{BHRR, DR6} concerning
 the Auslander algebra $D=D_n$ of the truncated polynomial ring
$\CC[t]/(t^{n})$. The algebra $D$ is  quasi-hereditary and so it admits a 
highest weight category $\Dfiltered$ of good $D$-modules, which are filtered 
by the \emph{standard $D$-modules} $\Delta_i$ 
(also known as Verma modules), one for each vertex $i$.
Br\"{u}stle--Hille--Ringel--R\"{o}hrle (BHRR) proved that 
an indecomposable module in $\Dfiltered$ is rigid if and only if it is isomorphic to a 
submodule of the largest indecomposable projective module $D_0$ at vertex 0 
and such a submodule is uniquely determined by its 
 $\Delta$-support (see \S \ref{sec:2.2}). Therefore, rigid indecomposable modules in $\Dfiltered$
 are classified by  non-empty subsets of $[n]$ \cite{BHRR}. These 
modules are referred to as  {\em rank one modules} and denoted by $M_I$, where $I$
is the $\Delta$-support of $M_I$.

We highlight several surprising applications of the Auslander algebra and their generalisations (e.g. see \cite{HV}). In Lie theory, particularly in the study of nilpotent orbits, 
BHRR 
used rigid good $D$-modules to construct Richardson elements in the nilpotent radical 
of parabolic subalgebras in type $\mathbb{A}$ (see also \cite{JS1}). In joint work with Yu,
we extended BHRR's construction to establish the existence of Richardson elements for 
seaweed Lie algebras of type $\mathbb{A}$ \cite{JSY}, and subsequently for all Dynkin types
\cite{JS2}. In a different direction, Baur--Erdmann--Parker used 
the Auslander algebra of $\CC[t]/(t^n)\rtimes C_2$ to construct Richardson elements
for parabolic subgroups in $O_N$~\cite{BEP}.
In this paper, we present another remarkable application of the Auslander algebra $D$ 
by exploring its connections with flag combinatorics and the (quantum) cluster structures on flag varieties.

Let $\charT=\oplus_{i=1}^{n}\charT_i$ be the characteristic tilting $D$-module  \cite{DR6}.
Define $\eta X$ to be the trace of $\charT$ in $X$.
Let $\Pi$ be the preprojective algebra of type $\mathbb{A}_{n-1}$ and define
\[
\projfun: \Dfiltered \to \mod\Pi, ~  X\mapsto X/\eta X.
\]
Dlab--Ringel \cite[Thm. 3]{DR6} showed that $\projfun$ induces an equivalence between
the additive categories, 
$\Dfiltered/\charT $ and $\mod \Pi$.

In the remainder of this section, 
we outline the main results of this paper, following the order of the exposition.

\subsection{Frobenius stably 2-Calabi--Yau (2-CY) subcategories $\Dfiltered(\setJ)$ of $\Dfiltered$} 
Let \[\charT_\setJ=\oplus_{j\in \setJ}\charT_j.\]
Define $\Dfiltered(\setJ)$ to be the full subcategory of $\Dfiltered$, whose objects $X$ have 
their trace $\eta_\charT X$ in $\add \charT_\setJ$.
We show that the restriction of $\projfun$ to $\Dfiltered(\setJ)$ induces an equivalence 
\[\Dfiltered(\setJ)/\add \charT_{\setJ} \rightarrow \sub Q_{\setJ},\]
which is usually not exact. 
We define an exact structure $\mathcal{E}$ on $\Dfiltered(\setJ)$ such that 
an exact sequence is contained in  $\mathcal{E}$  if and only if it remains exact
under $\eta_{\charT}$. Denote by $\ext^1(M, N)$ the extension 
group of $M, N\in \Dfiltered(\setJ)$ with respect to  $\mathcal{E}$.

\begin{Theorem} [Theorems~\ref{Thm:fullexact}, \ref{Thm:Frob2CY},  \ref{thm:liftARseq}, 
\ref{thm:clusterstructure}] 
The following are true.
\begin{itemize}
\item[(1)] The functor
$
\projfun\colon (\Dfiltered(\setJ),\mathcal{E}) \to \sub Q_{\setJ}, M\mapsto M/\eta_\charT M 
$
is dense, full, exact and  induces an isomorphism on the extension groups
\[
\ext^1(M,N)\cong \Ext^1(\projfun M, \projfun N).
\]
\item[(2)] The category 
$(\Dfiltered(\setJ),\mathcal{E})$ is a Frobenius stably 
$2$-CY category.
\item[(3)] The AR-sequences in $\Dfiltered(\setJ)$ are lifts of the AR-sequences in $\sub Q_{\setJ}$.
\item[(4)] The category $(\Dfiltered(\setJ),\mathcal{E})$ admits a cluster structure. 
\end{itemize}
\end{Theorem}

\subsection{Categorification of the cluster structure on flag varieties}
We identify the Grothendieck group $K(\mod \Pi)$  with the root lattice $\rootlat$ of $G=\gl_n$ 
such that the classes of the simple modules 
$S_i$ are simple roots; the Grothendieck group $K(\Dfiltered)$ with the weight lattice $\wtl$ of $G$
 such that the classes $[\charT_i]$ 
are the fundamental weights. Denote by $\wtl_\setJ$ the sublattice generated by 
the fundamental weights $\omega_j$ with $j\in \setJ$, and by $\grothEJ$ the 
Grothendieck group of $(\Dfiltered(\setJ), \mathcal{E})$. Then $\wtl_\setJ=K(\add \charT_\setJ)$.
We have the following isomorphism (\Cref{Lem:isom1}),
$$
\sigma\colon \grothEJ \rightarrow K(\sub Q_{\setJ})\oplus K(\add \charT_{\setJ}), ~
[M]\mapsto [\projfun M] + [\eta M]
$$
Denote the coordinate ring of the flag variety by $\CC[\flag(\setJ)]$. 
The coordinate ring has the following decomposition into graded components  \eqref{eq:gradingE}, 
\[
\CC[\flag(\setJ)]=\bigoplus_{(d, \lambda)} L(\lambda)_{d+\lambda}
=\bigoplus_{\omega}\CC[\flag(\setJ)]_{\omega}, 
\]
where $L(\lambda)$ is the irreducible representation with 
 highest weight $\lambda$, $(d, \lambda)\in \rootlat\oplus \wtl_\setJ$,
$w\in \grothEJ$ and $\sigma(w)=(d, \lambda)$.

We construct a $\grothEJ$-graded embedding (\ref{eq:rhohat}), 
\begin{equation*}
\fln\colon \CC[{\flag(\setJ)}]\rightarrow \CC[N_{\setK}][K(\add \charT_{\setJ})]. 
\end{equation*}
This embedding enables us to construct 
a cluster character on $\Dfiltered(\setJ)$, 
$$
\charfl\colon \Dfiltered(\setJ) \rightarrow \CC[\flag(\setJ)],
$$ 
from the Geiss--Leclerc--Schr\"{o}er's 
cluster character $\psi\colon \sub Q_\setJ\to \CC[N_\setK]$ (see \cite{GLS08}).
We denote the flag minor indexed by $I\subseteq[n]$ by $\Delta_I$.

\begin{Theorem}[\Cref{Thm:classiccat}] 
The map $\charfl$ is a cluster 
character and gives an additive categorification of the cluster algebra structure
on $\CC[\flag(\setJ)]$. Moreover, 
\begin{itemize}
\item[(1)] $\charfl(M)$ has degree $[M]\in \grothEJ$,
\item[(2)] $\charfl(M)$ for all $M\in \Dfiltered(\setJ)$ span $\CC[\flag(\setJ)]$,
\item[(3)] $\charfl(M_I) = \Delta_I$.
\end{itemize}
\end{Theorem}
\noindent This approach of categorification via $\fln$ is different from the homogenisation approach in 
\cite{GLS08, JKS1} and can be quantised to show that $\pqflag$ 
 admits the quantum cluster structure.
Using \cite[Thm. 4]{GLS12}, we show that   for any $M\in \Dfiltered(\setJ)$, 
the cluster character $\charfl(M)$ can be expressed as a Laurent polynomial 
in the initial cluster variables in a similar style to the cluster character defined by 
Fu--Keller \cite{FuKeller}. 
Note that  the global dimension of $(\End T)^{\text{op}}$ 
is in general infinite for a cluster tilting object $T\in \Dfiltered(\setJ)$. 
Our approach allows us to avoid dealing 
with the technicality of having infinite global dimension. 

\subsection{Flag combinatorics and cluster tilting objects}  \label{sec:1.4}
 Leclerc--Zelevinksy \cite{LZ} showed that two quantum minors $\qminor{I}$, $\qminor{J}$
 quasi-commute if and only if $I$ and $J$ are weakly separated, and 
 gave an explicit formula  $c(I, J)$ for the quasi-commutation rule. 
 Moreover, they showed that one of the products $\qminor{I}\qminor{J}$, 
 $\qminor{J}\qminor{I}$ is invariant under the bar-involution if and only if 
 $I$ and $J$ are strongly separated. 
In the case of Grassmannians,  together with King, we gave a homological interpretation of weak separation
 using vanishing of extension groups \cite{JKS1}.
Intriguingly, for flag varieties,  $\Dfiltered(\setJ)$ provides 
 insights about both types of separation. 
 
\begin{Theorem}[Theorems \ref{thm:extWS}, \ref{Thm:quasicomm}, \ref{thm:strongseparation}, 
\Cref{cor:barinv}]
Let $I,J\subseteq [n]$. 
\begin{itemize}
\item[(1)] $I$ and $J$ are weakly separated if and only if $\ext^1(M_I, M_J)=0$. In this case, 
\begin{eqnarray*}
c(I, J) &=&\dim\Hom(M_I, M_J)-\dim\Hom(M_J, M_I) \\
&=&\dim\Ext^1(M_I, M_J)-\dim\Ext^1(M_J, M_I). 
\end{eqnarray*}
\item[(2)] $I$ and $J$ are strongly separated if and only
if either $\Ext^1(M_J, M_I)=0$ or $\Ext^1(M_I, M_J)=0$. Moreover, 
$\qminor{J}\qminor{I}$ is invariant under the bar-involution if and only if 
$\Ext^1(M_I, M_J)=0$.
\end{itemize}
\end{Theorem}

Consequently, the exact (and 2-CY) structure 
 defined by $\ext^1$ interprets (quantum) cluster combinatorics, while the exact structure 
 inherited from $\mod D$  
detects when the product of two quantum minors is invariant under the 
 bar involution. 

A set $I\subseteq [n]$ is called a {\em $k$-set} if $|I|=k$, and a {\em $\setJ$-set} if $|k|\in \setJ$.
Denote by 
$\maxNC$ a maximal collection of weakly separated $\setJ$-sets $I$. Let 
\[
T_{\maxNC}=\oplus_{I\in \maxNC} M_I.
\]

When $\setJ$ is not an interval, there exists an indecomposable projective-injective object 
$P\in \Dfiltered(\setJ)$ that is not of the form $M_I$ and such an object must be a summand of any cluster tilting object, since
$\Dfiltered(\setJ)$ is Frobenius. We show that  $T_\maxNC$ is a cluster tilting object if and
only if $\setJ$ is an interval (\Cref{Thm:cardinality}).

When $\setJ$ is an interval, we extend a maximal collection $\maxNC_{k, \square}$ 
of weakly separated $k$-sets, which defines the rectangle cluster
in $\CC[\Gr(k, n)]$ \cite{RW},
to a collection $\maxNC_{\setJ, k, \square}$, where $k\in \setJ$. This is achieved by adding interval 
$t$-sets with $t>k$ and cointerval $s$-sets with $s<k$, where a cointerval is defined as 
the complement of an interval subset of $[n]$.
We show that $ \maxNC_{\setJ, k, \square}$ is a maximal collection of weakly separated $\setJ$-sets  
and $T_{\maxNC_{\setJ, k, \square}}$ is a cluster tilting object (\Cref{Thm:cardinality}) and 
is referred to as an \emph{extended cluster tilting object}. 
A module $M\in \Dfiltered(\setJ)$ is said to be \emph{reachable} if 
it appears as a summand of a cluster tilting object obtained by mutations from some 
$T_{\maxNC_{\setJ, k, \square}}$.

\begin{Theorem} [\Cref{Thm:reachability}]\label{thm4}
Let $\setJ\subseteq [n-1]$ be an interval.
\begin{itemize}
\item[(1)] The  cluster tilting objects  $T_{\maxNC_{\setJ,k, \square}}$ 
and $T_{\maxNC_{\setJ,k',  \square}}$, for any $k, k'\in \setJ$,  are mutation equivalent 
through mutations by flips. 

\item[(2)] Any module $M_I$ with $I$ a $\setJ$-set can be extended to a reachable  
cluster tilting object of the form $T_{\maxWS}$ that can be obtained from some 
$T_{\maxNC_{\setJ,k, \square}}$ through mutations by geometric exchanges. 
Consequently, $M_I$ is reachable.
\end{itemize}
\end{Theorem}

\subsection{Quantum cluster structure on quantum flag varieties}
Associated to any cluster tilting object $T\in \Dfiltered(\setJ)$, we construct 
a pair of matrices $(\mat B_{T}, \mat L_{T})$. Following 
\cite[Prop. 10.1 and 10.2]{GLS13} (see also \cite[Thm. 6.3]{JKS2} and \cite[Thm. 6.22]{GP}), we 
know that 
the pair $(\mat{B}_T, \mat{L}_T)$ is compatible and 
$\mu_k(\mat{B}_T, \mat{L}_T)=(\mat{B}_{\mu_k T}, \mat{L}_{\mu_k T})$
(\Cref{prop:main}).
We thus obtain a quantum cluster algebra $\qcl{\mat{B}_T, \mat{L}_T}$, which is contained 
in the quantum torus $\CC_q[K(\add T)]$
of the Grothendieck group of $\add T$.
Let $\qcl{{\mat{B}}_{\pi T}, {\mat{L}}_{\pi T}}$
be the quantum cluster algebra constructed in \cite{GLS13}. We obtain 
a $\grothEJ$-graded injection,
\[
\qnu\colon \qcl{{\mat{B}}_T, {\mat{L}}_T} \to \qcl{\mat B_{\projfun T}, \mat L_{\projfun T}}[K(\add \charT_{\setJ})]. 
\]
Moreover,  $\qnu$ becomes an isomorphism after inverting the cluster variable 
associated to $\charT_j$, for all $j\in \setJ$  (\Cref{thm:qmain1}). 
In addition, following \Cref{lem:qrules}, 
we deduce that the difference of the 
quasi-commutation rules of the corresponding cluster variables in 
$\qcl{{\mat{B}}_T, {\mat{L}}_T}$ and $\qcl{\mat B_{\projfun T}, \mat L_{\projfun T}}$
are determined by a bilinear form $\matadj$ defined on $\grothEJ$, which is
independent of the choice of $T$.

We then investigate the relations between $\qcl{{\mat{B}}_T, {\mat{L}}_T}$ and $\pqflag$.
First, consider the case  $\setJ=[n-1]$. Then the combinatorial operations of geometric exchanges (also called 
square moves) and flips give rise to specific mutations of cluster tilting objects
(Propositions \ref{prop:FlipMutation} and  \ref{prop:GeomExMutation}). 
Exploiting this correspondence, we derive explicitly 
the quantum mutation relations associated to these transformations, and show
that they coincide with quanutm Pl\"{u}cker and incidence relations (\Cref{prop:flipqrel}). 
This allows us to identify $\qminor{I}$ with the cluster variable 
$\Upsilon(M_I)$ for any $I\in \maxNC_{\setJ, k, \square}$. Furthermore, 
by \Cref{thm4}, which notably also relies on the combinatorial operations, 
any quantum minor $\qminor{I}$ is a cluster variable. 
Therefore, we have an embedding
\[
\pqflag\to \qcl{{\mat{B}}_T, {\mat{L}}_T}.
\]
As the two algebras are equal at $q=1$ and both quantum algebras are flat 
deformations of their classical counterparts, the embedding becomes an 
equality after extending the ground ring to $\CC(q)$ (\Cref{prop:overfield}).
Then, using the map $\qnu$ and the isomorphism between 
$\qcl{\mat B_{\projfun T}, \mat L_{\projfun T}}$ and $A_q(\n)$ 
(\cite[Thm. 12.3]{GLS13}, \cite[Cor. 11.2.8]{KKKO}, \cite[Thm. B]{GY}),
we obtain an isomorphism of $\Cq$-algebras (\Cref{thm:pfl1}), 
\[\chi\colon \pqflag \to \qcl{\mat B_T, \mat L_T}, ~\qminor{I}\mapsto \Upsilon([M_I]), 
\forall I\subseteq [n] \text{ and } I\not=[n].\]

Next, we generalise the isomorphism $\chi$ to an arbitrary flag variety $\flag(\setJ)$. 
A key step is to prove the reachability of quantum minors in 
$\pqflag$, or equivalently, the reachability of rank one modules in $\Dfiltered(\setJ)$. 
 We don't have an analogue of \Cref{thm4} for arbitrary $\setJ$.
To achieve the reachability, we make use of Geiss--Leclerc--Schr\"{o}er's description of the Gabriel quiver  
$Q_{V_{\bfi}}$ \cite[Thm 2.10]{GLS11},
where $V_{\bfi}$ is a cluster tilting object in $\sub Q_\setJ$ constructed from 
a reduced word of $\bfi$ associated to $\setJ$, to deduce that, for $\setJ_1\subseteq\setJ_2\subseteq[n-1]$,  
$\Dfiltered(\setJ_1)$ is a cluster subcategory of $\Dfiltered(\setJ_2)$. This, together with 
\Cref{thm4} for $\setJ=\setJ_1=[l, l]$ for some $l$, implies that
any rank one module in $\Dfiltered(\setJ)$ is reachable. 
We obtain the following generalisation.

\begin{Theorem} [\Cref{thm:qmain2}] 
Let $\setJ\subseteq [n-1]$. Then 
\begin{equation*}
\chi_{\setJ}\colon \pqflag \to \qcl{\Dfiltered(\setJ)}, ~\qminor{I}\mapsto \Upsilon^T([M_I])
\end{equation*}
is a $\Cq$-algebra isomorphism. 
\end{Theorem}

\subsection{Notation} 
\begin{itemize}

\item $D$ is the Auslander algebra of $\CC[t]/(t^n)$ and $D_i=De_i$ is the indecomposable projective 
$D$-module associated to vertex $i$.

\item The standard $D$-modules are $\Delta_1,\dots, \Delta_n$.

\item  
$\Pi$ is the preprojective algebra of type $\mathbb{A}_{n-1}$, 
$\Pi_j$ and $R_j$ are the indecomposable projective and injective  $\Pi$-modules associated to vertex $j$, respectively.

\item We normally omit the subscripts for $\Hom$, $\End$ and $\Ext^1$, unless the relevant module structure is not clear from the context or we wish to emphasise it.

\item $Q_j$ is the indecomposable injective object in $\sub Q_{\setJ}$ associated to $j$, and $P_j$ 
    is the minimal lift  of $Q_{n-j}$ to $\Dfiltered(\setJ)$.

\item For any algebra defined by a quiver with relations, let  
$S_i$ be  the simple module associated to vertex $i$.

\item $\add M$ is the category whose objects are summands of  finite direct sums of $M$.

\item  $\top M$, $\soc M$ and $\rad M$ are the top, socle  and radical of  $M$, respectively.

\item $K(\mathcal{C})$ is the Grothendieck group of an exact category $\mathcal{C}$.

\item $\qcl{\mat B, \mat L}$ is the quantum cluster algebra defined by the compatible pair 
$(\mat B, \mat L)$.
\item $\qcl{\mathcal{C}}$ is the quantum cluster algebra defined by a reachable cluster tilting object 
in  $\mathcal{C}$.
\item $[m]=[1, m]=\{1, \dots, m\}$ for any integer $m>0$.
\item For any set $L$, $Li=L\cup\{i\}$.
\end{itemize}

\section{The Auslander algebra of $\CC[t]/(t^n)$ and the Dlab--Ringel equivalence}\label{sec2}

We recall basic properties of the Auslander algebra $D$ of $\CC[t]/(t^n)$, 
the Dlab--Ringel equivalence from the category of good $D$-modules to the module category of a preprojective algebra \cite{DR6}, and the category $\sub Q_\setJ$
constructed by Geiss--Leclerc--Schr\"{o}er~\cite{GLS08}.

\subsection{The Auslander algebra of $\CC[t]/(t^{n})$}\label{sec:ausalg}
Fix an integer $n>1$. 
We can present the Auslander algebra $\aus$ of $\CC[t]/(t^{n})$ 
as the path algebra of the quiver 

\begin{equation}\label{eq:doublequivA}
\begin{tikzpicture}[scale=1.6, arr/.style={-angle 60}]
\draw (0, 0) node (a1) {0};
\draw (1, 0) node (a2) {1};
\path[arr] (a1) edge [bend right] node[black, below]{$b_1$}  (a2); 
\path[arr] (a2) edge [bend right]  node[black, above]{$a_1$} (a1); 

\draw (2, 0) node (a3) {2};
\path[arr] (a2) edge [bend right] node[black, below]{$b_2$} (a3); 
\path[arr] (a3) edge [bend right] node[black, above]{$a_2$} (a2); 

\draw (3, 0) node (a4) {$3$};
\path[arr] (a3) edge [bend right] node[black, below]{$b_3$} (a4); 
\path[arr] (a4) edge [bend right] node[black, above]{$a_3$} (a3); 

\draw (3.4, 0) node (x) {$\dots$};
\draw (4, 0) node (c1) {$n-2$};
\draw (5.5, 0) node (d1) {$n-1$};

\draw (4.3, 0.03) node (c) {};
\draw (5.2, 0.05) node (d) {};

\draw (4.3, -0.03) node (c2) {};
\draw (5.2, -0.05) node (d2) {};

\path[arr] (c2) edge [bend right] node[black, below]{$b_{n-1}$}  (d2); 
\path[arr] (d) edge [bend right]  node[black, above]{$a_{n-1}$} (c); 
\end{tikzpicture}
\end{equation}
with relations  $b_{n-1}a_{n-1}$,
$a_ib_i-b_{i-1}a_{i-1}$ for $2\leq i\leq n-1$. Denote the trivial path at vertex $i$ 
by $e_i$, and let   
\begin{equation}\label{eq:t}
t=a_1b_1.
\end{equation}

Denote by $\Delta$ the subalgebra of $\aus$ generated by the arrows $a_i$. Then 
$\Delta$ is the path algebra of the linear quiver of type $\mathbb{A}_n$, and there 
are algebra homomorphisms
\[
\Delta \longrightarrow \aus\longrightarrow \Delta
\] 
such that composition is the identity. This allows us to view $\aus$-modules as
$\Delta$-modules and vice versa.  

Denote by $S_i$ and $\aus_i$  the simple and projective $\aus$-modules associated to 
vertex $i$, respectively. The indecomposable projective module $\aus_{0}$  is 
also injective with  
\[
\End \aus_{0}=e_{0}\aus e_{0}=\CC[t]/(t^n).
\] 
We denote the indecomposable projective $\Delta$-module of length $i$ by 
$\Delta_{i}$. Then 
\begin{center}
\emph{$\Delta_i$ is the indecomposable projective $\Delta$-module 
at vertex $i-1$}. 
\end{center}
Note the shift of indices.
The $n$ indecomposable projective $\Delta$-modules are 
$\Delta_1, \dots, \Delta_n$. 

\subsection{Quasi-hereditary structure of $\aus$}\label{sec:2.2}
We recall several facts about $\aus$ as a quasi-hereditary algebra. 
For background on quasi-hereditary algebras, 
we refer the reader to \cite{DR2}, and for further details on $D$, see
{\cite{BHRR, HV}}.  

The algebra $\aus$ is quasi-hereditary with \emph{standard modules} $\Delta_i$. 
Let \[\Dfiltered\subseteq \mod \aus\] 
be the full subcategory of {\it good modules},  
that is,  the $D$-modules that are filtered by the standard modules $\Delta_i$, and 
are therefore also called  {\it $\Delta$-filtered modules}. 
For any $M\in \mod D$, denote by $_{\Delta}M$ the restriction of $M$ to $\Delta$.

\begin{lemma}\cite{BHRR, HV} \label{Lem:chardelta}
Let $M\in \mod \aus$. The following are equivalent.
\begin{itemize}
\item[(1)] $M$ is a good module.
\item[(2)] The restriction $_{\Delta}M$ is a projective $\Delta$-module.
\item[(3)] The projective dimension of $M$ is at most one.
\item[(4)] There is an embedding $M\to \aus_0^r$ for some $r\geq 0$.
\item[(5)] $\soc M=S_0^r$ for some $r\geq 0$.
\end{itemize}
Consequently, any submodule of a good module is also a good module. 
\end{lemma}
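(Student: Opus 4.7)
The plan is to prove the equivalences cyclically in a loop (1)$\Rightarrow$(2)$\Rightarrow$(1), an outer loop (1)$\Rightarrow$(5)$\Rightarrow$(4)$\Rightarrow$(2), and a parallel equivalence (1)$\Leftrightarrow$(3). The final consequence, that any submodule of a good module is good, then follows immediately from (4), since a submodule of a submodule of $\aus_0^r$ is still contained in $\aus_0^r$.

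For (1)$\Rightarrow$(2), I would observe that the standard modules $\Delta_i$ restrict to the indecomposable projective $\Delta$-modules, and since $\Delta$ is hereditary any extension of projectives splits, so an induction on filtration length gives that ${}_\Delta M$ is projective. The substantive direction is (2)$\Rightarrow$(1), which I would handle by induction on $\dim M$. Let $j = \max\{i : e_iM \neq 0\}$ and pick any nonzero $v \in e_jM$. Maximality forces $b_{j+1}v = 0$; setting $w_k := a_{j-k+1}\cdots a_jv$, I would show by induction on $k$ that $b_{j-k+1}w_k = 0$, since the mesh relation $a_ib_i = b_{i-1}a_{i-1}$ (with boundary relation $b_{n-1}a_{n-1}=0$ used when $j-k+1 = n-1$) gives $b_{j-k+1}w_k = a_{j-k+2}b_{j-k+2}w_{k-1}$, which vanishes by the inductive hypothesis. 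Hence the $\aus$-submodule $\aus v$ is annihilated by every $b_i$ and coincides with the cyclic $\Delta$-submodule generated by $v$. Writing ${}_\Delta M \cong \bigoplus_k \Delta_k^{m_k}$ with $m_k = 0$ for $k > j+1$, the element $v$ lies in the $\Delta_{j+1}^{m_{j+1}}$ part, so $\aus v \cong \Delta_{j+1}$ and splits off as a direct summand of ${}_\Delta M$. Consequently ${}_\Delta(M/\aus v)$ is projective and the dimension induction closes.

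For the outer loop: (1)$\Rightarrow$(5) is an easy induction on filtration length using $\soc \Delta_i = S_0$; (5)$\Rightarrow$(4) uses that $\aus_0$ is the injective envelope of $S_0$, being projective-injective with $\soc \aus_0 = S_0$ (recorded in the paragraph preceding the lemma); and (4)$\Rightarrow$(2) restricts the embedding to $\Delta$, noting that ${}_\Delta\aus_0 \cong \bigoplus_{i=1}^n \Delta_i$ is projective (by direct inspection of the paths in $\aus$ starting at vertex $0$, which organise into chains $a_{k+1}\cdots a_m b_m \cdots b_1$ indexed by the maximal vertex $m$ reached) together with the fact that submodules of projectives over a hereditary algebra are projective.

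Finally, for (1)$\Leftrightarrow$(3), each standard $\Delta_i$ has projective dimension at most one over $\aus$ — explicit projective resolutions can be extracted from the quiver, or one may cite \cite{BHRR, HV} — and combined with $\projd M \le \max(\projd L, \projd N)$ in a short exact sequence this yields (1)$\Rightarrow$(3) by induction on filtration length. The converse (3)$\Rightarrow$(1) is the classical characterisation of $\Delta$-filtered modules for this Auslander algebra, which I would take from \cite{BHRR, HV}. The main obstacle throughout is the mesh-relation bookkeeping in (2)$\Rightarrow$(1); the remaining implications are routine consequences of heredity of $\Delta$, properties of injective envelopes, and the known structure of $\aus_0$.
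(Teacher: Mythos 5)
The paper does not prove this lemma at all: it is imported wholesale from \cite{BHRR} and \cite{HV}, so there is no internal argument to compare against. Your proposal is, as far as I can check, a correct and essentially self-contained proof of everything except the implication (3)$\Rightarrow$(1), which you explicitly defer to the same references the paper cites — a defensible choice, since that is the one genuinely non-formal direction (for a general quasi-hereditary algebra it is false, and here it relies on the specific structure of $D$). The delicate step, (2)$\Rightarrow$(1), is handled correctly: the mesh-relation induction showing $b_{j-k+1}w_k = a_{j-k+2}b_{j-k+2}w_{k-1}$ does verify that $\aus v=\Delta v$ carries the standard-module structure $\Delta_{j+1}$ (with the boundary relation $b_{n-1}a_{n-1}=0$ covering the case $j-k+1=n-1$), and since $e_jM$ sits in the top of the $\Delta_{j+1}$-isotypic summand of ${}_\Delta M$, the submodule $\Delta v$ does split off, so the dimension induction closes. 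The remaining implications are routine as you say; the loop (1)$\Rightarrow$(5)$\Rightarrow$(4)$\Rightarrow$(2)$\Rightarrow$(1) makes your separate proof of (1)$\Rightarrow$(2) logically redundant, though harmless. Two small points of hygiene: the paragraph before the lemma records only that $\aus_0$ is projective-injective, not that $\soc\aus_0=S_0$ (that needs the one-line observation that $\soc De_0$ is spanned by $t^{n-1}$), and in (1)$\Rightarrow$(3) you should note the explicit resolutions $0\to D_i\to D_{i-1}\to\Delta_i\to 0$ rather than leave them implicit; but neither affects correctness.
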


We remark that there is a dual version $\Dnabla$ of $\Dfiltered$. Let $\nabla$ be 
the subalgebra of $D$ generated by the arrows $b_i$. Then $\Dnabla$ consists $D$-modules that 
are filtered by  
the  injective indecomposable $\nabla$-modules $\nabla_i$, called \emph{co-standard} $D$-modules.
Objects in $\Dnabla$ can 
be described by a dual version of \Cref{Lem:chardelta}.

Let $M\in \Dfiltered$. If $_{\Delta}M=\oplus_i \Delta_i^{d_i}$, then 
$\dvector_\Delta M = (d_i)_i$ is called the $\Delta$-{\em dimension vector} 
of $M$, and  we define  the $\Delta$-{\em support} of $M$ to be the set
$\{ i\colon  d_i\neq 0\}$.
The {\em rank} of $M$ is the smallest number $r$ such that there is 
an embedding $M\to  (D_0)^r$.

Denote by $[i,j]$ the interval $\{i,i+1,\cdots,j\}$, where $i\leq j$, and let $[i]=[1, i]$. 
When $i>j$, $[i, j]$ is the empty set. For any $I\subseteq [n]$, 
there exists a unique submodule of $D_0$ with $\Delta$-support $I$ and 
we denote this submodule by $M_I$ \cite{BHRR}. 
When no confusion arises, we simply write $M_{123}$ for $M_{\{1,2,3\}}$. 
 Note that $D_i=M_{[i+1, n]}$ and $\Delta_i=M_{\{i\}}$  
for all $i$.

Recall that a module is {\em rigid} if it has no self-extensions. 

\begin{lemma}\label{Lem:rankoneclassification} \cite{BHRR} 
A module in $\Dfiltered$ is rigid and indecomposable if and only if 
it is a nonzero submodule of $\aus_0$.
Consequently, indecomposable rigid good $D$-modules are parameterised by non-empty subsets of 
$[n]$.
\end{lemma}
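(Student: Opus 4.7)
The plan is to prove the biconditional; the parameterisation by non-empty subsets of $[n]$ is then immediate, because $M_I$ exists and is unique as a submodule of $\aus_0$ with prescribed $\Delta$-support $I$, as recorded in the paragraph preceding the lemma.

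For the direction from right to left, let $M \subseteq \aus_0$ be a non-zero submodule. Indecomposability is immediate since $\soc \aus_0 = S_0$ is simple, forcing $\soc M = S_0$. For rigidity of $M = M_I$, I would use that $\projd_\aus M \leq 1$ by Lemma~\ref{Lem:chardelta}(3): taking a minimal projective resolution $0 \to P_1 \to P_0 \to M \to 0$, one has
\[
\Ext^1_\aus(M, M) \;\cong\; \cok\!\bigl(\Hom_\aus(P_0, M) \to \Hom_\aus(P_1, M)\bigr).
\]
Writing $P_0$ and $P_1$ explicitly in terms of the $\Delta$-support $I$ and tracking the induced map on generators, the required surjectivity can be verified combinatorially, using the embedding $M \hookrightarrow \aus_0$ at the key step.

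For the direction from left to right, let $M$ be a rigid indecomposable good module. By Lemma~\ref{Lem:chardelta}(5), $\soc M = S_0^r$ for some $r \geq 1$, giving a minimal embedding $M \hookrightarrow \aus_0^r$. The goal is $r = 1$. I would argue by contradiction: if $r \geq 2$, projecting onto one factor of $\aus_0^r$ produces $0 \to M' \to M \to M'' \to 0$ with $M' \hookrightarrow \aus_0^{r-1}$ and $M'' \hookrightarrow \aus_0$, and rigidity of $M$ should force this sequence to split, contradicting indecomposability. The main obstacle is exactly this splitting step: one must control the class in $\Ext^1_\aus(M'', M')$ representing the filtration and show it vanishes as a consequence of $\Ext^1_\aus(M, M) = 0$, presumably by induction on $r$ together with a finer analysis of good submodules of direct sums of copies of $\aus_0$ whose socles distribute across the factors.
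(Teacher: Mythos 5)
The paper does not actually prove this lemma; it imports it wholesale from [BHRR]. So there is no internal argument to compare yours against, and the question is only whether your proposal stands on its own. It does not: it is a plan whose decisive step is missing. In the direction ``rigid indecomposable $\Rightarrow$ submodule of $\aus_0$'' you reduce to showing that, when $r\geq 2$, the sequence $0\to M'\to M\to M''\to 0$ obtained by projecting a minimal embedding $M\hookrightarrow \aus_0^{r}$ onto one factor splits. But rigidity of $M$ does not in general force a sub/quotient sequence of $M$ to split --- every proper nonzero submodule of an indecomposable rigid module gives a non-split such sequence --- so any successful argument must exploit the specific origin of this sequence, not merely $\Ext^1_\aus(M,M)=0$. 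The long exact sequences one would naturally invoke run the wrong way: $\Ext^1_\aus(M,M')\to\Ext^1_\aus(M,M)$ yields nothing, and the pushout of the extension along $M'\hookrightarrow M$ splits for trivial reasons (the inclusion $M'\hookrightarrow M$ is the restriction of $\mathrm{id}_M$), so no contradiction is extracted. You flag this as ``the main obstacle'', but ``induction on $r$ together with a finer analysis'' is not an argument. The real content of the [BHRR] classification at this point is to show that an indecomposable rigid good module has $\Delta$-dimension vector with all entries at most $1$, and that is exactly what is not supplied.

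The easy direction is also incomplete: ``the required surjectivity can be verified combinatorially'' defers the entire rigidity computation for $M_I$. This part is genuinely routine --- for instance, the bilinear form \eqref{Eq:bilin0} gives $\dim\Ext^1_\aus(M_I,M_I)=\dim\Hom_\aus(M_I,M_I)-|I|$, and $\dim\Hom_\aus(M_I,M_I)=|I|$ by the formula of \Cref{Lem:rankonehom} (or by $\End_\aus M_I\cong\CC[t]/(t^{|I|})$) --- but some such computation has to actually appear. The indecomposability of submodules of $\aus_0$ via the simplicity of $\soc\aus_0$, and the deduction of the parameterisation from the existence and uniqueness of $M_I$, are fine.
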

\begin{example} \label{Ex:first}
Let $n=3$. The Auslander algebra $\aus$ is defined by the quiver
\[
\begin{tikzpicture}[scale=1.6, arr/.style={-angle 60}]

\draw (0, 0) node (a1) {0};
\draw (1, 0) node (a2) {1};
\path[arr] (a1) edge [bend right] node[black, below]{$b_1$}  (a2); 
\path[arr] (a2) edge [bend right]  node[black, above]{$a_1$} (a1); 

\draw (2, 0) node (a3) {2};
\path[arr] (a2) edge [bend right] node[black, below]{$b_2$} (a3); 
\path[ arr] (a3) edge [bend right] node[black, above]{$a_2$} (a2); 
\end{tikzpicture}
\]
with relations  $b_2a_2$ and $b_1a_1-a_2b_2$. 
There are seven indecomposable rigid $D$-modules in $\Dfiltered$.
First, we have the standard modules, 
\[
\begin{tikzpicture}[scale=1.6, arr/.style={black, -angle 60}]
\draw (0, 0) node (a) {$\Delta_3$:};
\draw (1, 0) node (a1) {0};
\draw (2, 0) node (a2) {1};
\draw (3, 0) node (a3) {2};

\path[arr] (a2) edge  node[black, above]{}  (a1); 
\path[arr] (a3) edge  node[black, above]{} (a2);

\draw (4, 0) node (b) {$\Delta_2$:};
\draw (5, 0) node (b1) {0};
\draw (6, 0) node (b2) {1};
\path[arr] (b2) edge  node[black, above]{}  (b1); 

\draw (7, 0) node (c) {$\Delta_1$:};
\draw (8, 0) node (c1) {0};
\end{tikzpicture}
\]
where $\Delta_1$ is a one dimensional simple $D$-module supported at vertex $0$ (and should 
not be confused with the zero module $\{0\}$). The other modules are 
\[
\begin{tikzpicture}[scale=1, black arr/.style={black, -angle 60}, 
magenta arr/.style={magenta, -angle 60}, thick]
\draw (-0, 2) node (a) {$D_0$:};
\draw (1, 0) node (a1) {$0$};
\draw (2, 1) node (b1) {1};
\draw (1, 2) node (c1) {0};
\draw (3, 2) node (c2) {2};
\draw (2, 3) node (d1) {1};
\draw (1, 4) node (e1) {0};

\path[magenta arr] (e1) edge  node{} (d1); 
\path[magenta arr] (d1) edge  node{} (c2); 
\path[magenta arr] (c1) edge  node{} (b1); 
\path[black arr] (b1) edge  node{} (a1); 
\path[black arr] (c2) edge  node{} (b1); 
\path[black arr] (d1) edge  node{} (c1); 

\draw (4, 2) node (a) {$D_1$:};
\draw (5, 0) node (a11) {0};
\draw (6, 1) node (b11) {1};
\draw (5, 2) node (c11) {0};
\draw (7, 2) node (c12) {2};
\draw (6, 3) node (d11) {1};

\path[magenta arr] (d11) edge  node{} (c12); 
\path[magenta arr] (c11) edge  node{} (b11); 
\path[black arr] (b11) edge  node{} (a11); 
\path[black arr] (c12) edge  node{} (b11); 
\path[black arr] (d11) edge  node{} (c11); 

\draw (8, 2) node (a) {$M_{13}$:};
\draw (9, 0) node (a21) {0};
\draw (10, 1) node (b21) {1};
\draw (9, 2) node (c21) {0};
\draw (11, 2) node (c22) {2};

\path[black arr] (b21) edge  node{} (a21); 
\path[black arr] (c22) edge  node{} (b21); 
\path[magenta arr] (c21) edge  node{} (b21);

\draw (12, 2) node (a) {$M_{12}$:};
\draw (13, 0) node (a31) {0};
\draw (14, 1) node (b31) {1};
\draw (13, 2) node (c31) {0};

\path[magenta arr] (c31) edge  node{} (b31); 
\path[black arr] (b31) edge  node{} (a31); 
\end{tikzpicture}
\]
\end{example}

\subsection{The characteristic tilting module}
Let $\charT_i=M_{[i]}$  for any $i\in [n]$. Then 
\[
\charT_{n}=D_0, \; \charT_1=\Delta_1, \; \charT_i=D_0/D_{i} \text{ for any } 1<i<n.
\]
The {\it characteristic tilting $\aus$-module} (see \cite{DR6}) is 
\[
\charT=\bigoplus_{i=1}^{n} \charT_i.
\]
By \cite[Prop. 3.1]{DR6}, 
\begin{equation}\label{eq:chartilt}
\add \charT=\Dfiltered\cap \Dnabla.
\end{equation}

\begin{lemma} \label{Lem:charaddT}
Let $M\in \Dfiltered$.  Then $M\in \add \charT$ if and only 
if $\Ext^1(\charT, M)=0$.
\end{lemma}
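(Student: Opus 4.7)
My plan is as follows. The forward direction is immediate: the characteristic tilting module $T$ satisfies $\Ext^1_D(T,T)=0$ by construction \cite{DR6}, so any $M\in\add T$, being a summand of a finite direct sum of copies of $T$, automatically has $\Ext^1_D(T,M)=0$.

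For the converse, suppose $M\in\Dfiltered$ with $\Ext^1_D(T,M)=0$; since $T=\bigoplus_{i=1}^n T_i$ this is equivalent to $\Ext^1_D(T_i,M)=0$ for every $i\in[n]$. The strategy is to reduce to the classical Ringel characterization $\add T=\Dfiltered\cap\Dfiltered(\nabla)$ by proving $\Ext^1_D(\Delta_i,M)=0$ for each standard module $\Delta_i=M_{\{i\}}$, which then places $M$ in the costandard-filtered subcategory $\Dfiltered(\nabla)$ and hence in $\add T$.

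The key ingredient is the short exact sequence
\[
0\longrightarrow \Delta_i\longrightarrow T_i\longrightarrow T_{i-1}\longrightarrow 0 \qquad (T_0:=0)
\]
in $\Dfiltered$. I would establish it from the submodule structure of $D_0$: by \Cref{Lem:rankoneclassification} the rank-one modules $M_I$ form a sublattice of submodules of $D_0$ indexed by non-empty $I\subseteq[n]$ via their $\Delta$-supports, so the inclusion $\{i\}\subset[i]$ yields an inclusion $\Delta_i=M_{\{i\}}\hookrightarrow T_i=M_{[i]}$ whose quotient is $\Delta$-filtered with support $[i-1]$; a direct inspection (illustrated by \Cref{Ex:first}) identifies this quotient with $T_{i-1}=M_{[i-1]}$. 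Applying $\Hom_D(-,M)$ produces the long exact sequence fragment
\[
\Ext^1_D(T_i,M)\longrightarrow\Ext^1_D(\Delta_i,M)\longrightarrow\Ext^2_D(T_{i-1},M).
\]
The left term vanishes by hypothesis, and the right term vanishes because $T_{i-1}\in\Dfiltered$ has projective dimension at most one by \Cref{Lem:chardelta}, forcing $\Ext^{\geq 2}_D(T_{i-1},-)=0$. Hence $\Ext^1_D(\Delta_i,M)=0$ for every $i$, and Ringel's theorem \cite{DR6} then gives $M\in\add T$.

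The main obstacle is the identification $T_i/\Delta_i\cong T_{i-1}$; once this is in hand, the rest is a standard long exact sequence manipulation combined with the quasi-hereditary machinery.
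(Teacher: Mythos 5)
Your proof is correct, but it follows a different route from the paper's. The paper's argument is a two-line appeal to tilting theory: since $\charT$ is a tilting module, $\Ext^1_\aus(\charT,M)=0$ forces $M$ to be a factor of a module in $\add\charT$, and $\add\charT$ is closed under factor modules inside $\Dfiltered$ (a fact quoted from Dlab--Ringel), so $M\in\add\charT$. You instead reduce the hypothesis $\Ext^1_\aus(\charT_i,M)=0$ to $\Ext^1_\aus(\Delta_i,M)=0$ by running the long exact sequence over $0\to\Delta_i\to\charT_i\to\charT_{i-1}\to0$ and killing the $\Ext^2$ term with the fact that good modules have projective dimension at most one (\Cref{Lem:chardelta}); you then invoke Ringel's characterisations $\Dfiltered(\nabla)=\{X:\Ext^1(\Delta_i,X)=0\ \forall i\}$ and $\add\charT=\Dfiltered\cap\Dfiltered(\nabla)$. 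The short exact sequence you need is genuinely available: it is exactly the sequence the paper records later in \Cref{sec5}, and it also follows from \Cref{Lem:quotient} since $E([i],1)=\{i\}$ and $M_{[i]}/M_{\{i\}}=M_{[i-1]}$. The trade-off is that the paper's proof leans on the ``$\operatorname{Gen}(\charT)$ plus closure under quotients'' package as a black box, whereas yours makes the homological mechanism explicit (at the modest cost of invoking two other standard Ringel theorems at the end, and of using that $\projd\le 1$, which is special to this Auslander algebra rather than a general quasi-hereditary fact). Both are complete.
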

\begin{proof} By \Cref{Lem:chardelta}, $\charT$ has projective dimension at most 1. 
By \cite[Prop. 3.2]{DR6}, a $D$-module $X\in \Dnabla$ if and only if $\Ext^i(\charT, X)=0$ for all 
$i>0$. Now the lemma follows from \eqref{eq:chartilt}.
\end{proof}

\begin{lemma} \label{Lem:tiltingchar}
Let $M\in \mod D$. Then $M\in \Dfiltered$  if and only if 
$\Ext^1(M,\charT)=0$.
\end{lemma}
\begin{proof}
By \cite[Prop. 3.2]{DR6}, $M\in \Dfiltered$ if and only if 
 $\Ext^i_\aus(M,\charT)=0$ for all $i>0$. 
In our case, by  \eqref{eq:chartilt} and the dual version of \Cref{Lem:chardelta},  $\charT\in \Dnabla$ and has 
injective dimension at most 1. So the lemma follows.
\end{proof}

For any $X\in \Dfiltered$,  define $\eta_X: \Dfiltered \to \Dfiltered$,  $M\mapsto \eta_{X} M $, the trace of $X$ in $M$, that is, 
\[
\eta_{X} M=\sum_{f: X\rightarrow M} \im f.
\]
Then $\eta_X M$ is a submodule of $M$. By \Cref{Lem:chardelta},  $\eta_X M\in \Dfiltered$, 
and so $\eta_X$ is well-defined. When $X=\charT$, we denote $\eta_X$ by $\eta$.

\begin{lemma}\label{lem:chartilt}
The category $\add \charT$ is closed under taking quotient modules that 
are in $\Dfiltered$.
\end{lemma}
\begin{proof} The injective $\nabla$-modules  are closed under taking quotient 
representations. Now the lemma follows from \eqref{eq:chartilt}, \Cref{Lem:chardelta} and its dual version.
\end{proof}

\begin{lemma} \label{Lem:trace}
Let $M\in \Dfiltered$. Then
\begin{itemize}
\item[(1)] $\eta M = \eta_{\charT_{n}} M$.
\item[(2)] $\eta M = \aus e_0M$.
\item[(3)] $\eta M\in \add \charT$.
\end{itemize}
In particular, 
 $\eta M_I=\charT_{|I|}$, $\eta \aus_i=\charT_{n-i}$, and 
when $M_I\not= \charT_{|I|}$,  $\soc (M_I/\eta M_I)=S_{|I|}$.
\end{lemma}
\begin{proof}
(1) As $\charT_n=\aus_0$ is the projective $\aus$-module at vertex $0$ and any 
$\charT_i$ has a single top at $0$, any map $f\colon \charT_i\ra M$ can be 
extended to $fg\colon \charT_n\to M$ with $\im f=\im fg$, where $g\colon\charT_n\ra \charT_i$ 
is a surjection.  Therefore, (1) holds. 

(2) Since $\Hom(\charT_n, M)=\Hom(D_0, M)=e_0M$,   
$\eta_{\charT_n}M$ is the submodule of $M$ generated by $e_0M$ as claimed.
 
(3) By definition, $\eta M$ is a quotient of some module in $\add \charT$. So 
$\eta M\in \add \charT$,
by \Cref{lem:chartilt}.

The remainder is straightforward.   
\end{proof}

Note that multiplication with the idempotent $e_0$ is an exact functor, which we denote by $\e_0$. We can interpret $\e_0$ as follows,
\[
\e_0=\Hom(\aus_0,-)\colon \mod \aus \rightarrow \mod \CC[t]/(t^n), M\mapsto e_0M.
\]
\begin{lemma} \label{Lem:charTequivalence}
The functor
\[
\e_0\colon \Dfiltered \rightarrow \mod \CC[t]/(t^n) 
\]
is faithful, and when restricted to $\add \charT$, it is an equivalence of additive categories.
\end{lemma}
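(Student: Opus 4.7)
The plan is to handle faithfulness on all of $\Dfiltered$ first, and then bootstrap to the equivalence on $\add\charT$ via essential surjectivity together with a dimension count.

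For faithfulness, suppose $f\colon M\to N$ in $\Dfiltered$ satisfies $\e_0 f=0$, equivalently $e_0(\im f)=0$. Being a submodule of the good module $N$, the image $\im f$ is itself good by \Cref{Lem:chardelta}, so $\soc(\im f)=S_0^{r}$ for some $r\geq 0$. If $\im f$ were nonzero it would have nonzero socle, forcing $e_0(\im f)\supseteq\soc(\im f)\neq 0$; hence $\im f=0$ and $f=0$.

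Next I would identify $\e_0\charT_i$ explicitly. For $i<n$, applying the exact functor $\e_0$ to the short exact sequence $0\to D_i\to D_0\to\charT_i\to 0$ exhibits $\e_0\charT_i$ as a cyclic quotient of $\e_0 D_0=e_0 D e_0=\CC[t]/(t^n)$; for $i=n$ we have $\charT_n=D_0$ directly. To pin down the length I would count $\dim\e_0\charT_i$ via the $\Delta$-filtration ${}_\Delta\charT_i=\bigoplus_{k=1}^{i}\Delta_k$ together with the fact that $S_0$ occurs exactly once in each $\Delta_k$, giving $\dim\e_0\charT_i=i$. Thus $\e_0\charT_i\cong\CC[t]/(t^{i})$ and $\e_0\charT\cong\bigoplus_{i=1}^{n}\CC[t]/(t^{i})$ is an additive generator of $\mod\CC[t]/(t^n)$, so $\e_0|_{\add\charT}$ is essentially surjective.

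What remains is fullness, for which it suffices to show that the injective algebra map $\e_0\colon\End_\aus(\charT)\to\End_{\CC[t]/(t^n)}(\e_0\charT)$ is an isomorphism. By the very definition of the Auslander algebra, the target has dimension $\dim \aus$. To match this on the source, I would apply $\Hom_\aus(-,\charT_j)$ to the short exact sequence above, use $\Ext^1_\aus(\charT_i,\charT_j)=0$ from \Cref{Lem:charaddT}, and read off
\[
\dim\Hom_\aus(\charT_i,\charT_j)=\dim e_0\charT_j-\dim e_i\charT_j=\min(i,j),
\]
which matches $\dim\Hom_{\CC[t]/(t^n)}(\CC[t]/(t^{i}),\CC[t]/(t^{j}))$. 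Summing over $i,j\in[n]$, the two endomorphism algebras have equal dimension, so an injective algebra map between them is a bijection; by additivity, this delivers fullness on $\add\charT$ and completes the equivalence. The main obstacle is precisely this dimension match, which is the step that turns the easy faithfulness statement into the claimed equivalence.
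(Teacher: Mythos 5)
Your proof is correct and follows essentially the same route as the paper: faithfulness via the socle-in-$\add S_0$ argument, density by identifying $\e_0\charT_i\cong\CC[t]/(t^{i})$, and fullness by matching dimensions of Hom spaces between indecomposables (the paper leaves the computation $\dim\Hom_\aus(\charT_i,\charT_j)=\min(i,j)$ implicit, which you carry out explicitly and correctly).
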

\begin{proof} 
Let $f:M\ra N$ be a homomorphism in $\Dfiltered$ with $\e_0(f)=0$. 
Then $e_0\im (f)=0$, and so $\im (f) = 0$, since $\soc N \in \add S_0$. 
Hence $\e_0$ is faithful.

Note that $\e_0\charT_i=\CC[t]/(t^{i})$. So the restriction of $\e_0$ to $\add \charT$
is dense. 
By comparing the dimension of homomorphism spaces between 
indecomposables, we can conclude that $\e_0$ is full on $\add \charT$.
Therefore, the restriction of the functor $\e_0$ to $\add \charT$ is an equivalence.
\end{proof}
\begin{example} \label{Ex:second}
Continuing   \Cref{Ex:first}, we have 
\[
\charT_3=D_0, ~\charT_2=M_{[2]} \text{ and } \charT_1=S_0
\]
and 
\[
\e_0\charT_3 = \CC[t]/(t^3), ~\e_0\charT_2=\CC[t]/(t^2) \text{ and } 
\e_0\charT_1=\CC[t]/(t).
\]
\end{example}
\subsection{The Dlab--Ringel equivalence}
Let 
\[
\Pi = D/De_0D,
\] 
which is the preprojective algebra of type $\mathbb{A}_{n-1}$. Note that the 
top of the projective $\Pi$-module  $\Pi_i$ at vertex $i$ is $S_i$ and its socle is  
$S_{n-i}$. We have 
\begin{eqnarray*}
\mod \Pi &= & \{M\in \mod \aus\colon \Hom(\charT, M)=0 \}\\
                &=& \{M\in \mod \aus\colon e_0 M=0\}.
\end{eqnarray*}
Also, by \Cref{Lem:trace} (2), 
\[
M/\eta M=M/\aus e_0M\in \mod \Pi,
\]
for any $M\in \mod \aus$.

The following theorem is a reformulation of \cite[Thm. 3 and Lem. 7.1]{DR6} (see
also \cite[Prop. 5]{RZ}). 
\begin{theorem}\label{Thm:DlabRingel} The functor 
$$
\projfun\colon \mod \aus \rightarrow \mod \Pi, \;\; M \rightarrow M/\eta M
$$
induces an equivalence of additive categories
$$
\Dfiltered/\add  \mathbb{T} \rightarrow \mod \Pi. 
$$ 
\end{theorem}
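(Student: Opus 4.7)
The plan is to view $\projfun \colon \mod D \to \mod \Pi$ as the tensor-product functor $\Pi\otimes_D -$, which is left adjoint to the inclusion $\iota\colon \mod\Pi\hookrightarrow \mod D$; this follows from $\Pi=D/De_0 D$ together with the identification $\eta M = De_0 M$ of Lemma~\ref{Lem:trace}(2).

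The first step is to note that $\projfun$ annihilates $\add\charT$: for any $T\in \add\charT$, the trace of $\charT$ in $T$ is all of $T$, so $\projfun T = 0$. Hence every morphism factoring through $\add\charT$ is killed, and $\projfun$ descends to a functor $\overline{\projfun}\colon \Dfiltered/\add\charT\to \mod\Pi$. Faithfulness of $\overline{\projfun}$ is immediate: if $\projfun f = 0$ for $f\colon M\to N$ in $\Dfiltered$, then $\im f\subseteq \eta N\in \add\charT$ by Lemma~\ref{Lem:trace}(3), and so $f$ factors as $M\twoheadrightarrow \im f\hookrightarrow \eta N\hookrightarrow N$ through an object of $\add\charT$.

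For fullness, given $\bar f\colon \projfun M\to \projfun N$, compose $M\twoheadrightarrow \projfun M\xrightarrow{\bar f} \projfun N$ to obtain $g\colon M\to \projfun N$. Applying $\Hom_D(M,-)$ to the short exact sequence $0\to \eta N\to N\to \projfun N\to 0$ yields the exact sequence
\[
\Hom_D(M,N)\longrightarrow \Hom_D(M,\projfun N)\longrightarrow \Ext^1_D(M,\eta N).
\]
Since $M\in \Dfiltered$ and $\eta N\in \add\charT$, Lemma~\ref{Lem:tiltingchar} gives $\Ext^1_D(M,\eta N)=0$, so $g$ lifts to some $f\colon M\to N$ with $\projfun(f) = \bar f$, establishing fullness.

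The main obstacle is essential surjectivity. Given $N\in \mod\Pi$, I would construct the desired $M$ by a universal extension of $N$ by $\add\charT$. By Lemma~\ref{Lem:charTequivalence}, $\add\charT\simeq \mod\CC[t]/(t^n)$ has enough projectives, so one can find $T_N\in \add\charT$ fitting into an extension
\[
0\longrightarrow T_N\longrightarrow M\longrightarrow N\longrightarrow 0
\]
whose connecting homomorphism $\Hom_D(T_N,\charT)\to \Ext^1_D(N,\charT)$ is surjective. Applying $\Hom_D(-,\charT)$ to this extension and using $\Ext^1_D(T_N,\charT)=0$ (as $T_N\in \Dfiltered$, by Lemma~\ref{Lem:tiltingchar}) forces $\Ext^1_D(M,\charT)=0$, hence $M\in \Dfiltered$ by Lemma~\ref{Lem:tiltingchar} again. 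Moreover, $T_N\subseteq \eta M$ since $T_N\in \add\charT$ is a factor of $\charT^{(r)}$ mapping into $M$, while the composite $\eta M\hookrightarrow M\twoheadrightarrow N$ lands in $\eta N = 0$, giving $\eta M\subseteq T_N$. Thus $\eta M = T_N$ and $\projfun M\cong N$, completing essential surjectivity and hence the equivalence.
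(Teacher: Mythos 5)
Your proof is correct, and it is worth noting that the paper does not prove this statement at all: Theorem~\ref{Thm:DlabRingel} is presented as a reformulation of Dlab--Ringel's result and is simply cited from the literature. So you have supplied a self-contained argument where the paper has only a reference. Your three steps are sound. Faithfulness is exactly the content of the paper's \Cref{Lem:factormaps}. Fullness via the vanishing of $\Ext^1_\aus(M,\eta N)$ (from \Cref{Lem:tiltingchar}, since $\eta N\in\add\charT$) is the same lifting principle the paper deploys later, though there (\Cref{Lem:liftofmaps}) the lift is built by an explicit pullback rather than by the long exact sequence. For density, your universal extension $0\to T_N\to M\to N\to 0$ with surjective connecting map $\Hom_\aus(T_N,\charT)\to\Ext^1_\aus(N,\charT)$ is a clean alternative to the paper's construction in \eqref{Eq:lift}, which realises a lift of $N$ as the pullback of an $\aus$-projective cover $P\to Q$ along an injective copresentation $N\hookrightarrow Q$; both yield $\eta M=T_N$ and $\projfun M\cong N$ by the same trace argument. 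Two small points. First, the appeal to ``$\add\charT$ has enough projectives'' is a red herring: what the universal extension actually needs is that each $\Ext^1_\aus(N,\charT_i)$ is finite dimensional, so that one may take $T_N=\bigoplus_i\charT_i^{\dim\Ext^1_\aus(N,\charT_i)}$ with extension class given by a choice of bases. Second, your conclusion $M\in\Dfiltered$ rests on the ``if'' direction of \Cref{Lem:tiltingchar}, namely that vanishing of $\Ext^1_\aus(M,\charT)$ alone (rather than of all higher extension groups, as in Dlab--Ringel's original criterion) forces $M$ to be good; this is how the paper states the lemma, so the citation is legitimate, but it is the one external input your argument genuinely leans on.
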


The maps which factor through $\add \charT$ can be understood as follows. 

\begin{lemma} \label{Lem:factormaps}
A map $f\colon M\rightarrow N$ in $\Dfiltered$ factors through $\add \charT$ if and only 
if it factors through the inclusion $\eta N\subseteq N$.
\end{lemma}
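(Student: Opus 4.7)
The plan is to prove both implications directly from the definition of the trace $\eta N = \sum_{\phi\colon \charT \to N}\im\phi$, with no nontrivial ingredients beyond \Cref{Lem:trace}(3).

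For the ``if'' direction, suppose $f\colon M\to N$ factors as $M \xrightarrow{g} \eta N \xhookrightarrow{\iota} N$. By \Cref{Lem:trace}(3), $\eta N \in \add \charT$, so the factorisation $f=\iota g$ realises $f$ as a map through $\add \charT$.

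For the ``only if'' direction, suppose $f = h\circ g$ with $g\colon M\to T$ and $h\colon T\to N$ for some $T\in \add\charT$. Writing $T = \bigoplus_i T^{(i)}$ with each $T^{(i)}\in\add\charT$, each restriction $h|_{T^{(i)}}\colon T^{(i)}\to N$ contributes its image to the trace $\eta_\charT N$; summing over $i$ gives $\im h \subseteq \eta_\charT N = \eta N$. Consequently $\im f \subseteq \eta N$, so $f$ factors through the inclusion $\eta N \hookrightarrow N$ via the corestriction of $f$.

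I do not anticipate a genuine obstacle here: the statement is essentially unpacking the definition of $\eta N$ as the trace of $\charT$ in $N$, combined with the already-established fact that $\eta N$ itself lies in $\add \charT$. The only minor subtlety is ensuring that an arbitrary object $T\in\add\charT$ (rather than just $\charT$ itself) still sends its image into the trace, which is immediate after decomposing $T$ into summands lying in $\add\charT$.
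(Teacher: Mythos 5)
Your proof is correct and follows the same route as the paper: the image of a map factoring through $\add\charT$ lands in the trace $\eta N$ by definition, and conversely $\eta N\in\add\charT$ by \Cref{Lem:trace}(3). The only detail you gloss over (that a map out of a summand $T^{(i)}$ of some $\charT^r$ has image in the trace because it can be precomposed with the split projection $\charT^r\to T^{(i)}$) is immediate and does not affect correctness.
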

\begin{proof}
If $f$ factors through $\add \charT$, then its image is contained in the trace $\eta N$,
and so it factors through the inclusion $\eta N\subseteq N$. The converse follows since $\eta N$ is in 
$\add \charT$ by \Cref{Lem:trace} (3).
\end{proof}
\subsection{The subcategory $\sub Q_{\setJ}$}
Let $\setJ \subseteq [n-1]$. We recall 
the Frobenius subcategory 
$\sub Q_{\setJ}$ of $\mod \Pi$ constructed by Geiss--Leclerc--Schr\"{o}er 
\cite{GLS08}. Let
\[
Q_{\setJ}=\oplus_{j\in \setJ}\Pi_{n-j} 
\]
and $\sub Q_{\setJ}$ be the full subcategory of $\mod\Pi$ consisting of objects 
that are submodules of modules in $\add Q_{\setJ}$. In other words, $\sub Q_{\setJ}$ 
are the $\Pi$-modules which have socles in $\add S_\setJ$, where 
\[
S_\setJ=\oplus_{j\in \setJ}S_j.
\]
Any $M\in \mod \Pi$ has a unique maximal quotient module in $\sub Q_\setJ$. When
$M=\Pi_{n-j}$, this maximal quotient is denoted by $Q_j$.

The lemma below follows from the construction of the modules $Q_j$.
\begin{lemma} \label{Lem:soclesub}
Let $\setJ=\{j_1 < j_2 < \cdots < j_r\}$. 
\begin{itemize}
\item[(1)] If $j< j_1$ then $\soc Q_j=S_{j_1}$
\item[(2)] If $j\in \setJ$, then $Q_j=\Pi_{n-j}$ and $\soc Q_j=S_j$.
\item[(3)] If $j_i < j < j_{i+1}$, then $\soc Q_j = S_{j_i}\oplus S_{j_{i+1}}$.
\item[(4)] If $j>j_r$, then $\soc Q_j = S_{j_r}$.
\end{itemize}
\end{lemma}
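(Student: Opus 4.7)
My plan is to compute $\soc Q_j$ case by case by invoking the explicit Loewy structure of the indecomposable injective $\Pi_{n-j}$ in the preprojective algebra of type $\mathbb{A}_{n-1}$, together with the definition of $Q_j$ as the maximal quotient of $\Pi_{n-j}$ whose socle lies in $\add S_\setJ$.

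The key input is that since $\Pi$ is self-injective with Nakayama permutation $i\mapsto n-i$, the indecomposable projective $\Pi_{n-j}$ is simultaneously the injective envelope of $S_j$. Thus $\soc \Pi_{n-j}=S_j$, and the socle layers of $\Pi_{n-j}$ expand outward from $S_j$ in a truncated rhombus: the layer $\soc^{k+1}/\soc^{k}$ is the direct sum of those $S_l$ with $l$ a certain set of indices symmetrically placed around $j$, bounded by $[1,n-1]$ and by the height of $\Pi_{n-j}$. Case (2) is then immediate: if $j\in \setJ$ then the socle already lies in $\add S_\setJ$, so $\Pi_{n-j}\in \sub Q_{\setJ}$ and therefore $Q_j=\Pi_{n-j}$, with $\soc Q_j=S_j$.

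For the remaining cases, where $j\notin \setJ$, I would construct the minimal submodule $K_j\subseteq \Pi_{n-j}$ making $\Pi_{n-j}/K_j\in \sub Q_{\setJ}$ by iteratively climbing the rhombus. Starting with the socle $S_j$, which must be killed because $j\notin\setJ$, one moves to the next layer and retains each summand $S_l$ that lies in $\add S_\setJ$ (which then becomes part of $\soc Q_j$) while killing each summand that does not (then moving outward to the next layer on that side). Because the rhombus structure decouples the left and right directions, the left side first meets an element of $\setJ$ at $j_i$ and the right side at $j_{i+1}$; in Case (1) the left direction reaches the boundary of $[1,n-1]$ without meeting $\setJ$, leaving only $S_{j_1}$ on the right, and Case (4) is symmetric. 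This yields the three claimed socles.

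The main obstacle is to verify rigorously that this iterative construction produces the maximal quotient in $\sub Q_{\setJ}$ and that the left and right directions genuinely decouple. This reduces to the fact that along each edge of the rhombus the socle of each successive quotient is uniserial in the relevant direction, together with the standard characterisation of the injective envelope of $\soc Q_j$ inside $\sub Q_{\setJ}$; these combine to show that no submodule smaller than the $K_j$ constructed can make the quotient lie in $\sub Q_{\setJ}$.
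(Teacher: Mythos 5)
The paper gives no proof of this lemma (it is stated as following ``from the construction of the modules $Q_j$''), so your explicit computation via the Loewy structure of $\Pi_{n-j}$ is a reasonable way to supply one, and the conclusions in all four cases are correct; the iterative procedure of repeatedly factoring out the simple submodules not in $\add S_\setJ$ does compute the minimal $K$ with $\Pi_{n-j}/K\in\sub Q_\setJ$, since the set of such $K$ is closed under intersection. However, the assertion that ``the left and right directions genuinely decouple'' is not accurate and, taken literally, would give the wrong submodule $K$. The minimal $K$ is not the sum of the two uniserial staircases running from the socle $S_j$ up-left towards $j_i$ and up-right towards $j_{i+1}$: it is the entire triangular region of the rhombus between them. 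For instance with $n=7$, $\setJ=\{1,5\}$, $j=3$, the two staircases together are only $\soc_2(\Pi_4)$, and $\Pi_4/\soc_2(\Pi_4)$ has socle $S_1\oplus S_3\oplus S_5$, which is not in $\add S_\setJ$; the middle copy of $S_3$ in the third socle layer must also be killed. So the substantive verification is not that the two sides decouple, but rather that (i) the interior factors between the staircases successively acquire the status of socle elements and get killed, and (ii) every composition factor sitting just above the killed region retains a nonzero radical map onto a surviving factor (ultimately onto $S_{j_i}$, $S_{j_{i+1}}$, or a surviving boundary factor), so that no spurious simple summands enter $\soc(\Pi_{n-j}/K)$. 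Point (ii) requires the explicit multiplication rules of $\Pi$ in type $\mathbb{A}_{n-1}$ (each basis element in socle layer $m$ at vertex $l$ maps nontrivially onto the layer-$(m-1)$ elements at vertices $l\pm1$ whenever those exist in the rhombus). With that correction your argument goes through and yields exactly the four cases stated.
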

\begin{theorem} \cite{GLS08} \label{Prop:GLSFrob}
$\sub Q_{\setJ}$ is a full exact Frobenius stably 2-CY subcategory 
of $\mod \Pi$ with indecomposable projective-injective objects  $Q_1, \dots, Q_n$. 
\end{theorem}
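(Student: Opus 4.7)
The plan is to verify the four components in turn: (i) $\sub Q_{\setJ}$ is closed under extensions in $\mod \Pi$, giving it the structure of a full exact subcategory; (ii) each $Q_j$ with $j\in \setJ$ is both projective and injective in $\sub Q_{\setJ}$; (iii) the subcategory has enough projectives and enough injectives and they coincide; (iv) the resulting stable category is $2$-Calabi-Yau.

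For (i), I would use the characterisation $M\in \sub Q_{\setJ}$ if and only if $\soc M\in \add S_{\setJ}$. Given a short exact sequence $0\to A\to B\to C\to 0$ in $\mod \Pi$ with $A,C\in \sub Q_{\setJ}$, any simple submodule $S\subseteq B$ either meets $A$, forcing $S\subseteq \soc A\in \add S_{\setJ}$, or embeds into $B/A=C$, forcing $S\subseteq \soc C\in \add S_{\setJ}$. Hence $\soc B\in \add S_{\setJ}$ and $B\in \sub Q_{\setJ}$. Together with the obvious closure under subobjects, this lets us equip $\sub Q_{\setJ}$ with the exact structure inherited from $\mod\Pi$.

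For (ii) and (iii), I would exploit that $\Pi$ is self-injective (being a preprojective algebra of Dynkin type $\mathbb{A}_{n-1}$), so each indecomposable projective $\Pi_{n-j}$ is simultaneously the injective envelope of the simple $S_j$. For $j\in \setJ$, $Q_j=\Pi_{n-j}$ is injective in $\mod\Pi$, and being a member of $\sub Q_{\setJ}$ it remains injective there. Any $M\in \sub Q_{\setJ}$ with $\soc M\cong \bigoplus_{j\in \setJ} S_j^{m_j}$ embeds into $\bigoplus_{j\in\setJ} Q_j^{m_j}$ via its injective envelope in $\mod \Pi$, yielding enough injectives. Projectivity of $Q_j$ inside $\sub Q_{\setJ}$ is immediate: any surjection $M\twoheadrightarrow Q_j$ in the subcategory is split using projectivity of $\Pi_{n-j}$ in $\mod \Pi$. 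Enough projectives then follows either by an intrinsic construction of a minimal right $\add Q_{\setJ}$-approximation, or by transporting ``enough injectives'' through the Nakayama self-duality of $\Pi$; either way projectives and injectives coincide on the generators, so $(\sub Q_{\setJ},\mathcal{E})$ is Frobenius with projective-injectives $Q_j$, $j\in \setJ$.

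For (iv), I would invoke Auslander-Reiten duality for the self-injective algebra $\Pi$, which takes the form $\Ext^1_\Pi(X,Y)\cong D\Ext^1_\Pi(Y,X)$ for all finite-dimensional $X,Y$; this is precisely the stably $2$-Calabi-Yau property of Dynkin preprojective algebras. Because $\sub Q_{\setJ}$ is extension-closed in $\mod \Pi$, extension groups computed in the subcategory agree with the ambient ones and the duality restricts. Passing to the stable category, where the projective-injective summands $Q_j$ are annihilated, yields the $2$-CY structure on $\underline{\sub Q_{\setJ}}$. The main obstacle will be step (iii), specifically constructing projective covers intrinsically inside $\sub Q_{\setJ}$, which does not follow formally from the embedding into $\mod\Pi$ since $\mod \Pi$ has strictly more projectives than $\sub Q_{\setJ}$; I would handle this by adapting the explicit approximation argument of Geiss-Leclerc-Schr\"{o}er rather than attempting a purely categorical reduction.
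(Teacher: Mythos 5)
The paper does not prove this statement; it quotes it from \cite{GLS08}, so your attempt has to be judged on its own terms, and it contains a genuine gap. The theorem asserts that the indecomposable projective--injective objects of $\sub Q_{\setJ}$ are \emph{all} the $Q_j$ --- including, for $j\notin\setJ$, the maximal quotient $Q_j$ of $\Pi_{n-j}$ lying in $\sub Q_{\setJ}$, whose socle is described in \Cref{Lem:soclesub} and which is \emph{not} injective in $\mod\Pi$. Your argument only treats $j\in\setJ$ and ends by declaring the projective--injectives to be ``$Q_j$, $j\in\setJ$'', which contradicts the very statement you are proving. The error originates in your ``enough injectives'' step: the embedding of $M$ into its injective envelope $E=\bigoplus_{j\in\setJ}Q_j^{m_j}$ in $\mod\Pi$ is not an inflation in $\sub Q_{\setJ}$, because the cokernel $E/M$ need not have socle supported on $\setJ$. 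Concretely, take $n=3$, $\setJ=\{1\}$ and $M=S_1$: the envelope is $\Pi_2$ with cokernel $S_2\notin\sub Q_{\{1\}}$, so this sequence is not admissible; in fact $S_1=Q_2$ is already injective (and projective) in $\sub Q_{\{1\}}$, exemplifying the extra projective--injectives your argument misses.

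The correct route, following \cite{GLS08}, is to build the injective hull \emph{inside} the subcategory: for each simple $S_j$ in the socle of $M$ one uses the relative injective $Q_j$ (the largest quotient of $\Pi_{n-j}$ in $\sub Q_{\setJ}$), checks $\Ext^1_\Pi(Z,Q_j)=0$ for all $Z\in\sub Q_{\setJ}$ (using the $2$-CY duality on $\mod\Pi$ and the description of $Q_j$), and verifies that the resulting monomorphism has cokernel in $\sub Q_{\setJ}$; enough projectives then follows by the self-duality of $\Pi$. Your parts (i) (extension-closure via socles) and (iv) (restriction of the bifunctorial duality $\Ext^1_\Pi(X,Y)\cong \Dual\Ext^1_\Pi(Y,X)$ to the extension-closed subcategory) are sound, and your closing remark correctly identifies where the difficulty lies, but as written the proof both fails to establish the Frobenius property and misidentifies the projective--injective objects.
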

\begin{example}
Let $n=5$ and $\setJ=\{1, 3\}$. The projective-injective $\Pi$-module 
$\Pi_2$ and the projective-injective object $Q_2$ in $\sub Q_{\setJ}$ 
are as follows.
\[
\begin{tikzpicture}[scale=0.45,
 arr/.style={black, thick, -angle 60}]
\path (2,4) node (c1) {$2$};
\path (-1,2) node (c2) {$1$};
\path (5,2) node (c3) {$3$};
\path (2, 0) node (c4) {$2$};
\path (-2.5,2) node  {$\Pi_2\colon$};

\path (8, 0) node (c5) {$4$};
\path (5,-2) node (c6) {$3$};

\path[arr] (c1) edge node[auto]{} (c2);
\path[arr] (c1) edge node[auto]{}  (c3);
\path[arr] (c2) edge (c4);
\path[arr] (c3) edge (c4);

\path[arr] (c3) edge (c5);
\path[arr] (c4) edge (c6);
\path[arr] (c5) edge (c6);

\path (19,4) node (d1) {$3$};
\path (16,2) node (d2) {$2$};
\path (22,2) node (d3) {$4$};
\path (11.5,2) node  {$Q_2\colon$};

\path (13,0) node (d4) {$1$};
\path (19,0) node (d5) {$3$};

\path[arr] (d1) edge node[auto]{} (d2);
\path[arr] (d1) edge node[auto]{}  (d3);

\path[arr] (d2) edge node[auto]{} (d4);
\path[arr] (d2) edge node[auto]{} (d5);
\path[arr] (d3) edge node[auto]{} (d5);

\end{tikzpicture}
\]
\end{example}
\section{The subcategory $\Dfiltered(\setJ)$} \label{sec3}

For any  $\setJ\subseteq [n-1]$, we construct  a subcategory $\Dfiltered(\setJ)$ of 
$\Dfiltered$, which is mapped onto $\sub Q_\setJ$ via the Dlab--Ringel equivalence.

\subsection{Construction of $\Dfiltered(\setJ)$}
Let $\setJ$ be a subset of $[n-1]$ and define 
\[
\charT_{\setJ}=\oplus_{j\in \setJ} \charT_j.
\]
Let $\Dfiltered(\setJ)$ be the full subcategory of $\Dfiltered$ consisting
of $M$ such that  
\[
\eta M \in \add \charT_{\setJ}. 
\]
In particular, $\charT_{\setJ}$ belongs to $\Dfiltered(\setJ)$. Note that if 
$M\in \Dfiltered(\setJ)$, then  
\[
\eta M=\eta_{\charT_{\setJ}}M.  ~~ 
\] 
For rank one modules, we have the following consequence of \Cref{Lem:trace}.
\begin{lemma} \label{Lem:whichrankone} \qquad
\begin{itemize}
\item[(1)] Let $I\subseteq [n]$. Then $M_I\in \Dfiltered(\setJ)$ if and only if $|I|\in \setJ$.  

\item[(2)] 
The projective module $D_{n-j}\in \Dfiltered(\setJ)$ if and only if $j\in \setJ$, in which case 
 we have the short exact sequence (with $Q_j=\Pi_{n-j}$), 
\begin{equation} \label{Eq:projmod}
\begin{tikzpicture}[scale=0.6,
 arr/.style={black, -angle 60}, baseline=(bb.base)]
\pgfmathsetmacro{\eps}{0.2}
\coordinate (bb) at (0, 2.8);

\path (0,3) node(z1) {$0$}; 
\path (12.2,3) node(z2) {$0$.};
\path (3,3) node(b1) {$\charT_j$};
\path (6.1,3) node(b2) {$D_{n-j}$};
\path (9.2,3) node(b3) {$Q_j$};

\path[arr] (z1) edge (b1);
\path[arr] (b1) edge (b2);
\path[arr] (b2) edge (b3);
\path[arr] (b3) edge (z2);
\end{tikzpicture}
\end{equation}
\end{itemize}
\end{lemma}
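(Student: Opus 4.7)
The plan is that both statements follow immediately from \Cref{Lem:trace} together with the definition of $\Dfiltered(\setJ)$. For part (1), by \Cref{Lem:trace}(4) we have $\eta M_I = \charT_{|I|}$ for every non-empty $I \subseteq [n]$. Since $\charT_1,\dots,\charT_n$ are pairwise non-isomorphic indecomposable good modules, $\charT_{|I|} \in \add \charT_{\setJ} = \add\bigl(\bigoplus_{j \in \setJ}\charT_j\bigr)$ holds if and only if $|I| \in \setJ$, which by definition is the condition $M_I \in \Dfiltered(\setJ)$.

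For part (2), the first assertion is analogous: by \Cref{Lem:trace}(5), $\eta D_{n-j} = \charT_j$, and again $\charT_j \in \add \charT_{\setJ}$ iff $j \in \setJ$. Assuming $j \in \setJ$, the inclusion $\eta D_{n-j} \hookrightarrow D_{n-j}$ provides the first map $\charT_j \hookrightarrow D_{n-j}$. It then remains to identify the quotient $D_{n-j}/\charT_j = D_{n-j}/\eta D_{n-j}$ with $Q_j$. By \Cref{Lem:trace}(2), $\eta D_{n-j} = D e_0 D_{n-j}$, so the quotient is annihilated by $De_0 D$ and is therefore a $\Pi$-module; since $D_{n-j} = De_{n-j}$ is projective, the quotient is the projective $\Pi$-module $\Pi e_{n-j} = \Pi_{n-j}$ (equivalently, apply $\projfun$ from \Cref{Thm:DlabRingel}). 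Finally, when $j \in \setJ$, \Cref{Lem:soclesub}(2) identifies $\Pi_{n-j}$ with $Q_j$, completing the sequence.

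There is essentially no obstacle: the only point needing a brief argument is the identification $D_{n-j}/\eta D_{n-j} \cong \Pi_{n-j}$, but this is a direct consequence of \Cref{Lem:trace}(2) and the definition $\Pi = D/De_0 D$, and it merely records that $\projfun$ sends the indecomposable projective $D$-module at vertex $n-j$ to the indecomposable projective $\Pi$-module at the same vertex.
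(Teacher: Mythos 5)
Your proof is correct and follows exactly the route the paper intends: the paper states this lemma as an immediate consequence of \Cref{Lem:trace} (parts (4), (5) and (2)), and your argument simply fills in those details, including the correct identification $D_{n-j}/\eta D_{n-j} = (D/De_0D)e_{n-j} = \Pi_{n-j} = Q_j$ for $j\in\setJ$ via \Cref{Lem:soclesub}(2). No gaps.
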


\subsection{Restrictions of the Dlab--Ringel equivalence}
We consider the restriction of the functor $\projfun$ to $\Dfiltered(\setJ)$.
\begin{definition}
Let $N\in \sub Q_{\setJ}$.
A module $M$ in $\Dfiltered(\setJ)$ with $\projfun M=N$ will be called a {\it  lift}
of $N$ to $\Dfiltered(\setJ)$. A lift $M$ is {\it minimal} if it does not contain summands
from $\add \charT_{\setJ}$. 
\end{definition}

Let $N\in \sub Q_\setJ$. Then there is an injective map $\iota\colon N\ra Q$ with 
$Q\in \add Q_\setJ$. 
Consider the following pullback diagram,
\begin{equation} \label{Eq:lift}
\begin{tikzpicture}[scale=0.7,
 arr/.style={black, -angle 60}, baseline=(bb.base)]
\pgfmathsetmacro{\eps}{0.2}
\coordinate (bb) at (0,2.8);
 
\path (9,5) node(z1) {$0$};
\path (6,5) node(z2) {$0$};
\path (3,3) node(b1) {$\ker \lambda$};
\path (6,3) node(b2) {$\N$};
\path (9,3) node(b3) {$N$};
\path (9,1) node (c3) {$Q$};
\path (6,1) node (c2) {$P$};
\path (3,1) node (c1) {$\ker \mu$};
\path (12, 1) node (c4) {$0$,};
\path (0,1) node (c0) {$0$};

\path(0, 3) node (b0) {$0$};
\path(12, 3) node (b4) {$0$};
\path[arr] (b0) edge (b1);
\path[arr] (b3) edge (b4);
\path[arr] (z1) edge (b3);
\path[arr] (z2) edge (b2);
\path[arr] (b1) edge (b2);
\path[arr] (b2) edge node [auto]{$\lambda$}  (b3);
\path[arr] (c1) edge  (c2);
\path[arr] (c2) edge node [auto]{$\mu$} (c3);
\path[arr] (c0) edge (c1);
\path[arr] (c3) edge (c4);

\path[arr] (b3) edge node [auto]{$\iota$}(c3);
\path[arr] (b2) edge node [auto]{} (c2);
\path[arr] (b1) edge node [auto]{$\verteq$} (c1);
\end{tikzpicture}
\end{equation}
where  $P\rightarrow Q$ is an $\aus$-projective cover.

\begin{lemma} \label{Lem:dense} 
Let $N\in \sub  Q_{\setJ}$. Use the same notation as in \eqref{Eq:lift}.
\begin{itemize}
\item[(1)] The pullback  $\N $ is a lift of $N$ to $ \Dfiltered(\setJ)$ and $\eta \N=\eta P$.

\item[(2)] The lift $\N$ is unique up to summands from $\add\charT_{\setJ}$.  
Moreover, $\N$ is a minimal lift if and only if $\iota$ is an injective envelope.  

\item[(3)] If $\eta \N=\oplus_i \charT_i^{d_i}$, then $(d_i)_i=\dvector \soc Q\geq \dvector \soc N$, 
with equality when $\N$ is a minimal lift.
\end{itemize}
\end{lemma}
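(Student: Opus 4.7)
The plan is to treat the three parts essentially in order, with part (1) doing the real work of the pullback computation, and parts (2) and (3) falling out from it combined with Dlab-Ringel and the Frobenius structure on $\sub Q_\setJ$.

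For part (1), I will first observe that since $\iota\colon N\to Q$ is injective, the pullback projection $\hat{N}\to P$ is also injective, so $\hat{N}$ is a submodule of $P$ and hence a good module by \Cref{Lem:chardelta}. The key step is the identity $\eta\hat{N}=\eta P$. Using \Cref{Lem:trace}(2), this reduces to showing $e_0\hat{N}=e_0P$. One inclusion is obvious; for the other, since $Q\in\mod\Pi$ we have $e_0Q=0$, so $e_0P\subseteq\ker\mu$, and $\ker\mu\subseteq\hat{N}$ inside $P$ by the pullback construction. This gives $e_0P\subseteq \hat{N}$, hence $e_0\hat{N}=e_0P$, and then $\eta\hat{N}=De_0\hat{N}=De_0P=\eta P$. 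Writing $P=\bigoplus_{j\in\setJ}D_{n-j}^{m_j}$ according to the decomposition of $Q=\bigoplus_{j\in\setJ}Q_j^{m_j}$, \Cref{Lem:trace}(5) gives $\eta P=\bigoplus_{j\in\setJ}\charT_j^{m_j}\in\add\charT_\setJ$, so $\hat{N}\in\Dfiltered(\setJ)$. Finally, the inclusion $\eta P\subseteq\ker\mu$ together with $\ker\mu\subseteq\eta P$ (which holds because $P/\ker\mu\cong Q\in\mod\Pi$ forces $\ker\mu$ to contain the trace $\eta P$) gives $\eta P=\ker\mu$, so $\hat{N}/\eta\hat{N}=\hat{N}/\ker\mu\cong N$, confirming $\projfun\hat{N}=N$.

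For part (2), I would use that $\sub Q_\setJ$ is Frobenius (\Cref{Prop:GLSFrob}), so $N$ has an injective envelope $\iota_{\min}\colon N\hookrightarrow Q_{\min}$. Decomposing $Q=Q_{\min}\oplus Q'$ with $\iota$ factoring through the first summand, and lifting to projective covers $P=P_{\min}\oplus P'$, the pullback splits as $\hat{N}\cong\hat{N}_{\min}\oplus\ker\mu'$, and by \Cref{Lem:trace}(5) applied summand-wise we have $\ker\mu'=\eta P'\in\add\charT_\setJ$. This shows the non-minimal choice just adds $\charT_\setJ$-summands, so $\iota$ being an envelope implies $\hat{N}$ minimal, and conversely any non-envelope choice produces $\charT_\setJ$-summands. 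For the uniqueness clause, if $M_1,M_2\in\Dfiltered(\setJ)$ both have $\projfun M_i\cong N$, the Dlab-Ringel equivalence (\Cref{Thm:DlabRingel}) gives $M_1\oplus T_1\cong M_2\oplus T_2$ with $T_i\in\add\charT$. Writing $T_i=T_i^\setJ\oplus T_i'$ with $T_i^\setJ\in\add\charT_\setJ$ and $T_i'\in\add(\charT_{[n]\setminus\setJ})$, applying $\eta$ (which fixes $\add\charT$ and sends $M_i$ into $\add\charT_\setJ$) and Krull--Schmidt forces $T_1'\cong T_2'$, and cancellation then shows $M_1$ and $M_2$ differ only by summands in $\add\charT_\setJ$.

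For part (3), everything is already in hand: from (1), $\eta\hat{N}=\eta P=\bigoplus_{j\in\setJ}\charT_j^{m_j}$ where $m_j$ is the multiplicity of $Q_j$ in $Q$. By \Cref{Lem:soclesub}(2), each $Q_j$ with $j\in\setJ$ has simple socle $S_j$, so $m_j=[\soc Q:S_j]$ and hence $(d_i)_i=\dvector\soc Q$. When $\hat{N}$ is minimal, the characterization in (2) says $\iota$ is an injective envelope in $\sub Q_\setJ$, so $\soc Q=\soc N$ and the final identification follows.

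The main obstacle I anticipate is the careful bookkeeping in part (2)---specifically, matching the splitting of the pullback with the splitting of $Q$ into envelope plus complement, and verifying that minimality really is equivalent (not just implied by) the envelope condition. The trace computation in part (1) also requires a genuine two-sided argument to identify $\eta P$ with $\ker\mu$, though this is technically routine once one sees that $\eta P\subseteq\ker\mu$ holds because the quotient is a $\Pi$-module.
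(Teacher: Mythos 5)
Your parts (1) and (3) follow essentially the paper's route (pullback of a mono is a mono, so $\N\subseteq P$ is good; $e_0\N=e_0P$ gives $\eta\N=\eta P=\ker\mu\in\add\charT_{\setJ}$), and your Krull--Schmidt bookkeeping for uniqueness in (2) is a legitimate, slightly more explicit version of the paper's appeal to \Cref{Thm:DlabRingel}. One small slip in (1): your parenthetical justification for $\ker\mu\subseteq\eta P$ actually argues the reverse inclusion --- the fact that $P/\ker\mu\cong Q$ lies in $\mod\Pi$ only yields $e_0P\subseteq\ker\mu$, i.e.\ $\eta P\subseteq\ker\mu$. The correct source for $\ker\mu=\eta P$ is \eqref{Eq:projmod} (equivalently \Cref{Lem:whichrankone}(2)), which identifies $Q_j=\Pi_{n-j}=D_{n-j}/\eta D_{n-j}$, so that $Q$ is precisely the maximal $\Pi$-quotient $\projfun P$ of $P$. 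This is easily repaired.

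The genuine gap is in the ``minimal $\Leftrightarrow$ envelope'' equivalence of (2): you only prove one direction. Your decomposition $Q=Q_{\min}\oplus Q'$, $\N\cong\N_{\min}\oplus\ker\mu'$ with $\ker\mu'=\eta P'\in\add\charT_{\setJ}$ shows that a non-envelope $\iota$ forces extra $\add\charT_{\setJ}$-summands, hence minimal $\Rightarrow$ envelope. But the sentence ``this shows the non-minimal choice just adds $\charT_{\setJ}$-summands, so $\iota$ being an envelope implies $\N$ minimal'' is a non sequitur: you have not excluded the possibility that $\N_{\min}$, the pullback over the envelope itself, already carries a summand in $\add\charT_{\setJ}$. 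This is exactly where the paper does its real work: assuming $\N=T'\oplus N'$ with $T'\in\add\charT_{\setJ}$, it splits $\ker\mu=\ker\lambda=T'\oplus\eta N'$, lifts this splitting of $\eta P$ to compatible decompositions $P=P(T')\oplus P(K)$ and $Q=Q(T')\oplus Q(K)$ with $P(T')/T'\cong Q(T')$, observes that $N$ then embeds into $Q(K)$ while $Q(T')\neq 0$ whenever $T'\neq 0$ (no $\charT_j$ with $j\le n-1$ is projective, so $P(T')\neq T'$), and thereby contradicts essentiality of $N\subseteq Q$. Some version of this argument must be supplied; without it the ``only if'' half of the characterisation of minimal lifts is unproved.
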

\begin{proof}
(1) By \Cref{Lem:chardelta}, $\N$ is a good module, since it
is a submodule of a projective $\aus$-module. So it remains to show that $\N \in \Dfiltered(\setJ)$, 
that is, $\eta \N\in \add \charT_{\setJ}$.

As $\N$ is the pullback of $\mu$ and $\iota$, $\ker \lambda=\ker \mu$ and by 
\eqref{Eq:projmod}, 
\[
\ker \mu = \eta P\in \add \charT_{\setJ}.
\] 
As $N$ and $Q$ are not supported at vertex $0$, by \eqref{Eq:lift}, 
\[
e_0\N =e_0\ker \lambda=e_0\ker \mu=e_0P.
\]
Therefore, by \Cref{Lem:trace} (2), 
\begin{equation}\label{eq:cometa}\eta \N=\eta P=\ker\lambda\in \add \charT_{\setJ}\end{equation} 
and 
\[
N=\N/\ker\lambda=\N/\eta \N.
\]
So $\N$ is a lift of $N$ to $\Dfiltered(\setJ)$.

(2) The uniqueness of $\N$ up to summands follows by the Dlab--Ringel 
equivalence  from \Cref{Thm:DlabRingel}.
Suppose that $\N=T'\oplus N'$ with $N'$ a minimal lift of $N$ and $T'\in \add \charT_\setJ$. Then
\[
\ker \mu=\ker \lambda=T'\oplus K,
\]
with $K=\eta N'$. 
Consequently,  there is a decomposition $P=P(T')\oplus P(K)$ and $Q=Q(T')\oplus Q(K)$ such  that
\[
P(T')/T'\cong Q(T') \text{ and } P(K)/K\cong Q(K).
\]
So $N$ can be embedded into $Q(K)$. 
Therefore, $Q$ is the injective envelope of $N$ if and only if 
$Q(T')=0$,  equivalently $\N$ is a minimal lift of $N$.

(3) Follows from \Cref{Lem:whichrankone} (2) and the equality $\eta \N=\eta  P$.
\end{proof}
\begin{remark}
Part (3) of the lemma shows that the trace $\eta M$ for $M\in \Dfiltered(\setJ)$ plays
the role of the socle of $\projfun M\in \sub Q_{\setJ}$, and this explains our labelling of
 the indecomposable summands of $\charT$.
\end{remark}
\begin{lemma}\label{Lem:welldefined}
Let $M$ be a module in $\Dfiltered(\setJ)$. Then
\[
\projfun M=M/\eta M\in \sub Q_{\setJ}.
\]
\end{lemma}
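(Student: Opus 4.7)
The plan is to invoke \Cref{Lem:dense} in its maximal form, with $[n-1]$ in place of $\setJ$, to produce a minimal lift of $N := \projfun M$ to $\Dfiltered$, and then to compare this minimal lift with $M$ via the Dlab--Ringel equivalence.

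First, by \Cref{Thm:DlabRingel}, $N = M/\eta M$ lies in $\mod \Pi$. Since $\Pi$ is the preprojective algebra of a Dynkin quiver it is self-injective, so every $\Pi$-module embeds into a direct sum of the indecomposable projective-injectives $\Pi_{1},\dots,\Pi_{n-1}$; equivalently, $\sub Q_{[n-1]} = \mod \Pi$. Applying \Cref{Lem:dense} with $[n-1]$ in place of $\setJ$, and taking $\iota$ to be an injective envelope of $N$ in $\mod \Pi$, produces a minimal lift $\N \in \Dfiltered$ of $N$, and part~(3) of that lemma yields
\[
\eta \N \;=\; \bigoplus_{j=1}^{n-1}\charT_j^{e_j}, \qquad \text{with } e_j \text{ the multiplicity of } S_j \text{ in } \soc N.
\]

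Next, both $M$ and $\N$ are lifts of $N$, so by \Cref{Thm:DlabRingel} they are isomorphic in $\Dfiltered/\add \charT$; combined with Krull--Schmidt and the uniqueness statement in \Cref{Lem:dense}(2), this forces $M \cong \N \oplus \T$ for some $\T \in \add \charT$. The trace $\eta$ is the identity on $\add \charT$ (immediate from its definition), so $\eta M \cong \eta \N \oplus \T$. The hypothesis $\eta M \in \add \charT_{\setJ}$ then pins down, again by Krull--Schmidt, that $\eta \N \in \add \charT_{\setJ}$, hence $e_j = 0$ for every $j \notin \setJ$. Equivalently, $\soc N \in \add S_{\setJ}$, i.e., $N \in \sub Q_{\setJ}$, which is the desired conclusion.

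The only step requiring care is verifying that \Cref{Lem:dense} is available in the maximal case $\setJ' = [n-1]$ so that its socle formula in part~(3) applies to an arbitrary $\Pi$-module $N$. This is immediate once self-injectivity of $\Pi$ is noted, after which the remainder of the argument is Krull--Schmidt bookkeeping and I anticipate no further obstacles.
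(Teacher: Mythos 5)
Your argument is correct and follows essentially the same route as the paper: both form the pullback lift of $\projfun M$ along an injective envelope (the paper does this directly via \eqref{Eq:lift}, you do it by invoking \Cref{Lem:dense} for $\setJ'=[n-1]$, which is legitimate since $\sub Q_{[n-1]}=\mod\Pi$), then use uniqueness of minimal lifts and Krull--Schmidt to split off the minimal lift from $M$ and deduce that its trace lies in $\add\charT_{\setJ}$. The only cosmetic difference is the last step, where the paper concludes via $Q\in\add Q_{\setJ}$ and \Cref{Lem:whichrankone}(2) while you read off $\soc N\in\add S_{\setJ}$ from \Cref{Lem:dense}(3); both are valid.
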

\begin{proof}
Let $\tilde{M}$ be defined by the pullback  of $\mu$ and $\iota$ as in \eqref{Eq:lift}, 
\[
\begin{tikzpicture}[scale=0.7,
 arr/.style={black, -angle 60}]
\path (0,3) node (b0) {$0$};
\path (3,3) node (b1) {$\ker \mu$};
\path (6,3) node (b2) {$\tilde{M}$};
\path (9,3) node(b3) {$\projfun M$};
\path (12,3) node (b4) {$0$};

\path[arr] (b1) edge  (b2);
\path[arr] (b2) edge node [auto]{} (b3);
\path[arr] (b0) edge (b1);
\path[arr] (b3) edge (b4);

\path (9,1) node (c3) {$Q$};
\path (6,1) node (c2) {$P$};
\path (3,1) node (c1) {$\ker \mu$};
\path (12, 1) node (c4) {$0$};
\path (0,1) node (c0) {$0$};

\path[arr] (c1) edge  (c2);
\path[arr] (c2) edge node [auto]{$\mu$} (c3);
\path[arr] (c0) edge (c1);
\path[arr] (c3) edge (c4);

\path[arr] (b3) edge node [auto]{$\iota$}(c3);
\path[arr] (b2) edge node [auto]{}(c2);
\path[arr] (b1) edge node [auto]{$\verteq$}(c1);
\end{tikzpicture}
\]
 where we assume that $\iota$ is an injective envelope.  
By \Cref{Lem:dense} (1) and \eqref{eq:cometa}, respectively,  
\[\projfun M=\projfun \tilde{M} \text{ and } \eta \tilde{M}=\eta P=\ker \mu,\]
and by \Cref{Lem:dense} (2),
$M = \tilde{M}\oplus T'$ for some $T'\in \add \charT_\setJ$. So
$\eta  \tilde{M}$ is a summand of $\eta M$, which is in $\add \charT_\setJ$. Consequently, 
 \[\eta P \in \add \charT_{\setJ},\] and so 
  by \Cref{Lem:whichrankone} (2),  $Q\in \add Q_{\setJ}$. Therefore 
$\projfun M\in \sub Q_{\setJ}$.
\end{proof}

We have the following refinement of the Dlab--Ringel equivalence.

\begin{proposition}\label{Thm:equiv} The restriction $\projfun$ to
$\Dfiltered(\setJ)$ induces an equivalence,
\[
\Dfiltered(\setJ)/\add \charT_{\setJ} \rightarrow \sub Q_{\setJ}, \; \;
 X\mapsto X/\eta X.
\]
\end{proposition}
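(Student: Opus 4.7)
The strategy is to bootstrap from the Dlab-Ringel equivalence $\projfun\colon \Dfiltered/\add \charT \to \mod \Pi$ of \Cref{Thm:DlabRingel}, checking that restricting source to $\Dfiltered(\setJ)$, target to $\sub Q_\setJ$, and the quotient ideal to the smaller $\add \charT_\setJ$ preserves each property needed for an equivalence: well-definedness, density, fullness, and faithfulness modulo the ideal.

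Well-definedness is precisely the content of \Cref{Lem:welldefined}. Density follows at once from the lift construction in \Cref{Lem:dense}(1): for $N\in \sub Q_\setJ$, embedding $N$ into an injective envelope $Q\in \add Q_\setJ$ and pulling back along a projective cover $P\twoheadrightarrow Q$ produces $\hat N \in \Dfiltered(\setJ)$ with $\projfun \hat N = N$. Fullness is free of charge: given $g\colon \projfun X \to \projfun Y$ with $X,Y\in \Dfiltered(\setJ)$, the original Dlab-Ringel equivalence supplies a preimage $f\colon X\to Y$ in $\Dfiltered$, and since $\Dfiltered(\setJ)$ is a full subcategory of $\Dfiltered$, $f$ is automatically a morphism in $\Dfiltered(\setJ)$.

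The only step requiring genuine argument is faithfulness modulo $\add \charT_\setJ$. If $f\colon X\to Y$ in $\Dfiltered(\setJ)$ satisfies $\projfun f = 0$, the Dlab-Ringel theorem only yields a factorisation through $\add \charT$, a priori larger than $\add \charT_\setJ$. Here I would apply \Cref{Lem:factormaps} to replace this by a factorisation through the inclusion $\eta Y \hookrightarrow Y$, and then exploit the defining property of $\Dfiltered(\setJ)$, namely $\eta Y \in \add \charT_\setJ$, to pin the factorisation down to the desired ideal. The converse direction, that any morphism factoring through $\add \charT_\setJ$ is killed by $\projfun$, is immediate from $\add \charT_\setJ \subseteq \add \charT$ and the original equivalence.

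I do not expect a real obstacle: all the preparatory lemmas (\Cref{Lem:trace}, \Cref{Lem:whichrichtrankone}-style classification, \Cref{Lem:dense}, \Cref{Lem:welldefined}, and \Cref{Lem:factormaps}) have been tuned so that the interplay between $\eta Y$ and $\add \charT_\setJ$ makes the ideal-refinement step transparent. The argument is essentially a bookkeeping exercise; the conceptual work has already been done in establishing that $\eta Y$ controls both the lift of $\projfun Y$ and the factorisations through the tilting summands relevant to $\setJ$.
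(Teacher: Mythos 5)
Your proposal is correct and follows essentially the same route as the paper: well-definedness and density from \Cref{Lem:welldefined} and \Cref{Lem:dense}, and the key refinement step via \Cref{Lem:factormaps} together with $\eta Y\in\add\charT_\setJ$ to show that, on $\Dfiltered(\setJ)$, the ideal of maps factoring through $\add\charT$ coincides with that of maps factoring through $\add\charT_\setJ$, so full faithfulness is inherited from the Dlab--Ringel equivalence. The only difference is presentational (you check fullness and faithfulness separately where the paper identifies the quotient categories outright), which changes nothing of substance.
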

\begin{proof} Denote the induced functor by $\projfun_\setJ$.
The functor $\projfun_\setJ$ is well-defined and dense following
\Cref{Lem:welldefined} and \Cref{Lem:dense}.
By definition, $\eta M\in \add\charT_{\setJ}$ for any
$M\in \Dfiltered(\setJ)$. It then follows from the factorisation in
\Cref{Lem:factormaps} that the  category
$\Dfiltered(\setJ)$ modulo maps that factor through $\add \charT$ 
is equivalent to $\Dfiltered(\setJ)/\add \charT_{\setJ}$.
So $\projfun_\setJ$ can be viewed as the restriction of the Dlab--Ringel equivalence
 to $\Dfiltered (\setJ)/\add \charT_\setJ$.
Consequently, $\projfun_\setJ$ is full and faithful, and therefore an equivalence.
\end{proof}

It follows from \Cref{Thm:equiv}  that any map $f\colon M\ra N$ in $\sub Q_{\setJ}$ can be lifted to a homomorphism 
\[
\f\colon \M \ra \N,
\] 
in $\Dfiltered(\setJ)$ (i.e. $\projfun(\f) = f$). We give an explicit construction of this lift.

\begin{lemma} \label{Lem:liftofmaps} Let $f\colon M\ra N$ be a map in
$\sub Q_{\setJ}$.  
Then $\f\colon\M\ra \N$ defined by the following commutative diagram is a lift of 
$f$ to $\Dfiltered(\setJ)$,
\[
\begin{tikzpicture}[scale=0.7,
 arr/.style={black, -angle 60}]
 
 \path(6, 5) node (a1) {$M$};
 \path(10.5, 5) node (a2) {$N$};
 
 \path(4, 3.3) node (b1) {$\M$};
 \path(8.5, 3.3) node (b2) {$\N$};
 
 \path(6, 2) node (c1) {$Q_M$};
 \path(10.5, 2) node (c2) {$Q_N$};
 
 \path(6.5, 3.7) node {$\f$};
 
 \path(4, 0.3) node (d1) {$P_M$};
 \path(8.5, 0.3) node (d2) {$P_N$};
 
\path[arr] (a1) edge node[auto]{$f$} (a2);
\path[arr] (b1) edge    (b2);
\path[arr] (c1) edge (c2);
\path[arr] (d1) edge (d2);

\path[arr] (b1) edge  (a1);
\path[arr] (b2) edge (a2);
\path[arr] (b1) edge (d1);
\path[arr] (b2) edge (d2);

\path[arr] (a1) edge (c1);
\path[arr] (d1) edge (c1);
\path[arr] (a2) edge (c2);
\path[arr] (d2) edge (c2);

\end{tikzpicture}
\]
where $Q_M, Q_N\in \add Q_{\setJ}$,  $P_M$ and $P_N$ are $\aus$-projective covers
of $Q_M$ and $Q_N$, and $\M$ and $\N$ are pullbacks.
Moreover, the lift is unique  in the sense that if $g$
is another lift of $f$, then $\f-g$ factors through $\eta \N$.
\end{lemma}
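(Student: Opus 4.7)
The plan is to construct $\f$ by successive universal properties (first projectivity of $P_M$, then the pullback property of $\N$), then read off $\projfun \f = f$ directly from the diagram, and finally deduce uniqueness from \Cref{Thm:equiv} combined with \Cref{Lem:factormaps}.

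First I would build $f''$: since $P_M$ is $D$-projective and $P_N \twoheadrightarrow Q_N$ is an epimorphism, the composite $P_M \to Q_M \xrightarrow{f'} Q_N$ lifts to a morphism $f''\colon P_M \to P_N$. Here $f'\colon Q_M \to Q_N$ denotes any map in $\sub Q_\setJ$ inducing $f$ on the subobjects $M\subseteq Q_M$ and $N\subseteq Q_N$; such an $f'$ exists because $Q_N$ is injective in $\sub Q_\setJ$ by \Cref{Prop:GLSFrob}. To then produce $\f$, I would apply the universal property of the pullback $\N$ of $N \to Q_N \leftarrow P_N$: the two composites $\M \to M \xrightarrow{f} N$ and $\M \to P_M \xrightarrow{f''} P_N$ agree after post-composition with the canonical maps into $Q_N$, since every face of the relevant cube commutes by construction. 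Hence these composites induce a unique morphism $\f\colon \M \to \N$ fitting into the displayed diagram.

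Next I would verify that $\projfun \f = f$. By \Cref{Lem:dense}(1) applied to $\M$, the trace $\eta \M$ coincides with the kernel of the pullback projection $\M \to M$, and similarly $\eta \N = \ker(\N \to N)$. Consequently $\projfun \M = \M/\eta \M = M$, $\projfun \N = N$, and the map induced by $\f$ on these quotients is identified with $f$ by the commutativity of the top square.

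For uniqueness, suppose $g\colon \M \to \N$ is another lift. Then $\projfun(\f - g) = 0$, so by the equivalence in \Cref{Thm:equiv} the morphism $\f - g$ factors through $\add \charT_\setJ \subseteq \add \charT$. By \Cref{Lem:factormaps}, any map in $\Dfiltered$ factoring through $\add \charT$ factors through the inclusion $\eta \N \hookrightarrow \N$, giving the stated conclusion. The only real obstacle is bookkeeping, namely checking that all faces of the cube involving $\M$, $\N$, $M$, $N$, $P_M$, $P_N$, $Q_M$, $Q_N$ commute before invoking the pullback property; once this diagram chase is set up carefully, everything else is formal.
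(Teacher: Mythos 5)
Your proposal is correct and follows essentially the same route as the paper: $f'$ exists by injectivity of $Q_N$ in $\sub Q_\setJ$, $f''$ by projectivity of $P_M$, $\f$ by the universal property of the pullback $\N$, and uniqueness via the equivalence of \Cref{Thm:equiv} together with the factorisation criterion of \Cref{Lem:factormaps}. The extra detail you supply (identifying $\eta\M$ with $\ker(\M\to M)$ via \Cref{Lem:dense} to see $\projfun\f=f$) is a correct elaboration of what the paper leaves implicit.
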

\begin{proof}
Since $Q_N$ is an injective object in $\sub Q_\setJ$ and the map $M\to Q_M$ is injective, 
the map $Q_M\to Q_N$ exists.  
As for  the map $P_M\to P_N$, the existence is due to the fact that  $P_M$ and $P_N$ are projective covers of
$Q_M$ and $Q_N$, respectively.
Now, by the universal property of a pullback,  we have the map $\f$
such that $\projfun(\f)=f$, as required.  
The uniqueness follows from the factorisation of maps through $\eta \N$
given in \Cref{Lem:factormaps}.
\end{proof}

\section{Frobenius exact structure on $\Dfiltered(\setJ)$}\label{sec4}
The subcategory $\Dfiltered(\setJ)$ inherits an exact structure from $\mod D$, but under 
this exact structure the functor $\projfun$ is not exact. 
In this section, we will introduce a new exact structure $\mathcal{E}$ on $\Dfiltered(\setJ)$ such that the functor
$$
\projfun\colon \Dfiltered(\setJ)\to \sub Q_{\setJ}
$$
is full exact. We show that $\Dfiltered(\setJ)$ equipped with $\mathcal{E}$ is Frobenius stably 
2-CY and investigate the Gabriel quiver for its minimal projective-injective generator. 

\subsection{Construction of the exact structure $\mathcal{E}$ on $\Dfiltered(\setJ)$}
\begin{lemma}\label{Lem:exact1}
Let $L, M, N\in \Dfiltered(\setJ)$, and let
\[
\begin{tikzpicture}[scale=0.6,
 arr/.style={black, -angle 60}]
\path (9,1) node (c3) {$M$};
\path (6,1) node (c2) {$L$};
\path (3,1) node (c1) {$N$};
\path (12, 1) node (c4) {$0$,};
\path (-0.7,1) node  {$\xi\colon $};
\path (0, 1) node (c0) {$0$};

\path[arr] (c1) edge node[auto]{} (c2);
\path[arr] (c2) edge node[auto]{$g$}  (c3);
\path[arr] (c0) edge (c1);
\path[arr] (c3) edge (c4);
\end{tikzpicture}
\]
be an exact sequence in $\mod D$.
The following are equivalent.
\begin{itemize}

\item[(1)] The sequence $\eta(\xi)$ is exact,
\[
\begin{tikzpicture}[scale=0.6,
 arr/.style={black, -angle 60}]
 
\path (9.4,1) node (c3) {$\eta M$};
\path (6.2,1) node (c2) {$\eta L$};
\path (3,1) node (c1) {$\eta N$};
\path (12.4, 1) node (c4) {$0.$};
\path (-1.3,1) node {$\eta (\xi)\colon$};
\path (0,1) node (c0) {$0$};

\path[arr] (c1) edge (c2);
\path[arr] (c2) edge (c3);
\path[arr] (c0) edge (c1);
\path[arr] (c3) edge (c4);
\end{tikzpicture}
\]

\item[(2)] The sequence $\projfun(\xi)$ of $\Pi$-modules is exact,
\[
\begin{tikzpicture}[scale=0.6,
 arr/.style={black, -angle 60}]
 
\path (9.4,1) node (c3) {$\projfun M$};
\path (6.2,1) node (c2) {$\projfun L$};
\path (3,1) node (c1) {$\projfun N$};
\path (12.4, 1) node (c4) {$0.$};
\path (-1.3,1) node {$\projfun(\xi)\colon$};
\path (0,1) node (c0) {$0$};

\path[arr] (c1) edge  (c2);
\path[arr] (c2) edge  (c3);
\path[arr] (c0) edge (c1);
\path[arr] (c3) edge (c4);
\end{tikzpicture}
\]

\item[(3)] The short exact sequence   $e_0(\xi)$ of $e_0\aus e_0$-modules splits,
\[
\begin{tikzpicture}[scale=0.6,
 arr/.style={black, -angle 60}]
 
\path (9.4,1) node (c3) {$e_0M$};
\path (6.2,1) node (c2) {$\e_0L$};
\path (3,1) node (c1) {$e_0N$};
\path (12.4, 1) node (c4) {$0.$};
\path (-1.3,1) node {$\e_0(\xi)\colon$};
\path (0,1) node (c0) {$0$};

\path[arr] (c1) edge  (c2);
\path[arr] (c2) edge  (c3);
\path[arr] (c0) edge (c1);
\path[arr] (c3) edge (c4);
\end{tikzpicture}
\]

\item[(4)] The sequence $\Hom(\charT, \xi)$ of  projective $(\End \charT)^{\text{op}}$-modules is exact,
\[
\begin{tikzpicture}[scale=0.6,
 arr/.style={black, -angle 60}]
 
\path (13.3,1) node (c3) {$\Hom(\charT, M)$};
\path (8.3,1) node (c2) {$\Hom(\charT, L)$};
\path (3.3,1) node (c1) {$\Hom(\charT, N)$};
\path (17.3, 1) node (c4) {$0.$};
\path (-2.4,1) node {$\Hom(\charT, \xi)\colon$};
\path (-0.4,1) node (c0) {$0$};

\path[arr] (c1) edge  (c2);
\path[arr] (c2) edge  (c3);
\path[arr] (c0) edge (c1);
\path[arr] (c3) edge (c4);
\end{tikzpicture}
\]

\item[(5)] $\eta L=\eta  M\oplus \eta N$.
\end{itemize}
\end{lemma}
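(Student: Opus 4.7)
The plan is to route all six conditions through the single observation that $\eta L\to\eta M$ is always surjective: by \Cref{Lem:trace}(2), $\eta M=De_0M=\pi(De_0L)=\pi(\eta L)$. Hence condition $(1)$ reduces to the equality $\eta L\cap N=\eta N$, where $N$ is identified with its image in $L$. Granted this, $(1)\Leftrightarrow(2)$ follows from a diagram chase on the $3\times 3$ diagram whose middle row is $\xi$ and whose columns are $0\to\eta X\to X\to\projfun X\to 0$ for $X\in\{N,L,M\}$: the induced map $\projfun L\to\projfun M$ is automatically surjective with kernel $(N+\eta L)/\eta L\cong N/(N\cap\eta L)$, and this coincides with $\projfun N=N/\eta N$ exactly under condition $(1)$.

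I would then treat the cluster $(1)\Leftrightarrow(5)\Leftrightarrow(6)$ together. Condition $(6)$ is simply the statement that the short exact sequence in $(1)$ splits, and $(5)\Leftrightarrow(6)$ is immediate given the automatic surjectivity of $\eta(g)$. For $(1)\Rightarrow(6)$, since $\eta M,\eta N\in\add\charT$ by \Cref{Lem:trace}(3) and $\charT$ is a tilting module, we have $\Ext^1_\aus(\eta M,\eta N)=0$, so exactness automatically upgrades to splitting; the converse is trivial.

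For $(4)\Leftrightarrow(5)$, I would exploit that any morphism $f\colon\charT\to M$ has image in $\eta M$ by definition of the trace. Thus given a section $s\colon\eta M\to\eta L$ of $\eta(g)$, the composite $s\circ f\colon\charT\to L$ lifts $f$ through $g$, yielding $(4)$; conversely, applying $(4)$ to the inclusion $\eta M\hookrightarrow M$ (a morphism from $\add\charT$, since $\eta M\in\add\charT$) produces a lift $s\colon\eta M\to L$ whose image automatically lies in $\eta L$, which is the required section. Finally, for $(3)\Leftrightarrow(4)$, I would use the identification $\Hom_\aus(\charT_i,X)=\{x\in e_0X\colon t^ix=0\}\cong\Hom_{\CC[t]/(t^n)}(\CC[t]/(t^i),e_0X)$, valid for any $D$-module $X$, obtained from the presentation of $\charT_i$ as being generated at vertex $0$ by an element annihilated by $t^i$. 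Then $(4)$ translates into exactness of $\Hom_{\CC[t]/(t^n)}(\CC[t]/(t^i),e_0\xi)$ for every $i\in[n]$, and since $\{\CC[t]/(t^i)\}_{i\in[n]}$ is a complete set of indecomposable $\CC[t]/(t^n)$-modules, this is precisely the splitting of $e_0\xi$ as a sequence of $\CC[t]/(t^n)$-modules, which is $(3)$. The only technical subtlety is that this last identification is not an instance of \Cref{Lem:charTequivalence} (which treats only $X\in\add\charT$) but must be established by hand from the generation of $\charT_i$ at vertex $0$.
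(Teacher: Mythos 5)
Your reduction of everything to the automatic surjectivity of $\eta(g)$ is correct ($\eta M=\aus e_0M=g(\aus e_0L)=g(\eta L)$ by \Cref{Lem:trace}(2) — note the symbol you use for the surjection $L\to M$ should be $g$, since $\projfun$ already denotes the quotient functor), and so is the restatement of (1) as $\eta L\cap N=\eta N$. Your arguments for $(1)\Leftrightarrow(2)$, $(4)\Leftrightarrow(5)$ and $(1)\Leftrightarrow(6)$ are in substance the paper's. Your $(3)\Leftrightarrow(4)$ is genuinely different and works: the paper never compares (3) and (4) directly, proving instead $(1)\Rightarrow(3)$, $(1)\Rightarrow(4)$, $(3)\Rightarrow(6)$, $(4)\Rightarrow(6)$ and $(6)\Rightarrow(1)$, whereas you pass both conditions through $\mod\CC[t]/(t^n)$ via $\Hom_\aus(\charT_i,X)\cong\Hom_{\CC[t]/(t^n)}(\CC[t]/(t^i),e_0X)$. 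Be aware that this identification fails for arbitrary $D$-modules (in general only $\{x\colon b_i\cdots b_1x=0\}\subseteq\{x\colon t^ix=0\}$ holds, and the two can differ, e.g.\ for a module with socle $S_1$); it does hold for good modules, which embed in $\aus_0^r$, and that is all you use — but the blanket claim should be corrected.

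The one genuine gap is $(5)\Rightarrow(6)$ (equivalently $(5)\Rightarrow(1)$), which you declare ``immediate given the automatic surjectivity of $\eta(g)$''. Split surjectivity of $\eta(g)$ gives $\eta L=\ker(\eta(g))\oplus s(\eta M)$ with $\ker(\eta(g))=\eta L\cap N$, but it does not identify $\eta L\cap N$ with $\eta N$ — and that identification is exactly the content of (1), the thing to be proved in this direction. The containment $\eta N\subseteq\eta L\cap N$ is automatic; the reverse is not (already $\dim\eta L$ need not equal $\dim\eta M+\dim\eta N$ in general: if $e_0N\cong e_0M\cong\CC[t]/(t)$ and $e_0L\cong\CC[t]/(t^2)$, then $\eta L=\charT_2$ has strictly larger dimension than $\eta M\oplus\eta N=\charT_1\oplus\charT_1$). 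To close the gap, note that $\eta L\cap N$ is a direct summand of $\eta L\in\add\charT$, hence lies in $\add\charT$ and is therefore generated by its vertex-$0$ component; since $e_0(\eta L\cap N)=e_0L\cap e_0N=e_0N$, this yields $\eta L\cap N=\aus e_0N=\eta N$. Alternatively, chain your own $(5)\Rightarrow(4)\Rightarrow(3)$ with the paper's $(3)\Rightarrow(6)$, which uses the equivalence $\e_0\colon\add\charT\to\mod\CC[t]/(t^n)$ of \Cref{Lem:charTequivalence}. With that step supplied, the proof is complete.
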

\begin{proof}
Clearly, the modules in (2) and (3) are the types of modules as claimed.   
Those in (4) are projective $(\End \charT)^{\text{op}}$-modules, because for any $X\in \Dfiltered$, by \Cref{Lem:trace} and  the definition of $\eta X$, respectively, 
\begin{equation}\label{eq:projective}
\eta X\in \add \charT \text{ and }
\Hom(\charT, X)=\Hom(\charT, \eta X).
\end{equation}

We now prove the stated equivalences. 

The equivalence (1) $\Leftrightarrow$ (2) follows directly from 
definition of $\projfun X$ as the cokernel of the inclusion $\eta X\subseteq X$ for 
any $X\in \Dfiltered$, and the Snake Lemma. 

 (1) $\Rightarrow$ (3):
Exactness of the sequence in (1) implies split exactness, because $\charT$ is rigid and 
 $\eta X\in \add \charT$ for any $X\in \Dfiltered$ (see \eqref{eq:projective}). 
 Now (3) follows from the fact that 
 \begin{equation}\label{eq:eX}
 e_0X=e_0\eta X.
 \end{equation}

 (1) $\Rightarrow$ (4): This implication follows since  $\eta(\xi)$ splits and, 
 by the definition of $\eta$,
\begin{equation} \label{eq:homXetaX}
\Hom(\charT, \xi) = \Hom(\charT, \eta(\xi)).
\end{equation}

 (3) $\Rightarrow$ (5): First, following the fact in \eqref{eq:eX}, the split exactness of 
 $\e_0(\xi)$ implies the split exactness of $\e_0\eta(\xi)$.
 Now (5) follows from the fact that  $e_0De_0\cong \CC[t]/(t^n)$ and the equivalence in 
\Cref{Lem:charTequivalence},
\[ \e_0\colon \add \charT\to \mod \CC[t]/(t^n). \]

(4) $\Rightarrow$ (5): the exactness of the sequence in (4) implies that it is split exact. 
Now by \eqref{eq:homXetaX} and the equivalence 
$\Hom(\charT, -)\colon \add \charT\to \proj (\End \charT)^{\text{op}}$, we have
\[\eta L=\eta M\oplus \eta N.\]

(5) $\Rightarrow$ (1): Note that $\eta$ preserves injections and surjections. By comparing dimensions,  
(5) implies (1).
\end{proof}
Let $\mathcal{E}$ be the collection of short exact sequences in $\Dfiltered(\setJ)$
satisfying the equivalent conditions in \Cref{Lem:exact1}. Denote by $\ext^1(M, N)$
the subspace of $\Ext^1(M, N)$ consisting of extensions of $M$ by $N$
in $\mathcal{E}$.
In the sequel, an extension
\[
\begin{tikzpicture}[scale=0.6,
 arr/.style={black, -angle 60}]
\path (9,1) node (c3) {$M$};
\path (6,1) node (c2) {$L$};
\path (3,1) node (c1) {$N$};
\path (12, 1) node (c4) {$0$};
\path (-1,1) node  {};
\path (0, 1) node (c0) {$0$};

\path[arr] (c1) edge node[auto]{$f$} (c2);
\path[arr] (c2) edge node[auto]{$g$}  (c3);
\path[arr] (c0) edge (c1);
\path[arr] (c3) edge (c4);
\end{tikzpicture}
\]
in $\Ext^1(M, N)$ is said to be {\it admissible} if it belongs to $\ext^1(M,N)$,
in which case the injection $f$ and the surjection $g$ are also said to be {\em admissible}. 

\begin{remark} \label{Rem:exact1}By \Cref{Lem:exact1}, $\ext^1(M, N)$
is the kernel of the restriction
\[
\Ext^{1}_{\aus}(M, N)\rightarrow \Ext^{1}_{e_0De_0}(e_0M, e_0N).
\]
\end{remark}
The goal of this subsection is to prove that $\mathcal{E}$ gives $\Dfiltered(\setJ)$ the structure
of an exact category.
We recall the definition of an exact category, following the appendix by 
Keller in \cite{DRSKeller}. Let $\mathcal{C}$ be an additive category. A sequence 
\[
\begin{tikzpicture}[scale=0.6,
 arr/.style={black, -angle 60}]
 
\path (9,1) node (c3) {$M$};
\path (6,1) node (c2) {$L$};
\path (3,1) node (c1) {$N$};

\path[arr] (c1) edge node[auto]{$i$} (c2);
\path[arr] (c2) edge node[auto]{$d$} (c3);
\end{tikzpicture}
\]
is  {\em exact}  if $i$ is the kernel of $d$ and $d$ is the cokernel of $i$.
In this case, $i$ is usually called an {\em inflation} and $d$ 
a {\em deflation}.  

\begin{definition}  \cite{DRSKeller}
Let $\mathcal{E}'$ be a collection of exact sequences in an additive category $\mathcal{C}$.  
Then $(\mathcal{C}, \mathcal{E}')$ is an exact category if the following four
conditions are satisfied.
\begin{itemize}
\item[(E1)] The identity morphism of the zero object is a deflation.
\item[(E2)] The composition of two deflations is a deflation.
\item[(E3)] For each $g\in \Hom_\mathcal{C}(M', M)$ and each deflation
$d \in \Hom_\mathcal{C}(N, M)$, there is a Cartesian square (a pullback)
\[
\begin{tikzpicture}[scale=0.6,
 arr/.style={black, -angle 60}]

\path (3,1) node (b1) {$N$};
\path (7,1) node (b2) {$M$};

\path (3,3) node (c1) {$N'$};
\path (7,3) node (c2) {$M'$};
\path[arr] (c1) edge node[auto]{$d'$} (c2);
\path[arr] (b1) edge node[auto]{$d$} (b2);

\path[arr] (c1) edge node[left]{$g'$} (b1);
\path[arr] (c2) edge node[auto]{$g$} (b2);
\end{tikzpicture}
\]
such that $d'$ is a deflation.
\item[(E4)] Dually, for each $f\in \Hom_\mathcal{C}(M', M)$ and each inflation 
$i \in \Hom_\mathcal{C}(M, N)$, there is a coCartesian square (a pushout)
\[
\begin{tikzpicture}[scale=0.6,
 arr/.style={black, -angle 60}]

\path (3,1) node (b1) {$M'$};
\path (7,1) node (b2) {$N'$};

\path (3,3) node (c1) {$M$};
\path (7,3) node (c2) {$N$};
\path[arr] (c1) edge node[auto]{$i$} (c2);
\path[arr] (b1) edge node[auto]{$i'$} (b2);

\path[arr] (c1) edge node[left]{$f$} (b1);
\path[arr] (c2) edge node[auto]{$f'$} (b2);
\end{tikzpicture}
\]
such that $i'$ is an inflation.
\end{itemize}
\end{definition}
\begin{lemma} \label{Lem:exactcategory}
$(\Dfiltered(\setJ), ~\mathcal{E})$ is an exact category.
\end{lemma}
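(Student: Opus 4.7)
The plan is to verify axioms (E1)--(E4) by exploiting \Cref{Lem:exact1}(3): a short exact sequence in $\mod D$ with all three terms in $\Dfiltered(\setJ)$ lies in $\mathcal{E}$ if and only if it splits after applying the exact functor $\e_0$. Since $\eta$ respects direct sums and $\add \charT_{\setJ}$ is closed under summands, $\Dfiltered(\setJ)$ is additive. Axiom (E1) is trivial: the identity on the zero object is a deflation since the zero sequence splits under $\e_0$.

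For (E2), suppose $d_1\colon L\to M$ and $d_2\colon M\to N$ are deflations in $\mathcal{E}$ with admissible kernels $K_1$ and $K_2$ respectively. The kernel $K$ of the composition $d = d_2 d_1$ is an extension of $K_2$ by $K_1$ in $\mod D$, and $\e_0(d)=\e_0(d_2)\,\e_0(d_1)$ is a composition of split epimorphisms, hence split. Thus $0\to K\to L\to N\to 0$ splits under $\e_0$, and by condition~(6) of \Cref{Lem:exact1} applied to this sequence, $\eta L = \eta K\oplus \eta N$, forcing $\eta K\in\add \charT_{\setJ}$. Since $K\subseteq L\in\Dfiltered$, by \Cref{Lem:chardelta} $K\in\Dfiltered$, so $K\in\Dfiltered(\setJ)$ and $d$ is a deflation.

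For (E3), form the pullback $N'$ in $\mod D$ of a deflation $d\colon N\to M$ in $\mathcal{E}$ along a morphism $g\colon M'\to M$ in $\Dfiltered(\setJ)$; the induced map $d'\colon N'\to M'$ has kernel canonically isomorphic to $\ker d$. The functor $\e_0$ is exact and so preserves pullbacks, and the pullback of a split short exact sequence is split, so $d'$ is $\e_0$-split. By condition~(6), $\eta N' = \eta(\ker d)\oplus \eta M'\in \add \charT_{\setJ}$. Moreover, $\Dfiltered$ is closed under extensions in $\mod D$ (via \Cref{Lem:chardelta}(3), the class $\{M:\projd M\le 1\}$ is extension-closed), so $N'\in\Dfiltered(\setJ)$ and the new sequence lies in $\mathcal{E}$. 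Axiom (E4) follows by the dual argument, noting that pushouts of split short exact sequences are split and that the analogous closure holds on the opposite side.

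The main subtlety is ensuring that the middle terms produced by the pullback/pushout constructions in (E3)--(E4), as well as the kernel in (E2), remain in $\Dfiltered(\setJ)$; this is handled uniformly by condition~(6) of \Cref{Lem:exact1}, which controls the trace $\eta$ under admissible short exact sequences, together with the closure of $\Dfiltered$ under extensions and submodules.
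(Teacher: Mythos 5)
Your proof is correct and follows essentially the same strategy as the paper: reduce every axiom to the split-exactness criterion of \Cref{Lem:exact1}(3), using that $\e_0$ is exact (hence preserves pullbacks/pushouts) and that split epimorphisms are stable under pullback and composition. The one structural difference is (E2): the paper first proves (E3) and then deduces (E2) by pulling back along the kernel of the second deflation and computing traces, whereas you prove (E2) directly from the observation that a composition of $\e_0$-split epimorphisms is $\e_0$-split; your route is slightly more economical. One small point worth making explicit: in both (E2) and (E3) you invoke condition (6) of \Cref{Lem:exact1} for a sequence whose kernel is not yet known to lie in $\Dfiltered(\setJ)$, only in $\Dfiltered$ (as a submodule of a good module); this is harmless because the implication $(3)\Rightarrow(6)$ in that lemma only uses $\eta X\in\add\charT$ for good $X$ and the equivalence of \Cref{Lem:charTequivalence}, and it is exactly this that then forces $\eta K$ (resp.\ $\eta N'$) into $\add\charT_{\setJ}$ — the paper's proof relies on the same bootstrapping.
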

\begin{proof}
As $0$ is an object in $\Dfiltered(\setJ)$, (E1) holds. Below we will
first prove (E3), then deduce (E2) from (E3). The proof of (E4) is similar to 
that of (E3), and we skip the details. 

(E3) Let $f\colon N\rightarrow M$ be a deflation in $\Dfiltered(\setJ)$,
which we complete to a short exact sequence in $\mathcal{E}$, and let 
$g\colon M'\rightarrow M$ be a morphism in $\Dfiltered(\setJ)$.  Let $N'$ be the 
pullback of $f$ and $g$ in $\mod \aus$,
\begin{equation}\label{Eq:pullback} 
\begin{tikzpicture}[scale=0.7,
 arr/.style={black, -angle 60}, baseline=(bb.base)]
\pgfmathsetmacro{\eps}{0.2}
\coordinate (bb) at (0,1.9);
 
  \path (3,3) node(b1) {$K$};
 \path (6,3) node(b2) {$N'$};
\path (9,3) node(b3) {$M'$};
\path (9,1) node (c3) {$M$};
\path (6,1) node (c2) {$N$};
\path (3,1) node (c1) {$K$};
\path (12, 1) node (c4) {$0$.};
\path (0,1) node (c0) {$0$};
\path (-0.6, 1) node {$\epsilon_1\colon$};

\path (12, 3) node (b4) {$0$};
\path (0,3) node (b0) {$0$};
\path (-0.6, 3) node {$\epsilon_2\colon$};

\path[arr] (b1) edge (b2);
\path[arr] (b2) edge node [auto]{}  (b3);
\path[arr] (c1) edge  (c2);
\path[arr] (c2) edge node [auto]{$f$} (c3);
\path[arr] (c0) edge (c1);
\path[arr] (c3) edge (c4);

\path[arr] (b2) edge node [auto]{} (b3);

\path[arr] (b0) edge (b1);
\path[arr] (b3) edge (b4);

\path[arr] (b3) edge node [auto]{$g$}(c3);
\path[arr] (b2) edge node [auto]{} (c2);
\path[arr] (b1) edge node [auto]{$\verteq$} (c1);
\end{tikzpicture}
\end{equation}

\noindent Restricting to the vertex 0, we have 
\[
\begin{tikzpicture}[scale=0.7,
 arr/.style={black, -angle 60}]
  \path (3,3) node(b1) {$e_0K$};
 \path (6,3) node(b2) {$e_0N'$};
\path (9,3) node(b3) {$e_0M'$};
\path (9,1) node (c3) {$e_0M$};
\path (6,1) node (c2) {$e_0N$};
\path (3,1) node (c1) {$e_0K$};
\path (12, 1) node (c4) {$0$.};
\path (0.2,1) node (c0) {$0$};
\path (-0.8, 1) node {$\e_0(\epsilon_1)\colon$};

\path (12, 3) node (b4) {$0$};
\path (0.2,3) node (b0) {$0$};
\path (-0.8, 3) node {$\e_0(\epsilon_2)\colon$};

\path[arr] (b1) edge (b2);
\path[arr] (b2) edge node [auto]{}  (b3);
\path[arr] (c1) edge  (c2);
\path[arr] (c2) edge node [auto]{$\e_0(f)$} (c3);
\path[arr] (c0) edge (c1);
\path[arr] (c3) edge (c4);

\path[arr] (b2) edge node [auto]{} (b3);

\path[arr] (b0) edge (b1);
\path[arr] (b3) edge (b4);

\path[arr] (b3) edge node [auto]{$\e_0(g)$}(c3);
\path[arr] (b2) edge node [auto]{} (c2);
\path[arr] (b1) edge node [auto]{$\verteq$} (c1);
\end{tikzpicture}
\]

By assumption, the sequence $\epsilon_1$ is admissible and so by \Cref{Lem:exact1}, 
$\e_0(\epsilon_1)$ splits.
Note that $e_0N'$ is the pullback along $\e_0(f)$ and $\e_0(g)$ in 
$\mod e_0De_0$, and so the sequence $\e_0(\epsilon_2)$ also splits. 
Consequently, again by \Cref{Lem:exact1}, 
the sequence $\epsilon_2$ is admissible with $N'\in \Dfiltered(\setJ)$. 

(E2) Let $f\colon N\rightarrow M$ and $h\colon M\rightarrow M''$ be deflations.
We will show that the composition $hf$ is a deflation. Let 
$g\colon M'\to M$
be the kernel of $h$. We have the following commutative diagram, 
extending the pullback in \eqref{Eq:pullback},

\[
\begin{tikzpicture}[scale=0.7,
 arr/.style={black, -angle 60}]
\path (3,3) node(b1) {$K$};
\path (6,3) node(b2) {$N'$};
\path (9,3) node(b3) {$M'$};
\path (9,1) node (c3) {$M$};
\path (6,1) node (c2) {$N$};
\path (3,1) node (c1) {$K$};
\path (12, 1) node (c4) {$0$};
\path (0,1) node (c0) {$0$};

\path (6,-1) node (d2) {$M''$};
\path (9,-1) node (d3) {$M''$};

\path (12, 3) node (b4) {$0$};
\path (0,3) node (b0) {$0$};

\path[arr] (b1) edge (b2);
\path[arr] (b2) edge node [auto]{}  (b3);
\path[arr] (c1) edge  (c2);
\path[arr] (c2) edge node [auto]{$f$} (c3);
\path[arr] (c0) edge (c1);
\path[arr] (c3) edge (c4);

\path[arr] (b2) edge node [auto]{} (b3);

\path[arr] (b0) edge (b1);
\path[arr] (b3) edge (b4);

\path[arr] (b3) edge node [auto]{$g$}(c3);
\path[arr] (b2) edge node [auto]{} (c2);
\path[arr] (b1) edge node [auto]{$\verteq$} (c1);

\path[arr] (c3) edge node [auto]{$h$}(d3);
\path[arr] (c2) edge node [auto]{$h''$} (d2);

\path[arr] (d2) edge node [auto]{$=$}(d3);
\end{tikzpicture}
\]

By (E3), and using \Cref{Lem:exact1}, we know that
\[
\eta N'=\eta K\oplus \eta M'
\]
and so
\[
\eta N=\eta K\oplus \eta M=
\eta K\oplus \eta M'\oplus  \eta M''
=\eta N'\oplus \eta M''.
\]
and so $hf$ (i.e. $h''$) is a deflation, by \Cref{Lem:exact1}.
\end{proof}

\begin{remark}\label{rem:TBr}
\Cref{Lem:exactcategory} can also be deduced from  \cite[Props. 1.4 and 1.10]{DRSKeller}. 
More precisely, \cite[Prop. 1.10]{DRSKeller} implies that  (E2) follows from (E3). 
(E2) is equivalent to that $\ext^1(-, -)$ is a closed  additive bifunctor, and so  
by \cite[Prop. 1.4]{DRSKeller}, $(\Dfiltered(\setJ), \mathcal{E})$ is an exact category.
Nevertheless, we choose to give a direct and more transparent proof.
\end{remark}

Note that surjections in $\Dfiltered$ are not necessarily (categorical) epimorphisms
in $\Dfiltered$. However, admissible surjections are, as the following lemma
shows.

\begin{lemma}
A map $g\colon    L\to N$ in  $\Dfiltered$ is an epimorphism
if and only if $\eta  (g)\colon \eta L\to \eta N$ is surjective.
\end{lemma}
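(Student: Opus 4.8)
The plan is to prove both implications by working with the functor $\e_0$ and the trace $\eta$, which are the natural bridge between $\Dfiltered(\setJ)$ and $\sub Q_\setJ$. Recall that $\e_0$ is faithful on $\Dfiltered$ (\Cref{Lem:charTequivalence}), and that for any $M \in \Dfiltered$ one has $e_0M = e_0\eta M$ and $\eta M \in \add \charT$ (\Cref{Lem:trace}).

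First I would prove the harder direction: suppose $\eta(g)\colon \eta L \to \eta N$ is \emph{not} surjective, and produce two distinct morphisms $\alpha, \beta \colon N \to X$ in $\Dfiltered(\setJ)$ with $\alpha g = \beta g$. Since $\eta(g)$ fails to be surjective, the cokernel $C := \eta N / \im \eta(g)$ is a nonzero module in $\add \charT_\setJ$ (it is a factor of $\eta N \in \add \charT_\setJ$, and $\add\charT$ is closed under factors in $\Dfiltered$). The composite $N \to N/\eta N \oplus \cdots$ — more precisely, I would take the composite $h\colon N \twoheadrightarrow \eta N / \im\eta(g) = C$, which makes sense because $\im \eta(g)$ is a submodule of $N$ (being a submodule of $\eta N \subseteq N$) and $C$ is the quotient of $N$ by it — wait, that is not right since $N/\im\eta(g)$ need not equal $C$. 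Instead, observe that $\im\eta(g) \subseteq \eta N \subseteq N$, so there is a surjection $N/\im\eta(g) \twoheadrightarrow N/\eta N = \projfun N$ with kernel $C$; I do not have a direct map $N \to C$. The cleaner approach is to use that $C$ is a nonzero summand-quotient situation: since $\eta(g)$ is a map between rigid modules in $\add\charT$ that is not split surjective, and $\e_0$ restricted to $\add \charT$ is an equivalence (\Cref{Lem:charTequivalence}), the map $e_0(g) = e_0\eta(g)$ is not surjective either. So $e_0 L \to e_0 N$ is not surjective as a map of $\CC[t]/(t^n)$-modules. Now pick a nonzero $\CC[t]/(t^n)$-linear map $\varphi\colon e_0 N \to \CC[t]/(t^n)$ that kills $\im e_0(g)$; transporting through the equivalence $\e_0\colon \add\charT \to \mod\CC[t]/(t^n)$ and using $e_0 N = e_0 \eta N$, this gives a nonzero map $\psi\colon \eta N \to \charT_n = D_0$ in $\Dfiltered$ with $\psi \circ \eta(g) = 0$. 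Extend $\psi$ along the inclusion $\eta N \hookrightarrow N$ using that $\charT_n = D_0$ is injective in $\mod D$ (\Cref{Lem:chardelta} or the discussion after it: $D_0$ is injective), obtaining $\tilde\psi\colon N \to D_0$. Then $\tilde\psi \circ g$ restricted to $\eta L$ is $\psi \circ \eta(g) = 0$, but I want $\tilde\psi \circ g = 0$ on all of $L$; this need not hold, so the target should be $N \to D_0$ composed further to kill the image of $\eta L$ — and since $g(L) \supseteq$ nothing controlled, I instead note $\tilde\psi \circ g$ factors through $L/\eta L \to N$... The robust fix: take $X = D_0$, $\alpha = \tilde\psi$ and $\beta = 0$; it suffices that $\tilde\psi \neq 0$ (clear, as $\tilde\psi|_{\eta N} = \psi \neq 0$) while $\tilde\psi \circ g = 0$. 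For the latter, replace $N$ by working with $\eta$: by \Cref{Lem:factormaps} a map into $\add\charT$ from $N$ factors through $\eta N$, but $\tilde\psi \circ g\colon L \to D_0$ factors through $\eta L$, i.e. equals $\psi' \circ (\text{incl}) $ for some $\psi'\colon \eta L \to D_0$ with $\psi' = \psi \circ \eta(g) = 0$. Hence $\tilde\psi \circ g = 0 = 0 \circ g$ with $\tilde\psi \neq 0$, contradicting that $g$ is epi.

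For the converse, suppose $\eta(g)\colon \eta L \to \eta N$ is surjective and let $\alpha, \beta\colon N \to X$ with $\alpha g = \beta g$; I must show $\alpha = \beta$. Set $\gamma = \alpha - \beta$, so $\gamma g = 0$; I want $\gamma = 0$. Applying $\e_0$: $\e_0(\gamma)\,\e_0(g) = 0$, and $\e_0(g) = e_0\eta(g)$ is surjective (since $\eta(g)$ is and $e_0 L = e_0\eta L$, $e_0 N = e_0 \eta N$), hence $\e_0(\gamma)\colon e_0 N \to e_0 X$ is zero. Since $\e_0$ is faithful on $\Dfiltered$ (\Cref{Lem:charTequivalence}), $\gamma = 0$.

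The main obstacle is the first (only-if) direction — specifically, manufacturing the nonzero test map $N \to D_0$ that vanishes after precomposition with $g$. The key facts making this work are that $D_0 = \charT_n$ is injective in $\mod D$, that $\e_0$ is an equivalence on $\add\charT$ so linear-algebra over $\CC[t]/(t^n)$ detects non-surjectivity of $\eta(g)$, and that maps from $L$ into $\add\charT$ factor through $\eta L$ (\Cref{Lem:factormaps}), forcing $\tilde\psi \circ g$ to be controlled entirely by $\psi \circ \eta(g) = 0$. One must double-check that $\tilde\psi$ lands in $\Dfiltered(\setJ)$-morphisms, i.e. that $D_0$ can be taken in $\Dfiltered(\setJ)$ — but $\eta D_0 = D_0 = \charT_n$ need not lie in $\add\charT_\setJ$ unless $n \in \setJ$. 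To avoid this, replace the target $D_0$ by the pushout/image construction landing in $\add\charT_\setJ$: since $\psi\colon \eta N \to D_0$ and $\eta N \in \add\charT_\setJ$, factor $\psi$ through its image, which is a factor module of $\eta N$ hence in $\add\charT_\setJ \subseteq \Dfiltered(\setJ)$; injectivity of the relevant $\charT_i$ within the exact category $(\Dfiltered(\setJ),\mathcal{E})$ — equivalently, that the inclusion $\eta N \hookrightarrow N$ is an admissible inflation, which is immediate as $\eta(\mathrm{incl})$ is an isomorphism — then supplies the extension $\tilde\psi$ inside $\Dfiltered(\setJ)$. Assembling these pieces gives the contradiction and completes the proof.
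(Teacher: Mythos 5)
Your \textquotedblleft if\textquotedblright{} direction is correct and essentially
parallel to the paper's, though packaged slightly differently: you
cancel $\e_0(g)$ on the right and invoke faithfulness of $\e_0$ on
$\Dfiltered$ to conclude $\gamma=0$, whereas the paper notes that
$\cok g$ has no support at vertex $0$ and uses that $\soc Z$ lives only
at $0$ to kill $\Hom_\aus(\cok g, Z)$. Both are fine.

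The \textquotedblleft only if\textquotedblright{} direction has a genuine gap, and it is precisely
at the point you flagged yourself. You correctly observe that the map
$\tilde\psi\colon N\to D_0$ is \emph{not} a morphism in $\Dfiltered(\setJ)$
because $D_0=\charT_n\notin\Dfiltered(\setJ)$ (as $n\notin\setJ$), and you propose
replacing $D_0$ by the image of $\psi\colon\eta N\to D_0$, asserting it lies in
$\add\charT_\setJ$ because it is a factor of $\eta N\in\add\charT_\setJ$.
That assertion is false. What is true (and what the paper cites via Dlab--Ringel)
is that $\add\charT$ is closed under factors in $\Dfiltered$; factors of
$\add\charT_\setJ$-modules need only land in $\add\charT$, not in
$\add\charT_\setJ$. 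Concretely, with $\setJ=\{2\}$ and $n=3$, a nonzero
$\psi\colon\charT_2\to D_0$ can have image $\charT_1=S_0$, and
$\eta\charT_1=\charT_1\notin\add\charT_\setJ$, so $\charT_1\notin\Dfiltered(\setJ)$.
Thus your replacement target still fails to live in the category, and the
argument does not produce a counterexample to $g$ being epi in $\Dfiltered(\setJ)$.

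The approach is salvageable with a targeted change: instead of choosing
$\varphi\colon e_0N\to\CC[t]/(t^n)$, choose
$\varphi\colon e_0N\to\CC[t]/(t^m)$ for some fixed $m\in\setJ$ that is
nonzero on $\cok\e_0(g)$ (possible because $\CC[t]/(t^m)$ still cogenerates
enough: any nonzero $\CC[t]/(t^n)$-module admits a nonzero map to
$\CC[t]/(t^m)$, e.g.\ via its top). Transporting through the
$\e_0$-equivalence on $\add\charT$ gives $\psi\colon\eta N\to\charT_m$ with
$\psi\circ\eta(g)=0$, and now $\charT_m\in\add\charT_\setJ\subset\Dfiltered(\setJ)$
and is projective-injective in $(\Dfiltered(\setJ),\mathcal{E})$ by
\Cref{Thm:Frob2CY}, so you can extend along the admissible inflation
$\eta N\hookrightarrow N$. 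The rest of your vanishing argument (a map
$L\to\charT_m$ that dies on $\eta L$ has image with zero $e_0$-part,
hence zero socle, hence is zero) then goes through. Note also that your
appeal to \Cref{Lem:factormaps} here is a red herring — that lemma concerns
factoring through the trace of the \emph{target}, which is automatic when
the target is in $\add\charT$; the correct reason $\tilde\psi\circ g=0$ is
the socle argument just sketched, not \Cref{Lem:factormaps}.

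Finally, for comparison: the paper argues both directions directly on $\cok g$,
using only its support at vertex $0$. This keeps everything inside $\Dfiltered$
and avoids the round-trip through $\mod\CC[t]/(t^n)$, the choice of a cogenerator,
and the injectivity bookkeeping. Your $\e_0$-based route is conceptually fine but
produces more moving parts, and the membership-in-$\Dfiltered(\setJ)$ issue you
ran into is an artifact of that extra machinery rather than an intrinsic difficulty
of the lemma.
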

\begin{proof}
Assume that $\eta (g) \colon \eta L\to \eta N$ is surjective.
Then $\cok g$ is not supported at $0$, since 
$\eta L$ and $\eta N$ are submodules generated by 
$e_0L$ and $e_0N$ (\Cref{Lem:trace} (2)), respectively. 
Now, suppose that there is a zero composition 
\[
\begin{tikzpicture}[scale=0.6,
 arr/.style={black, -angle 60}]
\path (9,1) node (c3) {$Z$};
\path (6,1) node (c2) {$N$};
\path (3,1) node (c1) {$L$};

\path (1.5, 1) node {$0=hg$:};

\path[arr] (c1) edge node[auto]{$g$} (c2);
\path[arr] (c2) edge node[auto]{$h$}  (c3);

\end{tikzpicture}
\]
with $Z\in \Dfiltered(\setJ)$. Then $h$ factors as
\[
\begin{tikzpicture}[scale=0.6,
 arr/.style={black, -angle 60}]
\path (9,1) node (c3) {$Z$.};
\path (6,1) node (c2) {$\cok g$};
\path (3,1) node (c1) {$N$};

\path[arr] (c1) edge(c2);
\path[arr] (c2) edge (c3);

\end{tikzpicture}
\]
But $\Hom(\cok g, Z)=0$, since
$\cok g$ is not supported at $0$ and $\soc Z$ is only supported at $0$, by \Cref{Lem:chardelta} (5). 
Therefore $h=0$ and so $g$ is an epimorphism.

Conversely, suppose that $g\colon L\rightarrow N$ is an epimorphism.  
Note that \[g(e_0L)=e_0g(L)\subseteq e_0N \text{ and } \eta N=De_0N,\]
which follows from \Cref{Lem:trace} (2).
Assume for contradiction that $\eta (g)$ is not surjective. Then $e_0\cok g\neq 0$.
Let $Z$ be a quotient of $\cok g$ with $\soc Z$ only supported at vertex $0$, that is 
$Z\in \Dfiltered$ (see \Cref{Lem:chardelta}). Note that such a $Z$ exists by starting with $\cok g$ and
repeatedly removing summands of socles that are not supported at $0$.
Let $h$ be the composition 
\[
\begin{tikzpicture}[scale=0.6,
 arr/.style={black, -angle 60}]
\path (9,1) node (c3) {$Z$,};
\path (6,1) node (c2) {$\cok g$};
\path (3,1) node (c1) {$N$};
\path (2.2,1) node {$h$:};

\path[arr] (c1) edge  (c2);
\path[arr] (c2) edge   (c3);
\end{tikzpicture}
\]
which is in particular not zero, 
but  the composition 
\[
\begin{tikzpicture}[scale=0.6,
 arr/.style={black, -angle 60}]
\path (9,1) node (c3) {$Z$};
\path (6,1) node (c2) {$N$};
\path (3,1) node (c1) {$L$};

\path[arr] (c1) edge node[auto]{$g$} (c2);
\path[arr] (c2) edge node[auto]{$h$}  (c3);
\end{tikzpicture}
\]
is zero, contradicting that $g$ is an epimorphism.
Therefore, $\eta (g)$ must be surjective.
\end{proof}

\subsection{Exactness of the functor $\projfun$.}
\begin{proposition} \label{Lem:liftedprecise}
Let
\[
\begin{tikzpicture}[scale=0.6,
 arr/.style={black, -angle 60}]
 \path (5,16) node(a1) {$L$};
 \path (8,16) node(a2) {$M$};
\path (11,16) node(a3) {$N$};
 \path (2,16) node(a0) {$0$};
 \path (14,16) node(a4) {$0$};
 \path (1.2,16) node {$\alpha$:};

 \path[arr] (a0) edge  (a1);
 \path[arr] (a1) edge  (a2);
 \path[arr] (a2) edge  (a3);
 \path[arr] (a3) edge  (a4);
 \end{tikzpicture}
\]
be an extension in $\sub Q_{\setJ}$. Then $\alpha$ has a lift in $\mathcal{E}$ as follows, 
\[
\begin{tikzpicture}[scale=0.6,
 arr/.style={black, -angle 60}]
 \path (5,16) node(a1) {$\L$};
 \path (9,16) node(a2) {$\M\oplus \charT'$};
\path (13,16) node(a3) {$\N$};
 \path (2,16) node(a0) {$0$};
 \path (16,16) node(a4) {$0$,};
 
 \path[arr] (a0) edge  (a1);
 \path[arr] (a1) edge  (a2);
 \path[arr] (a2) edge  (a3);
 \path[arr] (a3) edge  (a4);
 \end{tikzpicture}
\]
where $\L, \M$ and $\N$ are minimal lifts, and
$\charT' = \oplus_i \charT_{i}^{d_i}\in \add \charT_\setJ$ with
\begin{equation}\label{eq:nsocdim}
(d_i)_i = \dvector \soc (L\oplus N) - \dvector \soc M.
\end{equation}
\end{proposition}
\begin{proof}
We construct the commutative diagram

\[
\begin{tikzpicture}[scale=0.42,
 arr/.style={black, -angle 60}]
 
\path(1, 16) node(a) {$\alpha$:};
\path (6,16) node(a1) {$L$};
\path (12,16) node(a2) {$M$};
\path (18,16) node(a3) {$N$};
\path (2.2,16) node(a0) {$0$};
\path (21.2,16) node(a4) {$0$};
\path[arr] (a0) edge (a1);
\path[arr] (a3) edge (a4);

\path(-1, 13) node(b) {$\beta$:};
\path (3,13) node(b1) {$\hat{L}$};
\path (9,13) node(b2) {$\M$};
\path (15,13) node(b3) {$\N$};
\path (0,13) node(b0) {$0$};
\path (19,13) node(b4) {$0$};
\path[arr] (b0) edge (b1);
\path[arr] (b3) edge (b4);

\path(1, 10) node(g) {$\gamma$:};
\path (6,10) node(g1) {$Q_L$};
\path (12,10) node(g2) {$Q_M$};
\path (18,10) node(g3) {$Q_N$};
\path (2,10) node(g0) {$0$};
\path (21.2,10) node(g4) {$0$};
\path[arr] (g0) edge (g1);
\path[arr] (g3) edge (g4);

\path(-1, 7)  node(d) {$\delta$:};
\path (3,7) node(d1) {$P_L$};
\path (9,7) node(d2) {$P_M$};
\path (15,7) node(d3) {$P_N$};
\path (0,7) node(d0) {$0$};
\path (19,7) node(d4) {$0$};
\path[arr] (d0) edge (d1);
\path[arr] (d3) edge (d4);
\path[arr] (a1) edge  (a2);
\path[arr] (a2) edge node[auto]{}  (a3);
\path[arr] (b1) edge  (b2);
\path[arr] (b2) edge node[auto]{} (b3);
\path[arr] (g1) edge  (g2);
\path[arr] (g2) edge  (g3);
\path[arr] (d1) edge  (d2);
\path[arr] (d2) edge  (d3);
\path[arr] (a1) edge  (g1);
\path[arr] (a2) edge  (g2);
\path[arr] (a3) edge  (g3);
\path[arr] (d1) edge  (g1);
\path[arr] (d2) edge  (g2);
\path[arr] (d3) edge  (g3);
\path[arr] (b1) edge  (a1);
\path[arr] (b2) edge  (a2);
\path[arr] (b3) edge  (a3);
\path[arr] (b1) edge  (d1);
\path[arr] (b2) edge  (d2);
\path[arr] (b3) edge  (d3);

\end{tikzpicture}
\]
where

\begin{itemize}
\item[(i)] $Q_L, ~ Q_M, ~Q_N\in \add Q_{\setJ}$, 
$P_L, ~ P_M, ~P_N\in \add D$, and the existence of $\gamma$ and 
$\delta$ follows from the Horseshoe Lemmas.

\item[(ii)]  The modules $\L$, $\M$ and $\N$  are pullbacks.
Since the sequences $\alpha$, $\gamma$ and $\delta$ are exact,  the sequence $\beta$ is  exact.
\end{itemize}
Note that
 \[  
 \eta P_M=\eta P_L\oplus \eta P_N,
 \] and so
 by \Cref{Lem:dense} (1),
 \[  
 \eta \M=\eta \L\oplus \eta \N.
 \]
That is, $\beta$ is admissible (\Cref{Lem:exact1} (6)). By construction, $\projfun(\beta)=\alpha$. 
Furthermore, 
\[
\soc Q_L\oplus \soc Q_N=\soc Q_M,
\] 
So the equality \eqref{eq:nsocdim} follows from Lemma \ref{Lem:dense} (3) and
 $\beta$ is a sequence as required.
\end{proof}
\begin{theorem} \label{Thm:fullexact}
The functor
$$
\projfun\colon (\Dfiltered(\setJ),\mathcal{E}) \to \sub Q_{\setJ}, M\mapsto M/\eta M
$$
is dense, full, exact and induces isomorphism on the extension groups
$$
\ext^1(M,N)\cong \Ext^1(\projfun M, \projfun N).
$$
\end{theorem}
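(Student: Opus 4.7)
\emph{Density, fullness and exactness.} These three claims assemble previously established results. Density of $\projfun$ is part of \Cref{Thm:equiv} via the explicit pullback lift \eqref{Eq:lift}. Fullness of $\projfun$ on the unquotiented category follows from the equivalence $\Dfiltered(\setJ)/\add\charT_\setJ \to \sub Q_\setJ$ in \Cref{Thm:equiv}, or directly from \Cref{Lem:liftofmaps}: every morphism in $\sub Q_\setJ$ arises as the image of some morphism in $\Dfiltered(\setJ)$. Exactness of $\projfun$ with respect to $\mathcal{E}$ is the implication (1) $\Rightarrow$ (2) of \Cref{Lem:exact1}; by definition an admissible sequence $\xi$ has $\eta(\xi)$ exact, which is equivalent to the exactness of $\projfun(\xi)$ in $\sub Q_\setJ$.

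\emph{Surjectivity of $\projfun_*$.} Since $\projfun$ is additive and exact, it induces a well-defined group homomorphism $\projfun_*\colon \ext^1(M, N) \to \Ext^1_\Pi(\projfun M, \projfun N)$ sending the class of an admissible extension $\xi$ to the class of $\projfun(\xi)$. Its surjectivity is exactly \Cref{Lem:liftedprecise}, which produces an admissible lift of any extension in $\sub Q_\setJ$.

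\emph{Injectivity of $\projfun_*$.} This is the main step. Suppose $\xi\colon 0 \to N \xrightarrow{f} L \xrightarrow{g} M \to 0$ is admissible and $\projfun(\xi)$ splits with section $s\colon \projfun M \to \projfun L$. By fullness, lift $s$ to $\tilde{s}\colon M \to L$ in $\Dfiltered(\setJ)$. The difference $g\tilde{s} - \mathrm{id}_M$ vanishes under $\projfun$, so by \Cref{Thm:equiv} it factors through $\add\charT_\setJ$, and by \Cref{Lem:factormaps} through the inclusion $\iota_M\colon \eta M \hookrightarrow M$; write $g\tilde{s} - \mathrm{id}_M = \iota_M \circ v$ for some $v\colon M \to \eta M$. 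Admissibility of $\xi$ gives the decomposition $\eta L = \eta N \oplus \eta M$ by \Cref{Lem:exact1} (6); let $\sigma\colon \eta M \to \eta L$ be the corresponding section of the restriction $\eta g$, so that $g\circ(\eta L \hookrightarrow L)\circ\sigma = \iota_M$. Setting $w = (\eta L \hookrightarrow L)\circ\sigma\circ v\colon M \to L$ gives $gw = \iota_M\circ v = g\tilde{s} - \mathrm{id}_M$, so $\tilde{s}-w$ is a genuine section of $g$ in $\Dfiltered(\setJ)$. The induced splitting $L \cong M \oplus N$ satisfies $\eta L = \eta M \oplus \eta N$ and is admissible by \Cref{Lem:exact1} (6), so $\xi$ is the zero class in $\ext^1(M, N)$. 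The essential obstacle is precisely this correction step: an abstract lift $\tilde{s}$ is only a section modulo $\eta$, and one needs the concrete direct-sum decomposition of $\eta L$ provided by admissibility—rather than the mere exactness of $\projfun(\xi)$—to build the correction $w$.
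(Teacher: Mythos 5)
Your proposal is correct and follows essentially the same route as the paper: density, fullness and exactness are quoted from \Cref{Thm:equiv} and \Cref{Lem:exact1}, surjectivity on extension groups from \Cref{Lem:liftedprecise}, and injectivity is proved by lifting a section of the split sequence $\projfun(\xi)$ and correcting it by a map through $\eta$ using the splitting of $\eta(\xi)$ guaranteed by admissibility. This correction step is exactly the paper's argument (there phrased as $gh=\mathrm{Id}+u$ with $u$ landing in $\eta N$, then subtracting $u'$ obtained from the splitting of $\eta(g)$), so no further comment is needed.
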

\begin{proof}
That $\projfun$ is exact follows from \Cref{Lem:exact1}. That the functor is dense and full
on maps follows from the refinement of the Dlab--Ringel equivalence given in
\Cref{Thm:equiv}. The fullness on extensions follows from \Cref{Lem:liftedprecise}.
Finally, let
\[
\begin{tikzpicture}[scale=0.6,
 arr/.style={black, -angle 60}]
 \path (5,16) node(a1) {$L$};
 \path (8,16) node(a2) {$M$};
\path (11,16) node(a3) {$N$};
 \path (2,16) node(a0) {$0$};
 \path (14,16) node(a4) {$0$};
 \path (1.2,16) node {$\xi$:};
 
 \path[arr] (a0) edge  (a1);
 \path[arr] (a1) edge  (a2);
 \path[arr] (a2) edge node[auto] {$g$} (a3);
 \path[arr] (a3) edge  (a4);
 \end{tikzpicture}
\]
be an admissible extension in $\Dfiltered(\setJ)$ with $\projfun(\xi)$ split.
In particular, $\projfun(g)$ is a split epimorphism. Let $\overline{h}\colon \projfun N \to \projfun M$ 
be a homomorphism such that $\overline{h}\projfun(g)=\mathrm{Id}_{\projfun N}$ 
and let $h$ be a lift of $\overline{h}$. Then, by the lifting of maps in 
\Cref{Lem:liftofmaps}, there exists a map $u\colon N\to \eta N$, so that
\[gh =\mathrm{Id}_N+u\] 
Note that $\eta(\xi)$ splits (see \Cref{Lem:exact1}). Let $u'$ be the composition of $u$ with the splitting  
of $\eta(g)$ so that
\[gu'=u.\]
Let 
 $h'=h-u'$. Then 
 \[
 gh'=gh-gu'=\mathrm{Id}_N+u-u=\mathrm{Id}_N.
 \]
Therefore $\xi$ splits. Consequently, the two extension groups are isomorphic. 
This completes the proof.
\end{proof}
\subsection{Frobenius and $2$-CY structure of $\Dfiltered(\setJ)$}
Let $P_{n-j}$ be the {\em minimal lift of the projective-injective} $Q_j\in \sub Q_{\setJ}$ to
$\Dfiltered(\setJ)$.  Note that $P_j$ has top $S_j$ and $Q_j$ has socle $S_j$, 
and that $P_{n-j}=D_{n-j}$ if and only if $j\in \setJ$, by \Cref{Lem:whichrankone}. Let 
\begin{equation}\label{eq:projgenerator}
P=(\oplus_{j\in [n-1]}P_j) \bigoplus \charT_\setJ.
\end{equation}

\begin{theorem} \label{Thm:Frob2CY} The category 
$\Dfiltered(\setJ)$ with the exact structure $\mathcal{E}$ is a Frobenius stably 
$2$-Calabi-Yau category with indecomposable projective-injective objects 
$P_1,\cdots,P_{n-1}$ and $\charT_j$ for $j\in \setJ$.
\end{theorem}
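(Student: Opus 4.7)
The plan is to transport the Frobenius and stably $2$-Calabi-Yau properties of $\sub Q_{\setJ}$ (Theorem~\ref{Prop:GLSFrob}) to $(\Dfiltered(\setJ),\mathcal{E})$ by means of the exact, full and dense functor $\projfun$ together with the extension-group isomorphism
\[
\ext^1(M,N)\cong \Ext^1_{\Pi}(\projfun M,\projfun N)
\]
supplied by Theorem~\ref{Thm:fullexact}. Everything reduces to this comparison, and the bookkeeping of $\add\charT_{\setJ}$-summands via Proposition~\ref{Lem:liftedprecise} and \Cref{Lem:dense}.

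First I would verify that the listed modules are projective-injective in $\mathcal{E}$. For $j\in\setJ$, one has $\projfun\charT_j=0$, so the extension-group isomorphism instantly gives $\ext^1(\charT_j,-)=\ext^1(-,\charT_j)=0$. For the minimal lift $P_{n-j}$ of $Q_j$, one has $\projfun P_{n-j}=Q_j$, which is projective-injective in $\sub Q_{\setJ}$ by Theorem~\ref{Prop:GLSFrob}; applying the isomorphism in both arguments gives the vanishing of $\ext^1(P_{n-j},-)$ and $\ext^1(-,P_{n-j})$.

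Next I would show that $(\Dfiltered(\setJ),\mathcal{E})$ has enough projectives and enough injectives. Given $M\in\Dfiltered(\setJ)$, choose a projective cover $0\to K\to P\to\projfun M\to 0$ in $\sub Q_{\setJ}$; Proposition~\ref{Lem:liftedprecise} lifts this to an admissible sequence $0\to\hat K\to\hat P\oplus\charT'\to\hat M\to 0$, where $\hat P$ is a sum of $P_{n-j}$'s and $\charT'\in\add\charT_{\setJ}$. Writing $M=\hat M\oplus\charT_M$ with $\charT_M\in\add\charT_{\setJ}$ and taking the direct sum with $\mathrm{id}_{\charT_M}$ produces an admissible epimorphism onto $M$ from the projective-injective $\hat P\oplus\charT'\oplus\charT_M$. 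The dual construction using injective hulls in $\sub Q_{\setJ}$ yields enough injectives, and the two classes coincide. To see these are the only indecomposables, suppose $P$ is an indecomposable projective-injective in $\mathcal{E}$. If $\projfun P=0$ then $P\in\add\charT_{\setJ}$ and $P\cong\charT_j$ for some $j\in\setJ$. Otherwise, given any SES $0\to K\to L\to\projfun P\to 0$ in $\sub Q_{\setJ}$, its lift via Proposition~\ref{Lem:liftedprecise} is admissible and, since $P$ is indecomposable and not in $\add\charT_{\setJ}$, splits; exactness of $\projfun$ then splits the original sequence, so $\projfun P$ is projective-injective in $\sub Q_{\setJ}$ and hence $P\cong P_{n-j}$ for some $j$.

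For the stable $2$-Calabi-Yau property I would chain the extension isomorphism with the $2$-CY duality in $\sub Q_{\setJ}$:
\[
\ext^1(M,N)\cong \Ext^1_{\Pi}(\projfun M,\projfun N)\cong D\,\Ext^1_{\Pi}(\projfun N,\projfun M)\cong D\,\ext^1(N,M),
\]
which is well-defined on the stable category since the projective-injectives of $\mathcal{E}$ either map to zero ($\charT_j$) or to a projective-injective of $\sub Q_{\setJ}$ ($P_{n-j}\mapsto Q_j$). The main obstacle is the classification step: one must carefully reconcile that $\projfun$ collapses $\add\charT_{\setJ}$-summands with the fact that those same summands are themselves projective-injective, so the interplay between minimality of lifts and the spurious $\charT'$-summand appearing in Proposition~\ref{Lem:liftedprecise} requires the most care.
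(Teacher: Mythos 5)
Your proposal is correct and follows essentially the same route as the paper's proof: transport projectivity/injectivity, enough projective-injectives, and the stable $2$-CY duality from $\sub Q_{\setJ}$ through the extension-group isomorphism of Theorem~\ref{Thm:fullexact}, lifting sequences via Proposition~\ref{Lem:liftedprecise} and classifying the indecomposable projective-injectives via the uniqueness of lifts in \Cref{Lem:dense}. Your third paragraph merely spells out in more detail the classification step that the paper compresses into the phrase ``by the uniqueness of lifts.''
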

\begin{proof}
Note that $\projfun P=Q_{\setJ}$ and 
 $\projfun$ induces 
 isomorphisms on extension groups (\Cref{Thm:fullexact}), it follows that 
modules in $\add P$ are projective and injective in 
$\Dfiltered(\setJ)$. By the uniqueness of lifts, up to summands in $\add \charT_J$,
shown in \Cref{Lem:dense}, there are no other indecomposable 
projective-injectives in $\Dfiltered(\setJ)$. By lifting extensions, using   
\Cref{Lem:liftedprecise}, we can conclude that  $\Dfiltered(\setJ)$ has enough 
projective-injectives. Hence $\Dfiltered(\setJ)$ is a Frobenius exact category.

By \Cref{Thm:fullexact} and using that $\sub Q_{\setJ}$ is 
stably 2-CY \cite{GLS08}, it follows that $\Dfiltered(\setJ)$ is stably 
$2$-Calabi-Yau.
\end{proof}
\begin{remark} \label{Rem:missingobject}
Both $\mathcal{F}_\Delta/\add \charT$ and $\Dfiltered([n-1])/\add \charT_{[n-1]}$ 
are equivalent to $\mod \Pi$. Similar to \Cref{Thm:Frob2CY}, 
$\Dfiltered$ is also a Frobenius stably $2$-Calabi-Yau category; it has one additional
indecomposable object,  $\charT_n=D_0$, compared to
$\Dfiltered([n-1])$. More generally, we can add  the module $\charT_n$ to any
$\Dfiltered(\setJ)$ to create a Frobenius stably $2$-Calabi-Yau category with
one additional indecomposable projective-injective object. This amounts to adding 
$n$ to $\setJ$. 
However, 
unless otherwise stated,  we keep the assumption that  $\setJ\subseteq [n-1].$ 
\end{remark}
\subsection{The Gabriel quiver of $(\End  P)^{\mathrm{op}}$}
In this subsection, we show that the Gabriel quiver of $(\End  P)^{\text{op}}$ has no loops, where
$P$ is the projective-injective generator of
 $\Dfiltered(\setJ)$ defined in \eqref{eq:projgenerator}.
The lemma below  follows from the description of the socles of the indecomposable 
projective-injective $Q_j\in \sub Q_{\setJ}$ in \Cref{Lem:soclesub} and properties of 
$\eta M$ in  \Cref{Lem:trace}.

\begin{lemma} \label{Lem:charTsub}
Let $\setJ=\{j_1<j_2<...< j_r\}$.
\begin{itemize}
\item[(1)] If $j\in \setJ$, then $P_{n-j}=D_{n-j}$  and $\eta P_{n-j}=\charT_j$.
\item[(2)] If $j_i< j<j_{i+1}$, then $\eta P_{n-j}=\charT_{j_i}\oplus \charT_{j_{i+1}}$.
\item[(3)] If $1\leq j<j_1$, then $\eta P_{n-j}= \charT_{j_1}$.
\item[(4)] If $j_r<j \leq n-1$, then $\eta P_{n-j}= \charT_{j_r}$.
\end{itemize}
\end{lemma}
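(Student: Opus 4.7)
The plan is to derive the lemma as a direct corollary of \Cref{Lem:dense}(3), which identifies, for a minimal lift $\N$ of any $N\in \sub Q_\setJ$, the decomposition $\eta \N = \bigoplus_i \charT_i^{d_i}$ via $(d_i)_i = \dvector \soc N$. Applied with $N = Q_j$ and $\N = P_{n-j}$ this gives $\eta P_{n-j} = \bigoplus_i \charT_i^{d_i}$ with $(d_i)_i = \dvector \soc Q_j$, so that all four cases can be read off directly from the socle description in \Cref{Lem:soclesub}.

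For cases (2), (3), (4) the verification is then immediate: each simple summand $S_\ell$ of $\soc Q_j$ listed in \Cref{Lem:soclesub} contributes a summand $\charT_\ell$ to $\eta P_{n-j}$, and the claimed equalities follow by matching subscripts with $j_i,\ j_{i+1},\ j_1,\ j_r$ as appropriate. There is nothing else to check in these cases.

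For case (1) I additionally need to identify $P_{n-j}$ with $D_{n-j}$ before invoking \Cref{Lem:trace}(5). When $j\in \setJ$, \Cref{Lem:whichrankone}(2) supplies the short exact sequence $0\to \charT_j\to D_{n-j}\to Q_j\to 0$, so $D_{n-j}\in \Dfiltered(\setJ)$ is a lift of $Q_j$. Because $D_{n-j}$ is indecomposable and is not isomorphic to any $\charT_k$ with $k\in \setJ\subseteq [n-1]$ (indeed, $D_{n-j}\cong M_{[n-j+1,n]}$ while $\charT_k\cong M_{[k]}$, and these coincide only for $k=n$, which is excluded), $D_{n-j}$ has no summand in $\add \charT_\setJ$ and is therefore a minimal lift of $Q_j$. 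By the uniqueness statement in \Cref{Lem:dense}(2) we conclude $P_{n-j}\cong D_{n-j}$, and then $\eta D_{n-j} = \charT_j$ follows from \Cref{Lem:trace}(5) with $i = n-j$.

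The only mildly delicate point is the index bookkeeping in case (1), since one has to reconcile the $D$-side convention ($D_i$ has top $S_i$ and $\eta D_i = \charT_{n-i}$) with the $\Pi$-side convention ($Q_j$ has socle $S_j$ and is lifted to $P_{n-j}$). Once these dictionaries are aligned the lemma is a routine translation, and I do not expect any substantive obstacle.
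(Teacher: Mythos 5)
Your proof is correct and follows essentially the route the paper intends: the paper justifies this lemma by citing the socle description in \Cref{Lem:soclesub} together with the properties of $\eta$, and your use of \Cref{Lem:dense}(3) (that $\eta$ of a minimal lift records $\dvector\soc$ of the module downstairs) is exactly the mechanism that makes that one-line justification precise. The extra care you take in case (1) to verify that $D_{n-j}$ is a minimal lift, and hence equals $P_{n-j}$ by the uniqueness in \Cref{Lem:dense}(2) before invoking \Cref{Lem:trace}(5), matches the identification the paper already records via \Cref{Lem:whichrankone}.
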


\begin{lemma}\label{Lem:noloops1}
Let $\setJ=\{j_1<j_2<...< j_r\}$ and let $s=j_i<j<j_{i+1}=t$. 
Then there is an exact sequence
\[
\begin{tikzpicture}[scale=0.6,
 arr/.style={black, -angle 60}]
 \path (5,16) node(a1) {$D_{n-j}$};
 \path (8.3,16) node(a2) {$P_{n-j}$};
\path (11.6,16) node(a3) {$\charT_{s+t-j}$};
 \path (2,16) node(a0) {$0$};
 \path (14.6,16) node(a4) {$0$.};
 
 \path[arr] (a0) edge  (a1);
 \path[arr] (a1) edge  (a2);
 \path[arr] (a2) edge node[auto] {} (a3);
 \path[arr] (a3) edge  (a4);
 \end{tikzpicture}
\]
\end{lemma}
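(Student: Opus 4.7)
The plan is to construct a morphism $f\colon D_{n-j}\to P_{n-j}$ using projectivity of $D_{n-j}$ in $\mod D$ and then analyse it via the Snake Lemma. Since $\projfun P_{n-j}=Q_j$ by \Cref{Lem:charTsub}(2), the canonical surjection $D_{n-j}\twoheadrightarrow \Pi_{n-j}\twoheadrightarrow Q_j$ lifts through $P_{n-j}\twoheadrightarrow Q_j$ to produce $f$. Placing the canonical sequences $0\to \eta M\to M\to \projfun M\to 0$ for $M=D_{n-j}$ and $M=P_{n-j}$ in a commutative diagram with middle vertical map $f$, the induced map on traces is $g=\eta f\colon \charT_j\to \charT_s\oplus \charT_t$, and on quotients is the canonical surjection $h\colon \Pi_{n-j}\twoheadrightarrow Q_j$ with kernel $K_j$.

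Via the equivalence $\e_0\colon \add\charT\to \mod\CC[t]/(t^n)$ of \Cref{Lem:charTequivalence}, the map $g$ becomes a $\CC[t]/(t^n)$-homomorphism $\CC[t]/(t^j)\to \CC[t]/(t^s)\oplus \CC[t]/(t^t)$. For the natural lift of $f$ coming from the pullback description of $P_{n-j}$ inside $D_{n-s}\oplus D_{n-t}$, this homomorphism takes the form $1\mapsto (1,t^{t-j})$, which is injective with $\CC[t]/(t^n)$-cokernel $\CC[t]/(t^{s+t-j})$. The Snake Lemma, together with $\ker g=0$ and $\cok h=0$, gives the exact sequence
\[
0 \to \ker f \to K_j \to \cok g \to \cok f \to 0,
\]
and the identities $\dim P_{n-j}=\dim(\charT_s\oplus \charT_t)+\dim Q_j$, $\dim D_{n-j}=\dim \charT_j+\dim \Pi_{n-j}$, and $\dim K_j=\dim \Pi_{n-j}-\dim Q_j$ yield $\dim \cok f=\dim \charT_{s+t-j}$, so the numerics agree.

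To finish, I must establish injectivity of $f$ and the identification $\cok f\cong \charT_{s+t-j}$ as a $D$-module. By \Cref{Lem:soclesub}(3), $K_j$ has socle $S_j$ and is precisely the part of $\Pi_{n-j}$ killed on passing to $Q_j$. The structural claim is that the Snake-Lemma connecting homomorphism identifies $K_j$ bijectively with the composition factors of $\cok g$ lying outside vertex $0$; granting this, the connecting map is injective (giving $\ker f=0$), and the quotient $\cok f$ has its $e_0$-part equal to $\CC[t]/(t^{s+t-j})$ with socle in $\add S_0$, whence by the $\e_0$-equivalence together with \Cref{Lem:rankoneclassification} we obtain $\cok f\cong \charT_{s+t-j}$. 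The main obstacle is precisely this structural matching between $K_j$ and the non-vertex-$0$ part of $\cok g$, which requires tracking the explicit forms of $g$ and $K_j$ through the pullback defining $P_{n-j}$ and comparing composition factors carefully.
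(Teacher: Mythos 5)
Your strategy---lift the canonical surjection $\Pi_{n-j}\twoheadrightarrow Q_j$ through $P_{n-j}$ by projectivity of $D_{n-j}$ and run the Snake Lemma against the two trace sequences---is a reasonable scaffold, but it does not discharge the content of the lemma: it relocates it into claims you leave unverified, and the one claim you state precisely is false as stated. The assertion that $\e_0(\eta f)$ is $1\mapsto(1,t^{t-j})$ depends on which lift $f$ you take (lifts differ by maps factoring through $\eta P_{n-j}$, which change $\eta f$ and can destroy injectivity of $g$), and proving it requires exactly the explicit description of $P_{n-j}$ inside $D_{n-s}\oplus D_{n-t}$ that the paper's proof reads off by inspection. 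More seriously, the ``structural matching'' is misstated: $K_j$ is \emph{not} identified with the composition factors of $\cok g$ lying outside vertex $0$. For $n=5$, $\setJ=\{1,3\}$, $j=2$ one computes $\dvector \cok g=(2,1,1,0,0)$ while $K_j\cong S_2$ has dimension $1$; the composition factors of $\cok g$ off vertex $0$ have total dimension $2$. The correct statement is that the connecting map embeds $K_j$ into $\cok g$ with quotient $\charT_{s+t-j}$, whose composition factors occupy vertices $0,1,\dots,s+t-j-1$, not just vertex $0$; taken literally, your claim would make $\cok f$ semisimple and supported at vertex $0$, contradicting the lemma.

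There is also a smaller gap at the end: passing from ``$e_0(\cok f)\cong\CC[t]/(t^{s+t-j})$ and $\soc \cok f\in\add S_0$'' to ``$\cok f\cong\charT_{s+t-j}$'' is not licensed by \Cref{Lem:charTequivalence} alone, since $\e_0$ is an equivalence only on $\add\charT$ and you have not shown $\cok f\in\add\charT$ (via \Cref{Lem:charaddT}, or by computing $\dvector_\Delta\cok f$ and identifying the module directly). By contrast, the paper's proof is a direct inspection of $P_{n-j}$: realizing it as the pullback sitting inside $D_{n-s}\oplus D_{n-t}$ over $Q_j$, one sees at once that the submodule generated by $e_{n-j}P_{n-j}$ is isomorphic to the projective $D_{n-j}$ and that the quotient is $\charT_{s+t-j}$. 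Your argument would become a proof only after performing that same explicit computation; as written, the decisive steps are announced rather than established.
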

\begin{proof}
The module $P_{n-j}$ can be illustrated as follows, 

\[
\begin{tikzpicture}[scale=0.7,arr/.style={black,  -angle 60}, thick]
\path (4, 2.2) node (1) {$\small{1}$};
\path (6.2, 4.2) node (2) {$\small{n-j}$};
\path (8.8, 3) node (3) {$\small{n-1}$};
\path (5.4, 1.6) node (4){$\small{s}$};
\path (6.3, 2.6) node (5) {$\small{s+t-j}$};
\path (7.3, 2) node (6) {$\small{t}$};

\draw[-] (4.1, 2)  to (6.1, 4);
\draw[-] (4.1, 2)  to (5.1, 1.5);
\draw[-] (6.1, 4) to  (8.1, 3);
\draw[-] (8.1, 3)  to (7.1, 2);
\draw[-] (5.1, 1.5)  to (6.1, 2.5);
\draw[-] (6.1, 2.5) to (7.1, 2);

\path(3.5, 3.1) node{ \color{red} $\charT_t$};
\draw[red, -]     (3.8, -1.3) to (3.8, 3.2);
\draw[red, -]     (3.8, 3.2) to (6.8, 1.7);
\draw[red, -]     (6.8, 1.7) to (3.8,  -1.3);

\path(3.5, 1.5) node {\color{blue} $\charT_s$};
\draw[blue, -] (3.8, 1.7) to (4.8, 1.2);
\draw[blue, -]  (3.8, 1.7) to (3.8, 0.2);
\draw[blue, -]  (4.8, 1.2) to (3.8, 0.2);
\end{tikzpicture}
\]
where the black part is $Q_{j}$, sitting on top of $\charT_s$ and $\charT_t$, and 
$\soc Q_j=S_s\oplus S_t$.

The submodule generated by $e_{n-j}P_{n-j}$ is isomorphic to the projective 
$D$-module $D_{n-j}$. The quotient module $P_{n-j}/ D_{n-j}$ is 
$\charT_{s+t-j}$. So we have the short exact sequence as claimed.
\end{proof}

\begin{example}\label{Ex:liftsofproj}
Let $n=6$ and $\setJ=\{2, 4\}$. We illustrate the projective objects $P_1, \dots, P_5$ in 
$\Dfiltered(\setJ)$ as follows,  where in each projective object, the black part is the corresponding projective object in $\sub Q_{\setJ}$.
In particular,  
\[
 Q_3= S_2\oplus S_4 \text{ and }
\eta P_3=\charT_2 \oplus \charT_4.
\]
The summands $\charT_2$ and $\charT_4$ in each $\eta P_i$ are 
in blue and red, respectively. 
We can check directly that 
\[
P_3/D_3\cong \charT_3,
\]
confirming \Cref{Lem:charTsub}.

\[ 
\begin{matrix} 
P_1\colon && &1   \\   &&  {\color{red} 0}& &2  \\   &&& {\color{red} 1} && 3   \\ &&  {\color{red} 0} &&{\color{red} 2} &&4   \\
 && & {\color{red} 1} && {\color{red} 3}    \\
&&  {\color{red} 0}& &{\color{red} 2} \\   && &{\color{red} 1}    \\ && {\color{red} 0}   \\        &&
\end{matrix}~~~~
\begin{matrix} &P_2\colon&   &&&2\\
&& &&1 &&3\\ &&  &{\color{red} 0}& &2 &&4\\&&  && {\color{red} 1} && 3 &&5\\&& &{\color{red} 0} &&{\color{red} 2} &&4\\
&&  && {\color{red} 1} && {\color{red} 3} \\ &&
 & {\color{red} 0}& &{\color{red} 2} \\ && &&{\color{red} 1}\\ && &{\color{red} 0}
\end{matrix}
\begin{matrix} 
&& P_3\colon &&&&&3  \\&&&&  {\color{red} 0} &&2&&4 \\
&&&&&  {\color{red} 1}1& &3&&5  \\
&&&&  {\color{red} 0} {\color{blue} 0} &&{\color{red} 2}2&&4 \\
&&&&&  {\color{red} 1} {\color{blue} 1}& & {\color{red} 3}  \\
&&&&  {\color{red} 0} {\color{blue} 0}&&{\color{red} 2} \\
&&& &&{\color{red} 1}\\ && &&{\color{red} 0}
\end{matrix}
\]

\[
\begin{matrix}
P_4\colon  &&&&&4\\
 &&&&3&&5\\   &&&2&&4\\  &&1&&3\\    &{\color{blue} 0}  &&2\\ 
 &&{\color{blue} 1} \\  &{\color{blue} 0} \\ 
\end{matrix}
\begin{matrix}
&  &&P_5\colon &&&&&&5 \\ 
&  && &&&&&4\\&  && &&&&3\\&   && &{\color{blue} 0}  &&2\\ 
&  && &&{\color{blue} 1} \\&  && & {\color{blue} 0} \\&
\end{matrix}
\]

\end{example}
\begin{proposition} \label{Lem:noloops}
The Gabriel quiver of $(\End  P)^{\text{op}}$ has no loops.
\end{proposition}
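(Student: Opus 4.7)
The plan is to show that for each indecomposable summand $X$ of the projective--injective generator $P = (\oplus_{i\in[n-1]}P_i) \oplus \charT_\setJ$, the radical of $\End_\aus(X)$ lies in $\rad^2\End_\aus(P)$; that is, every non-isomorphism $X\to X$ factors as a composition of two non-isomorphisms through summands of $P$. The indecomposable summands split into two families, $P_i$ for $i\in[n-1]$ and $\charT_j$ for $j\in\setJ$, which I treat separately.

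For a loop at $X=P_i$, I would apply the full exact functor $\projfun\colon \Dfiltered(\setJ)\to\sub Q_\setJ$ from \Cref{Thm:fullexact}. The image $\projfun P$ is a projective--injective generator of the Frobenius stably $2$-Calabi--Yau category $\sub Q_\setJ$, and the Gabriel quiver of $(\End_\Pi \projfun P)^{\mathrm{op}}$ has no loops by the standard properties of $\sub Q_\setJ$ established in \cite{GLS08}. Given $f\in\rad\End_\aus(P_i)$, its image $\projfun f\in\rad\End_\Pi(\projfun P_i)$ decomposes as $\projfun f=\sum_k g_k h_k$ with $g_k,h_k$ non-isomorphisms through summands $\projfun P_{i'}$ with $i'\neq i$. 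Lifting via \Cref{Lem:liftofmaps} yields $f=\sum_k\tilde g_k\tilde h_k + u$, where the correction $u$ factors through $\eta P_i\in\add\charT_\setJ\subseteq\add P$; since both contributions factor through summands of $P$, we get $f\in\rad^2\End_\aus(P)$.

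For a loop at $X=\charT_j$ with $j\in\setJ$, the equivalence $\e_0$ of \Cref{Lem:charTequivalence} identifies $\End_\aus(\charT_j)\cong\CC[t]/(t^j)$, whose radical is generated by the endomorphism $t\colon\charT_j\to\charT_j$. For $j=1$ there is nothing to check. For $j\geq 2$, if $j-1\in\setJ$ then $t$ factors as $\charT_j\twoheadrightarrow\charT_{j-1}\hookrightarrow\charT_j$ through the summand $\charT_{j-1}\in\add P$ via the quotient by the socle and the inclusion as the radical. If $j-1\notin\setJ$, \Cref{Lem:charTsub} shows that $\charT_j$ appears as a summand of $\eta P_{n-(j-1)}$, giving an inclusion $\iota\colon\charT_j\hookrightarrow P_{n-(j-1)}$ into a summand of $P$. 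I would construct a morphism $P_{n-(j-1)}\to\charT_j$ by composing the surjection $P_{n-(j-1)}\twoheadrightarrow\charT_{s+t-(j-1)}$ from \Cref{Lem:noloops1} with an inclusion $\charT_{s+t-(j-1)}\hookrightarrow\charT_j$, and verify through the equivalence $\e_0$ that the resulting endomorphism of $\charT_j$ is the desired $t$.

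The main obstacle is the last sub-case when the gap in $\setJ$ around $j$ is larger than two, so that a single composition through $P_{n-(j-1)}$ and $\charT_{s+t-(j-1)}$ only realises $t^m$ with $m>1$. To overcome this, I would complement the above with factorisations through the indecomposable projective $D_{n-j}=P_{n-j}\in\add P$ (which is a summand because $j\in\setJ$) when the Hom-space $e_{n-j}\charT_j$ is nonzero, and through the neighbouring $P_{n-k}$ for $k$ in the gap around $j$. The explicit verification reduces, via the faithful functor $\e_0$ and the identifications of $e_0 P_{n-k}$ provided by \Cref{Lem:charTsub}, to a calculation with polynomial multiplication in $\CC[t]/(t^n)$, which yields exactly the endomorphism $t\cdot \mathrm{id}_{\charT_j}$ in $\rad^2\End_\aus(P)$.
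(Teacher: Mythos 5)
Your reduction of the problem to loops at the vertices $\charT_j$, $j\in\setJ$, is correct and coincides with the paper's first step: fullness of $\projfun$, the absence of loops in the quiver of $(\End_\Pi\projfun P)^{\mathrm{op}}$, and \Cref{Lem:factormaps} (so that the correction term factors through $\eta P_i\in\add\charT_\setJ\subseteq\add P$). Your treatment of $\charT_j$ when $j-1\in\setJ$ likewise matches the paper's case where a neighbour of $j$ lies in $\setJ$.

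The genuine gap is the one you flag yourself, and your proposed repair does not close it. Write $\setJ=\{j_1<\dots<j_r\}$. When $j\in\setJ$ has no neighbour in $\setJ$, you route the factorisation through $P_{n-(j-1)}$, i.e.\ through the gap \emph{below} $j$: with $j=j_{i+1}$ and $s=j_i$, $t=j$ in \Cref{Lem:noloops1}, the cokernel of $D_{n-(j-1)}\hookrightarrow P_{n-(j-1)}$ is $\charT_{j_i+1}$, and the inclusion $\charT_{j_i+1}\hookrightarrow\charT_j$ is, under $\e_0$, multiplication by $t^{\,j-j_i-1}$. So your roundtrip realises at best $t^{\,j-j_i-1}$, which equals the generator $t$ of $\rad\End_\aus(\charT_j)$ only when the gap has size two; since no linear combination of higher powers of $t$ equals $t$, you must exhibit a factorisation realising $t$ itself. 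Of your two proposed supplements, the route through $D_{n-j}$ cannot work in general: $\Hom_\aus(D_{n-j},\charT_j)$ has dimension $\max(0,2j-n)$ and every such map restricts on $\eta D_{n-j}=\charT_j$ to a multiple of $t^{\,n-j}$, so it yields $t$ only for $j=n-1$. The phrase ``neighbouring $P_{n-k}$ for $k$ in the gap'' does contain the right object, but you neither identify $k$ nor verify the composite, and this is exactly where the content of the proof lies. (Your construction is also undefined when $j=j_1$, since \Cref{Lem:noloops1} requires an element of $\setJ$ strictly below $j-1$.)

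The paper's resolution is to use the gap \emph{above} $j$: for $j=j_i$ with $j+1<j_{i+1}=l$, take $k=l-1$, so that $\eta P_{n-(l-1)}=\charT_j\oplus\charT_l$ and \Cref{Lem:noloops1} gives a surjection $P_{n-(l-1)}\twoheadrightarrow\charT_{j+l-(l-1)}=\charT_{j+1}$ --- exactly one step above $\charT_j$, independently of the gap size --- whence the composite $\charT_j\hookrightarrow P_{n-(l-1)}\twoheadrightarrow\charT_{j+1}\twoheadrightarrow\charT_j$ is $t$. The boundary cases $j=j_1$ and $j=j_r$, where there is no interior gap on the relevant side, are handled separately by factoring through $P_{n-1}$ and $P_1$ respectively.
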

\begin{proof} Let $Q$ be the Gabriel quiver of $(\End  P)^{\text{op}}$.
Note that  $\projfun P=Q_\setJ$ and  $(\End Q_\setJ)^{\text{op}}$
is a quotient algebra of $\Pi$.
Therefore,
 the Gabriel quiver of $(\End \projfun P)^{\text{op}}$ has no loops. As
$\pi$ is full (see \Cref{Thm:fullexact}) and by \Cref{Lem:factormaps}, there is no loop in $Q$ incident at a vertex 
corresponding to $P_j$. Thus, any loop in $Q$ must occur at a 
vertex corresponding to some $\charT_j$ with $j\in \setJ$. 

Let $\setJ = \{j_1 < \cdots < j_r\}$ and $j\in \setJ$.
By \Cref{Lem:charTequivalence}, 
 \[(\End \charT_j)^{\text{op}}=\CC[t]/(t^j). \]
We proceed with the proof by examining each case as follows. 
\begin{itemize}
\item[(i)] Either $j-1$ or $j+1$ belongs to $\setJ$ and denote this element by $s$. Then $t\colon \charT_j\ra \charT_j$ factors through $ \charT_s$ and so there is no loop at $\charT_j$. 

\item[(ii)]  $j=j_1$. If $j_1=1$, then $\charT_j$ is simple and so there are no loops at 
$\charT_j$. Otherwise, the map $t\colon \charT_j\to \charT_j$ factors through $P_{n-1}$.

\item[(iii)] $j=j_r$. Then $t$ factors through $P_1$, because
$\eta P_1=\charT_{j_r}$, by \Cref{Lem:charTsub} (4).
\item[(iv)] Finally, $j=j_i <j+1< j_{i+1}$. Denote $j_{i+1}$ by $l$.   Then $t$ factors through 
 $P_{n-(l-1)}$. More precisely, the map $t\colon \charT_j\ra \charT_j$ is the following composition, 
 \[
 \charT_j \hookrightarrow  P_{n-(l-1)} \twoheadrightarrow \charT_{j+1} \twoheadrightarrow \charT_j,
 \] 
 where the first map is the embedding of $\charT_j$ into $\eta P_{n-(l-1)}=\charT_j\oplus \charT_l$, and 
 the second map is the surjection described in  \Cref{Lem:noloops1}. 
 \end{itemize}
 Therefore, there are no loops at $\charT_j$. This completes the proof.
\end{proof}

\section{Grothendieck groups, weights and bilinear forms}\label{sec5}
We identify weight and root lattices of $\gl_n$ with Grothendieck groups in such a way 
that the class of a module  will be the weight of its cluster  character (constructed 
in \Cref{sec9}). We also construct bilinear forms on the Grothendieck groups, which will play 
important roles in understanding flag combinatorics and the quantum cluster structure on the 
coordinate rings of flag varieties in later sections.
\subsection{The Grothendieck group $K(\Dfiltered)$ and the weight lattice $\wtl$}
\label{sec:5.1}
Let  $K(D)$, $K(\Pi)$ be the Grothendieck groups of 
$\mod \aus$ and $\mod \Pi$, respectively. Note that the  \emph{inclusion (of groups)}, 
\[K(\Pi)\subseteq K(D),\] 
since $\mod \Pi$ is an abelian subcategory of $\mod D$.
The category $\Dfiltered$, has 
two exact structures, one inherited from $\mod D$ and the other one  
 $\mathcal{E}$. We denote the two Grothendieck groups by $K(\Dfiltered)$ and 
$\grothE$, respectively. 
We have the inclusion, 
\[ 
K(\Dfiltered) \subseteq K(D), 
\]
by mapping a class in $K(\Dfiltered)$ to the corresponding class in $K(D)$.
This inclusion is an isomorphism 
\[ 
K(\Dfiltered)= K(D), 
\]
since we have $S_0=\Delta_1$, and the short exact sequence,
\begin{equation} \label{Eq:simpledelta}
\begin{tikzpicture}[scale=0.6,
 arr/.style={black, -angle 60}, baseline=(bb.base)]
 \coordinate (bb) at (0, 1.8);
 
 \path (5,2) node(a1) {$\Delta_{i}$};
 \path (8.1,2) node(a2) {$\Delta_{i+1}$};
\path (11.2,2) node(a3) {$S_i$};
 \path (2,2) node(a0) {$0$};
 \path (14.2,2) node(a4) {$0$}; 
 
 \path[arr] (a0) edge  (a1);
 \path[arr] (a1) edge  (a2);
 \path[arr] (a2) edge  (a3);
 \path[arr] (a3) edge  (a4);
 \end{tikzpicture}
\end{equation}
for any $i\geq 1$, and therefore $[S_i]= -[\Delta_{i}] +[\Delta_{i+1}]\in K(\Dfiltered)$.

We have the inclusion 
\[
K(\add \charT)\subseteq K(\Dfiltered)
\]
by mapping a class in $K(\add \charT)$ to its corresponding class in $K(\Dfiltered)$.
This inclusion is an isomorphism, since for any $1\leq i\leq n-1$, there is a short exact sequence
\[
\begin{tikzpicture}[scale=0.6,
 arr/.style={black, -angle 60}]
 \path (5,16) node(a1) {$\Delta_{i+1}$};
 \path (8.1,16) node(a2) {$\charT_{i+1}$};
\path (11.2,16) node(a3) {$\charT_i$};
 \path (2,16) node(a0) {$0$};
 \path (14.2,16) node(a4) {$0$,};

 \path[arr] (a0) edge  (a1);
 \path[arr] (a1) edge  (a2);
 \path[arr] (a2) edge  (a3);
 \path[arr] (a3) edge  (a4);
 \end{tikzpicture}
\]
and $\Delta_1=\charT_{1}=S_0$. Note that $\charT_n=D_0$ is the projective $D$-module associated to 
vertex $0$.
So $K(\Dfiltered)$ has four natural bases,
\begin{equation*}
\{[\Delta_1], \cdots, [\Delta_{n}]\}, ~ 
\{[S_0], [S_1], \cdots, [S_{n-1}]\}, ~
\end{equation*}
and the bases of classes of indecomposable summands of $\charT$ and $\aus$.
Expressing the class of a $\aus$-module $M$ in the first two bases gives us
the $\Delta$-dimension vector $\dvector_\Delta M$ and dimension vector $\dvector M$
of $M$. Note that entries in $\dvector_\Delta M$ can be 
negative, when $M$ is not in $\Dfiltered$.  

Denote by $\wtl$ and $\rootlat$ the weight lattice and root lattice of $\gl_n$, respectively. 
Let $\epsilon_1, \dots, \epsilon_n$ be the standard basis of the weight lattice
$\wtl$. For any $1\leq i\leq n$ and $1\leq j\leq n-1$, 
let 
\[
\omega_i=\epsilon_1+\dots +\epsilon_i \text{ and } \alpha_j=-\epsilon_{j}+\epsilon_{j+1}.
\] 
Then $\omega_1, \dots, \omega_n$ are the
 fundamental weights of $\gl_n$, and 
 $\alpha_1, \dots, \alpha_{n-1}$ are the ({\em negative}) simple roots of $\gl_n$.  

Denote by $\wtl^+$ the {\em monoid of dominant weights} in $\wtl$, which 
are non-negative linear combinations of the fundamental weights. The dominant weights 
are the {\em highest weights} of irreducible representations of $\gl_n$. 
For the Grothendieck group $K(\mathcal{C})$ of an exact category $\mathcal{C}$, denote by 
$K^{+}(\mathcal{C})$ the {\em monoid of   classes of objects} in $\mathcal{C}$.   
Let
\begin{equation}\label{Eq:gtowt1}
\zeta \colon \wtl \to K(\Dfiltered), \;~~ \epsilon_i \mapsto  [\Delta_{i}],
\end{equation}

\begin{lemma} \label{Lem:Gammaisom}
The map $\zeta$ is an isomorphism, identifying $\wtl$ with $K(\Dfiltered)$, and induces the following isomorphisms.  
\begin{itemize}
\item[(1)] $\zeta\colon\rootlat \to K(\Pi)\subseteq K(\Dfiltered), ~ \alpha_i\mapsto [S_i]$. 

\item[(2)] $\zeta\colon \wtl^+\to K^+(\add\charT)$, $\omega_i \mapsto [\charT_i]$.
\end{itemize}
Moreover, it maps the (lowest) weight $\sum_{j> i} \epsilon_j$  to $[D_{i}]$.
\end{lemma}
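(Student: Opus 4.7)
The plan is to unpack the definitions and verify the three claims essentially by bookkeeping, using facts already established in the text.

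First I would confirm that $\zeta$ is a $\ZZ$-linear isomorphism $\wtl \to K(\Dfiltered)$. Since $\wtl$ is free on $\epsilon_1,\dots,\epsilon_n$ and the paper has just observed that $K(\Dfiltered) = K(\aus)$ is free on $[\Delta_1],\dots,[\Delta_n]$, the map sending $\epsilon_i$ to $[\Delta_i]$ is an isomorphism of abelian groups.

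For part (1), I would compute $\zeta(\alpha_i) = -[\Delta_i] + [\Delta_{i+1}]$, which equals $[S_i]$ by the short exact sequence \eqref{Eq:simpledelta}. Since $\alpha_1,\dots,\alpha_{n-1}$ form a $\ZZ$-basis of $\rootlat$ and $[S_1],\dots,[S_{n-1}]$ form a $\ZZ$-basis of $K(\Pi)$ (the simples of $\Pi$ are supported at vertices $1,\dots,n-1$), the restriction of the isomorphism $\zeta$ to $\rootlat$ lands in $K(\Pi)$ and is itself an isomorphism onto $K(\Pi)$.

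For part (2), I would use the $\Delta$-filtration of $\charT_i = M_{[i]}$: its composition factors in $\Dfiltered$ are exactly $\Delta_1,\dots,\Delta_i$, each appearing once, so $[\charT_i] = \sum_{j=1}^{i}[\Delta_j]$ in $K(\Dfiltered)$. This equals $\zeta(\omega_i)$. Since the monoid $\wtl^+$ is generated as a commutative monoid by $\omega_1,\dots,\omega_n$ and $K^+(\add \charT)$ is generated by $[\charT_1],\dots,[\charT_n]$ (every object of $\add \charT$ is a direct sum of $\charT_i$'s, so its class is a non-negative combination of the $[\charT_i]$), $\zeta$ induces a monoid isomorphism $\wtl^+ \to K^+(\add \charT)$; injectivity is inherited from $\zeta$ on $\wtl$.

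For the final sentence, $D_i = M_{[i+1,n]}$ by the identification given in Section 2, so its $\Delta$-filtration has composition factors $\Delta_{i+1},\dots,\Delta_n$, whence $[D_i] = \sum_{j>i}[\Delta_j] = \zeta\bigl(\sum_{j>i}\epsilon_j\bigr)$. There is really no obstacle here beyond keeping the two shifts straight (the $\Delta$-support convention and the indexing $D_i = M_{[i+1,n]}$); the only place a reader might pause is in verifying that $K^+(\add \charT)$ is freely generated as a monoid by $[\charT_1],\dots,[\charT_n]$, which is immediate since the $\charT_i$ are pairwise non-isomorphic indecomposables and $\add \charT$ has the Krull--Schmidt property.
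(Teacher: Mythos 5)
Your proposal is correct and follows essentially the same route as the paper's proof: identify the free bases on each side, then verify $\zeta(\alpha_i)=[S_i]$ via the sequence \eqref{Eq:simpledelta}, $\zeta(\omega_i)=[\charT_i]$ and $\zeta(\sum_{j>i}\epsilon_j)=[D_i]$ from the $\Delta$-filtrations. You simply spell out the basis/monoid-generator bookkeeping that the paper leaves implicit.
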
 
\begin{proof} It is clear from the definition that $\zeta$ defines an isomorphism. 
The remaining statements  follow from the direct computations below, 
\[
\zeta(\alpha_i)=\zeta(-\epsilon_{i}+\epsilon_{i+1})=-[\Delta_{i}]+[\Delta_{i+1}]=[S_i], 
\]
where the last equality follows from \eqref{Eq:simpledelta};
\[
\zeta(\omega_i)=\zeta(\epsilon_1+\dots+ \epsilon_{i})=[\Delta_1]+\dots+[\Delta_{i}]=[\charT_i];
\]
and finally 
\[
\zeta(\sum_{j> i} \epsilon_i)=[\Delta_{i+1}]+\dots+[\Delta_{n}]=[D_{i}].
\]
\end{proof}

\subsection{A bilinear form on $K(\Dfiltered)$}
Recall the bilinear form on $K(\Dfiltered)$ from \cite{JS1}, 
\begin{equation}\label{Eq:bilin0}
\langle [X], [Y] \rangle = 
\dim \Hom(X,Y) - \dim \Ext^1(X,Y) = \sum_{i=1}^{n} x_i y_i, 
\end{equation}
where $X, Y\in \Dfiltered$ with $\dvector_\Delta X = (x_i)_i$ and 
$\dvector_\Delta Y = (y_i)_i$.
Observe that this bilinear form computes the dot product of $\Delta$-dimension vectors,
and via the isomorphism $\zeta$ in \eqref{Eq:gtowt1},
it is the dot product on $\wtl$ with respect to the standard basis. Therefore for any $1\leq i, j\leq n-1$, 
\begin{equation}\label{Eq:bilin1}
\langle [S_i], [S_j] \rangle=\left\{ \begin{tabular}{ll} 2 & if $i=j$;\\ -1 & if $i=j+1$ or $j=i+1$;\\
0 & otherwise. \end{tabular} \right.
\end{equation}
This also follows by  the direct computation, 
\[
\langle [S_i],  [S_j] \rangle = \langle -[\Delta_i] + [\Delta_{i+1}], -[\Delta_j] - [\Delta_{j+1}]\rangle 
= \delta_{ij} - \delta_{i+1,j} - \delta_{i,j+1} + \delta_{i+1,j+1}.\]
Therefore, for any $\dv c=(c_i)$ and $\dv d=(d_i)\in K(\Pi)\subseteq K(\Dfiltered)$ written as coordinate vectors 
with respect to  the classes of the simples, we have 
\begin{equation}\label{Eq:bilin2}
\langle \dv c, \dv d\rangle=2\sum_{i=1}^{n-1} c_id_i- \sum_{i=1}^{n-2} (c_id_{i+1}+c_{i+1}d_i),
\end{equation}
which is the bilinear form defined by the Cartan matrix of type $\mathbb{A}_{n-1}$.

For completeness,  we  compute the bilinear form of $[S_0]$ with classes of the simples, 
\[\langle [S_0], [S_0] \rangle = 1, ~\langle [S_{0}], [S_1]\rangle = -1 
\text{ and } \langle [S_0], [S_i] \rangle = 0, \text{ when } i>0.
\]

\subsection{The Grothendieck group $K(\Dfiltered(\setJ))_{\mathcal{E}}$}
Let $\grothEJ$ be the Grothendieck group 
of $\Dfiltered(\setJ)$ with the exact structure $\mathcal{E}$.
Note that this is the subgroup of the Grothendieck group $\grothE$
 spanned by the classes of modules in  
$\Dfiltered(\setJ)$.

\begin{proposition}\label{Lem:isom1}
We have an isomorphism
$$
\sigma\colon \grothE \rightarrow K(\Pi) \oplus K(\add \charT), ~~
[M]\mapsto [\projfun M] + [\eta M].
$$
Moreover, restriction of $\sigma$ induces  an isomorphism, 
$$
\sigma\colon \grothEJ \rightarrow K(\sub Q_{\setJ})\oplus K(\add \charT_{\setJ}).
$$
\end{proposition}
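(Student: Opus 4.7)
The plan is to verify that $\sigma$ is a well-defined homomorphism and then construct an explicit inverse.

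\emph{Well-definedness.} For any $\mathcal{E}$-exact sequence $0 \to A \to B \to C \to 0$ in $\Dfiltered$, additivity of the first coordinate of $\sigma$ follows from the exactness of $\projfun$ on $\mathcal{E}$ (Theorem~\ref{Thm:fullexact}), giving $[\projfun B] = [\projfun A] + [\projfun C]$ in $K(\Pi)$. Additivity of the second coordinate follows from condition~(6) of Lemma~\ref{Lem:exact1}, which asserts $\eta B \cong \eta A \oplus \eta C$ and hence $[\eta B] = [\eta A] + [\eta C]$ in $K(\add \charT)$.

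\emph{Construction of an inverse.} For any $X \in \mod \Pi$, Lemma~\ref{Lem:dense} provides a minimal lift $\hat X \in \Dfiltered$, unique up to isomorphism. Define
\[
\tau\colon K(\Pi) \oplus K(\add \charT) \longrightarrow \grothE, \qquad ([X], [T]) \longmapsto [\hat X] - [\eta \hat X] + [T].
\]
The key step is to check that the first-coordinate map $[X] \mapsto [\hat X] - [\eta \hat X]$ is a well-defined homomorphism $K(\Pi) \to \grothE$, i.e.\ respects short exact sequences in $\mod \Pi$. Given $0 \to A \to X \to B \to 0$ in $\mod \Pi$, Lemma~\ref{Lem:liftedprecise} produces an $\mathcal{E}$-exact lift $0 \to \hat A \to \hat X \oplus \charT' \to \hat B \to 0$, yielding $[\hat X] = [\hat A] + [\hat B] - [\charT']$ in $\grothE$. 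Applying condition~(6) of Lemma~\ref{Lem:exact1} to the same lift gives $\eta \hat X \oplus \charT' \cong \eta \hat A \oplus \eta \hat B$, and hence $[\eta \hat X] = [\eta \hat A] + [\eta \hat B] - [\charT']$. Subtracting cancels $[\charT']$ and yields
\[
[\hat X] - [\eta \hat X] = \bigl([\hat A] - [\eta \hat A]\bigr) + \bigl([\hat B] - [\eta \hat B]\bigr),
\]
as required. The main technical obstacle is precisely this delicate cancellation of the auxiliary summand $\charT'$, which appears in every lift of a short exact sequence.

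\emph{Verification that $\sigma$ and $\tau$ are mutually inverse.} The identity $\sigma \tau = \mathrm{id}$ is immediate from $\projfun \hat X = X$, $\projfun \charT_j = 0$, and $\eta \hat X \in \add \charT$. For $\tau \sigma = \mathrm{id}$, given $M \in \Dfiltered$, Lemma~\ref{Lem:dense}(2) writes $M \cong \hat X \oplus T'$ with $X := \projfun M$ and $T' \in \add \charT$, whence $\eta M = \eta \hat X \oplus T'$; consequently
\[
\tau \sigma([M]) = \tau([X], [\eta M]) = [\hat X] - [\eta \hat X] + [\eta \hat X] + [T'] = [M].
\]
For the restriction to $\grothEJ$, the defining condition $\eta M \in \add \charT_\setJ$ of $\Dfiltered(\setJ)$ together with Lemma~\ref{Lem:welldefined} ensure that $\sigma$ maps $\grothEJ$ into $K(\sub Q_\setJ) \oplus K(\add \charT_\setJ)$, while Proposition~\ref{Thm:equiv} guarantees that minimal lifts of objects of $\sub Q_\setJ$ lie in $\Dfiltered(\setJ)$; the same formula for $\tau$, restricted to these subgroups, supplies the inverse in the restricted setting.
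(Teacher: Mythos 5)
Your proof is correct, but it takes a genuinely different route from the paper's. The paper establishes well-definedness exactly as you do (exactness of $\projfun$ and additivity of $\eta$ on $\mathcal{E}$), then proves surjectivity by exhibiting the preimages $[M_{[i-1]\cup\{i+1\}}]$ and $[\charT_i]$ of the basis $\{[S_i]+[\charT_i],\,[\charT_j]\}$ of $K(\Pi)\oplus K(\add\charT)$, and finally shows by induction on a top summand $S$ of $\projfun M$ (lifting $0\to Y\to\projfun M\to S\to 0$ via \Cref{Lem:liftedprecise}) that these preimages span $\grothE$; a surjection carrying a spanning set onto a basis is then forced to be an isomorphism. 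You instead build an explicit inverse $\tau$, which shifts the whole burden onto checking that $[X]\mapsto[\hat X]-[\eta\hat X]$ kills the relations of $K(\Pi)$ — and you handle this correctly by the same lifting lemma, observing that the auxiliary summand $\charT'$ enters $[\hat X\oplus\charT']$ and $[\eta(\hat X\oplus\charT')]$ with the same coefficient and therefore cancels in the difference. Both arguments ultimately rest on the same two inputs, \Cref{Lem:liftedprecise} and the uniqueness of minimal lifts from \Cref{Lem:dense}. Yours buys an explicit closed formula $\sigma^{-1}([X],[T])=[\hat X]-[\eta\hat X]+[T]$ and avoids induction altogether; the paper's version identifies along the way an explicit basis of $\grothE$ (minimal lifts of the simples together with the $[\charT_j]$), which dovetails with the root- and weight-lattice identifications used later in the paper. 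Your handling of the restricted statement is also sound: injectivity is inherited from the unrestricted isomorphism, and surjectivity holds because minimal lifts of objects of $\sub Q_{\setJ}$ and their traces lie in $\Dfiltered(\setJ)$, so the same formula for $\tau$ lands in $\grothEJ$.
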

\begin{proof} First note that  the map $\sigma$ is well-defined,
since $\pi\colon(\Dfiltered, \mathcal{E}) \to \mod \Pi$ and $\eta\colon (\Dfiltered, \mathcal{E})\to \add \charT$ 
are exact, by \Cref{Thm:fullexact} and \Cref{Lem:exact1}, respectively.  We have 
\begin{equation}\label{Eq:simplescanlift}
\sigma([M_{[i-1]\cup\{i+1\}}])= [S_i]+[\charT_i] \text{ and } \sigma([\charT_i])=  [\charT_i].
\end{equation}
Therefore, the map is surjective, since  
\[
 \{ [S_i] + [\charT_i], ~[\charT_j]\colon i>0, j\}
\] 
is a basis of 
$K(\Pi)\oplus K(\add\charT)$. To prove the first isomorphism, it remains to show that 
\[
\alpha=\{ [S_i]+ [\charT_i], [\charT_j] \colon  i>0, j\}\subseteq K(\Dfiltered)_{\mathcal{E}}.
\] 
 spans $K(\Dfiltered)_{\mathcal{E}}$. 

We proceed by induction on $\dim \pi M$. First, classes of minimal lifts of simples from $\mod \Pi$ are in $\alpha$,
by \eqref{Eq:simplescanlift}. Let $M\in \Dfiltered(\setJ)$ and let $S$ be a summand 
of the top of $\projfun M$. Consider the short exact sequence, 
\[
\begin{tikzpicture}[scale=0.6,
 arr/.style={black, -angle 60}]
 \path (5,16) node(a1) {$Y$};
 \path (8.1,16) node(a2) {$\projfun M$};
\path (11.2,16) node(a3) {$S$};
 \path (2,16) node(a0) {$0$};
 \path (14.2,16) node(a4) {$0$.}; 
 
 \path[arr] (a0) edge  (a1);
 \path[arr] (a1) edge  (a2);
 \path[arr] (a2) edge  (a3);
 \path[arr] (a3) edge  (a4);
 \end{tikzpicture}
\]
By \Cref{Lem:liftedprecise}, there is a lift to $\Dfiltered$,
\[
\begin{tikzpicture}[scale=0.6,
 arr/.style={black, -angle 60}]
 \path (5,16) node(a1) {$\hat{Y}$};
 \path (9,16) node(a2) {$\widehat{\projfun M}\oplus \charT'$};
\path (13,16) node(a3) {$\hat{S}$};
 \path (2,16) node(a0) {$0$};
 \path (16,16) node(a4) {$0$,}; 
 
 \path[arr] (a0) edge  (a1);
 \path[arr] (a1) edge  (a2);
 \path[arr] (a2) edge  (a3);
 \path[arr] (a3) edge  (a4);
 \end{tikzpicture}
\]
where $\charT'\in \add \charT$. 
By induction, $[\hat{Y}]$ and $[\hat{S}]$ are in the span of $\alpha$ and thus so is 
$[\widehat{\projfun M}]$. By \Cref{Lem:dense}, $M$ and $\widehat{\projfun M}$ 
are isomorphic up to summands from $\add \charT$. It follows that 
$\alpha$ spans $K(\Dfiltered(\setJ))$.

For the second isomorphism, first note that 
\[[S_{n-i}]=[Q_i]-[\rad Q_i]\in 
K(\sub Q_\setJ)\subseteq K(\Pi).\] 
Therefore 
\[K(\sub Q_\setJ)=K(\Pi).\]
Also, as $M\in \Dfiltered(\setJ)$, by definition, $\eta M\in \add \charT_\setJ$. 
So the restriction is well defined. By \Cref{Lem:dense}, any $N\in \sub Q_{\setJ}$
has a lift $\N\in \Dfiltered(\setJ)$ and so the restriction is surjective, thus
an isomorphism.
\end{proof}
\begin{remark}\label{Rem:missingobject2}
By \Cref{Rem:missingobject}, it is possible to include  $n$ in $\setJ$. In this case,   
the Grothendieck group $\grothEJ$ contains the fundamental weight $w_n=[\charT_n]$ and 
 \Cref{Lem:isom1} is still true.
\end{remark}
\subsection{A bilinear form on $\grothEJ$}
Similar to \eqref{Eq:bilin0}, we define a symmetric bilinear form on 
$K(\Dfiltered)_{\mathcal{E}}$ by,
\begin{equation}\label{eq:roundbil}
([X], [Y]) = \langle [X], [Y]\rangle + \langle [\eta X], [\eta Y] \rangle,
\end{equation}
which, by \Cref{Lem:Symbil} below, makes computation of $\ext^1$ easier
(see \Cref{Ex:newex}).
We emphasise that we are using the exact structure $\mathcal{E}$, 
which ensures that $\eta$ is additive on short exact sequences, and 
so \eqref{eq:roundbil} does indeed give a well-defined and 
bilinear form on $\grothEJ$.

\begin{lemma} \label{lem:dimlemma}
Let $X,Y\in \Dfiltered(\setJ)$. 
\begin{itemize}
\item[(1)]  $\Hom(X, \projfun Y) \cong \Hom(\projfun X, \projfun Y)$.
\item[(2)]$\dim \Hom(X, Y) = \dim \Hom(X, \eta Y) + \dim \Hom(\projfun X, \projfun Y)$.
\end{itemize}
\end{lemma}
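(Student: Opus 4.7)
The plan is to apply $\Hom_\aus(X,-)$ to the short exact sequence
\[
0 \longrightarrow \eta Y \longrightarrow Y \longrightarrow \projfun Y \longrightarrow 0
\]
and extract both statements from the resulting long exact sequence.

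For part (1), the key observation is that $\projfun Y = Y/\eta Y$ lies in $\mod \Pi$, so $e_0(\projfun Y) = 0$. On the other hand, by \Cref{Lem:trace}(2) we have $\eta X = \aus e_0 X$, so $\eta X$ is generated by $e_0 X$ as a $\aus$-module. Hence for any $f\in \Hom_\aus(X,\projfun Y)$, the image $f(\eta X)$ is contained in the submodule of $\projfun Y$ generated by $e_0 f(X)\subseteq e_0(\projfun Y)=0$. Thus every such $f$ factors uniquely through the canonical surjection $X\twoheadrightarrow \projfun X$, yielding
\[
\Hom_\aus(X,\projfun Y) \;\cong\; \Hom_\aus(\projfun X,\projfun Y) \;=\; \Hom_\Pi(\projfun X,\projfun Y),
\]
where the last identification uses that both arguments are $\Pi$-modules.

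For part (2), the long exact sequence begins
\[
0 \to \Hom_\aus(X,\eta Y) \to \Hom_\aus(X,Y) \to \Hom_\aus(X,\projfun Y) \to \Ext^1_\aus(X,\eta Y) \to \cdots.
\]
I would then invoke \Cref{Lem:trace}(3) to conclude $\eta Y \in \add \charT$, and invoke \Cref{Lem:tiltingchar} (together with the fact that $X\in \Dfiltered(\setJ)\subseteq \Dfiltered$ is a good module) to conclude $\Ext^1_\aus(X,\charT)=0$, hence $\Ext^1_\aus(X,\eta Y)=0$. This collapses the long exact sequence to a short exact sequence, and taking dimensions together with the isomorphism from part (1) delivers the claimed identity.

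There is no real obstacle; the only points requiring care are (a) recognising that $\eta X = \aus e_0 X$ is precisely what kills maps into a $\Pi$-module, and (b) remembering to cite \Cref{Lem:tiltingchar} for the $\Ext^1$-vanishing against the characteristic tilting module. Both inputs are already established earlier in the paper.
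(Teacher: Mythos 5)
Your proposal is correct and follows essentially the same route as the paper: part (1) by showing any map $X\to \projfun Y$ must kill $\eta X = \aus e_0X$ and hence factor through $\projfun X$, and part (2) by applying $\Hom_\aus(X,-)$ to the trace sequence $0\to \eta Y\to Y\to \projfun Y\to 0$ and using $\Ext^1_\aus(X,\charT)=0$ from \Cref{Lem:tiltingchar} to truncate the long exact sequence. The paper's proof is terser but contains exactly these ingredients; your write-up simply makes the factorisation argument in (1) explicit.
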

\begin{proof}
(1) is true,  since any
map $X\rightarrow \projfun Y$ factors through $X/\eta X$.
(2) follows by applying $\Hom(X,-)$ to the exact sequence
\[
\begin{tikzpicture}[scale=0.6,
 arr/.style={black, -angle 60}]
 \path (5,16) node(a1) {$\eta Y$};
 \path (8,16) node(a2) {$Y$};
\path (11,16) node(a3) {$\projfun Y$};
 \path (2,16) node(a0) {$0$};
 \path (14,16) node(a4) {$0$,}; 
 
 \path[arr] (a0) edge  (a1);
 \path[arr] (a1) edge  (a2);
 \path[arr] (a2) edge  (a3);
 \path[arr] (a3) edge  (a4);
 \end{tikzpicture}
\]
and the fact that $\Ext^1(-, \charT)=0$ on $\Dfiltered(\setJ)$ from \Cref{Lem:tiltingchar}.
\end{proof}

\begin{proposition}\label{Lem:Symbil}
Let $X, Y\in \Dfiltered(\setJ)$. Then 
$$
( [X], [Y]) = \dim \Hom(X,Y) + \dim \Hom(Y,X) - \dim \ext^1(X,Y).
$$
\end{proposition}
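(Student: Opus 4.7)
The plan is to expand $([X], [Y])$ via bilinearity and the decomposition $[X] = [\eta X] + [\projfun X]$ in $K(\Dfiltered) = K(\aus)$, coming from the $\aus$-module short exact sequence $0 \to \eta X \to X \to \projfun X \to 0$ (this sequence is not in $\mathcal{E}$, but it is a valid short exact sequence in $\mod \aus$, so the relation holds in $K(\aus)$). This yields
\[
([X], [Y]) = 2\langle [\eta X], [\eta Y]\rangle + \langle [\projfun X], [\eta Y]\rangle + \langle [\eta X], [\projfun Y]\rangle + \langle [\projfun X], [\projfun Y]\rangle,
\]
and I would then evaluate each of the four terms.

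For the first term, since $\eta X, \eta Y \in \add\charT$ and $\charT$ is tilting, $\Ext^1_\aus(\eta X, \eta Y) = 0$, hence $\langle [\eta X], [\eta Y]\rangle = \dim\Hom_\aus(\eta X, \eta Y)$; the symmetry of $\langle\cdot,\cdot\rangle$ also gives $\dim\Hom_\aus(\eta X, \eta Y) = \dim\Hom_\aus(\eta Y, \eta X)$. For the cross term $\langle [\projfun X], [\eta Y]\rangle = \langle [X], [\eta Y]\rangle - \langle [\eta X], [\eta Y]\rangle$, \Cref{Lem:tiltingchar} gives $\Ext^1_\aus(X, \eta Y) = 0$, so $\langle [X], [\eta Y]\rangle = \dim\Hom_\aus(X, \eta Y)$ and thus
\[
\langle [\projfun X], [\eta Y]\rangle = \dim\Hom_\aus(X, \eta Y) - \dim\Hom_\aus(\eta X, \eta Y),
\]
with the analogue for $\langle [\eta X], [\projfun Y]\rangle$ obtained by swapping $X$ and $Y$. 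For the fourth term, \eqref{Eq:bilin2} identifies $\langle\cdot,\cdot\rangle$ restricted to $K(\Pi)$ with the symmetric Cartan form of type $\mathbb{A}_{n-1}$ on dimension vectors, and Crawley--Boevey's identity for preprojective algebras of Dynkin type then yields
\[
\langle [\projfun X], [\projfun Y]\rangle = \dim\Hom_\Pi(\projfun X, \projfun Y) + \dim\Hom_\Pi(\projfun Y, \projfun X) - \dim\Ext^1_\Pi(\projfun X, \projfun Y).
\]

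Adding the four contributions, the $\dim\Hom_\aus(\eta X, \eta Y)$ summands cancel, leaving
\[
\dim\Hom_\aus(X, \eta Y) + \dim\Hom_\aus(Y, \eta X) + \dim\Hom_\Pi(\projfun X, \projfun Y) + \dim\Hom_\Pi(\projfun Y, \projfun X) - \dim\Ext^1_\Pi(\projfun X, \projfun Y).
\]
Applying \Cref{lem:dimlemma}(2) to both $(X, Y)$ and $(Y, X)$ combines the first four summands into $\dim\Hom_\aus(X, Y) + \dim\Hom_\aus(Y, X)$, while \Cref{Thm:fullexact} identifies $\dim\Ext^1_\Pi(\projfun X, \projfun Y)$ with $\dim\ext^1(X, Y)$, completing the proof. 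The main nontrivial input is the Crawley--Boevey identity (equivalently, the stably $2$-Calabi--Yau structure on $\sub Q_\setJ$ packaged symmetrically); everything else is careful bookkeeping with the sequence $0 \to \eta X \to X \to \projfun X \to 0$ and the vanishing of $\Ext^1_\aus$ against $\charT$.
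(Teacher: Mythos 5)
Your proof is correct and follows essentially the same route as the paper's: both reduce to Crawley--Boevey's identity for $\langle [\projfun X],[\projfun Y]\rangle$, the vanishing $\Ext^1_\aus(-,\charT)=0$ on good modules, \Cref{lem:dimlemma}, and the identification $\ext^1(X,Y)\cong\Ext^1_\Pi(\projfun X,\projfun Y)$ from \Cref{Thm:fullexact}. The only difference is bookkeeping: you expand $([X],[Y])$ via $[X]=[\eta X]+[\projfun X]$ and evaluate four terms, whereas the paper expands $\langle[\projfun X],[\projfun Y]\rangle$ via $[\projfun X]=[X]-[\eta X]$ and rearranges — algebraically the same computation.
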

\begin{proof}
Viewing $\projfun X, \projfun Y$ as $D$-modules, we have 
 \[
[\projfun X]=[X]-[\eta  X] , ~~[\projfun Y]=[Y]-[\eta  Y]\in K(\Dfiltered).
\]
So by the definition of the bilinear form \eqref{Eq:bilin0},
\[\langle [\projfun X], [\projfun Y] \rangle =
\langle [X], [Y] \rangle  - \langle [X], [\eta Y] \rangle 
- \langle [Y], [\eta X] \rangle  + \langle [\eta X], [\eta Y] \rangle, 
\]
using the symmetry and bilinearity of the form.
On the other hand, by the homological interpretation of \eqref{Eq:bilin2} 
by  Crawley-Boevey \cite{CB}, 
\[
\langle [\projfun X], [\projfun Y]\rangle=
\dim \Hom(\projfun X,\projfun Y) + \dim \Hom(\projfun Y,\projfun X) 
- \dim \Ext^1(\projfun X,\projfun Y).
\]
Therefore, 
\begin{eqnarray*}
&& \langle [X], [Y] \rangle  - \langle  [X], [\eta Y] \rangle 
- \langle [Y], [\eta X] \rangle  + \langle[ \eta X], [\eta Y] \rangle \\
&=&\dim \Hom(\projfun X,\projfun Y) + \dim \Hom(\projfun Y,\projfun X) 
- \dim \Ext^1(\projfun X,\projfun Y).
\end{eqnarray*}
Note that $\Ext^1(\projfun X, \projfun Y)=\ext^1(X,Y)$ (\Cref{Thm:fullexact}),  and 
\begin{equation}\label{eq:bilhom}
\langle [X], [\eta Y] \rangle=\dim \Hom(X, \eta Y)\end{equation}
since $ \Ext^1(X, \eta Y)=0$ by \Cref{Lem:tiltingchar}.
Now the equality in the statement follows from \Cref{lem:dimlemma}.
\end{proof}

As a consequence, we have the following symmetric form describing the difference of 
the extension groups.
\begin{corollary} Let $X, Y\in \Dfiltered(\setJ)$.
$$\dim \Ext^1(X,Y)+\dim \Ext^1(Y,X) - \dim \ext^1(X,Y) = \langle [\eta X], [\eta Y] \rangle - \langle  [X], [Y]\rangle.$$ 
\end{corollary}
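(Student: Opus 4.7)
The plan is to derive the corollary by simply combining three ingredients already established just above: the definition of the rounded bilinear form in \eqref{eq:roundbil}, the bilinear form identity in \eqref{Eq:bilin0} (with its symmetry), and \Cref{Lem:Symbil}.

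First I would unpack $\dim\Hom_\aus(X,Y) + \dim\Hom_\aus(Y,X)$ using the definition \eqref{Eq:bilin0} of the bilinear form $\langle -,-\rangle$. Applied to the ordered pair $(X,Y)$ and then to $(Y,X)$, this gives
\[
\dim\Hom_\aus(X,Y) + \dim\Hom_\aus(Y,X) = \langle [X],[Y]\rangle + \langle [Y],[X]\rangle + \dim\Ext^1_\aus(X,Y) + \dim\Ext^1_\aus(Y,X).
\]
Since $\langle -,-\rangle$ is the dot product on $\wtl$ and hence symmetric, the first two terms on the right collapse to $2\langle [X],[Y]\rangle$.

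Next I would invoke \Cref{Lem:Symbil}, which states
\[
([X],[Y]) = \dim\Hom_\aus(X,Y) + \dim\Hom_\aus(Y,X) - \dim\ext^1(X,Y),
\]
and rewrite the left-hand side using the defining formula \eqref{eq:roundbil}:
\[
([X],[Y]) = \langle [X],[Y]\rangle + \langle [\eta X],[\eta Y]\rangle.
\]
Substituting the expression from the previous step for $\dim\Hom_\aus(X,Y) + \dim\Hom_\aus(Y,X)$ yields
\[
\langle [X],[Y]\rangle + \langle [\eta X],[\eta Y]\rangle = 2\langle [X],[Y]\rangle + \dim\Ext^1_\aus(X,Y) + \dim\Ext^1_\aus(Y,X) - \dim\ext^1(X,Y).
\]
Rearranging gives precisely the claimed identity. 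There is no real obstacle here — the statement is essentially a rewriting of \Cref{Lem:Symbil} after substituting the definition of the rounded form; the only thing to be careful about is bookkeeping the symmetry of $\langle-,-\rangle$ so the two copies of $\langle [X],[Y]\rangle$ combine correctly.
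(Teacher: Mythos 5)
Your proof is correct and follows exactly the route the paper intends: the paper's own proof simply cites \Cref{Lem:Symbil} and \eqref{Eq:bilin0}, and your computation is the explicit unpacking of that combination, with the symmetry of $\langle-,-\rangle$ handled correctly.
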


\begin{proof}
This is a direct consequence of \Cref{Lem:Symbil} and \eqref{Eq:bilin0}.
\end{proof}

\begin{example}\label{Ex:newex}
Let $n=4$ and let $X\in \Dfiltered$ be the following module, 
\[
\begin{tikzpicture}[scale=0.8,black arr/.style={magenta, -angle 60}, 
blue arr/.style={blue, -angle 60}, magenta arr/.style={black, -angle 60}, thick]

\path(3, 5) node (a1) {0};
\path(5, 5) node (a2) {2};
\path(8, 4) node (b1) {1};
\path(4, 4) node (b2) {1};
\path(6, 4) node (b3) {3};
\path(7, 3) node (c1) {0};
\path(3, 3) node (c2) {0};
\path(5, 3) node (c3) {2};
\path(4, 2) node (d) {1};
\path(3, 1) node (e) {0};

\draw[black arr] (a1) edge (b2);
\draw[magenta arr] (a2) edge (b1);
\draw[magenta arr] (a2) edge (b2);
\draw[black arr] (a2) edge (b3);
\draw[magenta arr] (b1) edge (c1);
\draw[magenta arr] (b2) edge (c2);
\draw[black arr] (b2) edge (c3);
\draw[magenta arr] (b3) edge (c3);
\draw[black arr] (c2) edge (d);
\draw[magenta arr] (c3) edge (d);
\draw[magenta arr] (d) edge (e);
\end{tikzpicture}
 \]
We have 
$\dvector_\Delta X=(1,1,1,1)$,  
$\eta X=\charT_1\oplus \charT_3$,
and by direct computation, \[\dim \End  X=5.\]
Therefore, following  \Cref{Lem:Symbil}, we have 
\[
\ext^1(X, X)=0, 
\]
which is not easy to verify directly. 
Therefore $X$ is rigid in $(\Dfiltered, \mathcal{E})$. 
Note however that $X$ is not rigid in $\mod D$, 
by an easy computation following \eqref{Eq:bilin0}.
\end{example}

\begin{remark}
The extension group $\Ext^1(M_I, M_J)$
can be used to interpret when $I$ and $J$ are strongly separated and thus detect when 
the product of two quantum minors is invariant under the bar involution (see \cite{LZ}).
We emphasise that this uses the exact structure from $\mod \aus$, 
which can not easily be understood using the existing exact structure on $\sub Q_{\setJ}$ 
(or $\mathcal{E}$ on $\Dfiltered(\setJ)$). See  \Cref{Sec:strongExt} 
for more details.
\end{remark}

\section{The AR-quiver of $\Dfiltered(\setJ)$}\label{sec6}
We compute AR-sequences in $\Dfiltered(\setJ)$ by lifting their counterparts
from $\sub Q_\setJ$. Note that this is possible by \Cref{Lem:liftedprecise}.
We also give some explicit examples of AR-quivers. 

\subsection{Computation of AR-sequences. }
\begin{theorem}\label{thm:liftARseq}
Let 
\[
\begin{tikzpicture}[scale=0.6,
 arr/.style={black, -angle 60}] 
 \path (5,12) node(a1) {$N$};
 \path (8,12) node(a2) {$X$};
\path (11,12) node(a3) {$M$};
 \path (2,12) node(a0) {$0$}; 
 \path (14,12) node(a4) {$0$};
 \path(1.2, 12) node (a) {$\xi$:};
  
 \path[arr] (a0) edge  (a1);
 \path[arr] (a1) edge node [auto]{$f$} (a2);
 \path[arr] (a2) edge node [auto]{$g$} (a3);
 \path[arr] (a3) edge  (a4);
 \end{tikzpicture}
\]
be an AR-sequence in $\sub Q_{\setJ}$. Then the lift of $\xi$,
\[
\begin{tikzpicture}[scale=0.6,
arr/.style={black, -angle 60}]
\path (5,12) node(a11) {$\N$};
\path (8,12) node(a22) {$\X$};
\path (11,12) node(a33) {$\M$};
\path (2, 12) node(a0) {$0$}; 
\path (14, 12) node(a44) {$0$};
\path(1.2, 12) node (a) {$\hat{\xi}$:};
  
\path[arr] (a0) edge  (a11);
\path[arr] (a11) edge node [auto]{$\f$} (a22);
\path[arr] (a22) edge node [auto]{$\g$} (a33);
\path[arr] (a33) edge  (a44);
\end{tikzpicture}
\]
to $\Dfiltered(\setJ)$ with $\N$ and $\M$ indecomposable is an AR-sequence.
\end{theorem}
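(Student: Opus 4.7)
The plan is to verify the defining properties of an AR-sequence for $\hat{\xi}$ in the exact category $(\Dfiltered(\setJ), \mathcal{E})$. By \Cref{Lem:liftedprecise}, the lift $\hat{\xi}$ exists as an admissible short exact sequence. It is non-split in $\mathcal{E}$ because $\pi$ induces the isomorphism $\ext^1(\hat{M}, \hat{N}) \cong \Ext^1_\Pi(M, N)$ of \Cref{Thm:fullexact} and $[\xi]\neq 0$. The endpoints $\hat{N},\hat{M}$ are indecomposable by hypothesis, and $\hat{M}$ is not $\mathcal{E}$-projective-injective: by \Cref{Thm:Frob2CY}, the indecomposable projective-injectives are the $P_j$ (minimal lifts of the $Q_j$) and the $\charT_j$ for $j\in\setJ$; since $\pi(\charT_j)=0$ and $\pi(P_{n-j})=Q_j$ is projective in $\sub Q_{\setJ}$, this reduces to the fact that $M$ is not projective in $\sub Q_{\setJ}$. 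It thus remains to prove that every non-retraction $h:Y\to \hat{M}$ in $\Dfiltered(\setJ)$ factors through $\hat{g}$.

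The key reduction is to show that $\pi(h):\pi Y\to M$ is itself not a retraction in $\sub Q_{\setJ}$. Suppose for contradiction that $s:M\to \pi Y$ satisfies $\pi(h)s=\mathrm{id}_M$, and lift $s$ via \Cref{Lem:liftofmaps} to $\hat{s}:\hat{M}\to Y$. Then $\pi(h\hat{s})=\mathrm{id}_M$, whence $h\hat{s}=\mathrm{id}_{\hat{M}}+u$ where $u$ lies in the ideal $I\subseteq \End\hat{M}$ of morphisms factoring through $\add\charT_{\setJ}$ (\Cref{Lem:factormaps}). Since $\hat{M}$ is indecomposable, $\End\hat{M}$ is local; because $\pi(\mathrm{id}_{\hat{M}})=\mathrm{id}_M\neq 0$, the ideal $I=\ker(\pi|_{\End\hat{M}})$ avoids the units and hence is contained in the Jacobson radical. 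Therefore $\mathrm{id}_{\hat{M}}+u$ is an automorphism, which makes $h$ a split epimorphism, contradicting the assumption on $h$.

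With $\pi(h)$ not a retraction and $\xi$ almost split, there is $t:\pi Y\to X$ with $g t=\pi(h)$. Fullness of $\pi$ (\Cref{Thm:fullexact}) yields $\hat{t}:Y\to \hat{X}$ with $\pi(\hat{t})=t$; the morphism $v:=\hat{g}\hat{t}-h$ then satisfies $\pi(v)=0$, so $v$ factors as $Y\to \eta\hat{M}\hookrightarrow \hat{M}$, where $\eta\hat{M}\in\add\charT_{\setJ}$. Since $\eta\hat{M}$ is $\mathcal{E}$-projective and $\hat{g}$ is an admissible epimorphism, the inclusion $\eta\hat{M}\hookrightarrow \hat{M}$ lifts through $\hat{g}$, whence $v$ itself lifts to some $v':Y\to \hat{X}$ with $\hat{g}v'=v$. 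Thus $h=\hat{g}(\hat{t}-v')$ factors through $\hat{g}$, as required. The main obstacle throughout is the systematic ambiguity of morphism lifts modulo maps factoring through $\add\charT_{\setJ}$; this is neutralized twice --- first via the locality of $\End\hat{M}$, then via the $\mathcal{E}$-projectivity of $\eta\hat{M}$ --- precisely because $\add\charT_{\setJ}$ consists of $\mathcal{E}$-projective-injectives.
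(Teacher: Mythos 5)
Your proof is correct and follows essentially the same route as the paper's: reduce to the almost split property of $g$ in $\sub Q_{\setJ}$ via fullness of $\projfun$, then absorb the discrepancy $v$ (whose image lies in $\eta\M\in\add\charT_{\setJ}$) using the splitting of $\eta(\hat\xi)$, which is exactly your lifting of $\eta\M\hookrightarrow\M$ through the deflation $\g$. Your extra argument that $\projfun(h)$ cannot be a retraction (via locality of $\End\M$ and the kernel of $\projfun$ lying in the radical) is a welcome justification of a step the paper only asserts.
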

\begin{proof}
We show that $\g$ is almost split. The proof for $\f$ can be done similarly. 
Let $Y\in \Dfiltered(\setJ)$ be indecomposable and $h\colon Y\rightarrow \M$ not an isomorphism. Then 
\[\overline{h}= \projfun(h)\colon \projfun Y\rightarrow M\] 
is not a split epimorphism  and so
there exists 
$l\colon \pi Y\rightarrow X$ such that 
\[
\overline{h}= gl
\]
By \Cref{Lem:liftofmaps}, $\overline{h}$ can be lifted to a map 
\[
\tilde{h}=\g \hat{l}\colon Y\rightarrow \M.
\]
By the equivalence in \Cref{Thm:equiv}, $h- \tilde{h}$ factors through $\add \charT_{\setJ}$ and so
\[
\im (h-\tilde{h})\subseteq \eta \M. 
\]
By \Cref{Lem:exact1}, the sequence $\eta(\hat{\xi})$ splits.  Let 
$\mu\colon\eta \M \rightarrow \eta\X$ be the splitting of $\eta(\hat{\xi})$. Then $\g \mu$ is the identity map on $\eta \M$. Let 
\[
l'=\mu(h-\g\hat{l})+ \hat{l}\colon Y\rightarrow \X
\]
Then
\[
\g l'=\g(\mu(h-\g\hat{l})+ \hat{l}) = \g\mu(h-\g\hat{l})+ \g\hat{l} =h. 
\]
That is,  $h$ factors through $\g$ and so $\g$ almost splits. 
\end{proof}

The AR-quivers of $\sub Q_{\setJ}$ and $\Dfiltered(\setJ)$ are essentially identical.
The difference is that  the latter has 
 the extra projective-injective indecomposable objects 
$\charT_j$ for $j\in \setJ$.
When $\setJ=\{1\}$ or $\{n-1\}$, any object in $\Dfiltered(\setJ)$ is
 projective-injective, 
and so there are no AR-sequences. In all other cases, the following proposition 
identifies where the
projective injective objects $\charT_j$ are placed in the AR-quiver of $\Dfiltered(\setJ)$. 
Note that the relative cosyzygy functor 
$\Sigma$ on $\sub Q_\setJ$  defines the AR-translation \cite{GLS08}.

\begin{proposition}
Let $j\in \setJ$. Then $\charT_j$ appears in the 
middle term of  the AR-sequence
\[
\begin{tikzpicture}[scale=0.6,black
 arr/.style={black, -angle 60}]
 \path (2, 4) node (1){$0$};
\path (5, 4) node (2){$\widehat{\Sigma S_j}$};
\path (9, 4) node (3){$\hat{X}\oplus \charT_j$};
\path (13, 4) node (4){$\hat{S_j}$};
\path (16, 4) node (5){$0$,};

\draw[->] (1) --  (2);
\draw[->]  (2) -- (3);
\draw[->]  (3) to (4);
\draw[->]  (4) to (5);
\end{tikzpicture}
\]
where 
$X$ is the middle term of the AR-sequence in $\sub Q_\setJ$ ending with the 
simple $\Pi$-module $S_j$ at vertex $j$, and 
the three lifts $\widehat{\Sigma S_j}$, $\hat{X}$ and $\hat{S_j}$ are all minimal.
\end{proposition}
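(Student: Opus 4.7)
The plan is to obtain the claimed AR-sequence by lifting the given AR-sequence in $\sub Q_\setJ$ using \Cref{thm:liftARseq}, and then identifying the $\add\charT_\setJ$ summand in the middle term via \Cref{Lem:liftedprecise}. Applying that lemma to $0\to \Sigma S_j\to X\to S_j\to 0$ produces an admissible short exact sequence
\[
0 \to \widehat{\Sigma S_j} \to \hat{X} \oplus \charT' \to \hat{S_j} \to 0
\]
in $\Dfiltered(\setJ)$ with $\charT' = \bigoplus_i \charT_i^{d_i}$ and $(d_i)_i = \dvector\soc(\Sigma S_j \oplus S_j) - \dvector\soc X$. The remaining task is to show that $(d_i)_i$ is the indicator vector of $\{j\}$, so that $\charT' = \charT_j$.

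The key step is the identity $\dvector \soc X = \dvector \soc \Sigma S_j$. To establish it, I will apply $\Hom_\Pi(S_i, -)$ to the sequence $0\to \Sigma S_j\to X\to S_j\to 0$, regarded as exact in $\mod \Pi$ since $\sub Q_\setJ$ is a full exact subcategory, and read off dimensions from the resulting long exact sequence. For $i \neq j$ the vanishing $\Hom_\Pi(S_i, S_j) = 0$ yields the identity at once, so the only substantive case is $i = j$: here the connecting homomorphism $\Hom_\Pi(S_j, S_j) \to \Ext^1_\Pi(S_j, \Sigma S_j)$ carries $\mathrm{Id}_{S_j}$ to the class of the AR-sequence, which is non-zero by non-splitting; this forces the preceding map $\Hom_\Pi(S_j, X) \to \Hom_\Pi(S_j, S_j) \cong \CC$ to vanish, giving the identity for $i = j$ as well.

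Combining this with $\dim\Hom_\Pi(S_i, M)$ being the multiplicity of $S_i$ in $\soc M$, and $\soc S_j = S_j$, one obtains $(d_i)_i = \dvector\soc S_j$, which is the indicator of $\{j\}$, yielding $\charT' = \charT_j$. No serious obstacle is anticipated; the argument rests on the non-splitness of AR-sequences together with the explicit lifting formula in \Cref{Lem:liftedprecise}.
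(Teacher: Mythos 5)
Your proposal is correct and follows essentially the same route as the paper: lift the AR-sequence via \Cref{thm:liftARseq}, read off the $\add\charT_\setJ$ summand from the formula in \Cref{Lem:liftedprecise}, and reduce everything to the equality $\dvector\soc\Sigma S_j=\dvector\soc X$, which is forced by non-splitness of the AR-sequence. The only (minor) variation is that you derive this socle equality from the long exact sequence of $\Hom_\Pi(S_i,-)$ and the injectivity of the Yoneda connecting map at $i=j$, whereas the paper argues directly that a surjection $\soc X\to\soc S_j=S_j$ would split the sequence; both arguments are sound and rest on the same fact.
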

\begin{proof}
Consider the AR-sequence ending at $S_j$,
\[
\begin{tikzpicture}[scale=0.6,black
 arr/.style={black, -angle 60}]
 \path (1.2, 4) node (1){$\xi$:};
 \path (2, 4) node (1){$0$};
\path (5, 4) node (2){$\Sigma S_j$};
\path (8, 4) node (3){$X$};
\path (11, 4) node (4){$S_j$};
\path (14, 4) node (5){$0$.};

\draw[->] (1) --  (2);
\draw[->]  (2) -- (3);
\draw[->]  (3) to (4);
\draw[->]  (4) to (5);
\end{tikzpicture}
\]
Then $\dim \soc(\Sigma S_j)=\dim \soc X$. Indeed,  otherwise 
there would be a split short exact sequence of socles 
\[
\begin{tikzpicture}[scale=0.6,black
 arr/.style={black, -angle 60}]
 \path (2, 4) node (1){$0$};
\path (5, 4) node (2){$\soc \Sigma S_j$};
\path (9, 4) node (3){$\soc X$};
\path (13, 4) node (4){$\soc S_j$};
\path (16, 4) node (5){$0$,};

\draw[->] (1) --  (2);
\draw[->]  (2) -- (3);
\draw[->]  (3) to (4);
\draw[->]  (4) to (5);
\end{tikzpicture}
\] 
which together with the fact that $\soc S_j=S_j$ would imply that 
$\xi$ splits, contradicting that $\xi$ is an AR-sequence. 
So by \Cref{Lem:liftedprecise}, the middle 
term of the lift of $\xi$ has $\charT_j$ as a direct summand as claimed.
\end{proof}

\subsection{Examples of AR-quivers of $\sub Q_{\setJ}$ and $\Dfiltered(\setJ)$}
\label{Ex:ARquivers}
Let $n=4$.
In both examples below, the blue and red modules are the indecomposable  projective 
and injective objects in the categories, with those
in blue the indecomposable summands of $\charT_{\setJ}$. 

(1) The AR-quivers of $\sub Q_{2}$ and  $\Dfiltered(\{2\})_{\mathcal{E}}$ are as follows, respectively.
\[
\begin{tikzpicture}[scale=0.7, black arr/.style={black, -angle 60}, 
blue arr/.style={blue, -angle 60}, thick]

\path (1, 1) node(b1) {$\begin{smallmatrix}2\end{smallmatrix}$};
\path (3, 3) node(b2) {\color{red}$\begin{smallmatrix}3\\2\end{smallmatrix}$};
\path (3, -1) node(c1) {\color{red}$\begin{smallmatrix}1\\2\end{smallmatrix}$};
\path (5, 1) node(c2) {$\begin{smallmatrix}1 &&3\\&2&\end{smallmatrix}$};
\path (9, 1) node(d2) {$\begin{smallmatrix}2 \end{smallmatrix}$};
\path (7, -1) node(d1) {\color{red}$\begin{smallmatrix}&2&\\1 &&3\\&2&\end{smallmatrix}$};

\path[black arr] (c1) edge  (c2);
\path[black arr] (b1) edge  (c1);
\path[black arr] (b2) edge  (c2);
\path[black arr] (b1) edge  (b2);
\path[black arr] (c2) edge  (d1);
\path[black arr] (d1) edge (d2);


\path (12.5, 1) node(e1) {$M_{13}$};
\path (14.5, 3) node(e2) {\color{red}$M_{14}$};
\path (14.5, -1) node(f1) {\color{red}$M_{23}$};
\path (16.5, 1) node(f2) {$M_{24}$};
\path (18.5, 3) node(f3) {\color{blue}$M_{12}$};
\path (18.5, -1) node (g1) {\color{red}$M_{34}$};
\path (20.5, 1) node(g2) {$M_{13}$};

\draw[blue arr] (f2) -- (f3);
\draw[blue arr] (f3) -- (g2);

\path[black arr] (g1) edge (g2);
\path[black arr] (e1) edge  (e2);
\path[black arr] (f1) edge  (f2);
\path[black arr] (e1) edge  (f1);
\path[black arr] (e2) edge  (f2);
\path[black arr] (f2) edge  (g1);
\end{tikzpicture}
\]

(2) The AR-quivers of $\sub Q_{\{1,3\}}$ and  $\Dfiltered(\{1, 3\})_{\mathcal{E}}$ are as follows, respectively.
\[
\begin{tikzpicture}[scale=0.65, black arr/.style={black, -angle 60}, 
blue arr/.style={blue, -angle 60}, thick]

\path (3, 1) node(a1) {\color{red}$\begin{smallmatrix}&2&\\1&&3\end{smallmatrix}$};
\path (5, 3) node(a2) {\color{black}$\begin{smallmatrix}2\\1\end{smallmatrix}$};
\path (7, 5) node(a3) {\color{red}$\begin{smallmatrix}3\\2\\1\end{smallmatrix}$};
\path (9, 3) node(b2) {$\begin{smallmatrix}3\end{smallmatrix}$};
\path (5, -1) node(b0) {$\begin{smallmatrix}2\\3\end{smallmatrix}$};
\path (7, -3) node(c0) {\color{red}$\begin{smallmatrix}1\\2\\3\end{smallmatrix}$};
\path (9, -1) node(c1) {$\begin{smallmatrix}1\end{smallmatrix}$};
\path (11, 1) node(c2) {\color{red}$\begin{smallmatrix}&&2&\\&1&&3\\\end{smallmatrix}$};

\draw[black arr] (a1) edge (b0);
\draw[black arr] (b0) edge (c0);
\draw[black arr] (c0) edge (c1);
\draw[black arr] (a1) -- (a2);
\draw[black arr] (a2) -- (a3);
\draw[black arr] (a3) edge  (b2);
\draw[black arr] (b2) edge (c2);
\draw[black arr] (c1) edge (c2);


\path (14, 1) node(d1) {\color{red}$X$};
\path (16, 3) node(d2) {\color{black}$M_{3}$};
\path (18, 5) node(d3) {\color{red}$M_{4}$};
\path (20, 3) node(e2) {$M_{124}$};
\path (18, 1.5) node(e1) {\color{blue}$M_{ 123}$};
\path (18, 0.5) node(e3) {\color{blue}$M_{1}$};
\path (16, -1) node(e0) {$M_{134}$};
\path (18, -3) node(f0) {\color{red}$M_{234}$};
\path (20, -1) node(f1) {$M_{2}$};
\path (22, 1) node(f2) {\color{red}$X$};

\draw[black arr] (d1) edge (e0);
\draw[black arr] (e0) edge (f0);
\draw[black arr] (f0) edge (f1);
\draw[black arr] (d1) -- (d2);
\draw[black arr] (d2) -- (d3);
\draw[black arr] (d3) edge  (e2);
\draw[black arr] (e2) edge (f2);
\draw[black arr] (f1) edge (f2);

\draw[blue arr] (d2) -- (e1);
\draw[blue arr] (e1) -- (e2);
\draw[blue arr] (e0) -- (e3);
\draw[blue arr] (e3) -- (f1);
\end{tikzpicture}
\]
We have $M_{1}=\charT_1$,
$M_{12}=\charT_2$, $M_{123}=\charT_3$, 
and the module $X$ is the same as the one in \Cref{Ex:newex}.

\section{Lifting the cluster structure on $\sub Q_{\setJ}$ to $\Dfiltered(\setJ)$} 
We show that the cluster structure on $\sub Q_{\setJ}$  can be lifted to $\Dfiltered(\setJ)$. In particular, 
the functor $\pi$ induces bijections on cluster tilting objects and mutation sequences.

\subsection{Cluster structure}\label{sec:clstructnew}
Let $\mathcal{C}$ be a Hom-finite, Frobenius stably 2-CY category, for instance $\sub Q_\setJ$
or $\Dfiltered(\setJ)$.
Let $T=\oplus_{i=1}^{\cn}T_i\in \mathcal{C}$ be a cluster tilting object, where each $T_i$ is indecomposable.  
For any summands $T_i$, $ T_j$, let
\begin{equation}\label{eq:Irr}
\Irr(T_i, T_j)=\rad(T_i, T_j)/\rad^2(T_i, T_j),
\end{equation}
which denotes the space of irreducible maps
$T_i\to T_j$ in $\add T$. 
Let $Q_T$ be the quiver with vertices $[\cn]=\{1, \cdots, \cn\}$ and the number of arrows 
from $i$ to $j$ equal to  $\dim \Irr(T_i, T_j)$, for any $i, j\in [\cn]$. 
Note that $Q_T$ is the opposite quiver 
of the Gabriel quiver of \[A=(\End_{\mathcal C}T)^{\text{op}}.\]
\begin{definition}\cite{BIRS, Iy}
An object $T\in \mathcal{C}$ is called a  {\em cluster tilting object} if  
$\Ext^1_{\mathcal{C}}(T,M)=0$ implies  $M\in \add T$. 
\end{definition}
Unless otherwise stated, all cluster tilting objects are assumed to be \emph{basic}, 
that is, if $T=\oplus_i T_i$ is a direct sum of indecomposable summands, 
then the summands $T_i$ are pairwise non-isomorphic.

Assume that $ \mathcal{C}$ admits a cluster structure \cite{BIRS} with a cluster tilting object $T$. 
A summand $T_i$ is said to be {\em mutable}
if it is not projective. The corresponding vertex in $Q_T$ is also said to be mutable. 
Non-mutable summands and vertices are referred to as  {\em frozen summands} 
and {\em frozen vertices}.
The assumption on $\mathcal{C}$ and $T$ has a number of consequences.
First, the quiver $Q_T$  has no loops and no 2-cycles incident 
at mutable vertices. Second, associated to any mutable summand 
$T_k$, there are exact {\em mutation sequences}, 
\begin{equation}
\begin{tikzpicture}[scale=0.6,
 arr/.style={black, -angle 60}, baseline=(bb.base)]
\coordinate (bb) at (0, 0.8);

\path (9,1) node (c3) {$T_k$};
\path (6,1) node (c2) {$E_k$};
\path (3,1) node (c1) {$T_k^*$};

\path[arr] (c1) edge node[auto]{} (c2);
\path[arr] (c2) edge node[auto]{$f$} (c3);
\end{tikzpicture}
\end{equation}
and 
\begin{equation}
\begin{tikzpicture}[scale=0.6,
 arr/.style={black, -angle 60}, baseline=(bb.base)]
\coordinate (bb) at (0, 0.8); 
 
\path (9,1) node (c3) {$T^*_k$};
\path (6,1) node (c2) {$F_k$};
\path (3,1) node (c1) {$T_k$};

\path[arr] (c1) edge node[auto]{$g$} (c2);
\path[arr] (c2) edge node[auto]{}  (c3);
\end{tikzpicture}
\end{equation}
where $f$ and $g$ are minimal $\add (T \minus\{T_k\})$-approximations and $T_k^*$ 
is indecomposable.
Third, 
$$T'=\mu_k(T)=(T\minus\{T_k\}) \oplus T_k^*$$ 
is again a cluster tilting object, which is called the 
{\em mutation of $T$ in direction $k$}.
Fourth,  $Q_{T'}$ 
is obtained from $Q_T$ by Fomin--Zelevinsky mutation \cite{FZ1} at vertex $k$. 
In fact, these four properties characterise the existence of a cluster structure
on $\mathcal{C}$.

Two cluster tilting objects $T$ and $T'$ are 
 {\em mutation equivalent} if they can be obtained from each other by 
a sequence of mutations. In this case, we also say that $T$ is {\em reachable} 
from $T'$. 

Assume that the first $\cm$ summands of $T$ are the mutable summands of $T$.
Let $\mat{B}=\mat{B}_T=(b_{ij})$ be the $\cn\times \cm$-matrix defined by
\begin{equation}
\label{eq:BfromT}
  b_{ij}={\dim \Irr(T_i, T_j)-\dim \Irr(T_j, T_i),}
\end{equation}
where $1\leq i\leq \cn$ and $1\leq j\leq \cm$. That is,  $\mat{B}$ is the \emph{exchange 
matrix} defined by $Q_T$. The submatrix of $\mat{B}$ consisting of the first 
$\cm$ rows is called the  {{\em principal part} of $\mat{B}$.}
As $Q_T$ has no 2-cycles incident at a mutable vertex, 
at least one of $ \Irr(T_j, T_i) $ and $ \Irr(T_i, T_j)$ is zero. 
So in the mutation sequences 
\eqref{Eq:exseq1} and \eqref{Eq:exseq2},  
\begin{equation*}
F_k = \bigoplus_{j:-b_{jk}>0} T_j^{-b_{jk}}
\quad\text{and}\quad
E_k = \bigoplus_{j:b_{jk}>0} T_j^{b_{jk}},
\end{equation*} 
and the matrix $\mat B$ is also called the \emph{mutation matrix defined}
by $T$.

\begin{remark}  \label{sec:clstruct1}
A module $T$ in $\sub Q_\setJ$ is a cluster tilting object 
if and only if $T$ is maximal rigid (see for instance \cite{GLS06} and \cite[Thm 2.9]{GLS11})
and  $\sub Q_{\setJ}$  admits a  cluster structure  
consisting of the cluster tilting objects \cite[\S 8]{GLS08}.
\end{remark}

\subsection{Cluster tilting objects and mutation sequences in 
$(\Dfiltered(\setJ), \mathcal{E})$} \label{subsect:liftingcto} \label{sec:7.2}

Recall that $\Dfiltered(\setJ)$ is a Frobenius stably 2-CY category under the exact structure $\mathcal{E}$
(see \Cref{Thm:Frob2CY}).
\begin{proposition} \label{Lem:liftcto} 
The exact functor $\projfun$ induces a bijection between the cluster tilting objects
in $\Dfiltered(\setJ)$ and those in $\sub Q_{\setJ}$. 
More precisely, for 
$T=\charT_\setJ\oplus X\in \Dfiltered(\setJ)$ with $X$ having no summands in $\add \charT_{\setJ}$, the object $T$
is a cluster tilting object 
if and only if $\projfun X$ is a cluster tilting object in $\sub Q_\setJ$.
\end{proposition}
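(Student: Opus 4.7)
The plan is to transport the cluster-tilting property across the equivalence $\pi_\setJ\colon \Dfiltered(\setJ)/\add\charT_\setJ \to \sub Q_\setJ$ of \Cref{Thm:equiv}, using the extension-group isomorphism $\ext^1(M,N)\cong\Ext^1_\Pi(\pi M,\pi N)$ from \Cref{Thm:fullexact} and the projective-injectivity of $\charT_\setJ$ in $(\Dfiltered(\setJ),\mathcal{E})$ given by \Cref{Thm:Frob2CY}. I would first note that rigidity passes both ways: $\ext^1(X,X)\cong\Ext^1_\Pi(\pi X,\pi X)$, and the extra summand $\charT_\setJ$ contributes no extensions to or from anything in $\Dfiltered(\setJ)$, so $T=\charT_\setJ\oplus X$ is rigid in $(\Dfiltered(\setJ),\mathcal{E})$ iff $\pi X$ is rigid in $\sub Q_\setJ$.

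For the forward direction, assume $T$ is cluster tilting. Given $N\in\sub Q_\setJ$ with $\Ext^1_\Pi(\pi X,N)=0$, lift $N$ via \Cref{Lem:dense} to a minimal lift $\N\in\Dfiltered(\setJ)$. Then $\ext^1(X,\N)\cong\Ext^1_\Pi(\pi X,N)=0$ by \Cref{Thm:fullexact}, while $\ext^1(\charT_\setJ,\N)=0$ since $\charT_\setJ$ is $\mathcal{E}$-projective. Hence $\ext^1(T,\N)=0$, so $\N\in\add T$, and applying $\pi$ yields $N\in\add\pi X$ (using $\pi\charT_\setJ=0$). This shows $\pi X$ is cluster tilting in $\sub Q_\setJ$.

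For the converse, assume $\pi X$ is cluster tilting. Given $M\in\Dfiltered(\setJ)$ with $\ext^1(T,M)=0$, the isomorphism of \Cref{Thm:fullexact} forces $\Ext^1_\Pi(\pi X,\pi M)=0$, hence $\pi M\in\add\pi X$. Writing $M=M_0\oplus\charT_M$ with $\charT_M\in\add\charT_\setJ$ and $M_0$ having no summands in $\add\charT_\setJ$, the module $M_0$ is a minimal lift of $\pi M_0=\pi M$. The uniqueness of minimal lifts up to $\add\charT_\setJ$-summands in \Cref{Lem:dense}(2), together with the Krull–Schmidt decomposition of $\pi X$ inside $\pi X\in\add\pi X$ (and the fact that $X$ already has no $\add\charT_\setJ$-summands), yields $M_0\in\add X$. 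Therefore $M\in\add(X\oplus\charT_\setJ)=\add T$, so $T$ is cluster tilting.

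The one subtlety I expect will need careful wording is the last step: turning the containment $\pi M\in\add\pi X$ into $M_0\in\add X$ without leaking $\add\charT_\setJ$-summands. This is where the uniqueness clause in \Cref{Lem:dense}(2) (minimal lifts are unique up to direct summands from $\add\charT_\setJ$) is doing the real work, together with the hypothesis that $X$ itself has no $\add\charT_\setJ$-summands, so that the indecomposable summands of $X$ are precisely the minimal lifts of the indecomposable summands of $\pi X$. Everything else is a bookkeeping exercise in the isomorphism of extension groups and the projective-injectivity of $\charT_\setJ$.
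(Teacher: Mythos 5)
Your proposal is correct and follows essentially the same route as the paper, whose proof is a one-line appeal to the properties of the full exact functor $\projfun$ in \Cref{Thm:fullexact} and the uniqueness of lifts up to $\add\charT_\setJ$-summands in \Cref{Lem:dense}; your argument is exactly the expansion of that citation. The only detail you leave implicit is that basicness also transfers (distinct indecomposable summands of $X$ have non-isomorphic images under $\projfun$, again by uniqueness of minimal lifts), which is routine.
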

\begin{proof} 
The proposition follows from the properties of the full exact functor $\projfun$ 
in \Cref{Thm:fullexact} and the uniqueness of lifts from $\sub Q_\setJ$
to $\Dfiltered(\setJ)$ (up to summands from $\add \charT_\setJ$) proven in \Cref{Lem:dense}.
\end{proof}

\begin{corollary}\label{Lem:liftcto1} 
$T\in \Dfiltered(\setJ)$ is a cluster tilting object if and only 
if it is maximal rigid. 
\end{corollary}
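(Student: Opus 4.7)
The plan is to transport the question across the exact functor $\projfun\colon(\Dfiltered(\setJ),\mathcal{E})\to \sub Q_\setJ$ and invoke the corresponding equivalence of maximal rigidity and cluster tilting in $\sub Q_\setJ$ recorded in \Cref{sec:clstruct1}, glued together by the bijection on cluster tilting objects in \Cref{Lem:liftcto}. The forward direction is immediate from the definition of a cluster tilting object: if $T$ is cluster tilting and $T\oplus M$ is rigid, then $\ext^1(T,M)=0$ forces $M\in\add T$, so $T$ cannot be properly enlarged while remaining rigid, i.e.\ $T$ is maximal rigid.

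For the converse, suppose $T\in \Dfiltered(\setJ)$ is maximal rigid. Since each $\charT_j$ with $j\in\setJ$ is projective-injective in $(\Dfiltered(\setJ),\mathcal{E})$ by \Cref{Thm:Frob2CY}, the sum $T\oplus \charT_\setJ$ is still rigid, so maximal rigidity forces $\charT_\setJ\in \add T$. Write $T=\charT_\setJ\oplus X$ with $X$ having no summands in $\add\charT_\setJ$. By \Cref{Thm:fullexact}, $\projfun X=\projfun T$ is rigid in $\sub Q_\setJ$. I claim it is in fact maximal rigid there; granted this claim, \Cref{sec:clstruct1} upgrades $\projfun X$ to a cluster tilting object in $\sub Q_\setJ$, and \Cref{Lem:liftcto} then lifts this conclusion back to show that $T$ is a cluster tilting object in $\Dfiltered(\setJ)$.

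To verify the claim, suppose for contradiction that some indecomposable $N\in \sub Q_\setJ$ with $N\notin \add \projfun X$ makes $\projfun X\oplus N$ rigid. Let $\hat N\in \Dfiltered(\setJ)$ be the minimal lift of $N$ supplied by \Cref{Lem:dense}; minimality together with the indecomposability of $N=\projfun \hat N$ forces $\hat N$ to be indecomposable and to contain no summand from $\add\charT_\setJ$. The isomorphism $\ext^1(-,-)\cong \Ext^1_\Pi(\projfun-,\projfun-)$ of \Cref{Thm:fullexact} then promotes the rigidity of $\projfun X\oplus N$ to that of $T\oplus \hat N$. Finally $\hat N\notin \add T$, since otherwise $N=\projfun\hat N\in \add\projfun T=\add\projfun X$ would contradict the choice of $N$. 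This violates the maximal rigidity of $T$ and proves the claim. The only delicate point is verifying these two properties of $\hat N$ (indecomposable, not in $\add T$), and both reduce to the uniqueness of the minimal lift modulo $\add\charT_\setJ$ summands established in \Cref{Lem:dense}.
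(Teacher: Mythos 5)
Your proposal is correct and follows essentially the same route as the paper: the paper's proof is a one-line appeal to \Cref{Lem:liftcto} together with the characterisation of cluster tilting objects in $\sub Q_{\setJ}$ as maximal rigid objects (\Cref{sec:clstruct1}), which is exactly the transport argument you carry out. You merely make explicit the details the paper leaves implicit (absorbing $\charT_\setJ$, lifting a hypothetical rigid complement via \Cref{Lem:dense}, and using the $\ext^1\cong\Ext^1_\Pi$ isomorphism of \Cref{Thm:fullexact}), and these details are all sound.
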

\begin{proof}
This follows from 
\Cref{Lem:liftcto} and the characterisation of cluster tilting objects in $\sub Q_\setJ$
as maximal rigid objects (see  \Cref{sec:clstruct1}).
\end{proof}

\begin{lemma} \label{Lem:liftapprox}
Let $M,N\in \Dfiltered(\setJ)$.
\begin{itemize}
\item[(1)] If $f\colon M'\rightarrow N$ is a surjective left $\add (M\oplus \charT_\setJ)$-approximation, then $\ker f\in \Dfiltered(\setJ)$ and the following extension is admissible, 
\[
\begin{tikzpicture}[scale=0.6, arr/.style={black, -angle 60}]
\path (3,12) node(a1) {$\ker f$};
\path (6.2,12) node(a2) {$M'$};
\path (9.2,12) node(a3) {$N$};
\path (0,12) node(a0) {$0$}; 
\path (-0.8,12) node(a) {$\varepsilon$:}; 
\path (12.2,12) node(a4) {$0$.};

\path[arr] (a0) edge (a1);
\path[arr] (a1) edge (a2);
\path[arr] (a2) edge node [auto]{$f$} (a3);
\path[arr] (a3) edge (a4);
\end{tikzpicture}
\]
\item[(2)] If $g\colon X\rightarrow \projfun N$ is a surjective left 
$\add \projfun M$-approximation, then $g$ has a lift $\hat{g}$ in 
$\Dfiltered(\setJ)$ such that $\hat{g}$ is a surjective left 
$\add (M\oplus \charT_{\setJ})$-approximation of $N$.
\end{itemize}
\end{lemma}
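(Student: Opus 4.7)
The plan is to reduce both parts of the lemma to the characterisations of the exact structure $\mathcal{E}$ given in \Cref{Lem:exact1}, combined with the lifting results for objects, morphisms and extensions established in \Cref{Lem:dense}, \Cref{Lem:liftofmaps} and \Cref{Lem:liftedprecise}.

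For part (1), the key observation is that since $N\in \Dfiltered(\setJ)$, the trace $\eta N$ lies in $\add \charT_\setJ \subseteq \add(M\oplus \charT_\setJ)$, so the canonical inclusion $\iota\colon \eta N \hookrightarrow N$ is itself a test morphism for the approximation property of $f$. I would factor $\iota = f\sigma$ for some $\sigma\colon \eta N\to M'$ and note that the image of $\sigma$ automatically lies in $\eta M'$, because $\eta N$ is in $\add\charT$ and the trace $\eta M'$ absorbs every morphism from $\add\charT$ into $M'$. The resulting corestriction of $\sigma$ is then a section of $\eta(f)\colon \eta M' \to \eta N$, so by the equivalence (1)$\Leftrightarrow$(5) of \Cref{Lem:exact1} the sequence $\varepsilon$ is admissible. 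From the resulting split exact sequence $0\to \eta\ker f\to \eta M' \to \eta N\to 0$ and the fact that $\eta M'\in \add\charT_\setJ$ (since $M\in\Dfiltered(\setJ)$ forces $\eta M\in \add\charT_\setJ$), I would then conclude that $\eta\ker f\in \add\charT_\setJ$, whence $\ker f\in \Dfiltered(\setJ)$.

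For part (2), I apply \Cref{Lem:liftedprecise} to the short exact sequence $0\to \ker g\to X\to \projfun N\to 0$ in $\sub Q_\setJ$, producing an admissible lift $0\to \widehat{\ker g}\to \hat{X}\oplus \charT'\xrightarrow{\hat{g}_0} \hat{N}\to 0$ in $\Dfiltered(\setJ)$, where $\hat{N}$ is the minimal lift of $\projfun N$ and $\charT'\in \add\charT_\setJ$. Writing $N=\hat{N}\oplus T_N$ with $T_N\in \add\charT_\setJ$, and noting via \Cref{Lem:dense} that the minimal lift $\hat{X}$ of $X\in \add\projfun M$ belongs to $\add M$, I would take $\hat{g}\colon \hat{X}\oplus \charT'\oplus T_N\to N$ to be the direct sum of $\hat{g}_0$ with $\mathrm{id}_{T_N}$. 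This is a surjective lift of $g$ whose source lies in $\add(M\oplus \charT_\setJ)$. To verify the approximation property, given any $h\colon Y\to N$ with $Y\in \add(M\oplus \charT_\setJ)$, I decompose $h=(h_1, h_2)$ along $\hat{N}\oplus T_N$, use the approximation property of $g$ to factor $\projfun h_1 = g\phi$, lift $\phi$ to a morphism $\hat{\phi}\colon Y\to \hat{X}$ via \Cref{Lem:liftofmaps}, and correct the residue $h_1-\hat{g}_0\hat{\phi}$, which factors through $\eta \hat{N}$ by \Cref{Lem:factormaps}, using a splitting of $\eta(\hat{g}_0)$ guaranteed by admissibility; the $T_N$-component is then absorbed by the identity summand of $\hat{g}$.

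I expect the principal technical obstacle to be the bookkeeping in part (2), specifically ensuring that the correction step reconstructs $h_1$ on the nose rather than merely up to something that vanishes under $\projfun$. What makes this go through is precisely the equivalence (1)$\Leftrightarrow$(6) of \Cref{Lem:exact1}: admissibility of the lifted sequence forces $\eta(\hat{X}\oplus \charT') = \eta\widehat{\ker g}\oplus \eta \hat{N}$, which provides the splitting of $\eta(\hat{g}_0)$ needed to absorb every map $Y\to \eta\hat{N}$ back into the source of $\hat{g}$.
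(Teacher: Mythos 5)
Your proof is correct and follows essentially the same route as the paper's: both parts reduce to \Cref{Lem:exact1} (you invoke the split-surjectivity criterion (5) by producing a section of $\eta(f)$ from the test map $\eta N\hookrightarrow N$, where the paper invokes the equivalent $\Hom_\aus(\charT,-)$-surjectivity criterion (4)), and part (2) lifts the sequence via \Cref{Lem:liftedprecise} and corrects test maps through the splitting of the trace sequence exactly as the paper does. The only difference is that you spell out the bookkeeping of $\add\charT_\setJ$-summands (writing $N=\hat N\oplus T_N$ and adding $\mathrm{id}_{T_N}$) that the paper leaves implicit.
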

\begin{proof}
(1) First note that $f$ is also an $\add(M\oplus \charT)$-approximation, since 
$N\in \Dfiltered(\setJ)$ implies that $\eta N=\eta_{\charT_\setJ} N\in \add \charT_\setJ$. 
Therefore, $\Hom(\charT, M')\rightarrow \Hom(\charT, N)$ is surjective. 
So the extension $\varepsilon$ is admissible in $\Dfiltered$, by \Cref{Lem:exact1} (4),
and further by  \Cref{Lem:exact1} (1), $\eta (\varepsilon)$ is 
split exact. Therefore, $\ker f\in \Dfiltered(\setJ)$.

(2) Consider the short exact sequence with $K=\ker g$, 
\[
\begin{tikzpicture}[scale=0.6,
 arr/.style={black, -angle 60}]
 \path (3,12) node(a1) {$K$};
 \path (6,12) node(a2) {$X$};
\path (9,12) node(a3) {$\projfun N$};
 \path (0,12) node(a0) {$0$}; 
  \path (-0.8,12) node(a) {$\epsilon$:}; 
 \path (12,12) node(a4) {$0$.};
  
 \path[arr] (a0) edge  (a1);
 \path[arr] (a1) edge  (a2);
 \path[arr] (a2) edge [auto] node{$g$}  (a3);
 \path[arr] (a3) edge  (a4);
 \end{tikzpicture}
\]
By assumption, $X\in \sub Q_{\setJ}$. So $K\in \sub Q_\setJ$ 
and $\epsilon$ is a short exact sequence in $\sub Q_\setJ$.
By \Cref{Lem:liftedprecise},  the sequence 
 $\epsilon$ lifts to an admissible sequence in $\Dfiltered$, 
\[
\begin{tikzpicture}[scale=0.6,
 arr/.style={black, -angle 60}]
 \path (3,12) node(a1) {$\hat{K}$};
 \path (6,12) node(a2) {$\hat{X}$};
\path (9,12) node(a3) {$N$};
 \path (0,12) node(a0) {$0$}; 
 \path (12,12) node(a4) {$0$,};

\path (-0.8,12) node(a) {$\hat{\epsilon}$:}; 

 \path[arr] (a0) edge  (a1);
 \path[arr] (a1) edge  (a2);
 \path[arr] (a2) edge[auto] node {$\hat{g}$} (a3);
 \path[arr] (a3) edge  (a4);
 \end{tikzpicture}
\]
and thus further by \Cref{Lem:exact1}, $\eta(\hat{g})$ splits. We have, 
\begin{equation}\label{eq:neweta}
\eta(\hat{X})=\eta(\hat{K})\oplus \eta(N).
\end{equation}
By construction, any map $h\colon M'\to N$ with $M' \in \add M$ factors through $\g$, 
up to some summand $Y_h$ in $\add \charT$, that is, 
\[\xymatrix{
h\colon M' \ar[r] & \hat{X}\oplus Y_h\ar[r]^{~~~~~~~l} &N. }
\]
 Now, by \Cref{Lem:factormaps},
any map from an object in $\add \charT$ to $N$ factors through $\eta(N)$. Therefore, further by 
\eqref{eq:neweta}, 
any such a map, 
including $l|_{Y_h}$,  factors through $\g$. So $\g$
is an approximation as claimed.
\end{proof}

\begin{proposition} \label{Lem:liftexchange}  
For a mutable summand $T_k$ in a cluster tilting object $T\in \Dfiltered(\setJ)$, 
there are two admissible mutation sequences in $\Dfiltered(\setJ)$,
\begin{equation}\label{Eq:exseq1}
\begin{tikzpicture}[scale=0.6,
 arr/.style={black, -angle 60}, baseline=(bb.base)]
\coordinate (bb) at (0, 11.8); 
 \path (3,12) node(a1) {$T_k^*$};
 \path (6,12) node(a2) {$E_k$};
\path (9,12) node(a3) {$T_k$};
 \path (0,12) node(a0) {$0$}; 
 \path (12,12) node(a4) {$0$};
  
 \path[arr] (a0) edge  (a1);
 \path[arr] (a1) edge  (a2);
 \path[arr] (a2) edge  (a3);
 \path[arr] (a3) edge  (a4);
\end{tikzpicture}
\end{equation}
and 
\begin{equation}\label{Eq:exseq2}
\begin{tikzpicture}[scale=0.6,
 arr/.style={black, -angle 60}, baseline=(bb.base)]
\coordinate (bb) at (0, 11.8); 
 
 \path (3,12) node(a1) {$T_k$};
 \path (6,12) node(a2) {$F_k$};
\path (9,12) node(a3) {$T_k^*$};
 \path (0,12) node(a0) {$0$}; 
 \path (12,12) node(a4) {$0$};

 \path[arr] (a0) edge  (a1);
 \path[arr] (a1) edge  (a2);
 \path[arr] (a2) edge  (a3);
 \path[arr] (a3) edge  (a4);
 \end{tikzpicture}
\end{equation}
which are lifts of the mutation sequences 
for $\pi T_k, ~\pi T_k^*$ in $\sub Q_\setJ$. Moreover, 
for each $j\in \setJ$, at most one of $E_k$ and $F_k$ 
has $\charT_j$ as a summand.
\end{proposition}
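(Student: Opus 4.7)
The plan is to transfer the exchange data from $\sub Q_\setJ$ to $\Dfiltered(\setJ)$ through $\projfun$ and then analyse the extra $\charT_\setJ$-summands produced by the lift. By \Cref{Lem:liftcto}, $\projfun T$ is a cluster tilting object in $\sub Q_\setJ$ with $\projfun T_k$ a mutable summand, so the cluster structure on $\sub Q_\setJ$ recalled in \Cref{sec:clstruct1} supplies exchange sequences
\[
\epsilon_1\colon 0 \to \projfun T_k^* \to E'_k \to \projfun T_k \to 0, \qquad
\epsilon_2\colon 0 \to \projfun T_k \to F'_k \to \projfun T_k^* \to 0,
\]
with $\projfun T_k^*$ indecomposable and $E'_k,F'_k$ the minimal right, respectively left, $\add(\projfun T/\projfun T_k)$-approximations.

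I would lift both sequences via \Cref{Lem:liftedprecise} to admissible short exact sequences $\hat\epsilon_1,\hat\epsilon_2$ in $\Dfiltered(\setJ)$ with middle terms $E_k=\widehat{E'_k}\oplus\charT'_E$ and $F_k=\widehat{F'_k}\oplus\charT'_F$, where $\charT'_E,\charT'_F\in\add\charT_\setJ$ are prescribed by the socle formula. The end $T_k^*$ is the minimal lift of $\projfun T_k^*$, hence indecomposable by \Cref{Lem:dense}, and both middle terms lie in $\add(T/T_k)$ since $\charT_\setJ\subseteq T/T_k$. To verify that $E_k\to T_k$ is a right $\add(T/T_k)$-approximation (the case of $F_k$ is dual), I would take $g\colon X\to T_k$ with $X\in \add(T/T_k)$ and note that $\projfun g$ factors through $E'_k\to\projfun T_k$ by the approximation property in $\sub Q_\setJ$; this factorisation lifts via \Cref{Lem:liftofmaps}, and the residue valued in $\eta T_k$ factors through the $\charT_\setJ$-part of $E_k$, using that by admissibility and \Cref{Lem:exact1}(6) the inclusion $\eta T_k \hookrightarrow \eta E_k$ splits.

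For the moreover statement, applying $\eta$ to both admissible sequences yields $\eta E_k\cong\eta T_k^*\oplus\eta T_k\cong\eta F_k$ by \Cref{Lem:exact1}(6). Combined with the socle identification in \Cref{Lem:dense}(3), this identifies the multiplicities $d_j^E$ and $d_j^F$ of $\charT_j$ in $E_k$ and $F_k$ with the dimensions of the images of the connecting maps
\[
\partial_1\colon\Hom_\Pi(S_j,\projfun T_k)\to\Ext^1_\Pi(S_j,\projfun T_k^*), \qquad
\partial_2\colon\Hom_\Pi(S_j,\projfun T_k^*)\to\Ext^1_\Pi(S_j,\projfun T_k),
\]
which are given by Yoneda multiplication with $[\epsilon_1]$ and $[\epsilon_2]$. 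Since the stably $2$-Calabi-Yau structure on $\sub Q_\setJ$ makes $\Ext^1_\Pi(\projfun T_k,\projfun T_k^*)$ and $\Ext^1_\Pi(\projfun T_k^*,\projfun T_k)$ mutually dual one-dimensional spaces spanned by $[\epsilon_1]$ and $[\epsilon_2]$, a $2$-Calabi-Yau duality argument then forces at least one of $\partial_1,\partial_2$ to vanish, giving $d_j^E\cdot d_j^F=0$. The main obstacle will be precisely this exclusivity: it amounts to the no-$2$-cycle property at $k$ in the Gabriel quiver of $\End_D T$ for arrows to the frozen vertices $\charT_j$, and since $\projfun\charT_j=0$ these arrows are invisible to $\projfun$ and cannot be imported from the cluster structure on $\sub Q_\setJ$, so the property must be established by the Calabi-Yau duality computation in $\sub Q_\setJ$ applied to the simples $S_j$ with $j\in\setJ$.
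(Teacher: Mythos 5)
Your construction of the two admissible sequences is correct and is essentially the paper's own argument: lift the exchange sequences of $\sub Q_\setJ$ via \Cref{Lem:liftedprecise}, and check the approximation property by lifting a factorisation with \Cref{Lem:liftofmaps} and absorbing the residue through the splitting of $\eta E_k\to\eta T_k$ --- this is exactly \Cref{Lem:liftapprox}(2). Your bookkeeping for the ``moreover'' part is also right: $\eta$-additivity on admissible sequences plus \Cref{Lem:dense}(3) identifies the multiplicity of $\charT_j$ in $E_k$ (resp.\ $F_k$) with $\dim\operatorname{Im}\partial_1$ (resp.\ $\dim\operatorname{Im}\partial_2$), matching the paper's formulas for $c_j,d_j$.

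Where you diverge from the paper is the exclusivity $\partial_1=0$ or $\partial_2=0$: the paper simply quotes the socle-dimension dichotomy from \cite{GLS11} (see also \cite[Lem.~6.14]{DI}), whereas you propose to derive it from the $2$-Calabi--Yau duality. As written this is a gap: you correctly flag it as the crux but then only assert that ``a $2$-Calabi--Yau duality argument forces'' the vanishing. That assertion is not true on the strength of the $2$-CY property alone --- in a general stably $2$-CY category a mutable summand can form a $2$-cycle with another object --- so some extra input is needed. The missing ingredient is the rigidity of the simple, $\Ext^1_\Pi(S_j,S_j)=0$ (no loops in the preprojective algebra). With it your route does close: if $\partial_2\neq 0$, pick $g\colon S_j\to \projfun T_k^*$ with $g^*[\epsilon_2]\neq 0$; non-degeneracy and bifunctoriality of the pairing $\Ext^1(A,B)\times\Ext^1(B,A)\to\CC$ give $\gamma\in\Ext^1_\Pi(\projfun T_k,S_j)$ with $g_*\gamma$ a nonzero multiple of $[\epsilon_1]$ (using $\dim\Ext^1(\projfun T_k,\projfun T_k^*)=1$). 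Then for every $f\colon S_j\to\projfun T_k$ one has
\[
f^*[\epsilon_1]=c\, f^*g_*\gamma=c\, g_*(f^*\gamma)=0,
\]
since $f^*\gamma\in\Ext^1_\Pi(S_j,S_j)=0$; hence $\partial_1=0$. You should either supply this computation (naming the no-loop input) or fall back on the citation the paper uses; without one or the other the final claim is unproved.
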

\begin{proof} 
By \Cref{Lem:liftedprecise}, we can lift mutation sequences from $\sub Q_\setJ$ to exact sequences in $\Dfiltered$. That the lifted maps are approximations follows from 
\Cref{Lem:liftapprox}. We therefore have the mutation sequences as claimed. 

Write 
\[E_k= \widehat{\projfun E_k}\oplus \bigoplus_j\charT_j^{c_j} ~\text{ and  }~
F_k= \widehat{\projfun F_k}\oplus \bigoplus_j\charT_j^{d_j},\]
where $\widehat{\projfun E_k}$ and $\widehat{\projfun F_k} $ are minimal lifts of 
$\projfun E_k$ and $\projfun F_k$.
By \Cref{Lem:liftedprecise}, 
\[
c_j= \dim \Hom(S_j, \projfun T_k) + \dim \Hom(S_j, \projfun T_k^*) -
\dim \Hom(S_j, \projfun E_k)
\]
and 
$$
d_j=\dim \Hom(S_j, \projfun T_k) + \dim \Hom(S_j, \projfun T_k^*)
-\dim \Hom(S_j, \projfun F_k).
$$
Now by  \cite[Prop.]{GLS11} (see also \cite[Lem. 6.14]{DI}), 
\begin{eqnarray*}
&&\mathrm{min}\{\dim \Hom(S_j, \projfun E_k), \dim \Hom(S_j, \projfun F_k)\}\\
&<&\dim \Hom(S_j, \projfun T_k) + \dim \Hom(S_j, \projfun T_k^*)\\
&=&\mathrm{max}\{\dim \Hom(S_j, \projfun E_k), \dim \Hom(S_j, \projfun F_k)\}
\end{eqnarray*}
Therefore, either $c_j=0$ or $d_j=0$. That is, 
at most one of  $E_k$ and $F_k$ has $\charT_j$ as a summand.
\end{proof}

\begin{remark}\label{rem:reach}
Following \Cref{Lem:liftcto1} and \Cref{Lem:liftexchange}, 
a cluster tilting object $T'$ is reachable from $T$ if and only if $\projfun T'$ 
is reachable from $\projfun T$.
\end{remark}

\subsection{Cluster structure on $(\Dfiltered, \mathcal{E} )$} \label{Subsection:matrixB}
\label{sec:7.3}

\begin{proposition}\label{Prop:2-cycles}
Let $T\in \Dfiltered(\setJ)$ be a cluster tilting object. Then 
the quiver $Q_T$ has no loops at any vertex and no 2-cycles incident to any mutable vertex. 
\end{proposition}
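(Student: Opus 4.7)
The plan is to combine three ingredients already at hand: the absence of loops in the Gabriel quiver of the projective-injective generator $P$ (\Cref{Lem:noloops}); the equivalence $\Dfiltered(\setJ)/\add\charT_\setJ \simeq \sub Q_\setJ$ given by $\projfun$ (\Cref{Thm:equiv}), combined with \Cref{Lem:liftcto} showing that $\projfun T$ is a cluster tilting object of $\sub Q_\setJ$, whose own cluster structure (\Cref{sec:clstruct1}) ensures $Q_{\projfun T}$ has no loops and no 2-cycles at mutable vertices; and \Cref{Lem:liftexchange}, which says that for every $j\in \setJ$ at most one of the middle terms $E_k$, $F_k$ of the two lifted mutation sequences at a mutable summand $T_k$ has $\charT_j$ as a direct summand.

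For loops, I would split on the type of vertex. Every cluster tilting object in the Frobenius category $\Dfiltered(\setJ)$ contains the projective-injective generator $P = (\oplus_{l\in[n-1]} P_l) \oplus \charT_\setJ$ as a summand, so $\add P\subseteq \add T$; consequently $\rad^2_{\add T}(X,X) \supseteq \rad^2_{\add P}(X,X)$ for any indecomposable summand $X$ of $P$, giving $\dim \Irr_{\add T}(X,X) \leq \dim \Irr_{\add P}(X,X) = 0$ by \Cref{Lem:noloops}, and ruling out loops at every frozen vertex. For a loop at a mutable summand $T_i$, note that $T_i\notin \add\charT_\setJ$ so $\projfun T_i\neq 0$; I would verify that $\projfun$ descends to an injection $\Irr_{\add T}(T_i,T_i) \hookrightarrow \Irr_{\add\projfun T}(\projfun T_i,\projfun T_i)$, after which the absence of loops in $Q_{\projfun T}$ settles the case.

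For 2-cycles incident to a mutable vertex $T_k$, I split on the partner $T_j$. If $T_j\notin \add\charT_\setJ$ (either another mutable summand or a frozen $P_l$), the same descent yields irreducible maps in both directions between the corresponding nonzero summands of $\projfun T$, producing a 2-cycle at the mutable vertex $\projfun T_k$ of $Q_{\projfun T}$, which is impossible. If $T_j$ equals some $\charT_l$ with $l\in\setJ$, a 2-cycle at $T_k$ would force $\charT_l$ to appear as a summand of both $E_k$ and $F_k$, directly contradicting \Cref{Lem:liftexchange}.

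The main technical point is the descent injectivity $\Irr_{\add T}(T_i,T_j) \hookrightarrow \Irr_{\add\projfun T}(\projfun T_i,\projfun T_j)$ for summands $T_i,T_j\notin \add\charT_\setJ$. The key observation is that any map $T_i\to T_j$ factoring through some $\charT_k\in \add\charT_\setJ$ lies automatically in $\rad^2_{\add T}$: the composition $T_i\to\charT_k\to T_j$ consists of two non-invertible morphisms in $\add T$, since $T_i,T_j\not\cong \charT_k$. Conversely, any factorisation of $\projfun f$ in $\add\projfun T$ lifts via \Cref{Thm:equiv} to a factorisation of $f$ in $\add T$ modulo a map through $\add\charT_\setJ$, which again sits in $\rad^2_{\add T}$. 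With this bookkeeping in place, the three cases above complete the proof.
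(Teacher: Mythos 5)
Your proposal is correct and follows essentially the same route as the paper's proof: loops are excluded by combining the absence of loops in $Q_{\projfun T}$ with \Cref{Lem:noloops}, and $2$-cycles at a mutable vertex are excluded by descending to $Q_{\projfun T}$ when the partner is not in $\add\charT_\setJ$ and by invoking \Cref{Lem:liftexchange} when it is. The only difference is that you spell out the comparison of $\Irr_{\add T}$ with $\Irr_{\add \projfun T}$ (maps factoring through $\add\charT_\setJ$ lie in $\rad^2_{\add T}$), a point the paper leaves implicit.
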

\begin{proof}
Note that 
the Gabriel quiver  $Q_{\pi T}$ of 
$(\End  \projfun T)^{\text{op}}$ does not have loops
nor 2-cycles (see \cite[Sect. 8.1]{GLS08}). 
Together with \Cref{Lem:noloops}, this implies that there are no loops in $Q_T$.

We show that there are no $2$-cycles incident to a mutable vertex in $Q_T$.
By the Dlab--Ringel equivalence \eqref{Thm:equiv},
$\End  \pi T$ is the quotient of $\End  T$ by the maps 
that factor through $\add \charT_\setJ$. Then, since there are no $2$-cycles in 
$Q_ {\pi T}$, there are no $2$-cycles in $Q_T$ between mutable vertices, or between
a mutable vertex and a vertex corresponding to a projective summand $P_j$.  
It remains to consider summands $\charT_j$ of $\charT_{\setJ}$. By \Cref{Lem:liftexchange}, $\charT_j$ occurs in at most one of $E_k$ and $F_k$.
 There are therefore
no $2$-cycles between mutable vertices and those corresponding to 
$\charT_j \in \add \charT_\setJ$. 
\end{proof}

\begin{theorem} \label{thm:clusterstructure} 
The cluster tilting objects in $\Dfiltered(\setJ)$ form a cluster structure. 
\end{theorem}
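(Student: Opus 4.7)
The plan is to verify the four defining properties of a cluster structure as recalled in \Cref{sec:clstructnew}, relying on the fact that $\sub Q_\setJ$ already admits a cluster structure by \cite{GLS08} together with the lifting machinery developed in the preceding sections.

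First, the absence of loops in $Q_T$ and of 2-cycles at mutable vertices is exactly \Cref{Prop:2-cycles}. Second, for each mutable summand $T_k$ of a cluster tilting object $T$, the required mutation sequences \eqref{Eq:exseq1} and \eqref{Eq:exseq2} in $\mathcal{E}$ are produced by \Cref{Lem:liftexchange}, with \Cref{Lem:liftapprox} ensuring that the maps on either side of the middle term are minimal $\add(T\setminus\{T_k\})$-approximations; the indecomposability of the lift $T_k^*$ follows from the uniqueness statement in \Cref{Lem:dense}.

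Third, to see that $\mu_k T = (T\setminus\{T_k\})\oplus T_k^*$ is again a cluster tilting object, I would apply the bijection in \Cref{Lem:liftcto}. Since $\projfun$ is full exact and since the mutation sequences in $\Dfiltered(\setJ)$ descend under $\projfun$ to the mutation sequences for $\projfun T$ in $\sub Q_\setJ$, we obtain $\projfun(\mu_k T) = \mu_k(\projfun T)$ up to summands in $\add\charT_\setJ$; this is a cluster tilting object in $\sub Q_\setJ$ by the cluster structure there, and lifting back through \Cref{Lem:liftcto} yields $\mu_k T$ as a cluster tilting object in $\Dfiltered(\setJ)$.

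The main obstacle is the fourth property, that $Q_{\mu_k T}$ is obtained from $Q_T$ by Fomin--Zelevinsky mutation at $k$. Arrows between mutable vertices, and between mutable vertices and vertices corresponding to non-$\charT_\setJ$-summands of $T$, can be read off from $Q_{\projfun T}$ via the refined Dlab--Ringel equivalence in \Cref{Thm:equiv} and mutate correctly by the cluster structure on $\sub Q_\setJ$. The delicate case is an arrow between a mutable vertex $k$ and a frozen vertex corresponding to some $\charT_j \in \add\charT_\setJ$. Here I would combine the explicit formulas for the multiplicities of $\charT_j$ in $E_k$ and $F_k$ appearing in the proof of \Cref{Lem:liftexchange} with the no-2-cycle property of \Cref{Prop:2-cycles} (which guarantees that the exchange matrix entry $b_{ik}$ is the signed difference $\dim\Irr(T_i,T_k) - \dim\Irr(T_k,T_i)$), and thereby confirm that these signed arrow counts flip sign under mutation at $k$, matching the FZ-rule.
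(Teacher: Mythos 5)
The paper's own proof is a one-line appeal to a general categorical result: by \cite[Thm.~II.1.6]{BIRS}, in a Hom-finite Frobenius stably 2-CY category the cluster tilting objects automatically determine a cluster structure as soon as their quivers have no loops and no 2-cycles at mutable vertices; hence everything follows from \Cref{Thm:Frob2CY} together with \Cref{Prop:2-cycles}. Your plan of checking the four defining properties by hand is therefore a genuinely different and much longer route --- it amounts to re-proving the BIRS theorem in this particular category. Your verifications of the first three properties are fine and line up with the supporting lemmas (\Cref{Prop:2-cycles}, \Cref{Lem:liftexchange}, \Cref{Lem:liftapprox}, \Cref{Lem:liftcto}, \Cref{Lem:dense}).

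The gap is in the fourth property. The Fomin--Zelevinsky rule at direction $k$ does two things: it flips the sign of the entries in row/column $k$, and it updates every other entry by $b_{ij}\mapsto b_{ij}+\tfrac{1}{2}\bigl(|b_{ik}|b_{kj}+b_{ik}|b_{kj}|\bigr)$, i.e.\ by counting composite paths through $k$. For the rows of $\mat B_T$ indexed by the frozen summands $\charT_j$ you only address the sign flip $b'_{\charT_j,k}=-b_{\charT_j,k}$; you never verify the second, harder part for the entries $b_{\charT_j,i}$ with $i\neq k$ mutable. These entries cannot be read off from $Q_{\projfun T}$, since $\projfun\charT_j=0$, so your appeal to the cluster structure on $\sub Q_{\setJ}$ does not reach them, and the explicit multiplicity formulas in the proof of \Cref{Lem:liftexchange} concern only the mutation sequences at $k$ itself, not the arrows between $\charT_j$ and a third vertex $i$. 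Filling this in from first principles requires the 2-CY duality comparison of the two exchange sequences carried out in \cite{BIRS}; that is exactly what the cited theorem packages, and why the paper can reduce the whole statement to the no-loop/no-2-cycle check of \Cref{Prop:2-cycles}.
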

\begin{proof}
By  \cite[Thm. II.1.6]{BIRS}, if the cluster tilting objects in a 2-CY category have no loops or 
2-cycles incident at mutable vertices, then they determine a cluster structure on the category. 
So the theorem follows from  \Cref{Prop:2-cycles}. 
\end{proof}

\begin{example} \label{Ex:GabrielQuiversEndT}
 We continue from the example in \Cref{Ex:ARquivers} (1). Let
\[P=M_{12}\oplus M_{23}\oplus  M_{34} \oplus M_{14} \text{ and } T= M_{13}\oplus P.\]
Then $T$ is a cluster tilting object with one mutable summand $M_{13}$. 
Denote the summands of $T$ in turn by $T_1, ~T_2, ~T_3, ~T_4$ and $T_5$, with $T_1=M_{13}$. 

\noindent (1) The algebra $(\End  P)^{\mathrm{op}}$ is defined by the double cyclic quiver below  with relations:
 \begin{itemize}
 \item[(i)] starting from each vertex,  $xy=yx$ and $x^2=y^2$.
 Note that these are the defining relations for the algebra $C$ in \cite{JKS1} (see also \S\ref{sec:Grass}) when $k=2$ and $n=4$,
 where $\CM C$ is applied to give a categorification the cluster algebra $\CC[\gr(2, 4)]$.

\item[(ii)] $x_3x_2=0$ and $y_4y_5$=0.
\end{itemize}
\[\begin{tikzpicture}[scale=0.6,
 arr/.style={thick, -angle 60}]
 
\path (4, 2) node (1) {$\bullet$};
\path (3.6, 1.8) node () {$5$};
\path (8, 2) node (2) {$\bullet$};
\path (8.4, 2) node (2) {$4$};
 
 \path (4, 6) node (3) {$\bullet$};
\path (8, 6) node (4) {$\bullet$};
 \path (3.6, 6.2) node (3) {$2$};
\path (8.4, 6) node (4) {$3$};
 
\draw[blue, thick, ->] (4.2, 5.8)  to  (7.8, 5.8);
\draw[blue, thick, ->]  (7.8, 6.1) to   (4.2, 6.1);
\path(6, 6.5) node  (){$y_2$};
\path(6, 5.4) node  (){$x_2$};

\draw[blue, thick, ->] (4.2, 1.8)  to (7.8, 1.8);
\draw[blue, thick, ->] (7.8, 2.1) to  (4.2, 2.1);
\path(6, 2.5) node(){$x_4$};
\path(6, 1.4) node(){$y_4$};

\draw[blue, thick,->]   (3.8, 5.5) to (3.8, 2.5);
\draw[blue, thick,->] (4.2, 2.5) to  (4.2, 5.5);
\path(3.4, 4) node(){$y_5$};
\path(4.6, 4) node(){$x_5$};

\draw[blue, thick,->]  (7.8, 5.5) to (7.8, 2.5);
\draw[blue, thick,->] (8.2, 2.5) to  (8.2, 5.5);
\path(8.6, 4) node(){$y_3$};
\path(7.4, 4) node(){$x_3$};

\path (12, 2) node (1) {$\bullet$};
\path (16, 2) node (2) {$\bullet$};
\path (11.7, 2) node () {$5$};
\path (16.3, 2) node () {$4$};
  
\path (14, 4) node (5) {$\bullet$};
\path (14.4, 4) node () {$1$};

\path (12.8, 4.80) node () {$a_2$};
\path (15.3, 4.80) node () {$a_3$};
\path (12.7, 3.2) node () {$a_5$};
\path (15.3, 3.1) node () {$a_4$};
 
 \path (12, 6) node (3) {$\bullet$};
\path (16, 6) node (4) {$\bullet$};
 \path (11.6, 6) node () {$2$};
\path (16.4, 6) node () {$3$};
 
\draw[blue, thick,->] (3) -- (4);
\path(14, 6.5) node(){$x_2$};
\draw[blue, thick,->] (4) -- (5);
\draw[blue, thick,->] (5) -- (3);

\draw[blue, thick,->]  (2) --  (1);
\path (14, 1.5) node (){$x_4$};

\draw[blue, thick,->] (1) -- (5);
\draw[blue, thick,->]  (5) -- (2);

\draw[blue, thick,->] (3) -- node[left]{{\color{black}$y_5$}}(1);
\draw[blue, thick,->]  (2) --node[right]{{\color{black}$y_3$}} (4);
 \end{tikzpicture}
 \]
(2) The algebra $A=(\End  T)^{\mathrm{op}}$ is defined by the quiver  on the right with relations: 
 \begin{itemize}
 \item[(i)] for each $i$, the two paths of length 2 from $i$
to $1$  are the same.
  
\item[(ii)] any path containing two of the arrows (which may be equal) $a_4$, $x_2$, $y_5$ is zero.
Note that these three named arrows 
correspond to irreducible maps induced by $t$ (see \eqref{eq:t}) when restricted to vertex 0, while 
the other arrows correspond to injective irreducible maps.
\end{itemize}
The algebra $A$ 
has the same Gabriel quiver as its
 counterpart constructed from $\CM C$ \cite{JKS1}, and so 
 the exchange matrices defined by the two Gabriel quivers are the same. 
This is only natural, as we will see in  \Cref{sec9} that $\Dfiltered(4, \{2\})$ 
also gives a  categorification of the cluster structure on $\CC[\Gr(2, 4)]$.
\end{example}

\section{Projective resolutions and the Grothendieck group $K(\proj A)$}
Let $T=\oplus_i T_i$ be a cluster tilting object in $\Dfiltered(\setJ)$,
 $A=(\End T)^{\text{op}}$ and $Q_T$ be as in \Cref{sec:clstructnew}.  
Let 
\[\bar A=(\End \projfun T)^{\text{op}}. \]
We compare the Grothendieck groups of $A$ and $\bar A$.
\subsection{Projective resolutions of $A$-modules}\label{sec:8.1}
Let  $\funF=\Hom(T, -).$ 
Then $\funF$ induces an equivalence 
\begin{equation}\label{eq:addTprojA}
\funF\colon \add T\to \proj A.\end{equation}
Denote by $S_i$ the simple top of the projective 
$A$-module $\funF T_i$. 
\begin{lemma} \label{Lem:resHom}
Let $M\in \Dfiltered(\setJ)$. An $\add T$-approximation $T'\to M$ can be completed 
to an admissible exact sequence,
\begin{equation}\label{Eq:newname}
\begin{tikzpicture}[scale=0.3,
 arr/.style={black, -angle 60}, baseline=(bb.base)]
  \coordinate (bb) at (0, 11.8);
 \path (6,12) node(a1) {$T''$};
 \path (12,12) node(a2) {$T'$};
\path (18,12) node(a3) {$M$};
 \path (0,12) node(a0) {$0$}; 
 \path (24,12) node(a4) {$0$.};

 \path[arr] (a0) edge  (a1);
 \path[arr] (a1) edge  (a2);
 \path[arr] (a2) edge  (a3);
 \path[arr] (a3) edge  (a4);
 \end{tikzpicture}
\end{equation}
where $T''\in \add T$. Consequently, 
$\projd_A \funF M\leq 1.$
\end{lemma}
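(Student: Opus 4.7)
The plan is to produce the admissible resolution in two stages: first construct a surjective admissible right $\add T$-approximation, then show its kernel lies in $\add T$. Because $\Dfiltered(\setJ)$ is Frobenius (\Cref{Thm:Frob2CY}) and $T$ is a cluster tilting object, $T$ must contain every indecomposable projective-injective of $\Dfiltered(\setJ)$ as a summand: any such $P$ satisfies $\ext^1(T,P)=0$, so the cluster tilting property forces $P\in\add T$. In particular there is an admissible surjection $\pi\colon P\twoheadrightarrow M$ with $P\in\add T$. Let $f\colon T'\to M$ be a right $\add T$-approximation of $M$. Then $\pi$ factors through $f$, so $f$ itself is surjective. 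Applying the exact functor $\eta$ (cf.\ \Cref{Lem:exact1}) to the factorisation $\pi=fg$, we see that $\eta(\pi)$ surjective forces $\eta(f)$ to be surjective, which by \Cref{Lem:exact1}(5) means $f$ is admissible. Hence $f$ completes to a sequence \eqref{Eq:newname} with $T''\in\Dfiltered(\setJ)$.

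The next step is to show $T''\in\add T$. Applying $\Hom_D(T,-)$ to the admissible sequence \eqref{Eq:newname} gives the long exact sequence
\[
0\to\Hom_D(T,T'')\to\Hom_D(T,T')\to\Hom_D(T,M)\to\ext^1(T,T'')\to\ext^1(T,T').
\]
By the defining property of the approximation, $\Hom_D(T,T')\to\Hom_D(T,M)$ is surjective, so the connecting map injects $\ext^1(T,T'')$ into $\ext^1(T,T')=0$, where the vanishing uses that $T$ is rigid in $(\Dfiltered(\setJ),\mathcal{E})$. Thus $\ext^1(T,T'')=0$, and the cluster tilting property of $T$ forces $T''\in\add T$.

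Finally, for the projective dimension statement I would apply $\funF=\Hom_D(T,-)$ to \eqref{Eq:newname}. The vanishing of $\ext^1(T,T'')$ just established collapses the long exact sequence to
\[
0\to\funF T''\to\funF T'\to\funF M\to 0,
\]
and via the equivalence $\funF\colon\add T\to\proj A$ of \eqref{eq:addTprojA} both $\funF T''$ and $\funF T'$ are projective $A$-modules. Hence this is a projective resolution of $\funF M$ of length at most one, so $\projd_A\funF M\le 1$.

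The step I expect to require the most care is the admissibility of $f$: a priori an $\add T$-approximation is exact only in the ambient category $\mod D$, and $\mathcal{E}$ is strictly finer than this exact structure. The key observation that makes it work is the coincidence of Frobenius-ness of $(\Dfiltered(\setJ),\mathcal{E})$ with the cluster tilting hypothesis, which guarantees $\charT_\setJ\oplus\bigoplus_{j\in[n-1]}P_j\in\add T$ and thus lets one dominate $M$ by an admissible projective-injective surjection before comparing with $f$.
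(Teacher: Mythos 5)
Your proof is correct and follows essentially the same strategy as the paper's: surjectivity of the approximation because the projective-injectives lie in $\add T$, admissibility of the resulting conflation (the paper packages your $\eta$-argument as \Cref{Lem:liftapprox}(1)), vanishing of $\ext^1(T,T'')$ from the approximation property together with rigidity of $T$, the cluster tilting property to conclude $T''\in\add T$, and finally an application of $\funF$. The only cosmetic difference is that the paper deduces $\ext^1(T,T'')=0$ from the long exact sequence of the associated triangle in the stable category, whereas you use the $\ext$-long exact sequence of the admissible short exact sequence directly.
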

\begin{proof}
Since $T$ contains the indecomposable projective-injective objects, 
an $\add T$-approximation $T'\to M$ is surjective. 
Then by \Cref{Lem:liftapprox} (1), the short exact sequence \eqref{Eq:newname} is admissible and $T''\in \Dfiltered(\setJ)$. Applying $\stHom(T,-)$ to the triangle 
corresponding to \eqref{Eq:newname} in the  stable category of $\Dfiltered(\setJ)$
and taking the long exact sequence we see that \[\ext^1(T,T'')=0.\] 
As $T$ is a cluster tilting object,  $T''$ is in $ \add T$.  
So we obtain a short exact sequence, 
\begin{equation*}\label{Eq:8.1}
\begin{tikzpicture}[scale=0.6,
 arr/.style={black, -angle 60}]
 \path (3,12) node(a1) {$\funF T''$};
 \path (6.2,12) node(a2) {$\funF T'$};
\path (9.4,12) node(a3) {$\funF M$};
 \path (0,12) node(a0) {$0$}; 
 \path (12.4,12) node(a4) {$0$};

 \path[arr] (a0) edge  (a1);
 \path[arr] (a1) edge  (a2);
 \path[arr] (a2) edge  (a3);
 \path[arr] (a3) edge  (a4);
 \end{tikzpicture}
\end{equation*}
and so $\projd_A \funF M\leq 1.$
\end{proof}
\begin{lemma}\label{Lem:resmutablesimple} 
Suppose that $T_k$ is  a mutable summand  of $T$. By splicing together the
two mutation sequences \eqref{Eq:exseq1} and \eqref{Eq:exseq2} for $T_k$ 
and applying $\funF$, we obtain a projective resolution
\[
\begin{tikzpicture}[scale=0.3,
 arr/.style={black,  -angle 60}]
  \path (0, 2) node(a0) {$0$}; 
 \path (6, 2) node(a1) {$\funF T_k$};
 \path (12, 2) node(a2) {$\funF F_k$};
\path (18, 2) node(a3) {$\funF E_k$};
 \path (24, 2) node(a4) {$\funF T_k$};
  \path (30, 2) node(a5) {$S_k$};
  \path (36, 2) node(a6) {$0$.};

 \path[arr] (a0) edge  (a1);
 \path[arr] (a1) edge  (a2);
 \path[arr] (a2) edge  (a3);
 \path[arr] (a3) edge  (a4);
 \path[arr] (a4) edge  (a5);
\path[arr] (a5) edge  (a6);
 \end{tikzpicture}
 \]
of the simple $A$-module $S_k$.
Consequently, 
$\projd_A S_k\leq 3. $
\end{lemma}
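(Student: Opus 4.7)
The plan is to apply the functor $\funF = \Hom_D(T,-)$ separately to each of the two admissible mutation sequences provided by \Cref{Lem:liftexchange}, obtaining complexes of projective $A$-modules (using that $\funF$ restricts to the equivalence $\add T \to \proj A$ of \eqref{eq:addTprojA}), and then splice the two resulting sequences at $\funF T_k^*$.

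For the first sequence $0 \to T_k^* \to E_k \to T_k \to 0$, left exactness of $\Hom_D(T,-)$ immediately gives exactness of $0 \to \funF T_k^* \to \funF E_k \to \funF T_k$. The key step is to verify that the cokernel of the last map is $S_k$, i.e.\ that the image of $\funF E_k \to \funF T_k$ equals $\rad \funF T_k$. The inclusion in $\rad \funF T_k$ follows because $T_k$ is not a summand of $E_k$ (owing to the absence of loops at $k$ in $Q_T$ given by \Cref{Prop:2-cycles}), so every composition $T_i \to E_k \to T_k$ is a non-isomorphism. For the reverse inclusion, I use that $E_k \to T_k$ is a right $\add(T \setminus T_k)$-approximation, so any map $T_i \to T_k$ with $i \neq k$ factors through $E_k$, while any element of $\rad \End T_k$ lies in $\rad^2_{\add T}(T_k,T_k)$ by the no-loops property and thus, after iteratively using nilpotency of $\rad \End T_k$, factors through some $T_i$ with $i \neq k$, hence through $E_k$. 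This produces the exact sequence
\[
0 \to \funF T_k^* \to \funF E_k \to \funF T_k \to S_k \to 0.
\]

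For the admissible sequence $0 \to T_k \to F_k \to T_k^* \to 0$, applying $\funF$ and invoking the long exact sequence for $\mathcal{E}$-extensions afforded by \Cref{Thm:fullexact} yields
\[
0 \to \funF T_k \to \funF F_k \to \funF T_k^* \to \ext^1(T, T_k),
\]
and the last term vanishes since $T$ is a cluster tilting object, hence rigid, in $(\Dfiltered(\setJ), \mathcal{E})$. Splicing the resulting short exact sequence with the four-term sequence above at $\funF T_k^*$ yields
\[
0 \to \funF T_k \to \funF F_k \to \funF E_k \to \funF T_k \to S_k \to 0,
\]
a length-$3$ projective resolution of $S_k$, whence $\projd_A S_k \leq 3$. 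The main technical hurdle I anticipate is pinning down the image of $\funF E_k \to \funF T_k$ as exactly $\rad \funF T_k$, which requires combining the minimality of the approximation $E_k \to T_k$ with the no-loops property at the mutable vertex $k$; once this is in hand, the rest is a formal splicing argument.
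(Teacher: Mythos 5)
Your proof is correct and follows essentially the same route as the paper: apply $\funF$ to the two admissible mutation sequences, identify the cokernel of $\funF E_k\to\funF T_k$ with $S_k$ via the approximation property of $f$ combined with the absence of loops at $k$ (\Cref{Prop:2-cycles}), and splice. You simply spell out in more detail the two points the paper leaves implicit, namely that the image of $\funF f$ is exactly $\rad\funF T_k$ and that rigidity of $T$ makes $\funF F_k\to\funF T_k^*$ surjective.
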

\begin{proof}
Applying $\funF$ to the mutation sequences in \eqref{Eq:exseq1} and 
\eqref{Eq:exseq2} produces the long exact sequence, 
\[
\begin{tikzpicture}[scale=0.3,
 arr/.style={black,  -angle 60}]
  \path (0, 2) node(a0) {$0$}; 
 \path (6, 2) node(a1) {$\funF T_k$};
 \path (12, 2) node(a2) {$\funF F_k$};
\path (18, 2) node(a3) {$\funF E_k$};
 \path (24, 2) node(a4) {$\funF T_k$};
  \path (30, 2) node(a5) {$\cok \funF f$};
  \path (36, 2) node(a6) {$0$.};

 \path[arr] (a0) edge  (a1);
 \path[arr] (a1) edge  (a2);
 \path[arr] (a2) edge  (a3);
 \path[arr] (a3) edge node [auto]{$\funF f$}  (a4);
 \path[arr] (a4) edge (a5);
\path[arr] (a5) edge  (a6);
 \end{tikzpicture}
 \]
where 
$f\colon E_k\to T_k$ is an $\add (T\minus \{T_k\})$-approximation. By 
\Cref{Prop:2-cycles} the quiver of $Q_T$ has no loops, 
 so the only map $T_k\to T_k$ (up to a scalar) that does not factor through $f$ is the identity map. Therefore, the cokernel of $\funF f$ is isomorphic to
$S_k$, and so $S_k$ has the projective resolution as stated. 
\end{proof}

 Recall that the indecomposable projective objects in $\Dfiltered(\setJ)$ are 
$P_j$ for $1\leq j \leq n-1$, 
  and $\charT_j$ for $j\in \setJ$. Each $P_j$
is the minimal  lift of the projective object $Q_{n-j}\in \sub Q_\setJ$.
\begin{lemma} \label{Lem:frozensimple}
Let $T_i=P_{j}$ for some $j$.
Then the simple $A$-module $S_i$ has  a projective resolution as follows, 
\[
\begin{tikzpicture}[scale=0.3,
 arr/.style={black, -angle 60}]
 \path (0, 2) node(a0) {$0$}; 
 \path (6, 2) node(a1) {$\funF T''$};
 \path (12, 2) node(a2) {$\funF T'$};
\path (18, 2) node(a3) {$\funF T_i$};
 \path (24, 2) node(a4) {$S_i$};
 \path (30, 2) node(a5) {$0$.};

 \path[arr] (a0) edge  (a1);
 \path[arr] (a1) edge  (a2);
 \path[arr] (a2) edge  (a3);
 \path[arr] (a3) edge  (a4);
  \path[arr] (a4) edge  (a5);
 \end{tikzpicture}
 \]
Consequently, $\projd_A S_i \leq 2.$
\end{lemma}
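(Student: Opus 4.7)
The plan is to mimic the strategy of \Cref{Lem:resmutablesimple}, replacing the (unavailable) mutation sequences with an $\add T$-approximation of $\rad T_i$. Concretely, I will apply $\funF$ to an admissible exact sequence resolving $\rad T_i$, and then splice with the inclusion $\iota\colon \rad T_i\hookrightarrow T_i$ and the quotient $\funF T_i\twoheadrightarrow S_i$ to obtain a length-2 projective resolution of $S_i$.

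First I verify that $\rad T_i$ lies in $\Dfiltered(\setJ)$: as a submodule of the good module $T_i=P_j$, it is good by \Cref{Lem:chardelta}, and since $P_j$ has simple top $S_j$ with $j\geq 1$ (the summands of $\eta P_j\in\add\charT_\setJ$ have top $S_0$, hence $\eta P_j\subseteq\rad P_j$), we have $e_0(\rad T_i)=e_0 T_i$, whence $\eta(\rad T_i)=De_0(\rad T_i)=\eta T_i\in\add \charT_\setJ$. Applying \Cref{Lem:resHom} to $\rad T_i$ then produces an admissible short exact sequence
\[
0\to T''\to T'\xrightarrow{g}\rad T_i\to 0
\]
with $T',T''\in\add T$. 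Since $T''$ is rigid, $\ext^1(T,T'')=0$, so applying $\funF$ yields the exact sequence $0\to \funF T''\to \funF T'\to \funF(\rad T_i)\to 0$ of $A$-modules.

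The main obstacle is to identify the induced injection $\funF\iota\colon\funF(\rad T_i)\hookrightarrow \funF T_i$ with $\rad_A\funF T_i$, so that the cokernel in $\mod A$ is precisely the simple $S_i$. Note first that $\funF T_i$ has simple top $S_i$: this is because $\End_D T_i$ is local and $Q_T$ has no loops at $i$ by \Cref{Lem:noloops}. It therefore suffices to show that for any indecomposable summand $T_k\ne T_i$ of $T$, every homomorphism $f\colon T_k\to T_i$ has image contained in $\rad T_i$. If $f$ were not contained in $\rad T_i$, then by Nakayama (using the simple top of $P_j$ noted above) $f$ would be surjective; then $\eta f$ would be surjective because $e_0 f$ is, so $f$ would be an admissible deflation by \Cref{Lem:exact1}. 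But $T_i=P_j$ is projective in $(\Dfiltered(\setJ),\mathcal{E})$ by \Cref{Thm:Frob2CY}, so this admissible surjection would split, making $T_i$ a summand of the indecomposable $T_k$ and forcing $T_k\cong T_i$, a contradiction. Hence $\rad_A\funF T_i=\funF\iota(\funF(\rad T_i))$, and splicing gives
\[
0\to \funF T''\to \funF T'\to \funF T_i\to S_i\to 0,
\]
so $\projd_A S_i\le 2$, as claimed.
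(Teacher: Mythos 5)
Your overall strategy is the same as the paper's: resolve the maximal submodule $\rad T_i$ (which is the paper's minimal lift $X$ of $\rad Q_{n-j}$) by an admissible $\add T$-approximation via \Cref{Lem:resHom}, splice with the inclusion into $T_i$, and identify the cokernel of $\funF(\rad T_i)\to \funF T_i$ with $S_i$. The verification that $\rad T_i\in\Dfiltered(\setJ)$, the application of $\funF$, and the splicing are all fine.

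The gap is in the step ``$\eta f$ would be surjective because $e_0f$ is, so $f$ would be an admissible deflation by \Cref{Lem:exact1}.'' The relevant condition in \Cref{Lem:exact1} is (5), which requires $\eta(f)$ to be \emph{split} surjective (equivalently, condition (6): $\eta T_k\cong \eta T_i\oplus\eta\ker f$), not merely surjective. Surjectivity of a map in $\add\charT$ does not imply splitness: under the equivalence $\e_0\colon\add\charT\to\mod\CC[t]/(t^n)$ of \Cref{Lem:charTequivalence}, there are non-split surjections such as $\CC[t]/(t^2)\to\CC[t]/(t)$. Correspondingly, in $(\Dfiltered(\setJ),\mathcal{E})$ there exist non-admissible surjections onto projective-injective objects — e.g.\ for $1,2\in\setJ$ the surjection $\charT_2=M_{12}\twoheadrightarrow M_1=\charT_1$ has $\eta M_{12}=\charT_2\not\cong\charT_1\oplus\charT_1$ — so ``surjective onto a projective-injective'' does not by itself yield a splitting, and your contradiction is not yet established.

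The claim you need is nevertheless true for $T_i=P_j$, and the clean repair is to pass through $\projfun$ rather than $\eta$ (this is essentially what the paper does). If $f\colon T_k\to T_i$ were surjective, then $\projfun f\colon\projfun T_k\to\projfun T_i=Q_{n-j}\neq 0$ is surjective; its kernel is a submodule of $\projfun T_k\in\sub Q_\setJ$, hence lies in $\sub Q_\setJ$, so the induced short exact sequence is a conflation in $\sub Q_\setJ$ and splits because $Q_{n-j}$ is projective there (\Cref{Prop:GLSFrob}). Indecomposability of $\projfun T_k$ forces $\projfun f$ to be an isomorphism, and the uniqueness of lifts up to summands in $\add\charT_\setJ$ (\Cref{Lem:dense}) then gives $T_k\cong T_i$, the desired contradiction. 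With this substitution your argument is complete.
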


\begin{proof}
Let $X$ be the minimal  lift of $\rad Q_{n-j}$
to $\Dfiltered(\setJ)$. Then 
$X\in \Dfiltered(\setJ)$ and there is an embedding $l\colon X\to T_i$ with  $T_i/l(X)$ the simple module 
associated to vertex $j$ (of the quiver defining $D$).  
So $X$ is a maximal submodule of $T_i$.
Consider the admissible $\add T$-presentation in \eqref{Eq:newname} for $X$,
\begin{equation}\label{Eq:8.2}
\begin{tikzpicture}[scale=0.3,
 arr/.style={black, -angle 60}, baseline=(bb.base)]
  \coordinate (bb) at (0, 1.8);
 \path (-1.5, 2) node (a) {$\epsilon$:};
  \path (0, 2) node(a0) {$0$}; 
 \path (6, 2) node(a1) {$T''$};
 \path (12, 2) node(a2) {$ T'$};
\path (18, 2) node(a3) {$X$};
 \path (24, 2) node(a4) {$0$,};
 
 \path[arr] (a0) edge  (a1);
 \path[arr] (a1) edge  (a2);
 \path[arr] (a2) edge node[above] {$g$} (a3);
 \path[arr] (a3) edge  (a4); 
\end{tikzpicture} 
\end{equation}
where $g$ is an $\add T$-approximation.
Splicing \eqref{Eq:8.2}
with the embedding $l\colon X \to T_i$ and 
applying $\funF$ give the sequence
\begin{equation}\label{eq:projresfrozenS}
\begin{tikzpicture}[scale=0.3,
 arr/.style={black, -angle 60}]
  \path (0, 2) node(a0) {$0$}; 
 \path (6, 2) node(a1) {$\funF T''$};
 \path (12, 2) node(a2) {$\funF T'$};
\path (18, 2) node(a3) {$\funF T_i$};
 \path (25, 2) node(a4) {$\cok \funF f$}; 
 \path (31, 2) node(a5) {$0$,};

 \path[arr] (a0) edge  (a1);
 \path[arr] (a1) edge  (a2);
 \path[arr] (a2) edge node[auto]{$\funF f$}  (a3);
 \path[arr] (a3) edge  (a4);
  \path[arr] (a4) edge  (a5);
 \end{tikzpicture}
\end{equation}
where $f$ is the composition of $g$ with the embedding $l$. 

Let $T_l\in \add T$ be indecomposable, and let 
$h\colon T_l\to T_i$ be a non-isomorphism. Then 
$\im h\subseteq l(X)$. Indeed, suppose that this is not the case. Then 
$\pi h\colon \pi T_l\to Q_{n-j}$ is surjective, since 
$Q_{n-j}$ is a projective $\Pi$-module and $\top Q_{n-j}\in \im \pi h$. As $Q_{n-j}$ is projective, 
$\pi T_l$ splits. Thus $T_l=T_i$ and $h$ is an isomorphism, contrary to our assumption. Therefore, $\im h\subseteq l(X)$, as claimed. 
Consequently, $h$ factors through $g$, since 
$g$ is an $\add T$-approximation.
As $T_i$ is indecomposable, $\End T_i$ is a local ring. Therefore, 
$\cok \funF f=S_i$, the simple quotient of $\Hom(T, T_i)$ generated by $\mathrm{Id}_{T_i}$,  and so \eqref{eq:projresfrozenS} is a projective resolution of $S_i$ as claimed. 
\end{proof}

\begin{remark}
Note that the projective object $P_j\in \Dfiltered(\setJ)$ in \Cref{Lem:frozensimple} 
can be different from the projective D-module $De_j$ and $\top P_j$ may not be simple. 
In particular, the module $X$ in the proof may be larger than $\rad P_j$.
\end{remark}

Iyama--Kalck--Wemyss--Yang showed that Frobenius categories can be realised as  categories of 
Gorenstein projective $A$-modules for certain algebras $A$ of finite global dimension 
\cite[Thm. 1.1]{KIWY}. 
It is natural to ask  
whether $\Dfiltered(\setJ)$ admits such a description. 
Below we provide an example of $(\End T)^{\text{op}}$ that has infinite global dimension,  
which is a feature not seen in $\sub Q_\setJ$ and $\CM C$.
Consequently, the result in \cite{KIWY} does not apply in this setting.
Note that by \cite[Thm. 2.2]{GLS06},  
\begin{equation}\label{eq:gldimbarA}
\mathrm{gl.} \dim (\End \pi T)^{\text{op}}\leq 3.
\end{equation}

\begin{example} \label{Ex:GabrielQuiversEndT1} 
Continue from \Cref{Ex:GabrielQuiversEndT}, where $T_2=\charT_2=M_{12}$.
We compute the projective dimension of the simple module $S_2$ at the vertex associated to $T_2$. 
We have the following exact sequences, where  the two right maps are  $\add (T\minus \{M_{12}\})$- and 
$\add T$-approximations, respectively, 
\[
\begin{tikzpicture}[scale=0.3,
 arr/.style={black, -angle 60}]
 \path (4,12) node(a1) {$M_4\oplus M_{13}$};
 \path (14,12) node(a2) {$M_{23}\oplus M_{14}$};
\path (22,12) node(a3) {$M_{12}$};
 \path (-3,12) node(a0) {$0$}; 

 \path[arr] (a0) edge  (a1);
 \path[arr] (a1) edge  (a2);
 \path[arr] (a2) edge  (a3);
 \end{tikzpicture}
\]
and 
\[
\begin{tikzpicture}[scale=0.3,
 arr/.style={black, -angle 60}]
 \path (6,12) node(a1) {$M_4$};
 \path (12,12) node(a2) {$M_{34}$};
\path (18,12) node(a3) {$M_4$.};
 \path (0,12) node(a0) {$0$}; 

 \path[arr] (a0) edge  (a1);
 \path[arr] (a1) edge  (a2);
 \path[arr] (a2) edge  (a3);
 \end{tikzpicture}
\]
So the minimal projective resolution of $S_2$ is infinite as follows, 
\[
\begin{tikzpicture}[scale=0.6,
 arr/.style={black,  -angle 60}]
  \path (-8, 2) node(b) {$\dots$}; 
   \path (-5, 2) node(a0) {$\funF T_3$};
 \path (-2, 2) node(a1) {$\funF T_3$};
 \path (2, 2) node(a2) {$\funF T_3\oplus \funF T_1$};
\path (7, 2) node(a3) {$\funF T_3\oplus \funF T_5$};
 \path (11, 2) node(a4) {$\funF T_2$};
  \path (14, 2) node(a5) {$S_2$};
  \path (16, 2) node(a6) {$0$.};

 \path[arr] (b) edge  (a0);
 \path[arr] (a0) edge  (a1);
 \path[arr] (a1) edge  (a2);
 \path[arr] (a2) edge  (a3);
 \path[arr] (a3) edge   (a4);
 \path[arr] (a4) edge (a5);
\path[arr] (a5) edge  (a6);
 \end{tikzpicture}
 \]
Therefore, the projective dimension of $S_2$ is infinite and $\mathrm{gl.} \dim (\End T)^{\text{op}}=\infty.$
\end{example}
\subsection{Grothendieck groups of $A$-modules}\label{sec:8.2}
Let $K(A)$ be the Grothendieck group of  $A$-modules, which has a natural 
basis consisting of the classes $[S_i]$ of simple $A$-modules. 
Let $K(\proj A)$ be the Grothendieck group of projective $A$-modules which
has the basis consisting of classes $[\funF T_i]$. There is an inclusion 
$$
K(\proj A)\subseteq K(A)
$$ 
which maps each class in $K(\proj A)$ to its corresponding class in $K(A)$.  
Any $A$-module $X$ of finite projective dimension has a class $[X]$ in 
$K(\proj A)$.

Denote by $K(\bar A)$ and $K(\proj \bar A)$ the Grothendieck groups of $\bar A$-modules, and of projective $\bar A$-modules, respectively. 
Since $\bar A$ has finite global dimension (see \eqref{eq:gldimbarA}), 
\begin{equation}\label{eq:KAbar} K(\bar A)=K(\proj \bar A). 
\end{equation}
Furthermore, $\bar A$ is a quotient of $A$, we also have an inclusion 
\begin{equation}\label{eq:KAAbar}
 K(\bar A)\subseteq K(A)
\end{equation}
mapping each class in $K(\bar A)$ to its corresponding class in $K(A)$.
In fact, $K(\bar A)$ is the subgroup generated by the classes of simple $A$-modules 
corresponding to indecomposable summands of $T$ that are not in 
$\add \charT_\setJ$. All of these simples have finite projective dimension by
\Cref{Lem:resmutablesimple} and \Cref{Lem:frozensimple}. 
So the inclusion \eqref{eq:KAAbar}
induces an injection,
\[
\beta_A\colon K(\bar A)  \to K(\proj A), ~~
(\mathrm{or } ~K(\proj\bar A)  \to K(\proj A)).
\]
\begin{proposition} \label{Lem:Grothiso} 
\begin{itemize}
\item[]
\item[(1)] For any $M\in \Dfiltered(\setJ)$, we have $[\funF M]\in K(\proj A)$ and $[\funF \projfun M]\in K(\bar A)$.
\item[(2)] Let  $M\in \sub Q_\setJ$ and  $\hat M$ a lift of $M$ to $\Dfiltered(\setJ)$. Then
$$
\beta_A([\funF M])= [\funF \hat M] - [\funF \eta \hat M],
$$
\item[(3)] The map 
$
\nu\colon K(\proj A) \rightarrow K(\proj \bar A)\oplus K(\add \charT_{\setJ}),  ~~
[\funF U]\mapsto [\funF \projfun U]+[\eta U]$, for all   $U\in \Dfiltered(\setJ)
$, is an isomorphism.
\end{itemize}
\end{proposition}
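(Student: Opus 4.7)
The plan is to derive all three parts from the canonical short exact sequence
\[
0 \longrightarrow \eta\hat M \longrightarrow \hat M \longrightarrow \projfun\hat M \longrightarrow 0
\]
in $\Dfiltered(\setJ)$. The crucial observation is that because $\eta\hat M \in \add\charT$ and $T \in \Dfiltered$, \Cref{Lem:tiltingchar} gives $\Ext^1_D(T, \eta\hat M) = 0$, so $\funF = \Hom_D(T,-)$ preserves exactness of this sequence. For part (1), the claim $[\funF M] \in K(\proj A)$ is immediate from $\projd_A \funF M \leq 1$ in \Cref{Lem:resHom}. For $[\funF\projfun M] \in K(\bar A)$: since $e_0\projfun M = 0$, every $D$-morphism $T \to \projfun M$ factors through $T \twoheadrightarrow \projfun T$, so $\funF\projfun M = \Hom_\Pi(\projfun T, \projfun M)$ is naturally a $\bar A$-module, whose class lies in $K(\bar A) = K(\proj\bar A)$ by \eqref{eq:gldimbarA}.

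For part (2), applying $\funF$ to the opening sequence yields
\[
0 \longrightarrow \funF\eta\hat M \longrightarrow \funF\hat M \longrightarrow \funF\projfun\hat M \longrightarrow 0,
\]
in which $\funF\eta\hat M$ is projective because $\eta\hat M \in \add\charT_\setJ \subseteq \add T$. Reading this relation in $K(\proj A)$ and using $M = \projfun\hat M$, together with the observation that $\beta_A([\funF M])$ and $[\funF\projfun\hat M]$ represent the same $A$-module of finite projective dimension, gives the formula.

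For part (3), I first check that the assignment $\xi$ is well-defined. Given $U \in \Dfiltered(\setJ)$, \Cref{Lem:resHom} yields an admissible sequence $0 \to T'' \to T' \to U \to 0$ with $T', T'' \in \add T$; applying $\projfun$ and $\eta$, both exact on $\mathcal{E}$ by \Cref{Thm:fullexact} and \Cref{Lem:exact1}, the formula $[\funF U] \mapsto [\funF\projfun U] + [\eta U]$ extends additively and consistently from the generators $[\funF T_i]$ to all of $K(\proj A)$. For the isomorphism, split the indecomposable summands of $T$ into those in $\add\charT_\setJ$ and the rest. With respect to the bases $\{[\funF\projfun T_i] : T_i \notin \add\charT_\setJ\}$ of $K(\proj\bar A)$ (a basis by \Cref{Thm:equiv}) and $\{[\charT_j] : j \in \setJ\}$ of $K(\add\charT_\setJ)$, the matrix of $\xi$ is block-triangular with identity diagonal blocks: $\xi([\funF\charT_j]) = [\charT_j]$ on the one hand, and the first coordinate of $\xi([\funF T_i])$ for $T_i \notin \add\charT_\setJ$ is exactly $[\funF\projfun T_i]$ on the other, so invertibility follows at once. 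The one delicate point to monitor is tracking the three homes of $[\funF M]$ — in $K(\bar A)$, $K(A)$, and $K(\proj A)$ — and verifying that $\beta_A$ correctly mediates between them, which rests on the finite projective dimensions established in \Cref{Lem:resmutablesimple} and \Cref{Lem:frozensimple}.
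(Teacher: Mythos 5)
Your proposal is correct and follows essentially the same route as the paper: parts (1) and (2) are argued identically (finite projective dimension via \Cref{Lem:resHom}, the identification $\funF \projfun M = \Hom_\Pi(\projfun T, \projfun M)$, and applying $\funF$ to the trace sequence $0\to\eta\hat M\to\hat M\to\projfun\hat M\to 0$ using $\Ext^1_D(T,\charT)=0$). For part (3), your block-triangular matrix computation is a repackaging of the paper's argument, which instead exhibits the map $\delta\colon K(\proj A)\to K(\proj\bar A)$, $[\funF T_i]\mapsto[\funF\projfun T_i]$, as a splitting of $\beta_A$ with kernel $K(\add\charT_{\setJ})$ -- the same observation in different clothing.
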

\begin{proof}
(1) From \Cref{Lem:resHom} we know that $\projd_A \funF M \leq 1$, and
so $[\funF M]$ is equal to the class of its projective resolution in $K(\proj A)$.
By \Cref{lem:dimlemma}, 
\begin{equation}\label{eq:comphom}
\Hom(T,\projfun M)=
\Hom(\projfun T, \projfun M),
\end{equation}
and so
$$
\funF\projfun M  = \Hom(\projfun T, \projfun M).
$$
Therefore $[\funF \projfun M]\in K(\bar A)$ as required. This proves (1).

(2) Let $M\in \sub Q_\setJ$. By \Cref{Lem:dense}, we can lift 
$M$ to $\Dfiltered(\setJ)$ to obtain a short exact sequence
\[
\begin{tikzpicture}[scale=0.6,
 arr/.style={black, -angle 60}]
  \path (0,1) node(a0) {$0$}; 
 \path (3,1) node(a1) {$\eta\M$};
 \path (6,1) node(a2) {$\M$};
\path (9,1) node(a3) {$M$};
 \path (12,1) node(a4) {$0,$}; 
  
 \path[arr] (a0) edge  (a1);
 \path[arr] (a1) edge  (a2);
 \path[arr] (a2) edge  (a3);
\path[arr] (a3) edge  (a4);
 \end{tikzpicture}
\]
where $\eta \hat M\in \add \charT_\setJ$ and $\pi \M=M$. By, \Cref{Lem:tiltingchar}, $\Ext^1(T, \eta\hat M )=0$.
Applying  $\funF=\Hom(T,-)$ to the sequence, we have 
\[
\begin{tikzpicture}[scale=0.6,
 arr/.style={black, -angle 60}]
  \path (0,1) node(a0) {$0$}; 
 \path (3,1) node(a1) {$\funF \eta\M$};
 \path (6,1) node(a2) {$\funF\M$};
\path (9,1) node(a3) {$\funF M$};
 \path (12,1) node(a4) {$0.$}; 
  
 \path[arr] (a0) edge  (a1);
 \path[arr] (a1) edge  (a2);
 \path[arr] (a2) edge  (a3);
\path[arr] (a3) edge  (a4);
 \end{tikzpicture}
\]
Therefore, 
$$
\beta_A([\funF M])= [\funF \hat M] - [\funF\eta \hat M].
$$
Both $[\funF \hat M]$ and $[\funF\eta \hat M]$ belong to $K(\proj A)$ (see \Cref{eq:addTprojA}), and so (2) follows.

(3) 
Let
\[
\delta:K(\proj A)\to K(\proj \bar A), ~ [\funF T_i]\mapsto [\funF \pi T_i]
\]
We have 
\[
\delta\beta_A([\funF \pi T_i])=\delta([\funF T_i]-[\funF\eta T_i])=[\funF \pi T_i]
\]
and so $\delta$
splits  $\beta_A$ and has  kernel $K(\add \charT_\setJ)$, noting that
$\pi T_j=0$ if and only if $T_j\in \add \charT_\setJ$. 
So we have the isomorphism $\nu$ as claimed.
\end{proof}

%

Note that here is another map
$$
\beta_{\bar A} \colon K(\bar A)\rightarrow K(\proj A),
$$
induced by first taking projective resolutions of simple $\bar A$-modules in $\mod \bar A$,
and then composing with the inclusion $K(\bar A)\subseteq K(\proj A)$ 
from \Cref{Lem:Grothiso} (2).

\begin{lemma} \label{Lem:equalbeta}
The two maps  $\beta_A $ and $\beta_{\bar A}$ are the same. 
\end{lemma}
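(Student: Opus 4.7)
The plan is to reduce the equality of $\beta_A$ and $\beta_{\bar A}$ to a single computation on a natural set of generators. Both maps are $\ZZ$-linear from $K(\bar A)$ to $K(\proj A)$, and by \eqref{eq:KAbar} we have $K(\bar A) = K(\proj \bar A)$, which is generated by the classes $[\funF \pi T_j]$ for those $j$ with $\pi T_j \neq 0$, i.e.\ those $T_j \notin \add \charT_\setJ$. So it suffices to verify $\beta_A([\funF \pi T_j]) = \beta_{\bar A}([\funF \pi T_j])$ for each such $j$.

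On the $\beta_{\bar A}$ side the computation is immediate: since $\funF \pi T_j$ is already projective in $\mod \bar A$, its projective resolution in $\mod \bar A$ is the module itself, and \Cref{Lem:Grothiso}~(2) applied with $M = \pi T_j$ and the canonical lift $\hat M = T_j$ gives
$$\beta_{\bar A}([\funF \pi T_j]) = [\funF T_j] - [\funF \eta T_j].$$

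The substance of the proof is to exhibit the matching projective resolution in $\mod A$. I would start from the short exact sequence $0 \to \eta T_j \to T_j \to \pi T_j \to 0$ in $\Dfiltered$ and apply $\funF = \Hom_D(T,-)$. Since $T$ is a good module and $\eta T_j \in \add \charT$, \Cref{Lem:tiltingchar} gives $\Ext^1_D(T, \eta T_j) = 0$, so the resulting long exact sequence collapses to a short exact sequence of $A$-modules
$$0 \longrightarrow \funF \eta T_j \longrightarrow \funF T_j \longrightarrow \funF \pi T_j \longrightarrow 0.$$
By \eqref{eq:addTprojA}, $\funF \eta T_j$ and $\funF T_j$ lie in $\add \funF T = \proj A$, so this is a projective resolution in $\mod A$ of the $A$-module $\funF \pi T_j$ (pulled back along $A \twoheadrightarrow \bar A$). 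Hence $\beta_A([\funF \pi T_j]) = [\funF T_j] - [\funF \eta T_j]$, which agrees with $\beta_{\bar A}([\funF \pi T_j])$.

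There is no serious obstacle here: the real content of the lemma is just the observation that the two a priori different recipes for passing from $K(\bar A)$ to $K(\proj A)$ (taking projective resolutions in $\mod A$ directly, versus taking them in $\mod \bar A$ first and then using the explicit inclusion of \Cref{Lem:Grothiso}~(2)) both reduce, on the projective generators $[\funF \pi T_j]$, to the single short exact sequence above obtained from the trace filtration $\eta T_j \subseteq T_j$.
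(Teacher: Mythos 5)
Your proof is correct, but it takes a genuinely different route from the paper's. You test the two linear maps on the generators $[\funF \projfun T_j]$ of $K(\proj \bar A)=K(\bar A)$, where everything collapses to the single trace sequence $0\to\eta T_j\to T_j\to\projfun T_j\to 0$: applying $\funF$ (using $\Ext^1_\aus(T,\charT_\setJ)=0$ from \Cref{Lem:tiltingchar}) gives a two-term projective resolution of $\funF\projfun T_j$ in $\mod A$, so $\beta_A$ returns $[\funF T_j]-[\funF\eta T_j]$, matching the value of $\beta_{\bar A}$ read off from \Cref{Lem:Grothiso}~(2). In effect you observe that the lemma is the special case $M=\projfun T_j$, $\hat M=T_j$ of \Cref{Lem:Grothiso}~(2) together with $\ZZ$-linearity. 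The paper instead works on the basis of simple $\bar A$-modules: it compares the explicit resolutions from \Cref{Lem:resmutablesimple} and \Cref{Lem:frozensimple} with their images in $\mod\bar A$, using that the mutation sequences upstairs are lifts of those downstairs (\Cref{Lem:liftexchange}) and that $\eta E_k\cong\eta F_k$ by admissibility, so the $\eta$-corrections cancel. Your argument is shorter and avoids the mutation sequences entirely; the paper's has the mild advantage of exhibiting the compatibility at the level of the resolutions themselves rather than only of their classes. Both versions rely on the same background conventions the paper already uses implicitly (additivity and resolution-independence of the class of a finite projective resolution, as in the proof of \Cref{Lem:Grothiso}~(1)), so there is no gap.
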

\begin{proof}
The projective resolution of a mutable simple is constructed by splicing together
the mutation sequences in \Cref{Lem:resmutablesimple}. By \Cref{Lem:liftexchange},  
these mutation sequences are lifts of mutation sequences in $\sub Q_\setJ$ 
and are admissible. 
Therefore, $\beta_A = \beta_{\bar A}$
on mutable simples.
Similarly, $\beta_A = \beta_{\bar A}$ holds for the other simple $\bar A$-modules, by the construction of the projective resolution in \Cref{Lem:frozensimple}.
\end{proof}

\begin{remark}
When restricted to mutable simples, it follows from \Cref{Lem:resmutablesimple} that the map $\beta_A$ is represented by the matrix $-\mat{B}_T$, with 
respect to the bases $[S_i]$ and $[\funF T_i]$,
where $\mat{B}_T$ is the exchange matrix constructed from $T$ in  \Cref{sec:clstructnew}.
 By \Cref{Ex:GabrielQuiversEndT}, 
the map $\beta_A$ can not be naturally extended to the whole group
$K(A)$, in general.  
\end{remark}

\section{Categorification of the cluster structure on flag varieties}\label{sec9}
We give an additive 
categorification of $\CC[\flag(\setJ)]$.
This categorification will be quantised to obtain a cluster structure
on the quantum coordinate ring $\pqflag$ (see \Cref{thm:qmain1} and \Cref{rem:finrem}).

\subsection{Cluster characters}
Let $\mathcal{F}$ be a Frobenius stably 2-CY category with
a cluster structure, for instance $\sub Q_{\setJ}$ or $\Dfiltered(\setJ)$. 

\begin{definition}\label{Def:char}
A {\it cluster character} on $\mathcal{F}$, with values in a commutative domain 
${R}$, is a map
$
\charC\colon \mathcal{F} \to {R}
$
such that for any $M, N\in \mathcal{F}$, 
\begin{itemize}
\item[(1)] $\charC(M)=\charC(N)$ when $M$ and $N$ are isomorphic;
\item[(2)] $\charC(M\oplus N)=\charC(M)\cdot \charC(N)$;
\item[(3)] if $\Ext^1_{\mathcal{F}}(M,N)\cong\mathbb{C}$ and the short exact sequences in 
$\mathcal{F}$,
\begin{equation*}
\begin{tikzpicture}[scale=0.6,
 arr/.style={black, -angle 60}]
 \path (3,12) node(a1) {$M$};
 \path (6,12) node(a2) {$E$};
\path (9,12) node(a3) {$N$};
 \path (0,12) node(a0) {$0$}; 
 \path (12,12) node(a4) {$0,$};
 \path[arr] (a0) edge  (a1);
 \path[arr] (a1) edge  (a2);
 \path[arr] (a2) edge  (a3);
 \path[arr] (a3) edge  (a4);
 \end{tikzpicture}
\end{equation*}
and
\begin{equation*}
\begin{tikzpicture}[scale=0.6,
 arr/.style={black, -angle 60}]
 \path (3,1) node(a1) {$N$};
 \path (6,1) node(a2) {$F$};
\path (9,1) node(a3) {$M$};
 \path (0, 1) node(a0) {$0$}; 
 \path (12,1) node(a4) {$0,$};
 \path[arr] (a0) edge  (a1);
 \path[arr] (a1) edge  (a2);
 \path[arr] (a2) edge  (a3);
 \path[arr] (a3) edge  (a4);
 \end{tikzpicture}
\end{equation*}
are non-split, 
then $\charC(M)\cdot \charC(N)=\charC(E) + \charC(F)$.
\end{itemize}
\end{definition}

Note that post-composing a cluster character with an algebra homomorphism
between commutative domains gives a 
new cluster character. Pre-composing with full exact functors can also produce 
new cluster characters.

Suppose ${R}$ is a cluster algebra with the initial seed $(B, X)$ \cite{FZ1}, where the exchange 
matrix $\mat B$ is the same as the mutation matrix defined by a cluster tilting object 
$T$ (see \eqref{eq:BfromT}) and 
$X=\{\kappa(T_i)\colon T_i \text{ is indecomposable and } T_i \in \add T\}$. 
Then $\kappa$ offers an \emph{additive categorification} (\cite{GLS11} Section 2.8) of  $R$
and it induces a bijection between isomorphism classes of reachable cluster tilting objects and clusters in ${R}$.

Geiss--Leclerc--Schr\"{o}er \cite{GLS06, GLS08} proved Lustztig's function  
$\phi$ on $\sub Q_{\setJ}$ is a cluster character and it categorifies the cluster algebra structure on the coordinate ring $\CC[N_{\setK}]$ (see  \Cref{sec:cat} for the definition of $\CC[N_{\setK}])$. Under this categorification, $\phi(M)$ is homogeneous of 
degree $[M]$, using the grading by the root lattice $\rootlat=K(\Pi)$.
 
Fu--Keller \cite{FuKeller} gave a general construction of cluster characters with
values in formal Laurent polynomial rings
(see also \cite{CC, Palu}), which depends on a chosen cluster tilting 
object $T$. Following \cite[(1.3)]{GLS12}, Fu--Keller's cluster character 
applied to $\sub Q_{\setJ}$ can be written as 
\begin{equation} \label{eq:fukellercluster}
\theta^T(M) = z^{[\funF M]}\sum_{d}\chi{(\Gr_d(\Ext^1(T,M))}z^{-\beta(d)},
\end{equation}
where $\beta=\beta_{\overline{A}}$ is as defined in \Cref{sec:8.2}. 

\begin{remark} Strictly speaking, 
the functor $\funF$ is defined as a $\Hom$-functor by a cluster tilting object in 
$\Dfiltered(\setJ)$ (see \Cref{sec:8.1}), but by \eqref{eq:comphom} for any 
$M\in \sub Q_\setJ$,
\[
\Hom(\T, M)=\Hom(T, M),
\]
where $\T$  is a cluster tilting object such that $\pi \T=T$. 
So $[\funF M]$ makes sense in \eqref{eq:fukellercluster}.
\end{remark}

By identifying $\phi(T_i)$ with $z^{[\funF T_i]}$ in \eqref{eq:fukellercluster}, 
Geiss--Leclerc--Schr\"{o}er proved the following.

\begin{theorem} \cite[Thm. 4]{GLS12} \label{thm:GLSEqual}
For all $M\in \sub (Q_{\setJ})$, 
$
\phi(M)=\theta^{T}(M). 
$
\end{theorem}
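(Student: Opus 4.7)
The plan is to reduce the equality to the main theorem of \cite{GLS12} by matching initial data and checking compatibility of the grading. First I would verify that both $\phi$ and $\theta^T$ are cluster characters on $\sub Q_{\setJ}$ in the sense of \Cref{Def:char}: for $\phi$ this is the content of \cite{GLS06, GLS08}, while for $\theta^T$ this is Fu-Keller's general statement applied to the Hom-finite Frobenius stably $2$-CY category $\sub Q_{\setJ}$, via the cluster tilting object $T$ (or rather a lift, cf.~\Cref{Lem:liftcto}). In particular both sides are multiplicative on direct sums and satisfy the exchange relation on extensions of dimension one.

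Next I would identify initial values. Evaluating \eqref{eq:fukellercluster} at a summand $T_i$ of $T$ yields $\theta^T(T_i)=z^{[\funF T_i]}$, since $\Ext^1_{\Pi}(T,T_i)=0$ forces the sum to collapse to the $d=0$ term. On the side of $\phi$, the values $\phi(T_i)$ are the initial cluster variables in $\CC[N_{\setK}]$ by \cite{GLS08}. Hence the identification $\phi(T_i)\leftrightarrow z^{[\funF T_i]}$ embeds the initial cluster into the Laurent polynomial ring and both $\phi$ and $\theta^T$ restrict to the same assignment on $\add T$.

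The main step, which is \cite[Thm.~4]{GLS12} essentially verbatim, is then to show that the explicit formula for $\phi(M)$ can be rewritten in Fu-Keller form. Concretely, by the definition in \cite{GLS06}, $\phi(M)$ is a generating series for Euler characteristics of varieties of composition series of $M$ weighted by the root lattice grading, and one uses the Caldero-Chapoton style manipulation to pass from composition series of $M$ to submodules of $\Ext^1_{\Pi}(T,M)$ of prescribed dimension vector. The degree shift $z^{[\funF M]}$ and the exponent $-\beta(d)$ appear from the projective resolutions of the simple $\bar A$-modules recorded in \Cref{Lem:resmutablesimple} and \Cref{Lem:frozensimple}, combined with $K(\bar A)=K(\proj\bar A)$ (see \eqref{eq:KAbar}).

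The main obstacle is the last step, namely the identification of the geometric invariants $\chi(\Gr_d(\Ext^1_{\Pi}(T,M)))$ with the composition-series counts defining $\phi(M)$; this is the part of \cite{GLS12} that relies on Lusztig's semicanonical basis and the duality between varieties of flags and varieties of submodules of $\Ext^1$, and is where one must work to translate the existing proof into the present notation rather than merely cite it. Once this identification is in place, summing over $d\in K(\proj\bar A)$ and collecting signs gives $\phi(M)=\theta^T(M)$ for every $M\in\sub Q_{\setJ}$, completing the proof.
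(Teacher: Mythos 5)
This statement is quoted verbatim from \cite[Thm.\ 4]{GLS12} and the paper offers no internal proof beyond the citation, so your outline — matching initial cluster variables, noting both maps are cluster characters, and deferring the identification of $\chi(\Gr_d(\Ext^1_\Pi(T,M)))$ with the composition-series counts to the semicanonical-basis machinery of \cite{GLS12} — is consistent with, and essentially the same as, the paper's approach of simply invoking the reference. Your honest flagging that the last step is the genuine content of the cited theorem is correct; no further work is expected here within this paper.
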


This theorem produces an explicit expression of  
$\phi(M)$ as a Laurent polynomial in any chosen cluster. 

\subsection{Cluster characters on $\Dfiltered(\setJ)$}
Define 
\begin{equation}\label{def:charPhi}
\Phi\colon \Dfiltered(\setJ) \rightarrow {\CC[N_{\setK}][K(\add \charT_{\setJ})]}, M\mapsto \phi (\projfun M)\cdot z^{[\eta M]},
\end{equation}
and
\begin{equation}\label{def:charTheta}
\Theta^T\colon \Dfiltered(\setJ) \to {\CC[K(\proj \bar{A})][K(\add \charT_{\setJ})]}, M\mapsto 
\theta^T(\projfun M)\cdot z^{[\eta M]}. 
\end{equation}
These two maps are well-defined, 
since for any $M\in \Dfiltered(\setJ)$, 
\[\eta M \in \add \charT_{\setJ}.\] 
Note that by $\CC[N_{\setK}][K(\add \charT_{\setJ})]$, we mean 
the {\em formal Laurent polynomial ring} of the Grothendieck group $K(\add\charT_{\setJ})$ with 
coefficients in $\CC[N_{\setK}]$. This ring is graded by $K(\sub Q_\setJ) \times K(\add \charT_{\setJ})$, which is isomorphic to $\grothEJ$ by the isomorphism 
$\sigma$ from \Cref{Lem:isom1}.

\begin{lemma}\label{Lem;charPhiTheta} The maps
$\Phi$ and $\Theta^T$ are cluster characters, and by 
identifying $\Phi(T_i)$ with $\Theta^T(T_i)$, for any $M\in \Dfiltered(\setJ)$, 
$$\Phi(M)=\Theta^T(M).$$
Moreover, $\Phi(M)$ has degree $\sigma([M])$.
\end{lemma}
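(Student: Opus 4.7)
The plan is to verify the three axioms of a cluster character for $\Phi$ by reducing each to the corresponding axiom for $\phi$ on $\sub Q_\setJ$, and then handle $\Theta^T$ in parallel. Invariance under isomorphism and multiplicativity on direct sums are immediate: if $M\cong N$ then $\pi M\cong \pi N$ and $\eta M\cong \eta N$, so $\phi(\pi M)z^{[\eta M]}=\phi(\pi N)z^{[\eta N]}$; and since both $\pi$ and $\eta$ are additive on direct sums, $\Phi(M\oplus N)=\phi(\pi M)\phi(\pi N)z^{[\eta M]+[\eta N]}=\Phi(M)\Phi(N)$.

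The substantive step is the exchange axiom. Suppose $\ext^1(M,N)=\CC$ with non-split admissible extensions $0\to M\to E\to N\to 0$ and $0\to N\to F\to M\to 0$ in $(\Dfiltered(\setJ),\mathcal{E})$. By \Cref{Thm:fullexact}, $\ext^1(M,N)\cong\Ext^1_\Pi(\pi M,\pi N)=\CC$, and applying the exact functor $\pi$ produces the two corresponding non-split exchange sequences for $\pi M,\pi N$ in $\sub Q_\setJ$; hence $\phi(\pi M)\phi(\pi N)=\phi(\pi E)+\phi(\pi F)$ by the cluster character property of $\phi$. Meanwhile, condition (6) of \Cref{Lem:exact1} gives $\eta E=\eta M\oplus \eta N=\eta F$, so $z^{[\eta E]}=z^{[\eta F]}=z^{[\eta M]+[\eta N]}=z^{[\eta M]}z^{[\eta N]}$. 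Multiplying through yields $\Phi(M)\Phi(N)=\Phi(E)+\Phi(F)$. The identical argument, with $\theta^T$ in place of $\phi$, shows that $\Theta^T$ is a cluster character.

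For the equality $\Phi(M)=\Theta^T(M)$, observe that identifying $\Phi(T_i)$ with $\Theta^T(T_i)$ amounts (after cancelling the common factor $z^{[\eta T_i]}$) to identifying $\phi(\pi T_i)$ with $\theta^T(\pi T_i)$. Under this identification \Cref{thm:GLSEqual} gives $\phi(\pi M)=\theta^T(\pi M)$ for every $M\in\Dfiltered(\setJ)$, and multiplying both sides by $z^{[\eta M]}$ yields $\Phi(M)=\Theta^T(M)$. Finally, the degree claim follows directly from the definition: $\phi(\pi M)$ is homogeneous of degree $[\pi M]\in K(\Pi)$ by the Geiss--Leclerc--Schr\"oer grading, and $z^{[\eta M]}$ has degree $[\eta M]\in K(\add\charT_\setJ)$, so $\Phi(M)$ has degree $[\pi M]+[\eta M]=\sigma([M])$ under the isomorphism of \Cref{Lem:isom1}.

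The main technical obstacle is really just ensuring that the exchange axiom for $\Phi$ uses \emph{admissible} extensions in $\mathcal{E}$ (not arbitrary $\Ext^1_D$-extensions), because only for admissible sequences does $\eta$ behave additively; this is precisely why the exact structure $\mathcal{E}$ was introduced, and the required additivity is supplied by \Cref{Lem:exact1}(6) combined with the isomorphism $\ext^1\cong\Ext^1_\Pi$ of \Cref{Thm:fullexact}.
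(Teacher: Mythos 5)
Your proof is correct and follows essentially the same route as the paper's: reduce each cluster-character axiom for $\Phi$ (and $\Theta^T$) to the corresponding one for $\phi$ (resp.\ $\theta^T$), using that $\eta$ is additive on admissible sequences so that $\eta E=\eta F=\eta M\oplus\eta N$, then deduce $\Phi=\Theta^T$ from \Cref{thm:GLSEqual} and the degree claim from \Cref{Lem:isom1}. The paper's proof is just a terser version of the same argument; your extra care with \Cref{Thm:fullexact} to transport the non-split admissible extensions down to $\sub Q_{\setJ}$ is exactly the implicit step the paper relies on.
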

\begin{proof}
By the definition of the exact structure  $\mathcal{E}$ on $\Dfiltered(\setJ)$, $\eta$ is additive 
on admissible short exact sequences. Therefore,  
for any $M, N\in \Dfiltered(\setJ)$, 
\begin{equation*}\label{Eq:property2}
\eta(M\oplus N)= \eta M \oplus \eta N 
\end{equation*}
and when $M, N, E, F$ are as in  \Cref{Def:char} (3), 
\begin{equation*}\label{Eq:property3}
\eta E=\eta F=\eta M \oplus \eta N.
\end{equation*}
So,  $\Phi$ and $\Theta^T$ are cluster characters, since $\phi$ and $\theta^T$ are. 
That $\Phi(M)=\Theta^T(M)$ follows from the equality 
$\phi(M)=\theta^{\projfun T}(M)$ from \Cref{thm:GLSEqual}.

Finally, that $\Phi(M)$ has degree $\sigma([M])$ follows using the isomorphism 
$\sigma$ from \Cref{Lem:isom1} and that $\phi(\projfun M)$ has degree 
$[\projfun M]$
\end{proof}

It is also possible to define a cluster character along the lines of \cite{FuKeller} by 
adapting the construction of $\theta^T$. We define  
\[
\Psi^T\colon \Dfiltered(\setJ) \to \CC[K(\proj A)],  
M\mapsto z^{[T, M]}\sum_{d}\chi(\Gr_d(\ext^1(T, M))z^{-\beta(d)},
\]
where $\beta=\beta_A$ is the map induced by taking
projective resolutions of mutable simple $A$-modules. Recall that $\ext^1(T, M)$ is only 
supported at mutable vertices, which 
all have finite projective dimension, by \Cref{Lem:resmutablesimple}.  
So $\beta$ and therefore $\Psi^T$ is well-defined.  

Recall the isomorphism  from \Cref{Lem:Grothiso} (3), 
\[
\nu\colon K(\proj A) \rightarrow K(\proj \bar A)\oplus K(\add \charT_{\setJ}).
\]
The inverse induces an isomorphism of Laurent polynomials, 
\begin{equation}\label{eq:Cnuinv}
\CC[\nu^{-1}]\colon \CC[K(\proj \bar A)\oplus K(\add \charT_{\setJ})] \rightarrow \CC[K(\proj A)].
\end{equation}
\begin{lemma}\label{Lem:Psi}
The map $\Psi^T\colon \Dfiltered(\setJ) \to \CC[K(\proj A)]$ is the composition of the cluster character
$\Theta^T$ with the algebra isomorphism $\CC[\nu^{-1}]$. As a consequence, it is a cluster character.
\end{lemma}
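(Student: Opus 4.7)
The plan is to realize $\Psi^T$ as $\CC[\xi]^{-1}\circ \Theta^T$, where
\[
\CC[\xi]\colon \CC\bigl[K(\proj A)\bigr] \xrightarrow{\ \sim\ } \CC\bigl[K(\proj \bar A)\bigr]\bigl[K(\add \charT_{\setJ})\bigr]
\]
is the algebra isomorphism of Laurent polynomial rings induced by the Grothendieck group isomorphism $\xi$ of \Cref{Lem:Grothiso}(3). Once this identification is in hand, the second assertion of the lemma is immediate: the defining properties of \Cref{Def:char} are preserved under composition with an algebra isomorphism, and $\Theta^T$ is already known to be a cluster character by \Cref{Lem;charPhiTheta}.

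To verify $\CC[\xi]\circ \Psi^T=\Theta^T$ on an arbitrary $M\in \Dfiltered(\setJ)$, I would match the three ingredients of the Fu--Keller expansion one by one. First, the monomial prefactor transforms correctly because $\xi([\funF M])=[\funF\projfun M]+[\eta M]$ is precisely the definition of $\xi$. Second, the Euler characteristic sums agree termwise: \Cref{Thm:fullexact} supplies a natural isomorphism $\ext^1(T,M)\cong \Ext^1_\Pi(\projfun T,\projfun M)$, and this isomorphism intertwines the $A$- and $\bar A$-module structures (via the quotient $A\twoheadrightarrow \bar A$, which is legitimate because $\ext^1(T,M)$ is supported only at mutable vertices, whose simples factor through $\bar A$); hence $\Gr_d(\ext^1(T,M))=\Gr_d(\Ext^1_\Pi(\projfun T,\projfun M))$ for every $d$.

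The most delicate compatibility is $\CC[\xi]\bigl(z^{-\beta_A(d)}\bigr)=z^{-\beta_{\bar A}(d)}$, where the right-hand side is interpreted as a monomial in the $K(\proj \bar A)$-factor. By \Cref{Lem:equalbeta} we have $\beta_A=\beta_{\bar A}$, and $\beta_{\bar A}$ factors through $K(\bar A)=K(\proj\bar A)$ followed by the inclusion $K(\bar A)\hookrightarrow K(\proj A)$. Concretely, for a summand $T_i$ of $T$ outside $\add \charT_\setJ$, the short exact sequence $0\to \eta T_i\to T_i\to \projfun T_i\to 0$ satisfies $\Ext^1_D(T,\eta T_i)=0$ by \Cref{Lem:tiltingchar}, so $[\funF\projfun T_i]=[\funF T_i]-[\funF\eta T_i]$ in $K(\proj A)$, and applying $\xi$ gives
\[
\xi\bigl([\funF T_i]-[\funF\eta T_i]\bigr)=\bigl([\funF \projfun T_i]+[\eta T_i]\bigr)-\bigl(0+[\eta T_i]\bigr)=[\funF \projfun T_i].
\]
Thus $\xi$ carries $K(\bar A)\subseteq K(\proj A)$ onto $K(\proj \bar A)\oplus 0$, and the restriction is the tautological identification; in particular $\xi\circ\beta_A=\beta_{\bar A}$. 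Assembling the three ingredients yields $\CC[\xi]\circ \Psi^T=\Theta^T$, whence the lemma; the only step that deserves care is the $\eta$-cancellation just displayed, with the rest being bookkeeping.
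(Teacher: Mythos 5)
Your proposal is correct and follows essentially the same route as the paper: the torus isomorphism $\CC[K(\proj A)]\cong \CC[K(\proj \bar A)][K(\add \charT_{\setJ})]$ induced by \Cref{Lem:Grothiso}(3), the identity $\beta_A=\beta_{\bar A}$ from \Cref{Lem:equalbeta} (valid because the relevant classes are supported on mutable vertices), and $\ext^1(T,M)\cong\Ext^1_\Pi(\projfun T,\projfun M)$ from \Cref{Thm:fullexact}. You simply spell out the $\eta$-cancellation that the paper leaves implicit; the verification is sound.
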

\begin{proof}
Following the definition of $\nu$, 
we have \[\CC[\nu] (z^{[\funF T_i]}) = z^{[\funF \projfun T_i] + [\eta T_i]}.\]
By \Cref{Lem:equalbeta}, $\beta_{\bar A}(d)=\beta_{A}(d)$ for any $d\leq \ext^1(T, M)$,  since $d$
is only supported on mutable vertices.  Also, $\ext^1(T,M)=\Ext^1(\projfun T, \projfun M)$ by \Cref{Thm:fullexact}.
So $\Psi^T$ is the composition of 
the cluster character $\Theta^T$ in \eqref{def:charTheta} and
the algebra isomorphism $\CC[\nu^{-1}]$, and thus a cluster character.
\end{proof}

We remark that it is also possible to prove the lemma, using similar arguments to
 \cite{Palu, FuKeller} and the fact that $\ext^1(T, M)=\Ext^1(\projfun T, \projfun M)$. 

\subsection{Categorification of $\CC[\flag(\setJ)]$}\label{sec:cat}
Let $\setJ\subseteq [n-1]$, $\setK=[n-1]\backslash \setJ$, and let 
$$
N_{\setK}\subseteq P_{\setK}\subseteq \gl_n
$$
be the unipotent and parabolic subgroups of $\gl_n$ determined by $\setK$, respectively. 
Let 
\[\flag(\setJ) = \gl_n/P^-_{\setK}\] 
be the flag variety associated to $\setJ$,
where  $P^-_{\setK}$ is the opposite parabolic of $P_{\setK}$.
Let $\wtl_{\setJ}$ be the sublattice of the weight lattice $\wtl$ generated by the fundamental weights $\omega_j$, $j\in \setJ$, 
 \[
\wtl_{\setJ}=\{\lambda=\sum_{j\in \setJ}a_j\omega_j\colon a_j \in  \ZZ\}
\] 
and let $\wtl_\setJ^{+}=\wtl^+\cap \wtl_\setJ$ be the monoid consisting of the dominant weights in $\wtl_\setJ$. 
The homogeneous coordinate ring $\CC[\flag(\setJ)]$ is multigraded by
 $\wtl_{\setJ}$.  More precisely, by the  Borel-Weil Theorem,
\begin{equation}\label{eq:BWthm}
\CC[\flag(\setJ)]=\bigoplus _{\lambda \in \wtl_{\setJ}^+} L(\lambda),
\end{equation}
where $L(\lambda)$ is the irreducible representation with highest 
weight $\lambda$. Furthermore, this coordinate ring is generated by the 
subspace $\oplus _{j \in \setJ} L(\omega_j)$.

Recall the isomorphism in \eqref{Eq:gtowt1},
$\zeta\colon \wtl \to K(\Dfiltered), ~~\epsilon_i\mapsto [\Delta_{i}],$
which restricts to an isomorphism between $\wtl^+$ and $K^+(\Dfiltered)$ (see \Cref{sec:5.1}
for definitions). Note also
the isomorphism from \Cref{Lem:isom1},
 $\sigma\colon \grothE \rightarrow K(\Pi) \oplus K(\add \charT)$.
Therefore,  there is a finer decomposition of \eqref{eq:BWthm} into a direct sum of weight-spaces,
\begin{equation}\label{eq:gradingE}
\CC[\flag(\setJ)]=\bigoplus_{(d, \lambda)} L(\lambda)_{d+\lambda}
=\bigoplus_{\omega}\CC[\flag(\setJ)]_{\omega}, 
\end{equation}
where the first sum is over  $(d,  \lambda)\in  K^{+}(\sub Q_{\setJ})\times K^{+}(\add \charT_{\setJ})$, the second sum is over $\omega\in K^{+}(\Dfiltered(\setJ))_{\mathcal{E}}$ and the
summands are related by $\sigma(\omega)=(d,\lambda)$.
By convention 
$L(\lambda)_{d+\lambda}$ is zero if $d+\lambda$ is not a weight of the representation $L(\lambda)$.

For any $I=\{i_1, \dots, i_k\}$,
denote by $\Delta_I$ the minor of the submatrix consisting of the columns $i_1, \dots, i_k$ in the 
first $k$ rows. 
Note that in this case $\Delta_I$ is contained in the weight 
space $ \CC[\flag(\setJ)]_{\omega_k}$ of the fundamental weight $\omega_k$. 

Recall the quotient map, 
$$
\rho\colon\CC[{\flag(\setJ)}]\mapsto \CC[N_{\setK}], ~ f\mapsto f|_{N_K}.
$$
In particular,  $\rho(\Delta_{[j]})=1$, for any $j\in \setJ$.
The restriction of $\rho$ to a representation $L(\lambda)$ is injective  and so we have 
an $\grothEJ$-graded injection
\begin{equation}\label{eq:rhohat}
\fln\colon \CC[{\flag(\setJ)}]\rightarrow \CC[N_{\setK}][K(\add \charT_{\setJ})], ~~x\mapsto  \rho(x)\cdot z^{\lambda},
\end{equation}
where $x \in \CC[{\flag(\setJ)}]_{\omega}$ and $\sigma(\omega)=(d, \lambda)$, 
remembering the homogeneous multi-degree $\lambda$.  
By \cite[Prop. 9.1]{GLS08} and \cite[Thm. 3]{GLS06},
\begin{equation}\label{eq:spanning}  
 \fln(L(\lambda)) \text{ is spanned by }
\phi(M)\cdot z^\lambda,
\text{ where } M\subseteq \oplus_i Q_i^{\lambda_i} \text{ and } 
\lambda = \sum_i \lambda_i [\charT_i]. 
\end{equation} 
By \Cref{Lem:dense}, every 
such $M$ is of the form $\projfun (\hat M)$
for some $\hat M\in \Dfiltered(\setJ)$, which is unique up to isomorphism when $[\eta \hat M]=\lambda$. We can therefore
define the map
\[
\charfl \colon \Dfiltered(\setJ)\to \CC[\flag(\setJ)],  ~~M\mapsto  \fln^{-1}(\Phi(M)).
\]

\begin{theorem} \label{Thm:classiccat}
The map $\charfl \colon\Dfiltered(\setJ) \to \CC[\flag(\setJ)]$ is a cluster 
character which gives an additive categorification of the cluster algebra structure
on $\CC[\flag(\setJ)]$. Moreover, 
\begin{itemize}
\item[(1)] $\charfl(M)$ has degree $[M]\in \grothEJ$,
\item[(2)] $\charfl(M)$ for all $M\in \Dfiltered(\setJ)$ span $\CC[\flag(\setJ)]$,
\item[(3)] $\charfl(M_I) = \Delta_I$.
\end{itemize}
Finally, by identifying $\charfl(T_i)$ with $\Psi^T(T_i)$,  for any $M\in \Dfiltered(\setJ)$, we have
$$\charfl(M)=\Psi^T(M).$$
\end{theorem}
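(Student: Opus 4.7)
The plan is to transport everything through the $\grothEJ$-graded injection $\fln$ of \eqref{eq:rhohat} using the already-established cluster character $\Phi$ from \Cref{Lem;charPhiTheta}. First I check $\charfl$ is well-defined, that is, that $\Phi(M)\in \fln(\CC[\flag(\setJ)])$ for every $M\in \Dfiltered(\setJ)$. This is exactly what the discussion preceding the theorem sets up: $\fln(L(\lambda))$ is spanned by elements $\phi(N)\cdot z^\lambda$ for $N\subseteq \bigoplus_i Q_i^{\lambda_i}$, and by \Cref{Lem:dense}(3) the $\Pi$-module $N=\projfun M$ (with $\lambda=[\eta M]$) satisfies the required socle condition, so $\phi(\projfun M)\cdot z^{[\eta M]}=\Phi(M)$ lies in $\fln(L(\lambda))$. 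Injectivity of $\fln$ then lets $\fln^{-1}$ be taken unambiguously on each graded piece. Because $\fln$ is a ring embedding into a commutative domain, the cluster character axioms of Definition~\ref{Def:char} for $\charfl$ follow formally by pulling back through $\fln^{-1}$ from the corresponding axioms for $\Phi$.

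For the three enumerated claims: (1) is immediate, since $\Phi(M)$ has $K(\Pi)\oplus K(\add\charT_\setJ)$-degree $\sigma([M])$ by \Cref{Lem;charPhiTheta} and $\fln$ is $\sigma$-graded. For (2), by the weight-space decomposition \eqref{eq:gradingE} it suffices to span each $L(\lambda)$; this reduces, via $\fln$, to the spanning of $\fln(L(\lambda))$ by elements $\phi(N)\cdot z^\lambda$ as recalled above combined with the existence of lifts in \Cref{Lem:dense}(1). For (3), I compare $\fln(\Delta_I)$ and $\Phi(M_I)$ in the target ring: $\Delta_I\in L(\omega_{|I|})$ has $\wtl_\setJ$-degree $\omega_{|I|}=[\charT_{|I|}]$, so $\fln(\Delta_I)=\rho(\Delta_I)\cdot z^{[\charT_{|I|}]}$; by the Geiss--Leclerc--Schr\"oer categorification of $\CC[N_\setK]$ we have $\rho(\Delta_I)=\phi(\projfun M_I)$, and $[\eta M_I]=[\charT_{|I|}]$ by \Cref{Lem:trace}(4), so $\fln(\Delta_I)=\Phi(M_I)$, giving $\charfl(M_I)=\Delta_I$ as desired.

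It remains to establish the categorification statement and the final identification with $\Psi^T$. The cluster character $\charfl$, combined with the bijection between cluster tilting objects in $\Dfiltered(\setJ)$ and in $\sub Q_\setJ$ from \Cref{Lem:liftcto} and the Geiss--Leclerc--Schr\"oer categorification of $\CC[N_\setK]$, produces the required bijection between reachable cluster tilting objects and clusters under the identification $\charfl(T_i)\leftrightarrow$ initial cluster variables, via which the exchange relations for $\Phi$ (and hence $\charfl$) match those in the cluster algebra. The main technical obstacle I anticipate is that $\End_D T$ can have infinite global dimension (see \Cref{Ex:GabrielQuiversEndT1}), which would obstruct a direct Fu--Keller style construction on $\Dfiltered(\setJ)$ itself; this is bypassed precisely because everything is pulled back through $\fln$ from $\sub Q_\setJ$, where $\End_\Pi \projfun T$ has finite global dimension and Fu--Keller's formula makes sense. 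Finally, for the coincidence $\charfl(M)=\Psi^T(M)$ after identification, $\Phi=\Theta^T$ by \Cref{Lem;charPhiTheta} and $\Theta^T$ is the image of $\Psi^T$ under the algebra isomorphism \eqref{Eq:torusiso} by \Cref{Lem:Psi}; applying $\fln^{-1}$ and tracking the identification of initial cluster variables yields $\charfl(M)=\Psi^T(M)$ for all $M\in \Dfiltered(\setJ)$.
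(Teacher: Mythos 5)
Your proposal is correct and follows essentially the same route as the paper's proof: defining $\charfl$ as $\fln^{-1}\circ\Phi$, pulling back the cluster character axioms and the grading through the injection $\fln$, deducing the categorification from the lifted cluster structure together with the Geiss--Leclerc--Schr\"oer categorification of $\CC[N_\setK]$, identifying $\charfl(M_I)=\Delta_I$ via $\phi(\projfun M_I)=\rho(\Delta_I)$, and obtaining $\charfl=\Psi^T$ from \Cref{Lem;charPhiTheta} and \Cref{Lem:Psi}. The extra detail you supply on well-definedness and on the weight computation in (3) is consistent with, and slightly more explicit than, the paper's argument.
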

\begin{proof}
The map $\charfl$ is the composition of the cluster character $\Phi(M)$ from \Cref{Lem;charPhiTheta} and the algebra homomorphism $\fln^{-1}$ defined on
the image of $\fln$. So $\charfl$ is a cluster character. As $(\sub Q_\setJ, \phi)$ provides a categorification of
 $\CC[N_K]$ (\cite{GLS08}) and the cluster structure on 
$\Dfiltered(\setJ)$ is lifted from that of 
$\sub Q_J$ (see \S \ref{sec:7.2} and \S \ref{sec:7.3}),
$(\Dfiltered(\setJ), \Phi)$ gives a 
 categorification of $\CC[\flag(\setJ)]$.

The representation $\fln(L(\lambda))$ is spanned by elements $\phi(\projfun M)\cdot z^\lambda$ with 
$\eta(M)=\lambda$ (see \eqref{eq:spanning}), and every such element has a unique preimage $\charfl(M)$
(see \Cref{Lem:dense}).
This, together with the isomorphism $\sigma$ from \Cref{Lem:isom1} implies (1), and, 
in conjunction with \eqref{eq:BWthm},  implies (2).

By \cite[\S 6.2]{GLS08}, $\phi({\projfun (M_I)})$ is the restriction 
of the minor $\Delta_I$ to $N_{\setK}$, and so \[\charfl(M_I)=\Delta_I. \]
This proves (3).

Finally, $\charfl(M)=\Psi^T(M)$, by the equality $\Phi(M)=\Theta^T(M)$ from \Cref{Lem;charPhiTheta} and the description of $\Psi^T$ using $\Theta^T$ in 
\Cref{Lem:Psi}. 
\end{proof}
\begin{remark}
Geiss--Leclerc--Schr\"{o}er \cite[\S 10]{GLS08} proved that the coordinate ring 
$\CC[\flag(\setJ)]$ is a cluster algebra by homogenisation of the cluster structure on $\CC[N_K]$, 
and Jensen--King--Su \cite{JKS1} gave a categorification of the cluster algebra $\CC[\Gr(k, n)]$ by 
homogenising Geiss--Leclerc--Schr\"{o}er's cluster character $\phi$.
\Cref{Thm:classiccat} gives a categorification of the cluster algebra on $\CC[\flag(\setJ)]$ and a
new categorification of $\CC[\Gr(k, n)]$.
The approach is different from \cite{JKS1}, by introducing the map $\fln$, which has a transparent
quantisation (see \Cref{sec16}). 
\end{remark}

\begin{remark} \label{rem:classicalcase} \qquad
\begin{itemize}

\item[(1)] 
Localising $ \CC[\flag(\setJ)]$ at the minors $\Delta_{[j]}$ for $j\in \setJ$ turns $\fln$ into 
 an isomorphism, 
\begin{equation*} \label{eq:rhohat1}
\fln\colon \CC[\flag(\setJ)]_{\{\Delta_{[j]}\colon j\in \setJ\}} \rightarrow \CC[N_{\setK}][K(\add \charT_{\setJ})].
\end{equation*}
 
\item[(2)] The construction of $\charfl$ produces a basis of functions of the form $\charfl(M)$ for \emph{generic} $M$, using the corresponding basis for $\CC[N_K]$ (see e.g. \cite{GLS12}). 

\item[(3)] As explained in Remarks \ref{Rem:missingobject} and \ref{Rem:missingobject2}, 
it makes sense  to include $n$ in $\setJ$. In this case, we obtain a larger coordinate ring $\CC[\flag(\setJ)]$ 
by adding the determinant $\Delta_{[n]}$ as a generator, and \Cref{Thm:classiccat} is still true.
\end{itemize}
\end{remark}

\begin{example}
Consider the cluster structure on $\CC[\flag(\setJ)]$ with $n=4$ and $\setJ=\{1, 3\}$. 
In this case, the AR-quiver of 
$\Dfiltered(\setJ)$ is described in  \Cref{Ex:ARquivers} (2). Let 
\[
T=M_{3}\oplus M_{134}\oplus X\oplus M_4\oplus M_{234}\oplus M_1\oplus M_{12}.
\]
Then $T$ is a cluster tilting object and 
the algebra $\CC[\flag(\setJ)]$ is a cluster algebra of type 
$\mathbb{A}_1\times \mathbb{A}_1$.
The first two summands are mutable, and the others are projective-injective 
in $\Dfiltered(\{1, 3\})$. 
Note that $X$ is not a submodule of $D_0$ and so not of the form $M_I$ for some $I\subseteq [4]$.
However, following the mutation sequences for pairs $(M_3,  M_{124})$ and
$(M_2, M_{134})$,
and the fact that $\hat{\Phi}$
is a cluster character, we have 
\begin{eqnarray*}
 \hat{\Phi}(X)&=& \Delta_{3}\Delta_{124} - \Delta_4 \Delta_{123}\\
                    &=&\Delta_{2}\Delta_{134} - \Delta_1 \Delta_{234}.
\end{eqnarray*}
\end{example}

\section{Separation, rigidity and quasi-commutation rules}\label{sec10}
We investigate the connections between the combinatorial notion of  
weak and strong separation and 
the extension groups $\ext^1(-, -)$ and $\Ext^1(-, -)$ in $\Dfiltered(\setJ)$. In particular, 
maximal collections of weakly separated sets 
are in one-to-one correspondence with (basic) maximal rigid modules 
with respect to $\ext^1(-, -)$ whose indecomposable summands all have rank one, strong separation can be detected by $\Ext^1(-, -)$. Furthermore, we prove that 
Quasi-commutation rules for quantum 
minors can be computed using dimensions of $\Hom(-, -)$-spaces or $\Ext^1(-, -)$, and that 
$\Ext^1(-, -)$ detects when the product of two quantum minors is invariant under the bar involution.

\subsection{The Grassmannian cluster category $\CM C$}\label{sec:Grass}
Recall the Grassmannian cluster category $\CM C$ constructed in \cite{JKS1}. Let  $C=C_{k, n}$ be the complete 
path algebra of a double cyclic quiver with relations. 
 For instance, when $n=5, k=2$, the algebra $C$ is 
defined by the  quiver, 
\[
\begin{tikzpicture}[scale=0.9,
 Earrow/.style={thick,-latex},
 Barrow/.style={thick,-latex, black},
 Rline/.style={thick,teal},
 Vline/.style={thick,teal},
 Bline/.style={thick,blue} ]
\newcommand{\alp}{10}
\newcommand{\centerarc}[4]{($(#1)+(#2:#4)$) arc (#2:#3:#4)}
\newcommand{\qvertex}{\small $\bullet$}

\path (0.5, 1) node (bb) {};
\newcommand{\radius}{1.8cm}
\foreach \j in {0,...,4}{
  \path (90-72*\j:\radius) node (w\j) {\color{black}\j};
  \path (162-72*\j:\radius) node[black] (v\j) {};
}
\path[Earrow] (v1) edge[bend left=25,thick] node [right, xshift=-0.1cm, yshift=0.1cm] {$x_1$} (w1);
\path[Earrow] (v2) edge[bend left=25,thick] node [right, xshift=-0.1cm, yshift=0cm] {$x_2$} (w2);
\path[Earrow] (v3) edge[bend left=25,thick] node [below, xshift=0.08cm, yshift=0.1cm]  {$x_3$} (w3);
\path[Earrow] (v4) edge[bend left=25,thick] node[left, xshift=0.1cm, yshift=-0.05cm]  {$x_4$} (w4);
\path[Earrow] (v0) edge[bend left=25,thick] node [left, xshift=-0cm, yshift=0cm]  {$x_0$} (w0);

\path[Earrow] (w1) edge[bend left=23,thick] node [left, xshift=0.15cm, yshift=-0.15cm] {$y_1$} (v1);
\path[Earrow] (w2) edge[bend left=23,thick] node [left, xshift=0.1cm, yshift=0cm] {$y_2$} (v2);
\path[Earrow] (w3) edge[bend left=23,thick] node [above, xshift=0cm, yshift=-0.1cm]  {$y_3$} (v3);
\path[Earrow] (w4) edge[bend left=23,thick] node[right, xshift=-0.05cm, yshift=-0cm]  {$y_4$} (v4);
\path[Earrow] (w0) edge[bend left=23,thick] node [right, xshift=-0.05cm, yshift=-0.05cm] {$y_0$} (v0);
\end{tikzpicture}
\]
with relations \[x_ix_{i-1}=y_{i+1}y_{i+2}y_{i+3},  ~~ x_iy_i=y_{i+1}x_{i+1},  ~~\forall ~ i \in \ZZ_5.\] The centre of $C$ is  
the complete polynomial ring $R=\CC[[t]]$ with $t=\sum_i x_iy_i$. 
Now the category $\CM C$ consists of Cohen-Macaulay $C$-modules, that is, $C$-modules 
that are free over the centre $R$.

There is a full exact quotient functor 
\[
\tilde{\projfun} \colon \CM C \ra \sub Q_{\setJ}, ~ M\mapsto M/Ce_0M.
\]
In particular, this induces isomorphisms on extension groups \cite[Cor. 4.6]{JKS1} and an equivalence 
of categories \[ \CM C/\add Ce_0 \ra \sub Q_{\setJ}. \] 

\emph{The index set $[n]$ for the vertices and the arrows of the quiver defining $C$ is identified with 
$\ZZ_n$. In particular, $0\equiv n\bmod n.$}

The rank one  modules 
in $\CM C$ are classified by subsets of $[n]$ with $k$ elements, which are referred to 
as {\em $k$-sets}.
More precisely, any $k$-set $I$ defines a rank one module $N_I$ in $\CM C$, where the 
vector space at each vertex is $R$, the arrow 
$x_i$ for any $i\in I$ acts as $1$, while the other $x$-arrows act as $t$, and the actions 
of $y$-arrows are determined by the generating relations of $C$. Any
rank one module is isomorphic to some $N_I$, and $N_I\not\cong N_J$ for any 
distinct $k$-sets $I$ and $J$. By construction, 
we have the following.
\begin{lemma} \label{Eq:twoprojections} For any $k$-set $I$,  
$\projfun M_I=\tilde{\projfun}N_I.$
\end{lemma}
\begin{example}
Let $k=2, ~n=5$ and $I=\{3, 5\}$. Then the modules $N_I$, $M_I$ and $\projfun M_I=\tilde{\projfun} N_I$ are as follows, 
\[
\begin{tikzpicture}[scale=0.9,
teal arr/.style={magenta, -angle 60},
blue arr/.style={blue, -angle 60},
magenta arr/.style={teal, -angle 60}, thick] 
\path (2, 5) node (e2) {\color{blue}$2$};
\path (4, 5) node (e4) {\color{blue}$4$};
\path (1, 4) node (e1) {\color{blue}$1$};
\path (3, 4) node (e3) {\color{blue}$3$};
\path (5, 4) node (e0) {\color{teal}$0$};

\path[blue arr] (e2) edge (e1);
\draw[blue arr] (e2) edge (e3);
\draw[blue arr] (e4) edge (e3);
\draw[teal arr] (e4) edge (e0);

\path (2, 3) node (e22) {\color{blue}$2$};
\path (4, 3) node (e24) {\color{teal}$4$};
\path (0, 3) node (e20) {\color{teal}$0$};

\draw[blue arr] (e1) edge (e22);
\draw[teal arr] (e1) edge (e20);
\draw[blue arr] (e3) edge (e22);
\draw[teal arr] (e3) edge (e24);
\draw[magenta arr] (e0) edge (e24);

\path (1, 2) node (e21) {\color{teal}$1$};
\path (3, 2) node (e23) {\color{teal}$3$};
\path (5, 2) node (e30) {\color{teal}$0$};

\draw[magenta arr] (e20) edge (e21);
\draw[teal arr] (e22) edge (e21);
\draw[teal arr] (e22) edge (e23);
\draw[magenta arr] (e24) edge (e30);
\draw[magenta arr] (e24) edge (e23);

\path (2, 1) node (e32) {\color{teal}$2$};
\path (4, 1) node (e34) {\color{teal}$4$};
\path (0, 1) node (e40) {\color{teal}$0$};

\draw[magenta arr] (e21) edge (e40);
\draw[magenta arr] (e21) edge (e32);
\draw[magenta arr] (e23) edge (e32);
\draw[magenta arr] (e23) edge (e34);
\draw[magenta arr] (e30) edge (e34);

\path (2, 0.5) node (e32) {$\vdots$};
\path (4, 0.5) node (e34) {$\vdots$};
\path (0, 0.5) node (e40) {$\vdots$};
\path (9, 5) node (f2) {\color{blue}$2$};
\path (11, 5) node (f4) {\color{blue}$4$};
\path (8, 4) node (f1) {\color{blue}$1$};
\path (10, 4) node (f3) {\color{blue}$3$};

\draw[blue arr] (f2) edge (f1);
\draw[blue arr] (f2) edge (f3);
\draw[blue arr] (f4) edge (f3);

\path (9, 3) node (f22) {\color{blue}$2$};
\path (7, 3) node (f20) {\color{teal}$0$};

\draw[blue arr] (f1) edge (f22);
\draw[teal arr] (f1) edge (f20);
\draw[blue arr] (f3) edge (f22);
\path (8, 2) node (f21) {\color{teal}$1$};
\draw[magenta arr] (f20) edge (f21);
\draw[teal arr] (f22) edge (f21);
\path (7, 1) node (f40) {\color{teal}$0$};
\draw[magenta arr] (f21) edge (f40);
\path (14, 5) node (g2) {\color{blue}$2$};
\path (16, 5) node (g4) {\color{blue}$4$};
\path (13, 4) node (g1) {\color{blue}$1$};
\path (15, 4) node (g3) {\color{blue}$3$};
\draw[blue arr] (g2) edge (g1);
\draw[blue arr] (g2) edge (g3);
\draw[blue arr] (g4) edge (g3);
\path (14, 3) node (g22) {\color{blue}$2$};
\draw[blue arr] (g1) edge (g22);
\draw[blue arr] (g3) edge (g22);
\end{tikzpicture}
\]
In $N_I$, the two columns of zeros are identified, and the teal coloured part is the projective module  
$Ce_0$, while the teal part in $M_I$ is $\charT_2$.
\end{example}

\subsection{Weak separation and rigidity}
\begin{definition} \cite[\S 1]{LZ} \label{def:non-crossing}
Two  subsets $I$ and $J$ of $[n]$  are  
{\em weakly separated}  if 
one of the following two conditions holds: 
\begin{itemize}
\item[(i)]  $|I|\geq |J|$ and $J\setminus {I}$ can be written as a disjoint union $J'\cup J''$ so that \[J'< (I\setminus J) <J'';\]
\item[(ii)]  $|J|\geq |I|$ and $I\setminus {J}$ can be written as a disjoint union $I'\cup I''$ so that \[I'< (J\setminus I) <I'',\]
\end{itemize}
where $I<J$ means that $i<j$ for all $i\in I$ and $j\in J$. 
A collection of subsets is said to be {\em weakly separated} if 
its elements are pairwise weakly separated.
\end{definition}

We remark that when the cardinalities of $I$ and $J$ are equal, then $I$ and $J$ are 
weakly separated if and only if they  are \emph{non-crossing}, that is, 
there do not exist $a,c\in I\setminors J$ and $b,d\in J\setminors I$ such
that $a,b,c,d$ are cyclically ordered.

We will prove that $I$ and $J$ are weakly separated
if and only if $\ext^1(M_I, M_J)=0$, or equivalently $M_I\oplus M_J$ is 
rigid in $\Dfiltered(\setJ)$.  

\begin{lemma} \label{Lem:rankoneext}
Let $I,J\subseteq [n]$.
Then 
$$
\dim \ext^1(M_I, M_J) = \dim \Hom(M_I, M_J) + \dim \Hom(M_J, M_J) - |I \cap J| - \min\{|I|, |J|\}.
$$
\end{lemma}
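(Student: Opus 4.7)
The plan is to specialise the symmetric bilinear form identity of Proposition~\ref{Lem:Symbil} to the pair $X = M_I$, $Y = M_J$. That proposition yields
$$([M_I], [M_J]) = \dim \Hom_\aus(M_I, M_J) + \dim \Hom_\aus(M_J, M_I) - \dim \ext^1(M_I, M_J),$$
so the real content of the lemma is the closed-form evaluation
$$([M_I], [M_J]) = |I \cap J| + \min\{|I|, |J|\},$$
after which a single rearrangement recovers the displayed identity. (Reconciling this with the statement strongly suggests that the second Hom-term in the lemma should read $\dim \Hom_\aus(M_J, M_I)$ rather than $\dim \Hom_\aus(M_J, M_J)$; that is the form produced by the bilinear identity and the one I aim for.)

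To evaluate $([M_I], [M_J])$, I would use the definition \eqref{eq:roundbil} to split it as
$([M_I], [M_J]) = \langle [M_I], [M_J] \rangle + \langle [\eta M_I], [\eta M_J] \rangle$, and handle each summand separately. For the first summand, the definition of $M_I$ as the unique submodule of $D_0$ with $\Delta$-support $I$ makes $\dvector_\Delta M_I$ the characteristic vector of $I \subseteq [n]$; then the formula \eqref{Eq:bilin0} identifies $\langle [M_I], [M_J] \rangle$ with the dot product of the two characteristic vectors, which equals $|I \cap J|$.

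For the second summand, Lemma~\ref{Lem:trace}(4) gives $\eta M_I = \charT_{|I|}$ and $\eta M_J = \charT_{|J|}$. Since $\charT_i = M_{[i]}$ by construction, its $\Delta$-dimension vector is the characteristic vector of the interval $[1,i]$, and therefore
$$\langle [\charT_{|I|}], [\charT_{|J|}] \rangle = \bigl|[1,|I|] \cap [1,|J|]\bigr| = \min\{|I|, |J|\}.$$
Adding the two pieces gives $([M_I], [M_J]) = |I\cap J| + \min\{|I|, |J|\}$, and substituting back into the symmetric form identity produces the stated formula. There is no serious obstacle: once Proposition~\ref{Lem:Symbil} and Lemma~\ref{Lem:trace}(4) are in hand, the argument reduces to identifying two inner products as dot products of indicator vectors of subsets of $[n]$.
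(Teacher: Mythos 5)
Your proposal is correct and follows essentially the same route as the paper: the paper's proof likewise evaluates $\langle [M_I],[M_J]\rangle=|I\cap J|$ and $\langle [\eta M_I],[\eta M_J]\rangle=\min\{|I|,|J|\}$ via \eqref{Eq:bilin0} and then applies \Cref{Lem:Symbil}. Your observation that the second Hom-term in the statement should read $\dim\Hom_\aus(M_J,M_I)$ is also right; that is what the symmetric form identity produces and how the lemma is used later (e.g.\ in the proof of \Cref{thm:extWS}).
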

\begin{proof} By the definition of the bilinear form $\langle -, -\rangle$ in \eqref{Eq:bilin0}, 
\[
\langle [M_I], [M_J] \rangle = |I\cap J| \text{ and }
\langle [\eta M_I], [\eta M_J] \rangle = \min\{|I|, |J|\}.
\]
The lemma then follows from   \Cref{Lem:Symbil}.
\end{proof}

Below, we will give a combinatorial formula for computing $\dim \Hom(M_I, M_J)$. 
Let $I = \{i_1<\cdots< i_k\}$. For $1\leq d\leq k$, let $S(I, d)$ and $E(I, d)$ be 
the $d$ smallest elements and the $d$ largest elements in $I$, respectively. That is, 
\[
S(I, d)=\{i_s\colon 1\leq s\leq d\} \text{ and }E(I, d)=\{i_s\colon k-d+1\leq s\leq k\}. 
\] 
We have $S(I,|I|)=E(I,|I|)=I$.  Let 
$S(I,0)=\emptyset=E(I,0)$. Let
$J=\{j_1<\dots < j_k\}$ be another $k$-set.
We define 
\[
I\preceq J \text{ if } i_s\leq j_s, \text{ for all } 1\leq s\leq k.
\]
By computation, using the construction of the modules $M_I$, we have the following straightforward lemma.
\begin{lemma} \label{Lem:quotient} 
Let $I, J\subseteq [n]$. 
\begin{itemize}
\item[(1)] $M_I\subseteq M_J$ if and only if $I\preceq E(J,|I|) $.
\item[(2)] Suppose that $M_I\subseteq M_J$. Then 
 $M_J/M_I$ is a good module if and only if $I=E(J,|I|)$.
Moreover, in this case $M_J/M_{E(J,d)}=M_{S(J,|J|-d)}$.
\end{itemize}
\end{lemma}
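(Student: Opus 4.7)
The plan is to prove both parts by direct computation, exploiting the realisation of $M_I$ as the unique submodule of $D_0$ with $\Delta$-support $I$ (\Cref{Lem:rankoneclassification}). The key formula, read off the $\Delta$-filtration of $M_I$, is that $\dim_\CC e_r M_I = |\{i \in I : i > r\}|$ for each vertex $r$, since every $\Delta_i$ contributes exactly one copy of each simple $S_0,\dots,S_{i-1}$ and nothing beyond.

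For (1), necessity of $I \preceq E(J,|I|)$ would follow by noting that $M_I \subseteq M_J$ forces the componentwise inequality $\dim e_r M_I \leq \dim e_r M_J$, and then observing (via a standard dominance rearrangement) that this system is equivalent to $|I| \leq |J|$ together with $i_s \leq j_{|J|-|I|+s}$ for all $s$. For sufficiency, I would build a chain $I = I_0 \preceq I_1 \preceq \cdots \preceq I_t = J$ using two elementary moves---appending one element above the current maximum, or shifting a single entry of $I$ up by one---and check each elementary inclusion $M_{I_u} \subseteq M_{I_{u+1}}$ at the level of the triangular basis of $D_0$, using the action of the $a_i,b_i$.

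For (2), subtracting $\Delta$-dimension vectors yields $[M_J/M_I : \Delta_i] = \mathbf{1}_J(i) - \mathbf{1}_I(i)$, so goodness of $M_J/M_I$ already forces $I \subseteq J$. The ``if'' direction, together with the explicit formula, is obtained by exhibiting the \emph{top} $\Delta$-filtration
\[
0 \subset M_{E(J,1)} \subset M_{E(J,2)} \subset \cdots \subset M_{E(J,|J|)} = M_J,
\]
each step being an admissible inclusion by part (1) since $E(J,d-1) \preceq E(J,d-1) = E(E(J,d), d-1)$, with successive quotient of $\Delta$-support $\{j_{|J|-d+1}\}$, hence equal to $\Delta_{j_{|J|-d+1}}$. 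Therefore $M_J/M_{E(J,d)}$ is good with $\Delta$-support $S(J,|J|-d)$, and the uniqueness in \Cref{Lem:rankoneclassification} forces $M_J/M_{E(J,d)} = M_{S(J,|J|-d)}$. The converse in (2) is the hardest step: I plan to use the socle condition $\soc(M_J/M_I) \subseteq \add S_0$ from \Cref{Lem:chardelta}(5) together with the triangular-basis realisation of $M_J$; if $I \subsetneq E(J,|I|)$, then picking the smallest $j \in J \setminus I$ above some $i \in I$ allows one to locate a residual generator descending to a simple $S_{j-1}$ in $\soc(M_J/M_I)$, contradicting goodness and thus forcing $I = E(J,|I|)$.
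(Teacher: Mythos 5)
The paper gives no proof of this lemma (it is introduced as ``straightforward by computation''), so you are supplying the omitted verification; your overall strategy --- vertexwise dimension counts for the necessity in (1), chains of elementary inclusions for the sufficiency, and the filtration $0\subset M_{E(J,1)}\subset\cdots\subset M_J$ for (2) --- is exactly the computation the authors intend. The necessity half of (1) is fine: $\dim e_rM_I=|\{i\in I\colon i>r\}|$ is correct, and that system of inequalities is indeed equivalent to $|I|\le |J|$ together with $i_s\le j_{|J|-|I|+s}$ for all $s$.

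The sufficiency half of (1), as written, does not work. Your two elementary moves are ``append an element above the current maximum'' and ``shift an entry up by one'', and both leave the minimum of the index set non-decreasing, so no chain built from them can reach a $J$ with $\min J<\min I$. This is not a corner case but the generic one: after shifting $I$ up to $E(J,|I|)$, the remaining elements $j_1,\dots,j_{|J|-|I|}$ of $J$ all lie \emph{below} $\min E(J,|I|)$, so the move you actually need is ``append an element below the current minimum'' --- precisely the step $M_{E(J,d-1)}\subset M_{E(J,d)}$ of the filtration you use in part (2). Concretely, $M_{\{3\}}=\Delta_3\subseteq M_{\{1,3\}}$ holds, yet $\{1,3\}$ is unreachable from $\{3\}$ by your moves. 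Replacing the move and routing the chain through $E(J,|I|)$ repairs the argument. Two smaller jumps remain. First, knowing that $M_{E(J,d)}/M_{E(J,d-1)}$ has the class of $\Delta_{j_{|J|-d+1}}$ in $K(\Dfiltered)$ does not identify it with $\Delta_{j_{|J|-d+1}}$ (a direct sum of simples has the same class), and identifying $M_J/M_{E(J,d)}$ with $M_{S(J,|J|-d)}$ via the uniqueness in \Cref{Lem:rankoneclassification} requires first checking that this quotient is good with simple socle $S_0$, i.e.\ embeds in $D_0$; both facts do follow from the explicit basis of $D_0$ you invoke, but they must be checked there rather than read off from $\Delta$-dimension vectors. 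Second, the converse in (2) is only a sketch: the idea of exhibiting a simple other than $S_0$ in $\soc(M_J/M_I)$ and appealing to \Cref{Lem:chardelta} is the right one, but the asserted summand $S_{j-1}$ is not what occurs in general --- for $I=\{1\}\subset J=\{1,3\}$ one finds $\soc(M_{13}/M_{\{1\}})\cong S_1$, not $S_2$ --- so this step needs to be carried out, not just named.
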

\begin{lemma} \label{Lem:rankonehom}
Let $I, J\subseteq [n]$. Then  
\begin{equation}\label{dimhom}
\dim \Hom(M_I, M_J)=\max\{d \colon  S(I, d)\preceq E(J, d)\}.
\end{equation}
Equivalently, 
\begin{equation}\label{dimhom1}
 \dim \Hom(M_I, M_J)=d, \text{ where }  S(I, d)\preceq E(J, d)
\text{ but } S(I, d+1)\not\preceq E(J, d+1).
\end{equation}
\end{lemma}
\begin{proof}
If $f\colon M_I\to M_J$, then $\im (f)\leq M_J$ is a good module. 
Thus,
\[
\im (f)=M_{S(I,x)} \text{ with } 
S(I, x) \preceq E(J, x)
\] 
for some $0 \leq x \leq |I|$, by \Cref{Lem:quotient}.

We recall the faithful functor from \Cref{Lem:charTequivalence},
$$
\e_0\colon \Dfiltered\to \mod \CC[t]/(t^n), ~~M\mapsto e_0M.
$$
We have
$$
\End  M_I= \End (e_0M_I) = \CC[t]/(t^{|I|})
$$
and 
$$
\Hom(M_I, M_J)\leq \Hom (e_0M_I, e_0M_J).
$$ 
Therefore, $\Hom(M_I,M_J) = \CC[t]/(t^{d})$,
where 
\[
d=\dim \Hom(M_I, M_J)
\] 
is the largest $x$ so that $M_{S(I, x)}=M_I/M_{E(I,|I|-x)}\leq M_J$.
This proves \eqref{dimhom}. 

Next, by the definition of the order $\preceq$, the left-hand sides of the 
equalities in \eqref{dimhom} and \eqref{dimhom1} are equal. 
So \eqref{dimhom1} follows from \eqref{dimhom}.
\end{proof}

For any subset $I$ of $[n]$ with $s=|I|$, Oh--Postnikov--Speyer \cite{OPS} defined 
the set
\[
\pad(I)=I\cup [n+s+1, 2n],
\] 
which is an $n$-subset of $[2n]$.
Thus $M_{\pad(I)}$ is the submodule of the 
projective module ${D_{2n}}e_0$ whose $\Delta$-support is
$\pad(I)$, where $D_{2n}$ is the Auslander algebra of {$\CC[t]/(t^{2n})$}.
Similarly, $N_{\pad(I)}$ denote the rank one module in $\CM C_{n, 2n}$ 
corresponding to the $n$-set $\pad(I)$.

\begin{lemma} \cite[Lemma 12.7]{OPS} \label{Lem:padseparated} 
Let $I, J\subseteq [n]$. Then $I$ and $J$ are weakly separated
if and only if $\pad(I)$ and $\pad(J)$ are weakly separated. 
\end{lemma}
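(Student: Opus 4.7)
The plan is to reduce the equivalence to an elementary set-theoretic comparison. I would begin by assuming without loss of generality that $|I| \leq |J|$, setting $s = |I|$ and $t = |J|$, and computing the differences of the padded sets directly. Using $[n+t+1, 2n] \subseteq [n+s+1, 2n]$ together with the fact that $I, J \subseteq [n]$ are disjoint from $[n+1, 2n]$, a routine calculation gives
\[
\pad(I) \setminus \pad(J) = (I \setminus J) \cup [n+s+1, n+t], \qquad \pad(J) \setminus \pad(I) = J \setminus I.
\]
Since $|\pad(I)| = |\pad(J)| = n$, weak separation of the padded sets amounts to partitioning $\pad(I) \setminus \pad(J)$ into a lower and an upper block sandwiching $\pad(J) \setminus \pad(I) = J \setminus I$ (the two defining conditions of weak separation collapse to one another when the cardinalities are equal).

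For the forward implication, I would start from a partition $I \setminus J = I' \cup I''$ with $I' < J \setminus I < I''$ witnessing the weak separation of $I$ and $J$, and take $A = I'$, $B = I'' \cup [n+s+1, n+t]$. The enlarged block $B$ still lies entirely above $J \setminus I$, because $[n+s+1, n+t] \subseteq [n+1, 2n]$ sits above all of $[n]$, and this gives weak separation of $\pad(I)$ and $\pad(J)$. For the converse, given any valid partition $\pad(I) \setminus \pad(J) = A \cup B$ with $A < J \setminus I < B$, I would observe that the same ``interval lies above $[n]$'' remark forces $A \cap [n+s+1, n+t] = \emptyset$, so $A \subseteq I \setminus J$; setting $I' = A$ and $I'' = (I \setminus J) \setminus A$ then restricts the partition to one of $I \setminus J$ with $I' < J \setminus I < I''$, as required.

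The only step that requires any care is the bookkeeping in computing $\pad(I) \setminus \pad(J)$ under the asymmetric assumption $s \leq t$; the case $|I| > |J|$ is then recovered by the symmetry of weak separation in its two arguments. Once the two set-difference formulas are in hand, both implications reduce to the trivial observation that the ``padding interval'' $[n+s+1, n+t]$ lies strictly above $[n]$, so it can always be placed in the upper block of the partition and can never appear in the lower one.
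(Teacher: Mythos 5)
Your set-difference computations are correct, and the argument is sound whenever $|I|<|J|$, but there is a genuine gap in the case $|I|=|J|$, caused by the false parenthetical claim that ``the two defining conditions of weak separation collapse to one another when the cardinalities are equal.'' They do not. Take $I=\{2,3\}$ and $J=\{1,4\}$: condition (i) of the definition holds (write $J\setminus I=\{1\}\cup\{4\}$ with $\{1\}<\{2,3\}<\{4\}$), but condition (ii) fails, since any $I'<J\setminus I$ must lie below $1$ and any $I''>J\setminus I$ must lie above $4$, forcing $I'=I''=\emptyset\neq I\setminus J$. Consequently, weak separation of the padded sets is \emph{not} equivalent to the single sandwich condition ``partition $\pad(I)\setminus\pad(J)$ around $\pad(J)\setminus\pad(I)$'': in this example both $I,J$ and $\pad(I),\pad(J)$ are weakly separated, yet no such partition exists. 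Your forward implication then has nothing to start from (there is no witness $I\setminus J=I'\cup I''$), and your backward implication ignores the possibility that weak separation of $\pad(I),\pad(J)$ is witnessed by a partition of $\pad(J)\setminus\pad(I)$ rather than of $\pad(I)\setminus\pad(J)$.

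The repair is short but necessary. When $s=|I|=|J|=t$, the interval $[n+s+1,2n]$ added to both sets is identical, so $\pad(I)\setminus\pad(J)=I\setminus J$ and $\pad(J)\setminus\pad(I)=J\setminus I$, and either witnessing condition for $I,J$ is literally the same condition for the padded sets; the equivalence is immediate. When $s<t$, condition (i) is unavailable for $I,J$, so your treatment of condition (ii) covers the forward direction; for the backward direction you must also handle the case where the padded sets satisfy condition (i), i.e.\ $J\setminus I=A_1\cup A_2$ with $A_1<(I\setminus J)\cup[n+s+1,n+t]<A_2$. There $A_2=\emptyset$ because $n+t$ exceeds everything in $J\setminus I\subseteq[n]$, whence $J\setminus I<I\setminus J$ and condition (ii) for $I,J$ holds with $I'=\emptyset$, $I''=I\setminus J$. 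With these two additions the proof is complete. (The paper itself only cites \cite[Lemma 12.7]{OPS} for this statement, so there is no in-house proof to compare against.)
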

\begin{lemma} \label{Lem:padhom}
Let $I, J\subseteq [n]$ with  $|J|=t$. Then 
\begin{equation}\label{Eq:dimhom}
\dim \Hom(M_{\pad(I)}, M_{\pad(J)}) =\dim \Hom(M_I, M_J)+ (n-t).
\end{equation}
\end{lemma}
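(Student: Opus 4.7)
The plan is to apply the combinatorial formula from \Cref{Lem:rankonehom} to both sides: writing $d_0 := \dim\Hom_\aus(M_I, M_J) = \max\{d' : S(I,d') \preceq E(J,d')\}$, I aim to prove that
\[
\max\{d : S(\pad(I), d) \preceq E(\pad(J), d)\} = (n-t) + d_0.
\]
The underlying picture is that the padded tail $[n+t+1, 2n]$ forms the top $n-t$ elements of $\pad(J)$ and lies above everything in $\pad(I)$ that is $\le n$, so it contributes $n-t$ ``free'' matchings on top of those inherited from the pair $(I,J)$.

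To establish the inequality $\geq$, I would evaluate at $d = (n-t)+d_0$. Ordered increasingly, $E(\pad(J), d)$ becomes $j_{t-d_0+1}, \dots, j_t, n+t+1, \dots, 2n$, while $S(\pad(I), d)$ becomes $i_1, \dots, i_{\min(s,d)}, n+\min(s,d)+1, \dots, n+d$. The first $d_0$ position-wise comparisons collapse exactly to the hypothesis $S(I, d_0) \preceq E(J, d_0)$, while the remaining $n-t$ positions reduce to $i_k \leq n < n+t+(k-d_0)$ (when $k \leq s$) or to $n+k \leq n+t+(k-d_0)$ (when $k > s$); the latter uses $d_0 \leq \min(s,t) \leq t$.

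For the inequality $\leq$, write $d = (n-t)+d'$ with $d' \geq d_0 + 1$. If $d' \leq s$, the leftmost $d'$ positions reduce to $i_k \leq j_{t-d'+k}$, i.e.\ to $S(I, d') \preceq E(J, d')$, which fails by maximality of $d_0$. If $d' > s$, then at position $s+1$ the left-hand value is $n+s+1$ while the right-hand value is $j_{t-d'+s+1} \leq n$, and the inequality already fails. Finally, for $d \leq n-t$ the condition $S(\pad(I), d) \preceq E(\pad(J), d)$ holds automatically: in that range $E(\pad(J), d) = [2n-d+1, 2n]$, and any size-$n$ subset of $[2n]$ has $k$th smallest element at most $n+k$, which is $\le 2n-d+k$ since $d \leq n$.

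The main obstacle is purely bookkeeping: tracking the three regimes of the position index $k$ relative to $s$, $d'$ and $t$. Once the explicit position-wise expressions for $S(\pad(I), d)$ and $E(\pad(J), d)$ are written down, each comparison reduces either to an assumed inequality on $(I,J)$ or to the trivial bound $i_k, j_k \leq n$ coming from $I, J \subseteq [n]$, and the claimed formula drops out.
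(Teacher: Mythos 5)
Your proposal is correct and follows essentially the same route as the paper's proof: both reduce the claim via \Cref{Lem:rankonehom} to showing that the maximal $d$ with $S(\pad(I),d)\preceq E(\pad(J),d)$ equals $(n-t)+d_0$, verify the inequality at that value by splitting positions into the ``inherited'' block and the padding block, and establish maximality by a case split on whether the relevant index exceeds $s=|I|$ (your cases $d'\leq s$ versus $d'>s$ match the paper's cases $d<s$ versus $d=s$). The bookkeeping in your position-wise comparisons checks out, so no gap.
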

\begin{proof}
Let $I=\{i_1 < \dots < i_s\}$ and $J=\{j_1 < \dots < j_t\}$. 
Let $d=\dim \Hom(M_I, M_J)$. By  \Cref{Lem:rankonehom},
\[d=\max\{c \colon  S(I, c)\preceq E(J, c)\}.\] 
In particular,  $d \leq t \text{ and } d\leq s$. 
Denote the elements in $\pad(I)$ by 
$i_1< \dots<i_s<\dots< i_n$, where the last $(n-s)$ elements are $[n+s+1, 2n]$. 
We have, 
\[ 
S(\pad(I), d+(n-t))=S(I, d)\cup \{i_{d+1}, \dots, i_{d+(n-t)}\} 
\]
and 
\[ 
E(\pad(J), d+(n-t))=E(J, d)\cup [n+t+1, 2n]. 
\]
Clearly, 
\[
S(I, d) \preceq E(I, d) ~~\text{ and }~~ \{i_{d+1}, \dots, i_{d+(n-t)} \}  \preceq [n+t+1, 2n].
\]
So 
\begin{equation}\label{Eq:ineq}
S(\pad(I), d+(n-t))\preceq E(\pad(J), d+(n-t)).
\end{equation}
We show that $d+(n-t)$ is maximal such that \eqref{Eq:ineq} is true. Indeed, when $d=t$, then $d+(n-t)=n$, 
which is clearly maximal. Now, we assume that $d<t$. We have
\begin{equation}\label{eq:newES1}
S(\pad(I), d+(n-t)+1)=S(\pad(I), d+1)\cup \{i_{d+2}, \dots, i_{d+(n-t)+1}\}, 
\end{equation}
and 
\begin{equation}\label{eq:newES2}
E(\pad(J), d+(n-t)+1)=E(J, d+1)\cup [n+t+1, 2n]. 
\end{equation}
Note that 
\[
S(\pad(I), d+1)=\left\{\begin{tabular}{ll} $S(I, d+1)$ &  if  $d<s$; \\ 
$S(I, d)\cup \{n+d+1\}$  &  if $d=s$.\end{tabular}
 \right. 
\]
 By the maximality of $d$ for the first case, while by the fact that  $j<n+d+1$ for any $j\in J$ for the second case, 
 we have  
\[
S(\pad(I), d+1)\not\preceq E(J, d+1).
\]   
Consequently, 
\[
S(\pad(I), d+1)\not\preceq E(\pad(J), d+1).
\]
Therefore, by \eqref{eq:newES1} and \eqref{eq:newES2}, 
\[
S(\pad(I), d+ (n-t)+1)\not\preceq E(\pad(J), d+(n-t)+1). 
\]
Hence, by \Cref{Lem:rankonehom},  $d+(n-t)$ is maximal and 
\eqref{Eq:dimhom} holds. 
\end{proof}
\begin{proposition} \cite[Prop. 5.6]{JKS1} \label{Prop:WS1} 
Let $I, J\subseteq [n]$ be $k$-sets. Then  $\Ext^1(N_I, N_J)=0$ 
if and only if $I$ and $J$ are weakly separated.
\end{proposition}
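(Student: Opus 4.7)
The plan is to transport the statement across two projection functors to a combinatorial identity on $k$-tuples and then verify that identity. First, the full exact quotient functor $\tilde{\projfun}\colon \CM C\to \sub Q_\setJ$ induces isomorphisms on extension groups, and by \eqref{Eq:twoprojections} sends $N_I$ to $\projfun M_I$. Combined with the isomorphism $\ext^1(M,N)\cong \Ext^1_\Pi(\projfun M, \projfun N)$ from \Cref{Thm:fullexact}, this yields
\[
\Ext^1_C(N_I, N_J)\;\cong\; \Ext^1_\Pi(\projfun M_I, \projfun M_J)\;\cong\; \ext^1(M_I, M_J).
\]
So it suffices to prove that $\ext^1(M_I, M_J)=0$ if and only if the $k$-sets $I, J$ are weakly separated.

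Second, I rewrite the vanishing as a combinatorial equation. By \Cref{Lem:rankoneext} applied with $|I|=|J|=k$, the condition $\ext^1(M_I, M_J)=0$ is equivalent to
\[
\dim\Hom_\aus(M_I, M_J) + \dim\Hom_\aus(M_J, M_I) \;=\; |I\cap J| + k,
\]
and \Cref{Lem:rankonehom} expresses each summand on the left as $\max\{d\colon S(-,d)\preceq E(-,d)\}$, a purely combinatorial quantity on sorted tuples. Since $\dim\ext^1\geq 0$ is automatic, the inequality $\geq$ is free, and the task is to show that equality (as opposed to strict $>$) characterises weak separation.

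The substantive step is this combinatorial verification, and my plan has two stages. The first stage is a reduction to the case $|I|=|J|=n$ in the ambient $[2n]$: by \Cref{Lem:padseparated} weak separation is preserved under padding, and a direct check using \Cref{Lem:padhom} shows that the equation above is invariant under padding—each $\dim\Hom$ on the left shifts by $n-k$, while $|I\cap J|+k$ shifts by $2(n-k)$ because $\pad(I)\cap\pad(J)=(I\cap J)\cup[n+k+1,2n]$ contributes an extra $n-k$ common elements and the ambient size grows from $k$ to $n$. In the normalised setting, for a non-crossing pair I write $I\setminus J=I'\sqcup I''$ with $I'<J\setminus I<I''$, and use the positions of $I\cap J$, $I'$, $I''$ relative to $J\setminus I$ to produce explicit matchings realising both $\max$-quantities whose sizes sum to exactly $|I\cap J|+n$. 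Conversely, for a crossing quadruple $a<b<c<d$ with $a,c\in I\setminus J$ and $b,d\in J\setminus I$, I argue that any admissible matching in one direction must ``skip'' one element that would have contributed in the other direction, forcing an unavoidable unit of slack and hence $\dim\ext^1(M_I, M_J)\geq 1$.

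The main obstacle is the combinatorial bookkeeping in the second stage. Individual $\dim\Hom$ counts are not simple additive functions of $|I\cap J|$ and the set differences—for example, $I=\{1,3\}$, $J=\{1,4\}$ in $[4]$ give $\dim\Hom_\aus(M_I, M_J)=2$ and $\dim\Hom_\aus(M_J, M_I)=1$ with $|I\cap J|=1$ and $k=2$, so neither summand matches a naive count of the form $|I\cap J|+|I''|$, yet the symmetric sum does match $|I\cap J|+k=3$. A robust reformulation is to recast $\max\{d\colon S(I,d)\preceq E(J,d)\}$ as the length of a longest coordinatewise-dominated matching of an initial segment of $I$ into a terminal segment of $J$, reducing the verification to a statement about interlaced increasing sequences.
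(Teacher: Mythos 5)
First, a structural point: the paper does not prove this proposition at all --- it is imported verbatim from \cite[Prop.~5.6]{JKS1}, and the paper's own \Cref{thm:extWS} is then \emph{deduced from} it. So you are supplying a proof where the paper supplies a citation, and your route (reducing $\Ext^1_C(N_I,N_J)$ to $\ext^1(M_I,M_J)$ via \eqref{Eq:twoprojections} and \Cref{Thm:fullexact}, then converting the vanishing into the numerical identity $\dim\Hom_\aus(M_I,M_J)+\dim\Hom_\aus(M_J,M_I)=|I\cap J|+k$ via \Cref{Lem:rankoneext,Lem:rankonehom}) is legitimate and non-circular, since none of those lemmas depends on \Cref{Prop:WS1} or \Cref{thm:extWS}. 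The padding stage is harmless but unnecessary here: for equal-size sets the identity is already in its normalised form, and padding is only needed in \Cref{thm:extWS} to equalise cardinalities.

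The genuine gap is that the combinatorial biconditional --- equality holds if and only if $I,J$ are non-crossing --- is asserted and checked on examples but not proven, and this is exactly the content that the citation to \cite{JKS1} carries. The direction ``weakly separated $\Rightarrow$ equality'' you can in fact close using the paper's own machinery: \Cref{Thm:quasicomm} (proved independently, by induction on $|I\cap J|$ via \Cref{Lem:homInd}) gives $\dim\Hom_\aus(M_I,M_J)=|I\cap J|+|J''|$ and $\dim\Hom_\aus(M_J,M_I)=|I\cap J|+|J'|$, whose sum is $|I\cap J|+|J|$ as required. The converse --- a crossing quadruple forces $\dim\ext^1(M_I,M_J)\geq 1$ --- is dispatched in one sentence (``an unavoidable unit of slack''), which is not an argument. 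To make it one, you would need, say, to reduce to $I\cap J=\emptyset$ (removing a common element preserves crossings and, by \Cref{Lem:homInd}, leaves $\dim\ext^1$ unchanged), and then show that the two maximality conditions of \Cref{Lem:rankonehom} cannot both be tight at $h(I,J)+h(J,I)=k$: writing out the inequalities $i_s\leq j_{k-h+s}$ and $j_s\leq i_{h+s}$ coming from $h(I,J)=h$ and $h(J,I)=k-h$, one must derive that $I$ and $J$ interlace as $J'<I\setminus J<J''$ (or the symmetric pattern), contradicting the existence of a crossing witness $a<b<c<d$. None of that bookkeeping is present, and it is precisely the part your own ``main obstacle'' paragraph concedes is delicate; as it stands the proposal is a correct reduction plus an unproven combinatorial claim.
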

Below, we have a similar characterisation of weak 
separation for any two subsets of $[n]$ using the exact structure $\mathcal{E}$ on  
$\Dfiltered$.
\begin{theorem}\label{thm:extWS}
Let $I,J\subseteq [n]$. Then $\ext^1(M_I, M_J)=0$ if and only if 
$I$ and $J$ are weakly separated.
\end{theorem}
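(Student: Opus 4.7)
The plan is to reduce to the equal-cardinality case by padding and then invoke the Grassmannian cluster category result quoted as \Cref{Prop:WS1}.

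First I would show that padding preserves the extension dimension: if $|I|=s$ and $|J|=t$, then
\[
\dim \ext^1(M_{\pad(I)}, M_{\pad(J)}) = \dim \ext^1(M_I, M_J),
\]
where the left-hand side is computed in $\Dfiltered$ for the Auslander algebra of $\CC[t]/(t^{2n})$. This is a direct calculation using \Cref{Lem:rankoneext} applied to both sides: one has $|\pad(I)|=|\pad(J)|=n$, and $|\pad(I)\cap \pad(J)| = |I\cap J| + (n - \max\{s,t\})$, so the socle-type term becomes $|I\cap J| + n - \max\{s,t\} + n$; combined with the Hom-shifts $(n-t)$ and $(n-s)$ supplied by \Cref{Lem:padhom}, the $n$'s cancel against $\max\{s,t\}+\min\{s,t\}=s+t$, giving the claimed equality.

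Next, for the equal-cardinality case, say $|I|=|J|=k$, I would relate $\ext^1(M_I,M_J)$ to $\Ext^1_C(N_I,N_J)$ in the Grassmannian cluster category. By \Cref{Thm:fullexact}, $\ext^1(M_I,M_J) \cong \Ext^1_\Pi(\projfun M_I, \projfun M_J)$. By the equality \eqref{Eq:twoprojections}, $\projfun M_I = \tilde\projfun N_I$ and $\projfun M_J = \tilde\projfun N_J$. The analogous statement for $\CM C$, namely that $\tilde\projfun$ induces an isomorphism of $\Ext^1$ groups between rank one modules and their images, is (Cor.~4.6 of \cite{JKS1} cited above \eqref{Eq:twoprojections}), so $\Ext^1_C(N_I,N_J) \cong \Ext^1_\Pi(\tilde\projfun N_I, \tilde\projfun N_J)$. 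Chaining these isomorphisms gives $\ext^1(M_I, M_J) \cong \Ext^1_C(N_I,N_J)$, which vanishes iff $I,J$ are weakly separated by \Cref{Prop:WS1}.

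Combining the two steps: $\ext^1(M_I,M_J)=0$ iff $\ext^1(M_{\pad(I)}, M_{\pad(J)})=0$ (Step 1), iff $\pad(I)$ and $\pad(J)$ are weakly separated $n$-sets in $[2n]$ (Step 2 applied to padded data), iff $I$ and $J$ are weakly separated (by \Cref{Lem:padseparated}).

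The main obstacle is the bookkeeping in Step 1. The dimension formula in \Cref{Lem:rankoneext} involves terms that behave asymmetrically in $|I|,|J|$ (a $\min$ and an intersection), and the padding lemma shifts each Hom-dimension by a different amount $(n-|J|)$ vs.\ $(n-|I|)$, so one must carefully check that these shifts, together with the contribution from the tail $[n+\max\{s,t\}+1,2n]$ to the intersection, conspire to cancel exactly. Once the cancellation is established, the remainder of the argument is a straightforward chain of already-proved equivalences.
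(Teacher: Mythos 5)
Your proposal is correct and follows essentially the same route as the paper: reduce to equal cardinality via padding (using \Cref{Lem:rankoneext} and \Cref{Lem:padhom} to see that vanishing of $\ext^1$ is preserved), then transfer to the Grassmannian cluster category via $\projfun M_{\pad(I)}=\tilde\projfun N_{\pad(I)}$, \Cref{Thm:fullexact} and \cite[Cor.~4.6]{JKS1}, and conclude with \Cref{Prop:WS1} and \Cref{Lem:padseparated}. Your bookkeeping in Step 1 is right and in fact yields the slightly sharper statement that padding preserves $\dim\ext^1$ exactly, whereas the paper only records the equivalence of vanishing.
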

\begin{proof}
Let $s=|I|$ and $t=|J|$. As $\ext^1(-, -)$ is symmetric, 
without loss of generality, we may assume $s\leq t$. 
By \Cref{Lem:padhom}, 
\begin{align*}
&\dim \Hom(M_{\pad(I)}, M_{\pad(J)}) +\dim \Hom(M_{\pad(J)}, M_{\pad(I)}) \\
=& \dim \Hom(M_I, M_J)+ (n-t) + \dim \Hom(M_J, M_I)+ (n-s). 
\end{align*}
Therefore, 
\[
\dim \Hom(M_I, M_J)+\dim \Hom(M_J, M_I)=|I\cap J|+s
\]
if and only if 
\[
\dim \Hom(M_{\pad(I)}, M_{\pad(J)}) +\dim \Hom(M_{\pad(J)}, M_{\pad(I)})
= |\pad(I)\cap \pad(J)| +n.
\]
Hence, by \Cref{Lem:rankoneext}, 
\[
 \ext^1(M_I,M_J)=0 \text{ if and only if } \ext^1(M_{\pad(I)}, M_{\pad(J)})=0.
\]
By \Cref{Eq:twoprojections},
\[
\projfun M_{\pad(I)} = \tilde{\projfun} N_{{\pad(I)}}  \mbox{ and } \projfun M_{\pad(J)} = \tilde{\projfun} N_{{\pad(J)}}.
\]
As both $\projfun$ and $\tilde{\projfun}$ are full exact (see \Cref{Thm:fullexact} and \cite[Cor. 4.6]{JKS1}),
we have,
\begin{eqnarray*}
\ext^1(M_{\pad(I)}, M_{\pad(J)}) 
&\cong & \Ext^1( {\projfun} M_{{\pad(I)}},  {\projfun} M_{{\pad(J)}})\\
&\cong & \Ext^1( \tilde{\projfun} N_{{\pad(I)}},  \tilde{\projfun} N_{{\pad(J)}})\\
&\cong & \Ext^1(N_{{\pad(I)}},  N_{{\pad(J)}})
\end{eqnarray*}
Therefore, by \Cref{Prop:WS1} and \Cref{Lem:padseparated},
  $\ext^1(M_I, M_J)=0$
if and only if $I$ and $J$ are weakly separated.
\end{proof}
\subsection{Quasi-commuting quantum minors}
Recall that {\em quantum matrix algebra} $\CC_q[M_{n\times n}]$ is  the $\Cq$-algebra generated by the variables $x_{ij}$, 
for $1\leq i, j \leq n$,  subject to the following homogeneous relations (cf. \cite{FRT})

\begin{equation}\label{eq:matqrel} 
x_{ij}x_{st}=\begin{cases}
qx_{st}x_{ij} & \text{if $i=s$ and $j<t$;}\\
qx_{st}x_{ij} & \text{if $i<s$ and $j=t$;}\\
x_{st}x_{ij} & \text{if $i<s$ and $j>t$;}\\
x_{st}x_{ij}+(q-q^{-1})x_{sj}x_{it} & \text{if $i<s$ and $j<t$,}
\end{cases} 
\end{equation}
and the quantum minor for $I=\{i_1, \dots, i_r\}$ is 
\begin{equation}\label{eq:qminor}
\qminor{I}=\sum_{\sigma} (-q)^{l(\sigma)} x_{1 i_{\sigma(1)}}\dots x_{1 i_{\sigma(r)}},
\end{equation}
where $l(\sigma)$ is the length of the permutation $\sigma$.

\begin{definition}\label{def:CIJ}
When $I$ and $J$ are weakly separated, let 
\[
  c(I,J) = \begin{cases} 
 |J''| - |J'| & \text{in Case (i)},\\
 |I'| - |I''| & \text{in Case (ii)},
\end{cases}
\]
where Cases (i) and (ii) are as in \Cref{def:non-crossing}.
\end{definition}

\begin{theorem}\label{Thm:LZ} \cite[Thm 1.1]{LZ}
Two quantum minors $\qminor{I}$ and $\qminor{J}$ quasi-commute if and only if $I$ and $J$ are weakly separated. 
In this case, the quasi-commutation rule is
\[
 \qminor{I}\qminor{J}=q^{c(I,J)}\qminor{J}\qminor{I}.
\]
\end{theorem}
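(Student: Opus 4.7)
Since the theorem is attributed to Leclerc-Zelevinsky, in the paper the proof is most naturally a reference to \cite{LZ}. Nevertheless, within the framework developed here one can sketch a categorical proof, whose plan I outline below.

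For the "if" direction, the plan is to combine \Cref{thm:extWS} with the cluster structure of \Cref{thm:clusterstructure}. When $I$ and $J$ are weakly separated, $\ext^1(M_I, M_J)=0$ by \Cref{thm:extWS}, so $M_I\oplus M_J$ is rigid in $(\Dfiltered(\setJ), \mathcal{E})$ for any $\setJ$ containing $|I|$ and $|J|$. For a suitable interval $\setJ$, this rigid pair extends to a cluster tilting object $T$ in $\Dfiltered(\setJ)$. Under the (to-be-constructed) quantum cluster algebra attached to $T$, the initial cluster variables $\Upsilon^T(M_I)=\qminor{I}$ and $\Upsilon^T(M_J)=\qminor{J}$ quasi-commute by definition of the quantum torus, with exponent $\mat L_T([M_I], [M_J])$.

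For the exponent formula, the plan is to identify $\mat L_T([M_I], [M_J])$ with both $c(I,J)$ and $\dim\Hom_D(M_I, M_J) - \dim\Hom_D(M_J, M_I)$. The second identification follows from the explicit construction of $\mat L_T$ later in the paper. The first (that this Hom-difference equals $c(I,J)$) reduces via \Cref{Lem:rankonehom} to a combinatorial comparison of $\max\{d : S(I,d)\preceq E(J,d)\}-\max\{d : S(J,d)\preceq E(I,d)\}$ with the interleaving count from \Cref{def:CIJ}, which I would verify by a case analysis on the relative positions of elements in $I\setminus J$ and $J\setminus I$ according to the two cases of the definition.

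The main obstacle is the converse: showing that non-weakly-separated $I, J$ cannot have quasi-commuting quantum minors. The categorical machinery alone does not supply a direct obstruction, so I would invoke Leclerc-Zelevinsky's explicit quantum minor identity, whose leading correction term vanishes precisely under weak separation. Alternatively, once a quantum cluster algebra structure is in place, one could argue that any quasi-commutation relation would force $\qminor{I}$ and $\qminor{J}$ to belong to a common cluster, and then lift this to rigidity of $M_I\oplus M_J$, contradicting the failure of weak separation via \Cref{thm:extWS}.
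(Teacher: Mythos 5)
This statement is a recalled external result: the paper gives no proof of it at all, only the citation to \cite{LZ}, so your opening sentence correctly identifies what the paper actually does. The remainder of your proposal — the sketched categorical proof — is therefore extra material rather than a reconstruction of the paper's argument, and it has a genuine circularity problem. The identification $\Upsilon^T([M_I])=\qminor{I}$ that you want to use is established in the paper (in \eqref{eq:noname} and \Cref{prop:overfield}) precisely by \emph{invoking} \Cref{Thm:LZ}: the quantum minors indexed by a maximal weakly separated collection are shown to quasi-commute with exponents $c(I,J)$ \emph{because of} Leclerc--Zelevinsky's theorem, and only then can the embedding of the quantum cluster algebra into $\CC_q(\flag)$ be defined. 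So the categorical machinery cannot be used to prove the quasi-commutation statement without assuming it. What the paper does prove independently is \Cref{Thm:quasicomm}, a homological formula for the already-defined combinatorial quantity $c(I,J)$ in terms of $\dim\Hom_\aus$ and $\dim\Ext^1_\aus$ of rank one modules; that computation is purely combinatorial (via \Cref{Lem:rankonehom} and \Cref{Lem:homInd}) and does not reprove, nor rely on reproving, the LZ theorem. Your admission that the converse direction needs Leclerc--Zelevinsky's explicit quantum minor identities is accurate and confirms that the correct treatment here is simply the citation.
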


\begin{remark}\label{rem:signprob}
Some authors use an alternative presentation of the quantum 
matrix algebra with $q$ and $q^{-1}$ swapped; see, for example, \cite{LZ}.
The statement of \Cref{Thm:LZ} differs by a sign from its original formulation 
in \cite{LZ}.
\end{remark}

Let $a\in I=\{i_1<\dots< i_k\}\subseteq [n]$, we denote 
$I\setminors \{a\}$ by $I_{\hat{a}}$.
Below we will give a categorical interpretation of the quasi-commutation rules. We start with two technical lemmas. 

\begin{lemma}\label{Lem:homInd} Let $I, J \subseteq [n]$ and 
$a\in I\cap J$. Then 
\begin{equation} \label{Eq:dimhom2}
\dim \Hom(M_I, M_J)=\dim \Hom(M_{I_{\hat{a}}}, M_{J_{\hat{a}}})+1.
\end{equation}
\end{lemma}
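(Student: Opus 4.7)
The plan is to deduce the identity from the combinatorial formula in Lemma~\ref{Lem:rankonehom}, which says that $\dim \Hom_D(M_X, M_Y) = \max\{d : S(X,d) \preceq E(Y,d)\}$. Writing $I = \{i_1 < \cdots < i_s\}$ with $a = i_p$ and $J = \{j_1 < \cdots < j_t\}$ with $a = j_q$, I would set $d_1 = \dim \Hom_D(M_I, M_J)$ and $d_2 = \dim \Hom_D(M_{I_{\hat a}}, M_{J_{\hat a}})$, describe the elements $i'_r$ of $I_{\hat a}$ (equal to $i_r$ for $r < p$ and to $i_{r+1}$ for $r \geq p$) and similarly $j'_r$ for $J_{\hat a}$, and then argue $d_1 = d_2 + 1$ by proving the two inequalities directly from the defining matching condition. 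Without loss of generality I would assume $s \leq t$, the other case being symmetric.

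For $d_1 \geq d_2 + 1$: starting from a witness $i'_r \leq j'_{(t-1)-d_2+r}$, $r = 1,\ldots,d_2$, I would insert $a = i_p = j_q$ back into both sides of the matching in its correct sorted position, producing a sequence of $d_2+1$ inequalities for $(I,J)$. The key point is that the extra matched pair is simply $a \leq a$, and monotonicity of $j_r$ in $r$ absorbs any small shift introduced by reinserting $a$ on the target side.

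For $d_1 \leq d_2 + 1$: from a witness $i_r \leq j_{t-d_1+r}$, $r = 1,\ldots,d_1$, I would delete the entry corresponding to $a$ and shift indices to obtain a witness of size $d_1 - 1$ for $(I_{\hat a}, J_{\hat a})$. Here a case analysis is needed, depending on whether $a \in S(I, d_1)$ (equivalently $p \leq d_1$) and whether $a \in E(J, d_1)$ (equivalently $q \geq t - d_1 + 1$). In the ``diagonal'' case where $a$ sits in both, one simply removes the matched pair $a \leq a$. In the remaining cases one deletes $a$ from only one side and uses the monotonicity inequality $j_r \leq j_{r+1}$ (or its analogue on the $I$-side) to verify that the shifted inequalities $i'_r \leq j'_{(t-1)-(d_1-1)+r}$ still hold.

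The main obstacle is the bookkeeping in this case analysis: one must carefully distinguish the four cases based on the relative positions of $p$, $q$, $d_1$, $t-d_1$, and in each case perform the right index shift while keeping track of when $i'_r$ equals $i_r$ versus $i_{r+1}$, and similarly for $j'_r$. The inequalities survive the shift essentially because removing one element from a weakly increasing sequence only shortens the matching by one, but producing a clean uniform argument requires care around the boundary case where $a$ is on the edge of $S(I, d_1)$ or $E(J, d_1)$.
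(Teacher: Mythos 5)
Your approach is essentially the paper's: both proofs rest entirely on the combinatorial formula of \Cref{Lem:rankonehom} and a case analysis on where $a=i_p=j_q$ sits relative to the windows $S(I,d)$ and $E(J,d)$ (the paper phrases the two directions as ``$S(I_{\hat a},d-1)\preceq E(J_{\hat a},d-1)$'' plus ``$d-1$ is maximal'', and likewise handles three cases in detail while observing the fourth is impossible).

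One point in your insertion direction deserves more than ``monotonicity absorbs the shift''. If $a$ falls outside \emph{both} windows, i.e.\ $p>d_2+1$ and $q\le t-d_2-1$, then $S(I,d_2+1)=S(I_{\hat a},d_2+1)$ and $E(J,d_2+1)=E(J_{\hat a},d_2+1)$, so reinserting $a$ produces no new matched pair at all and the construction gives nothing. This case must instead be ruled out: there every element of $S(I_{\hat a},d_2+1)$ is $<a$ and every element of $E(J_{\hat a},d_2+1)$ is $>a$, whence $S(I_{\hat a},d_2+1)\preceq E(J_{\hat a},d_2+1)$, contradicting the maximality of $d_2$. (This is the exact analogue of the paper's remark that $x>d$ and $y\le l-d$ cannot occur.) With that case excluded, your insertion argument goes through in the remaining three configurations, and the deletion direction is the paper's case-by-case verification.
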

\begin{proof}
Let $I=\{i_1<\dots < i_k\}$, $J=\{j_1<\dots < j_l\}$,
 $a=i_x=j_y$ and 
 \[d=\dim \Hom(M_I, M_J). \]
We will show that 
\begin{equation}\label{Eq:ineq2}
S(I_{\hat{a}}, d-1)\preceq E(J_{\hat{a}}, d-1)
\end{equation}
and $d-1$ is maximal such that \eqref{Eq:ineq2} is true. 
Then the equality in \eqref{Eq:dimhom2} follows from \Cref{Lem:rankonehom}.
We have the following three cases to consider, depending on the relation between $x, y$ and $d$. 
 
\begin{itemize}\item[(i)]
 $d<x$ and $l-d+1\leq y \leq l$.

\item[(ii)] $x\leq d$ and $ l-d+1\leq y\leq l$.
 
\item[(iii)] $ x\leq d$ and $y\leq l-d$. 
\end{itemize}
\noindent
As $d$ is maximal such that $S(I, d)\preceq E(J, d)$, the situation that both 
$y\leq l-d$ and $x> d$ hold does not happen.
We first consider Case (i). 
We have
\[
S(I_{\hat{a}}, d-1 )=S(I, d-1) ~\text{ and }~ E(J_{\hat{a}}, d-1)=\{j_{l-d+1}, \dots, j_{y-1}, j_{y+1}, \dots, j_l \}.
\] 
Therefore, the inequality \eqref{Eq:ineq2} is indeed true. 

Next we show that $d-1$ is maximal as stated. Note by the maximality of $d$, there exists $1\leq t\leq d+1$
such that 
\begin{equation}\label{Eq:ineq3}
i_t>j_{l-(d+1) +t}.
\end{equation}
As we are in Case (i), 
\[i_{y-(l-d)+1}<\dots <i_{d+1}\leq i_x=a=j_y<j_{y+1}<\dots <j_l .\] 
Therefore,
\[t\leq y-(l-d).\]
By definition,  the $s$th element in $E(J_{\hat{a}}, d)$ is
\[
E(J_{\hat{a}}, d)_s=\left\{\begin{tabular}{ll} $j_{l-d+s-1}$ & if $s\leq y-(l-d)$; \\ 
$j_{l-d+s}$ & otherwise.
\end{tabular}\right.
\]
and $S(I_{\hat{a}}, d)= S(I, d)$.
Therefore, the $t$th elements in the sets $E(J_{\hat{a}}, d)$ and $S(I_{\hat{a}}, d)$ satisfy 
the following, 

\[
E(J_{\hat{a}}, d)_{t}= j_{l-d+t-1}< i_t=S(I_{\hat{a}}, d)_t,
\] 
which is exactly the inequality in \eqref{Eq:ineq3}.
Consequently, 
\[
S(I_{\hat{a}}, d)\not\preceq E(J_{\hat{a}}, d).
\]
So, by \Cref{Lem:rankonehom},
$d-1$ is indeed maximal and \eqref{Eq:dimhom2} holds for Case (i).

The proof for the other cases is similar. We skip the details.
\end{proof}

\begin{theorem}\label{Thm:quasicomm}
Suppose that $I$ and $J$ are two weakly separated subsets of $[n]$ with  $|I|\geq |J|$ and 
$J\setminus {I}=J'\cup J''$ such that $J'< (I\setminus J) <J''$. Then
\begin{itemize}
\item[(1)] $\dim\Hom(M_I, M_J)=|I\cap J|+ |J''|$. 
\item[(2)] $\dim\Hom(M_J, M_I)=|I\cap J|+ |J'|$.
\item[(3)] $\dim\Ext^1(M_I, M_J)=|J''|$. 
\item[(4)] $\dim\Ext^1(M_J, M_I)=|J'|$.
\end{itemize} 
Consequently,
\begin{eqnarray*}
c(I, J) &=&\dim\Hom(M_I, M_J)-\dim\Hom(M_J, M_I) \\
&=&\dim\Ext^1(M_I, M_J)-\dim\Ext^1(M_J, M_I). 
\end{eqnarray*}
\end{theorem}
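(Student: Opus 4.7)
The plan is to establish (1) and (2) by induction on $|I\cap J|$, reducing to the disjoint case via the recursion in \Cref{Lem:homInd}, and then to deduce (3) and (4) from (1) and (2) using the bilinear form \eqref{Eq:bilin0}. The combinatorial consequences about $c(I,J)$ will then be immediate from Definition~\ref{def:CIJ}.

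For the inductive step, pick any $a\in I\cap J$. Since $I\setminors J$ and $J\setminors I$ are untouched when we remove $a$ from both sets, the pair $I_{\hat a}, J_{\hat a}$ remains weakly separated with the same partition $J\setminors I = J'\sqcup J''$, and $|I_{\hat a}\cap J_{\hat a}| = |I\cap J|-1$. By \Cref{Lem:homInd},
\[
\dim\Hom_\aus(M_I,M_J) = \dim\Hom_\aus(M_{I_{\hat a}},M_{J_{\hat a}})+1,
\]
so the induction hypothesis reduces both (1) and (2) (by applying the same argument with $I$ and $J$ swapped) to the base case $I\cap J=\emptyset$.

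In that base case, $J=J'\sqcup J''$ with $J'<I<J''$, and I compute $\dim\Hom_\aus(M_I,M_J)$ directly from the combinatorial formula in \Cref{Lem:rankonehom}: the $d$ largest elements $E(J,d)$ sit in $J''$ as long as $d\le |J''|$, so $S(I,d)<E(J,d)$ holds componentwise (every element of $I$ is smaller than every element of $J''$). For $d=|J''|+1$ (and $|J'|\ge 1$), $E(J,d)$ picks up the largest element of $J'$ as its minimum, which is smaller than $\min I$, breaking the inequality $S(I,d)\preceq E(J,d)$ in the first coordinate. Hence $\dim\Hom_\aus(M_I,M_J)=|J''|$. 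The symmetric computation with $I$ and $J$ swapped — the first failure now comes at $d=|J'|+1$, when $E(I,d)$ is forced to include an element of $I$ smaller than $\min J''$ — yields $\dim\Hom_\aus(M_J,M_I)=|J'|$.

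To obtain (3) and (4), I apply the bilinear form \eqref{Eq:bilin0}: since $\langle [M_I],[M_J]\rangle = |I\cap J|$ (computed from the $\Delta$-dimension vectors of rank one modules), we get
\[
\dim\Ext^1_\aus(M_I,M_J) = \dim\Hom_\aus(M_I,M_J) - |I\cap J| = |J''|,
\]
and similarly $\dim\Ext^1_\aus(M_J,M_I)=|J'|$. The two formulae for $c(I,J)$ then follow by subtraction, using $c(I,J)=|J''|-|J'|$ from \Cref{def:CIJ} in Case (i). The main technical content sits in the combinatorial base case above; the only delicate point is verifying that the boundary situations $|J'|=0$ or $|J''|=0$ (and, in the $|J'|=0$ case, the cap $d\le\min(|I|,|J|)$ coming from $\End_\aus M_I=\CC[t]/(t^{|I|})$) do not cause the maximum in \Cref{Lem:rankonehom} to be saturated by an edge effect rather than by the separation data, which is handled by a direct inspection in each sub-case.
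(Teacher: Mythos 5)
Your proposal is correct and follows essentially the same route as the paper: induction on $|I\cap J|$ via \Cref{Lem:homInd}, the base case $I\cap J=\emptyset$ handled by \Cref{Lem:rankonehom}, and (3), (4) deduced from \eqref{Eq:bilin0} using $\langle [M_I],[M_J]\rangle=|I\cap J|$. The only difference is that you spell out the combinatorial verification of the base case (including the edge cases $|J'|=0$ or $|J''|=0$), which the paper leaves implicit.
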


\begin{proof}
We prove (1) and (2) by induction on $|I\cap J|$.
When $|I\cap J|=0$, that is, $I\cap J=\emptyset$, 
\[
J=J' \cup J'' \text{ and } J'< I< J''.
\]
So by \Cref{Lem:rankonehom}, 
\[
\dim \Hom(M_I, M_J)=|J''| \text{ and } \dim \Hom(M_J, M_I)=|J'|. 
\]

Now consider the general case where $I\cap J\not=\emptyset$. Assume that 
(1) and (2) are true for any weakly separated sets $K$ and $L$ with $|K\cap L|<|I\cap J|$.
Take $a\in I\cap J$. 
Note that $I_{\hat{a}}$ and $J_{\hat{a}}$ are weakly separated with 
$|I_{\hat{a}}|\geq |J_{\hat{a}}|$ and $|I_{\hat{a}}\cap J_{\hat{a}}|=|I\cap J|-1$. We have 
\[
I_{\hat{a}}\setminors J_{\hat{a}}=I\setminors J \text{ and }
J_{\hat{a}}\setminors I_{\hat{a}}=J\setminors I=J'\cup J''.
\]
So by induction, 
\[
\dim\Hom(M_{I_{\hat{a}}}, M_{J_{\hat{a}}})=|I_{\hat{a}} \cap J_{\hat{a}}| +|J''|=|I\cap J|-1 +|J''|
\]
and 
\[
\dim\Hom(M_{J_{\hat{a}}}, M_{I_{\hat{a}}})=|I_{\hat{a}} \cap J_{\hat{a}}| +|J'|=|I\cap J|-1 +|J'|.
\]
On the other hand by \Cref{Lem:homInd}, 
\[
\dim\Hom(M_{I}, M_{J})=\dim\Hom(M_{I_{\hat{a}}}, M_{J_{\hat{a}}})+1
\]
and 
\[
\dim\Hom(M_{J}, M_{I})=\dim\Hom(M_{J_{\hat{a}}}, M_{I_{\hat{a}}})+1.
\]
Therefore, we have (1) and (2), which together with \eqref{Eq:bilin0} imply  (3) and (4). 
Finally, the descriptions of $c(I, J)$ follows from \Cref{def:CIJ} and  (1)-(4).
\end{proof}
\subsection{Strong separation and $\Ext^1(-. -)$}\label{Sec:strongExt}
\begin{definition}\cite[\S 1]{LZ}
Let $I, J\subseteq [n]$. Then $I$ and $J$ are
\em{ strongly separated } if  either $J\minus I<I\minus J$ 
or $I\minus J <J\minus I$. 
\end{definition}

We remark that when one of $I$ and $J$ contains the other one, then they are 
strongly separated, and strong separation implies weak separation.

Recall the bar involution on the quantum matrix algebra 
\[\CC_q[M_{n\times n}]\to \CC_q[M_{n\times n}]\colon 
 q\mapsto q^{-1} \text{ and } x_{ij}\mapsto x_{ij}. 
 \]
By \cite[(2.16)]{LZ}, we have  
\[
{\color{blue}\overline{\qminor{I}\qminor{J}}=q^{-d(I, J)} \qminor{J}\qminor{I}},
\]
where
\[
d(I, J)= \min\{|I|, |J|\}-|I\cap J|.
\]
Again, note the discrepancy in signs compared with the original version 
(see remark \Cref{rem:signprob})

\begin{proposition} \cite[Cor.2.10]{LZ}
Let $I, J\subseteq [n]$. Then $I$ and $J$ are strongly separated if and only if 
one of the products $\qminor{I}\qminor{J}$ and  $\qminor{J}\qminor{I}$ is invariant under the bar involution.
\end{proposition}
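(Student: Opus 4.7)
The plan is to deduce this proposition directly from the two identities already recorded in the excerpt: the bar-involution formula $\overline{\qminor{I}\qminor{J}} = q^{d(I,J)}\qminor{J}\qminor{I}$ (where $d(I,J) = \min\{|I|,|J|\} - |I\cap J|$) and the quasi-commutation rule of \Cref{Thm:LZ}, together with the explicit value of $c(I,J)$ from \Cref{def:CIJ}. The argument is purely algebraic; no new categorical input is needed.

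First I would establish the forward direction. Assume $\qminor{J}\qminor{I}$ is bar-invariant. Applying the bar-involution formula to $\qminor{J}\qminor{I}$ yields $\qminor{J}\qminor{I} = q^{d(I,J)}\qminor{I}\qminor{J}$, so the two minors quasi-commute, whence by \Cref{Thm:LZ} the sets $I$ and $J$ are weakly separated. Working in the case $|I| \geq |J|$ and writing $J\setminus I = J'\sqcup J''$ with $J' < I\setminus J < J''$, we have $c(I,J) = |J''| - |J'|$ and $d(I,J) = |J'| + |J''|$. Comparing the relation $\qminor{I}\qminor{J} = q^{c(I,J)}\qminor{J}\qminor{I}$ with $\qminor{I}\qminor{J} = q^{-d(I,J)}\qminor{J}\qminor{I}$ forces $c(I,J) = -d(I,J)$, hence $|J''| = 0$, so $J\setminus I < I\setminus J$ and $I,J$ are strongly separated. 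The analogous computation starting from bar-invariance of $\qminor{I}\qminor{J}$ yields $c(I,J) = d(I,J)$, hence $|J'| = 0$ and $I\setminus J < J\setminus I$, again strong separation. The case $|J| \geq |I|$ is symmetric, using $c(I,J) = |I'| - |I''|$.

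For the converse, suppose $I$ and $J$ are strongly separated, say $J\setminus I < I\setminus J$ with $|I| \geq |J|$. Then $I,J$ are in particular weakly separated with $J'' = \emptyset$, so \Cref{Thm:LZ} gives $\qminor{I}\qminor{J} = q^{-|J'|}\qminor{J}\qminor{I}$. Since $|J'| = |J\setminus I| = |J| - |I\cap J| = d(I,J)$, this rearranges to $\qminor{J}\qminor{I} = q^{d(I,J)}\qminor{I}\qminor{J} = \overline{\qminor{J}\qminor{I}}$, so $\qminor{J}\qminor{I}$ is bar-invariant. If instead $I\setminus J < J\setminus I$, the same calculation shows $\qminor{I}\qminor{J}$ is bar-invariant, and the two remaining cases with $|J| \geq |I|$ are handled symmetrically.

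The only real subtlety — and the step that most needs care — is keeping track of which of the two products is bar-invariant in terms of which of $J', J''$ (or $I', I''$) vanishes, since the sign in $c(I,J) = \pm d(I,J)$ depends on this choice; once the bookkeeping is fixed, both directions reduce to elementary arithmetic in the exponents of $q$.
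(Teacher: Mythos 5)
Your proof is correct. Note that the paper itself offers no argument for this proposition: it is quoted as a black-box citation to Leclerc--Zelevinsky, so there is no ``paper proof'' to match against. What you have done is reassemble the statement from the two other [LZ] facts recorded in the same subsection --- the quasi-commutation criterion of \Cref{Thm:LZ} with the explicit exponent $c(I,J)$, and the identity $\overline{\qminor{I}\qminor{J}}=q^{d(I,J)}\qminor{J}\qminor{I}$ --- and the bookkeeping checks out: bar-invariance of one product forces quasi-commutation with $c(I,J)=\pm d(I,J)$, which (since $d(I,J)=|J'|+|J''|$ in Case (i), resp.\ $|I'|+|I''|$ in Case (ii)) kills exactly one of $J',J''$ (resp.\ $I',I''$), and conversely. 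This is precisely the comparison of $c(I,J)$ with $d(I,J)$ that the paper uses to prove the adjacent \Cref{cor:barinv}, so your route is consistent with, and effectively supplies the omitted argument behind, the cited result. The one hypothesis you are implicitly relying on is that the identity $\overline{\qminor{I}\qminor{J}}=q^{d(I,J)}\qminor{J}\qminor{I}$ holds for arbitrary $I,J$ (not just weakly separated pairs); that is indeed how [LZ, (2.16)] is stated, and it is needed for your forward direction, so it is worth saying explicitly.
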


\begin{theorem}\label{thm:strongseparation}
Let $I, J\subseteq [n]$ with $|I|\geq |J|$. Then $I$ and $J$ are strongly separated 
if and only if either $\Ext^1(M_J, M_I)=0$ or $\Ext^1(M_I, M_J)=0$. 
\end{theorem}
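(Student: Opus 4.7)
The strategy is to bootstrap off \Cref{thm:extWS} (which characterises weak separation by the vanishing of $\ext^{1}$) and \Cref{Thm:quasicomm} (which computes $\dim \Ext^{1}_{D}(M_{I}, M_{J})$ and $\dim \Ext^{1}_{D}(M_{J}, M_{I})$ explicitly for weakly separated rank-one modules). Throughout, the assumption $|I| \geq |J|$ ensures that whenever $I, J$ are weakly separated, the decomposition $J \setminus I = J' \cup J''$ from Case~(i), with $J' < I \setminus J < J''$, applies.

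For the forward direction, strong separation immediately yields weak separation with either $J' = \emptyset$ or $J'' = \emptyset$, and \Cref{Thm:quasicomm} then gives $\dim \Ext^{1}_{D}(M_{I}, M_{J}) = |J''|$ and $\dim \Ext^{1}_{D}(M_{J}, M_{I}) = |J'|$, so at least one of these vanishes. For the converse, I would assume $\Ext^{1}_{D}(M_{I}, M_{J}) = 0$ (the other case is symmetric). Substituting $\langle [\eta M_{I}], [\eta M_{J}]\rangle = \min(|I|, |J|) = |J|$ and $\langle [M_{I}], [M_{J}]\rangle = |I \cap J|$ into the unlabelled corollary at the end of Section~5, the identity becomes
\[
\dim \ext^{1}(M_{I}, M_{J}) = \dim \Ext^{1}_{D}(M_{J}, M_{I}) - |J \setminus I|.
\]
The key estimate is $\dim \Hom_{D}(M_{J}, M_{I}) \leq |J|$, which follows at once from \Cref{Lem:rankonehom} since $\{d : S(J,d) \preceq E(I,d)\} \subseteq [0, \min(|I|,|J|)] = [0,|J|]$. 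Hence $\dim \Ext^{1}_{D}(M_{J}, M_{I}) \leq |J| - |I \cap J| = |J \setminus I|$, forcing $\dim \ext^{1}(M_{I}, M_{J}) \leq 0$, so $\ext^{1}(M_{I}, M_{J}) = 0$. \Cref{thm:extWS} now implies weak separation, and reading off the formula in \Cref{Thm:quasicomm} gives $|J''| = \dim \Ext^{1}_{D}(M_{I}, M_{J}) = 0$, i.e., $J \setminus I < I \setminus J$, which is the desired strong separation.

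The only real subtlety is the uniform bound $\dim \Ext^{1}_{D}(M_{J}, M_{I}) \leq |J \setminus I|$; without it there is no obvious way to leverage a single $\Ext^{1}$-vanishing to conclude $\ext^{1}(M_{I}, M_{J}) = 0$ and so apply the weak-separation theorem. Once that combinatorial bound and the $\ext^{1}/\Ext^{1}$ identity are in place, the argument is essentially automatic, and no direct combinatorial inspection of the $S(I,d) \preceq E(J,d)$ formula is required.
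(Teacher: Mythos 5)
Your proof is correct, and its overall skeleton is the same as the paper's: reduce the converse to the vanishing of $\ext^1(M_I,M_J)$, invoke \Cref{thm:extWS} to get weak separation, and then read off $|J'|$ or $|J''|$ from \Cref{Thm:quasicomm}; the forward direction is identical. The one place you diverge is the step that turns the vanishing of a single $\Ext^1_\aus$ into the vanishing of $\ext^1(M_I,M_J)$. The paper gets this in one line: by \Cref{Rem:exact1}, $\ext^1(M,N)$ is a subspace of $\Ext^1_\aus(M,N)$, so whichever of $\Ext^1_\aus(M_I,M_J)$, $\Ext^1_\aus(M_J,M_I)$ vanishes kills the corresponding $\ext^1$, and the symmetry of $\ext^1$ (inherited from $\Ext^1_\Pi$ via \Cref{Thm:fullexact}) transports this to $\ext^1(M_I,M_J)=0$. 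You instead run a dimension count through the unlabelled corollary after \Cref{Lem:Symbil} together with the bound $\dim\Hom_\aus(M_J,M_I)\leq\min(|I|,|J|)$ from \Cref{Lem:rankonehom}, concluding $\dim\ext^1(M_I,M_J)\leq 0$. That computation is valid (and your "symmetric" second case does check out, since the same Hom bound gives $\dim\Ext^1_\aus(M_I,M_J)\leq|J\setminus I|$ as well), but it is more work than the containment argument, which makes the "key estimate" you flag as the crux entirely unnecessary. So: correct proof, same route, with one step done the hard way.
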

\begin{proof} Assume that 
$I$ and $J$ are strongly separated. Then they are weakly separated, and (we may assume that) 
$J\minus I=J'\cup J''$ such that
\[
J'<I\minus J<J''.
\]
Since $I$ and $J$ are strongly separated,  either $J'$ or $J''$ is empty.  Now \Cref{Thm:quasicomm} 
implies that either $\Ext^1(M_J, M_I)=0$ or $\Ext^1(M_I, M_J)=0$.

Conversely, assume that one of the extension groups is zero. Then 
by the symmetry of $\ext^1$,  we have 
\[
\ext^1(M_I, M_J)=0, 
\] 
which implies that $I$ and $J$ are weakly separated, following \Cref{thm:extWS}.
Now by \Cref{Thm:quasicomm} and the assumption that one of the two extension 
groups vanishes, either $J'$ or $J''$ is empty. Therefore, $I$ and $J$ are 
strongly separated. 
\end{proof}
\begin{corollary}\label{cor:barinv}
Let $I, J\subseteq [n]$ with $|I|\geq |J|$ be weakly separated. {\color{blue}Then }
\begin{itemize}
\item[(1)]  $\qminor{I}\qminor{J}$ is invariant under the bar involution if and only if 
$\Ext^1(M_I, M_J)=0$.
\item[(2)] $\qminor{J}\qminor{I}$ is invariant under the bar involution if and only if 
$\Ext^1(M_J, M_I)=0$.
\end{itemize}
\end{corollary}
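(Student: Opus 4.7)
The plan is to reduce the corollary directly to a comparison of the exponents in two known identities, and then cash in the dimension formulas from Theorem~\ref{Thm:quasicomm}. The two ingredients are the quasi-commutation $\qminor{I}\qminor{J}=q^{c(I,J)}\qminor{J}\qminor{I}$ from Theorem~\ref{Thm:LZ} (available under the weak separation hypothesis) and the bar-involution rule $\overline{\qminor{I}\qminor{J}}=q^{d(I,J)}\qminor{J}\qminor{I}$ with $d(I,J)=\min\{|I|,|J|\}-|I\cap J|$, recalled just before the statement. Under our assumption $|I|\geq |J|$ and writing $J\setminus I=J'\cup J''$ as in Theorem~\ref{Thm:quasicomm}, we have $d(I,J)=|J|-|I\cap J|=|J'|+|J''|$ and $c(I,J)=|J''|-|J'|$.

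For part (1), I would combine the two identities to get $\overline{\qminor{I}\qminor{J}}=q^{d(I,J)-c(I,J)}\,\qminor{I}\qminor{J}$, so that bar-invariance of $\qminor{I}\qminor{J}$ is equivalent to $d(I,J)=c(I,J)$, i.e.\ $(|J'|+|J''|)=(|J''|-|J'|)$, i.e.\ $|J'|=0$. By Theorem~\ref{Thm:quasicomm}(4), $\dim\Ext^1_\aus(M_J,M_I)=|J'|$, so this is equivalent to $\Ext^1_\aus(M_J,M_I)=0$, as claimed.

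For part (2), the same substitution gives $\overline{\qminor{J}\qminor{I}}=q^{d(J,I)}\qminor{I}\qminor{J}=q^{d(I,J)+c(I,J)}\qminor{J}\qminor{I}$ (using the symmetry $d(I,J)=d(J,I)$). Hence bar-invariance of $\qminor{J}\qminor{I}$ is equivalent to $d(I,J)+c(I,J)=0$, i.e.\ $(|J'|+|J''|)+(|J''|-|J'|)=0$, i.e.\ $|J''|=0$. By Theorem~\ref{Thm:quasicomm}(3), $\dim\Ext^1_\aus(M_I,M_J)=|J''|$, so this is equivalent to $\Ext^1_\aus(M_I,M_J)=0$.

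There is no real obstacle: all nontrivial content has been packaged into the earlier results, and what remains is to organise the two exponent comparisons cleanly and match each vanishing condition to the correct one of $|J'|=0$ or $|J''|=0$. The one small care point I would flag is that the case $|I|=|J|$ is covered without ambiguity because the formulas $c(I,J)=|J''|-|J'|$ and $d(I,J)=|J'|+|J''|$ are symmetric in the chosen decomposition, so (1) and (2) remain mutually consistent even when both decompositions from Definition~\ref{def:non-crossing} apply.
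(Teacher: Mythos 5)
Your proposal is correct and follows essentially the same route as the paper's own (much terser) proof, which simply says to compare $d(I,J)$ with $c(I,J)=-c(J,I)$ and invoke Theorem~\ref{Thm:quasicomm}; you have just written out the exponent bookkeeping explicitly, and your identifications $d(I,J)=|J'|+|J''|$, $c(I,J)=|J''|-|J'|$ and the matching of $|J'|=0$, $|J''|=0$ with the vanishing of $\Ext^1_\aus(M_J,M_I)$, $\Ext^1_\aus(M_I,M_J)$ respectively are all accurate.
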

\begin{proof}
By assumption, $\qminor{I}$ and $\qminor{J}$ quasi-commute. By \Cref{Thm:quasicomm} and by comparing $-d(I, J)$ and $c(I, J)$, the corollary follows.
\end{proof}
\section{Cluster tilting objects of rank one modules} \label{sec11}
We discuss when a maximal weakly separated collection defines a cluster tilting object.
When $\setJ$ is an interval, we extend rectangle clusters for $\CC[\Gr(k, n)]$ with $k\in \setJ$
to clusters (of minors) for $\CC[\flag(\setJ)]$. 

\subsection{Extending rectangle clusters.}
A set $I\subseteq [n]$ is called 
a {\em $\setJ$-set} if $|I|\in \setJ$, 
and a  {\em cointerval}
if it is the complement of an interval in $[n]$. 
Let $\maxNC_{k,\square}$ be the collection of interval $k$-sets, and 
$k$-sets that contain $1$ and are unions of two intervals.  
Note that  $\maxNC_{k,\square}$
is a collection of weakly separated sets of size  
$k(n-k)+1$, and so it is maximal by \cite[Thm. 3.5]{OPS} (see also \cite{JKS3}). 

The collection $\maxNC_{k,\square}$ determines a cluster in $\CC[\Gr(k, n)]$, 
referred to as a \emph{rectangle cluster} (\cite{RW}). The
$k$-sets in $\maxNC_{k,\square}$ except $[k]$ naturally label
the boxes in the $k\times (n-k)$-grid. More precisely, 
a box $b$ is the upper-right corner of the rectangle whose lower left corner is 
the lower corner of the $k\times (n-k)$ grid. 
The label assigned to box $b$ is given by the vertical steps in the lattice path going from the upper left corner to 
the lower right corner of the $k\times (n-k)$ grid that runs along the top and right boundary of the rectangle determined by  box $b$.
We place the $k$-set $[k]$ on the left of the grid in the first row. 

\begin{example} \label{ex:73J}Let $n=7$ and $k=3$. The collection $\maxNC_{k,\square}$ 
has the following $k$-sets.
\[
\begin{tikzpicture}[scale=1.0,
 arr/.style={black, -angle 60}, thick]
\foreach \i in {1,...,5}{ \path (\i, 4.15) node (w\i) {};}
\foreach \i in {1,...,5}{\path (\i, 0.85) node (v\i) {};}

\draw[blue] (1, 4) --(1, 3);
\draw[blue] (1, 3) --(3, 3);
\draw[blue] (3, 3) --(3, 1.1);
\draw[blue] (3, 1.1) --(5, 1.1);

\draw[red] (1, 4) --(2, 4);
\draw[red] (w2) --(v2);
\draw[red] (2, 1) --(5, 1);

\path (1.5, 3.5) node  {\color{red}$234$};
\path (2.5, 3.5) node  {$345$};
\path (3.5, 3.5) node  {$456$};
\path (4.5, 3.5) node  {$567$};

\path (1.5, 2.5) node  {$134$};
\path (2.5, 2.5) node  {\color{blue}$145$};
\path (3.5, 2.5) node  {$156 $};
\path (4.5, 2.5) node  {$167$};

\path (1.5, 1.5) node  {$124$};
\path (2.5, 1.5) node  {$125$};
\path (3.5, 1.5) node  {$126$};
\path (4.5, 1.5) node  {$127$};

\path (0.5, 1.5) node  {\color{magenta}$123$};
\end{tikzpicture}
\]
Here the red path corresponds to the box labelled by $234$, as
step numbers $2, 3, 4$ are the vertical steps. Similarly, the blue path corresponds 
to the box labelled by $145$. 
\end{example}
\begin{proposition} \label{prop:MWS} Let $k\in \setJ$. Then $\maxNC_{k, \square}$ can be uniquely extended 
to a maximal collection $\maxNC_{\setJ, k, \square}$ of weakly separated sets, by adding all the cointerval $t$-sets for $t\in \setJ$ with 
$ t<k$ and all the interval $t$-sets for $t\in \setJ$ with $t>k$.  
\end{proposition}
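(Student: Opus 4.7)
The plan is to establish three properties of the proposed collection $\maxNC_{\setJ, k, \square}$: weak separation, maximality among weakly separated collections of $\setJ$-sets, and uniqueness of the extension of $\maxNC_{k, \square}$.

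First, I would verify weak separation of $\maxNC_{\setJ, k, \square}$ by pairwise case analysis. Within $\maxNC_{k, \square}$ this is already known (and cited from \cite{OPS}). Two interval sets (of any sizes) are automatically weakly separated since the difference between two intervals naturally splits around the other. Two cointerval sets are weakly separated by applying $I \mapsto [n] \setminus I$, which exchanges cointervals and intervals while preserving weak separation. An interval $t$-set (with $t > k$) and a cointerval $s$-set (with $s < k$) are weakly separated by a direct check combining both previous arguments. The crucial case is to pair an interval $t$-set or a cointerval $s$-set with an element of $\maxNC_{k, \square}$. This reduces to checking against either an interval $k$-set or an L-shape $[1, j] \cup [l+1, l+k-j]$, and in each sub-case one writes the differences explicitly and verifies the inequality of \Cref{def:non-crossing}.

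Second, I would prove maximality by showing that any $\setJ$-set $K$ weakly separated from every element of $\maxNC_{\setJ, k, \square}$ must already belong to the collection. Let $t = |K| \in \setJ$. If $t = k$, then by maximality of $\maxNC_{k, \square}$ among weakly separated $k$-collections, $K \in \maxNC_{k, \square}$. If $t > k$, I claim $K$ is an interval: suppose to the contrary that $K$ has a gap, i.e., there exist $p < q$ in $K$ with $(p, q) \cap K = \emptyset$ and $q - p \geq 2$. If the gap is wide ($q - p - 1 \geq k$), the interval $k$-set $J = [p+1, p+k] \in \maxNC_{k, \square}$ sits inside the gap; since $p \in K$ lies below $J$ and $q \in K$ lies above $J$, one cannot split $J \setminus K = J' \cup J''$ with $J' < K \setminus J < J''$, contradicting weak separation. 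If the gap is narrow ($q - p - 1 < k$), one uses an L-shape in $\maxNC_{k, \square}$ straddling the gap: for example, when $p \geq k$ the set $J = [1, k-1] \cup \{p+1\}$ has $p+1 \in J \setminus K$ trapped between $p, q \in K \setminus J$, again forcing non-weak-separation. The case $t < k$ is handled dually by complementation, showing $K$ must be a cointerval. Uniqueness follows immediately, since maximality forces precisely these added sets and nothing else.

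The main obstacle is the case analysis for narrow gaps: when the gap $(p, q)$ in $K$ has size less than $k$, no interval $k$-set fits inside, and one must engineer the right L-shaped witness from $\maxNC_{k, \square}$. Several boundary situations must be treated separately (e.g., when $p$ is small, when $q$ is near $n$, or when $K$ does or does not contain $1$), each requiring a slightly different L-shape or a shifted interval $k$-set as witness. A concrete example is $K = \{2,4,5,6,7\} \subseteq [8]$ with $k = 3$: neither $[2,4]$ nor $[1,3]$ witnesses non-separation, but the L-shape $\{1,3,4\}$ does. These sub-cases are purely combinatorial and do not require deep theory, but they must be enumerated with care.
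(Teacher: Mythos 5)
Your proposal is correct and takes essentially the same route as the paper: the paper also treats weak separation of the new collection as routine and proves maximality (hence uniqueness) by exhibiting, for every $\setJ$-set outside $\maxNC_{\setJ,k,\square}$, an explicit interval or L-shaped witness in $\maxNC_{k,\square}$ that fails weak separation with it. The paper organizes the witness construction by choosing the offending element $i$ minimally and its brackets $a,b$ extremally (rather than by gap width), which yields the same witnesses — e.g.\ for your example $K=\{2,4,5,6,7\}$ its formula produces exactly $\{1,3,4\}$ — and it handles $t<k$ by a direct symmetric construction instead of complementation.
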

\begin{proof}
It is straightforward to see that $\maxNC_{\setJ, k, \square}$ is a collection of weakly separated sets. 
Let $I$ be a $\setJ$-set, and 
$I\not\in \maxNC_{\setJ, k, \square}$. 
We claim that $I$ is not weakly separated from a set in 
$\maxNC_{k, \square}$. 
Consequently, 
$\maxNC_{\setJ, k, \square}$ is the unique maximal 
collection of weakly separated sets containing $\maxNC_{k, \square}$.

We prove the claim. Let $t=|I|$. If $t=k$, 
clearly, the collection $I\cup \maxNC_{k, \square}$ is not weakly separated, 
as $\maxNC_{k, \square}$ is a maximal such collection. So it remains 
to consider the following two cases.
\begin{itemize}
\item[(i)]  $t< k$ and $I$ is not a cointerval. 
\item[(ii)] $t>k$ and $I$ is not an interval. 
\end{itemize} 
In Case (i),  there exist $a<i<b$ in $[n]$ such that 
\begin{equation}\label{eq:iab}
i\in I, \text{ but } a, b \not\in I.
\end{equation}
We first choose $i$ minimal, and for this $i$ choose $a$ and $b$ minimal, 
satisfying \eqref{eq:iab}.
Note that this implies that $[1, a-1]\subseteq I$ and $[i, b-1]\subseteq I$. 
Therefore 
\begin{equation} \label{eq:abi}
a< a+(b-i)\leq t<k.
\end{equation}
Let $L\in \maxNC_{k, \square}$ be the set defined by
\[
L=\left\{ \begin{array}{ll}  [1, a]\cup [ i+1, i+(k-a)], & \text{if } i+(k-a)\leq n  \\ \\   
  {} [1, k-(n-i)] \cup  [i+1, n],    & \text{otherwise}.
  \end{array}\right. 
\]
By construction and \eqref{eq:abi}, we see that $i\not\in L$ and 
$a,b\in L$. That is, we have 
\[ 
|I|=t<k=|L|, \;
a<i<b, \; a, b\in L\setminus I \text{ and } i \in I\setminus L.
\]
So $I$ and $L$ are not weakly separated. \\

Now consider Case (ii).   There exist $a<i<b$ in $[n]$ such that 
\begin{equation} \label{eq:iab2}
i\not\in I, \text{ but } a, b \in I.
\end{equation}
Note also in this case, 
\begin{equation} \label{eq:kt}
k<t\leq n.
\end{equation}
Choose $i$ minimal, and then for this $i$, choose $a$ and $b$ maximal, satisfying \eqref{eq:iab2}.
Then 
\begin{equation}\label{eq:ia}
i=a+1
\end{equation} 
 and so by \eqref{eq:kt}, 
\begin{equation}\label{eq:ika}   
i+(k-a)=k+1\leq n,
\end{equation} 
which implies 
\[
[ i, i+(k-a)]\subseteq [n].
\]
Let 
\[
L=\left\{ \begin{tabular}{ll}  
$[i, i+k-1]$, & if $a=1$;\\ \\
$[1, a-1]\cup [ i, i+(k-a)]$, & if  $1< a< k$;  \\ \\
$[1, k-1] \cup  [i, i]$,    &  \text{otherwise}.
\end{tabular}\right. 
\]
In either case, $L\in \maxNC_{k, \square}$ and 
\[
i\in L \text{ and } a \not\in L.
\]
We show that $b\not\in L$. When $a\geq k$, then $b\not\in L$ since $b>i$. 
Now assume that $a< k$. Since $b$ was chosen to be maximal, 
we have $[b+1, n]\cap I=\emptyset$ and so $I$ is a proper subset of 
$[1, b]$, since $i<b$ and $i\not\in I$.
Therefore 
\[  
k<|I|=t < b.
\]
Note  that $i=a+1$ from \eqref{eq:ia} and that $i+(k-a)=k+1<b$
from \eqref{eq:kt}.
Therefore,
\[
b\not\in L.
\] 
So we have $L\in \maxNC_{k, \square}$ such that   
\[ |I|=t > k=|L|, \;
a<i<b, \; a, b\in I\setminus L \text{ and } i \in L\setminus I.
\]
Therefore, $L$ and $I$ are not weakly separated. 
This completes the proof. 
\end{proof}

\subsection{Cluster tilting objects}\label{Sec:extendedrectangle}
Let $\maxNC$ be a maximal collection of weakly separated $\setJ$-sets and define
\[
T_{\maxNC}=\bigoplus_{I\in \maxNC} M_I,
\]   
This module
is rigid in $\Dfiltered(\setJ)$ since weak separation for sets translates to 
vanishing of $\ext^1(-, -)$ for modules, by \Cref{thm:extWS}.

\begin{theorem} \label{Thm:cardinality} 
There exists a cluster tilting object in $\Dfiltered(\setJ)$ of the form $T_{\maxNC}$ 
if and only if $\setJ$ is an interval. 
Moreover, in this case, every such module $T_{\maxNC}$ is a cluster tilting object in $\Dfiltered(\setJ)$. In particular, this applies when 
$\maxNC=\maxNC_{\setJ, k, \square}$ with $k\in \setJ$.
\end{theorem}
\begin{proof}
If $\setJ$ is not an interval,  by \Cref{Lem:charTsub} (2),  there is a rank two projective-injective object in
$\Dfiltered(\setJ)$.  Note that any indecomposable projective-injective object must be 
a summand of any cluster tilting object. Therefore, $\Dfiltered(\setJ)$ does not have
cluster tilting objects of the form $T_\maxNC$,  consisting solely of rank one modules.

Now assume that $\setJ=[r, s]$ is an interval.  Danilov--Karzanov--Koshevoy proved
that all the maximal collections of weakly separated $\setJ$-sets $\maxNC$ 
have the same size \cite[Thm. 1.1]{DKK1} (see also \cite{DKK2}),
\[
|\maxNC| = \left(\begin{matrix} n+1\\ 2 \end{matrix}\right)
-\left(\begin{matrix} n-s+1\\ 2 \end{matrix}\right)-
\left(\begin{matrix} r+1\\ 2 \end{matrix}\right)+1.
\]
This number is equal to $\dim\flag(\setJ) + |\setJ|$, by a 
straightforward computation. By \Cref{Thm:classiccat}, $\Dfiltered(\setJ)$ 
provides a categorification of the cluster algebra $\CC[\flag(\setJ)]$. 
Consequently, the number of indecomposable summands (up to
isomorphism) of a cluster tilting object equals  $\dim\flag(\setJ) + |\setJ|$. 
Hence $T_{\maxNC}$  is a maximal rigid object in 
$\Dfiltered(\setJ)$ and therefore a cluster tilting object, by \Cref{Lem:liftcto1}. 
This completes the proof.
\end{proof}

\begin{remark} 
 When $\setJ=\{k\}$, we call $T_{\maxNC_{k, \square}}$ a {\em rectangle cluster tilting object}, 
 and in the general case 
where $\setJ$ is an interval, we call the cluster tilting object 
$T_{\maxNC_{\setJ, k, \square}}$ an {\em extended rectangle cluster tilting object}. 
When $\setJ$ is not an interval, the number $|\maxNC_{\setJ, k, \square}|$ varies depending 
on $k$. See the example below.
\end{remark}
\begin{example}\label{ex:higherones}
Let $n=5$ and $\setJ=\{1, 3\}$, then $\maxNC_{\setJ,1, \square}$ has 
$8$ elements,
\[
  \{1\},  \{2\}, \{3\}, \{4\}, \{5\}, \{1,  2,  3\}, \{2, 3, 4\}, \{3, 4, 5\}
\] 
while $\maxNC_{\setJ,3, \square}$ has $9$ elements,
\[
\{1\},  \{5\}, \{1, 2, 3 \}, \{1, 2,  4\},   \{ 1, 2, 5\},  \{1, 3, 4\}, \{1, 4, 5\}, \{ 2, 3, 4\} , \{ 3, 4, 5\}.
\] 
Both are maximal collections of weakly separated $\setJ$-sets. 
The dimension of the flag variety
is $8$ in this example. So the number of summands in a
cluster tilting object is $8+|\setJ|=10$. The module $T_{\maxNC_{\setJ, k, \square}}$ in either case is
 not a cluster tilting object.  
\end{example}

When $\setJ$ is an interval, we arrange the sets in $\maxNC_{\setJ, k, \square}$
in a grid. We start with the rectangle containing the $k$-sets in $\maxNC_{k, \square}$ and then add interval $\setJ$-sets 
of larger size upward and cointerval $\setJ$-sets of smaller size rightward.
More precisely, the interval $[r, s]$ is placed 
above $[r+1, s]$ for $s-r\geq k$, and the cointerval $(t-1)$-sets are placed in the column on the right of cointerval $t$-sets with $[n-t+1, n]$ and $[n-t+2, n]$ in the same row for $t\leq k$.

\begin{example} \label{ex:grid} 
Let $n=7$, $k=3$ and $\setJ=[4]$. The sets in $\maxNC_{\setJ, k, \square}$
are arranged as follows, where the red coloured part is $\maxNC_{k, \square}$, 
the teal part consists of the intervals of size bigger than $k$, while the blue part consists of the cointervals of size smaller than $k$. 
\[
\begin{tikzpicture}[scale=1.0,
 arr/.style={black, -angle 60}, thick]
\path (1.5, 1) node {\color{red} 123};
\path (2.5, 1) node {\color{red} 124};
\path (3.5, 1) node {\color{red} 125};
\path (4.5, 1) node {\color{red} 126};
\path (5.5, 1) node {\color{red} 127};
\path (6.5, 1) node {\color{blue} 12};
\path (2.5, 1.8) node {\color{red} 134};
\path (3.5, 1.8) node {\color{red} 145};
\path (4.5, 1.8) node {\color{red} 156};
\path (5.5, 1.8) node {\color{red}  167};
\path (6.5, 1.8) node {\color{blue} 17};
\path (7.5, 1.8) node {\color{blue} 1};

\path (2.5, 2.6) node {\color{red} 234};
\path (3.5, 2.6) node {\color{red} 345};
\path (4.5, 2.6) node {\color{red} 456};
\path (5.5, 2.6) node {\color{red} 567};
\path (6.5, 2.6) node {\color{blue} 67};
\path (7.5, 2.6) node {\color{blue} 7};

\path (2.5, 3.4) node {\color{teal} 1234};
\path (3.5, 3.4) node {\color{teal} 2345};
\path (4.5, 3.4) node {\color{teal} 3456};
\path (5.5, 3.4) node {\color{teal} 4567};
\end{tikzpicture}
\]
\end{example}

\subsection{The Gabriel quiver for extended rectangle clusters}
We call an indecomposable summand $T_i$ of $T_{\maxNC_{\setJ, k, \square}}$ a 
\emph{level-$l$ summand} if the $\Delta$-support of $T_i$ is an $l$-set.
A  mutable summand of $T_{\maxNC_{k, \square}}$ is mutable for 
$T_{\maxNC_{\setJ, k, \square}}$  and we call such a summand a 
{\em mutable level-$k$} summand for   $T_{\maxNC_{\setJ, k, \square}}$. 
Note that some
frozen summands of $T_{\maxNC_{k, \square}}$
can be mutable for $T_{\maxNC_{\setJ, k, \square}}$.
The vertex corresponding to a summand $T_i$ is described in 
the same way, that is, \emph{mutable or frozen level-$l$ vertices}.

\begin{proposition} \label{Prop:gabriel}
Let $k\in \setJ=[r,s]$ and let $T=T_{\maxNC_{\setJ,k, \square}}$.  
Then the quiver $Q_T$ has no arrows between level-$k$-mutable vertices and vertices in the other levels.
\end{proposition}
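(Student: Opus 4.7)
The plan is to read off the arrows of $Q_T$ at a mutable level-$k$-vertex from the two mutation sequences \eqref{Eq:exseq1} and \eqref{Eq:exseq2}: for a mutable summand $T_i$, the arrows of $Q_T$ incident to vertex $i$ are counted by the indecomposable summands (with multiplicity) of the middle terms $E_i$ and $F_i$. By Proposition~\ref{Lem:liftexchange}, each mutation sequence in $\Dfiltered(\setJ)$ is a lift of the corresponding mutation sequence of $\projfun T_i$ in $\sub Q_{\setJ}$, with extra summands from $\add\charT_{\setJ}$ prescribed by the socle formula in Lemma~\ref{Lem:liftedprecise}. The problem therefore splits into two parts: (a) show that in $\sub Q_{\setJ}$ the mutation at $\projfun T_i$ has middle terms built only from $\projfun T_j$ with $T_j$ a level-$k$-summand of $T$; and (b) show that the only $\charT_m$ appearing in the lift is $\charT_k$.

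Part (b) is short. A mutable level-$k$ rank-one summand satisfies $\soc \projfun T_i = S_k$ by Lemma~\ref{Lem:trace}(4); its mutation $T_i^*$ is again a rank-one level-$k$-module (by the Pl\"ucker exchange mentioned below), so $\soc \projfun T_i^* = S_k$ as well. Granting (a), every summand of $\projfun E_i$ and $\projfun F_i$ has socle concentrated at vertex $k$, and the socle-matching formula of Lemma~\ref{Lem:liftedprecise} then forces the correcting $\charT'$ to lie in $\add \charT_k$. For part (a), I would start from the well-known Pl\"ucker exchange for the rectangle cluster of $\Gr(k,n)$ (see \cite{Scott, GLS08}): at an internal box of the $k\times(n-k)$ grid, the middle term of the exchange is a direct sum of the (at most four) grid-neighbour level-$k$-sets. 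Translated into $\sub Q_{\{k\}}$ this proves the claim when $\setJ=\{k\}$ and $T=T_{\maxNC_{k,\square}}$. What remains is to show that enlarging $T$ to $T_{\maxNC_{\setJ,k,\square}}$ and $\setJ$ to $[r,s]$ does not change the mutation sequence at $\projfun T_i$, i.e.\ that the Pl\"ucker middle terms stay minimal $\add(\projfun T\setminus\projfun T_i)$-approximations in the larger category $\sub Q_{\setJ}$. Concretely, for every level-$l$-summand $T_j$ with $l\neq k$, one must verify that every morphism $\projfun T_j\to\projfun T_i$ (and $\projfun T_i\to\projfun T_j$) factors through the grid-neighbour approximation; at the level of rank-one modules this is a consequence of Lemma~\ref{Lem:rankonehom}, with an interpolating $M_K$ chosen from an appropriate initial or final segment of the interval or cointerval indexing $T_j$.

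The main obstacle is this factorisation step. Because the level-$l$-summands for $l>k$ are intervals and those for $l<k$ are cointervals, the argument requires a case-by-case verification that the combinatorially natural intermediate $k$-set $K$ produced by Lemma~\ref{Lem:rankonehom} actually lies in $\maxNC_{\setJ,k,\square}$, so that $M_K$ is already a summand of $T$ rather than merely some auxiliary rank-one module. The internal-box description of the mutable level-$k$-summands, combined with the explicit grid layout of $\maxNC_{\setJ,k,\square}$ from Section~\ref{Sec:extendedrectangle}, should allow one to pick such a $K$ uniformly. Once this combinatorial check is in place, (a) and (b) together imply that the summands of $E_i$ and $F_i$ are all level-$k$-objects, which yields the proposition.
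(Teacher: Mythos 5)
Your proposal is correct in outline but takes a genuinely different route from the paper's. The paper (whose written argument for this proposition is itself only a sketch: it explicitly skips the details) works directly with irreducible morphisms: using the explicit structure of rank-one modules it checks that every homomorphism between a level-$l$ summand ($l\neq k$) and a level-$k$ summand factors through one of the boundary level-$k$ summands (the interval and cointerval $k$-sets), so the irreducible maps that cross levels are incident only to vertices frozen in $\maxNC_{k,\square}$, never to mutable level-$k$ vertices. You instead read off the arrows at a mutable level-$k$ vertex from its two exchange sequences --- legitimate, since by \Cref{Prop:2-cycles} there are no loops or $2$-cycles at mutable vertices, so the summands of $E_i\oplus F_i$ count all incident arrows --- and split the work into (a) showing that the rectangle-cluster Pl\"ucker approximations remain minimal $\add(\projfun T\setminus \projfun T_i)$-approximations after enlarging the cluster tilting object, and (b) controlling the $\add\charT_{\setJ}$-correction of the lift via the socle formula of \Cref{Lem:liftedprecise}. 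Step (b) is a genuine gain: it rules out arrows to the frozen summands $\charT_j$, $j\neq k$, with no Hom computation, using only that mutation at an internal box is a geometric exchange, so every term of the sequence has socle $S_k$. But step (a) reduces to exactly the combinatorial factorisation the paper omits --- that every map $\projfun T_j\to\projfun T_i$ and $\projfun T_i\to\projfun T_j$ with $T_j$ in another level factors through the grid-neighbour approximation --- and you, like the authors, leave that case-by-case verification as a sketch via \Cref{Lem:rankonehom}. So the two arguments are equally (in)complete at their combinatorial core; yours buys reuse of the mutation and lifting machinery of Propositions~\ref{Lem:liftexchange} and~\ref{prop:GeomExMutation}, while the paper's direct Hom analysis is self-contained and would also determine arrows among non-mutable vertices, which the exchange-sequence method cannot see (though the proposition does not need them).
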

Note that the module structure of rank one modules is explicit, and so are their homomorphisms. 
By carefully analysing the homomorphisms between the indecomposable summands 
of $T=T_{\maxNC_{\setJ,k, \square}}$, we have the proposition as claimed. We skip the details of the proof. 
In fact, it is not hard to compute the quiver $Q_T$ for   
$ T=T_{\maxNC_{\setJ,k, \square}}$. 
We give an example to illustrate.

\begin{example} Let $n=7$ and $\setJ=[4]$. 
Below is the quiver of 
$Q_{T_{\maxNC_{\setJ,3, \square}}}$.
\[
\xymatrix{
&\color{teal} 1234 \ar[r]  \ar[ddddl] & \color{teal}2345  \ar[dl]  \ar[r] & \color{teal}3456 \ar[dl]   \ar[r] & \color{teal}4567 \ar[dl]  \\
 & {\color{red}234 \ar[u]}   \ar[r]    & {\color{red}345\ar[u]   \ar[r]} \ar[dl]& {\color{red}456\ar[u]  \ar[r]}  \ar[dl]&  {\color{red}567  \ar[u]  \ar[r]\ar[dl] } & \color{blue}67 \ar[dl]  \ar[r]   & \color{blue}7 \ar[dl]   \\
 & {\color{red} 134}\ar[u]  \ar[r]    & {\color{red} 145} \ar[u]   \ar[r] \ar[dl] & {\color{red}156} \ar[u]  \ar[r] \ar[dl]& {\color{red} 167}  \ar[u]  \ar[r] \ar[dl]& \color{blue}17 \ar[u]   \ar[r]  \ar[dl]&  \color{blue}1 \ar[u]\ar[dl] \\
  &{\color{red} 124}    \ar[u]\ar[r] & {\color{red} 125} \ar[r]  \ar[u]& {\color{red} 126} \ar[r] \ar[u] 
  & {\color{red} 127}  \ar[r]  \ar[u]&  \color{blue}12 \ar[u] \ar[dlllll]\\
{\color{red}123}  \ar[ur] & & & & &
}\]

\noindent The red part is the rectangle cluster $T_{\maxNC_{3, \square}}$, 
in which the mutable summands are 124, 125, 126, 134, 145, 156, but 
only 123 is not mutable in $T_{\maxNC_{\setJ, 3, \square}}$.
\end{example}

\section{Reachability of rank one modules}\label{sec12}
When $\setJ$ is an interval, 
we show that geometric exchanges and flips on collections of weakly separated sets 
induce mutations of cluster tilting objects. 
Consequently,  the extended rectangle cluster tilting objects 
$T_{\maxWS_{\setJ, k, \square}}\in \Dfiltered(\setJ)$ are shown to be mutation equivalent, and all the rank one modules $M_I$ with $|I|\in \setJ$ are {\em reachable}, that is,  the modules $M_I$ can 
be obtained from some $T_{\maxWS_{\setJ, k, \square}}$ by mutations.
\subsection{Geometric exchanges and mutations of cluster tilting objects}
 For $L\subseteq [n]$ and $a, b, c\in  [n]$, 
we denote by $La$, $Lab$ and $Labc$ the subsets $L\cup \{a\}$, $L\cup \{a, b\}$ and $L\cup \{a, b, c\}$, respectively. 
 Let $\maxNC$ be a collection of $\setJ$-sets and let 
\[\pad(\maxNC)=\{\pad(I)\colon I\in \maxNC\}.\]
We adapt the definition of geometric exchange for $k$-sets  \cite{Scott, JKS2}  to $\setJ$-sets. 
Suppose that $\maxNC$ is weakly separated and contains $Lab$, $Lbc$, $Lcd$, $Lad$, $Lac$, where $a, b, c, d$ are 
 four cyclically ordered numbers in $[n]\backslash L$. 
 A \emph{geometric exchange} replaces $Lac$ by $Lbd$ to obtain a new 
 collection $\maxNC'$ of $\setJ$-sets, that is, 
 \[
 \maxNC'=(\maxNC\backslash\{Lac\})\cup \{Lbd\}.
 \]
In the case of $k$-sets, geometric exchanges preserve weak separation \cite{Scott}. 
We will see that this is also true for $\setJ$-sets.
We remark that in the setting of $k$-sets, maximal collections of weakly separated sets are the face labels of plabic graphs, 
which are related by a sequence of square moves  \cite{OPS}. A square move changes the face labels by a geometric exchange. It would be interesting to investigate whether analogous 
connections extend to $\setJ$-sets. 

\begin{proposition}\label{prop:WSGeomEx}
Geometric exchanges preserve weak separation of $\setJ$-sets. 
\end{proposition}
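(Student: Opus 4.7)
The plan is to convert the combinatorial claim into a homological one via \Cref{thm:extWS} and then exploit the admissible exchange sequences in the Frobenius $2$-Calabi--Yau exact category $(\Dfiltered(\setJ), \mathcal{E})$.

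First I would reduce to showing that $Lbd$ is weakly separated from each $I \in \maxNC \setminus \{Lac\}$. The cases $I \in \{Lab, Lbc, Lcd, Lda\}$ are immediate from \Cref{def:non-crossing} since $Lbd$ differs from each of these by a single element. For general $I$, by \Cref{thm:extWS} it suffices to show $\ext^1(M_{Lbd}, M_I) = 0$, and the hypothesis that $I$ is weakly separated from each element of $\{Lab, Lac, Lad, Lbc, Lcd\} \subseteq \maxNC$ translates (again via \Cref{thm:extWS}) to $\ext^1(M_J, M_I) = 0$ for every such $J$.

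Next I would construct the two categorical exchange sequences in $\mathcal{E}$,
\[
0 \to M_{Lbd} \to M_{Lab} \oplus M_{Lcd} \to M_{Lac} \to 0, \qquad 0 \to M_{Lac} \to M_{Lad} \oplus M_{Lbc} \to M_{Lbd} \to 0,
\]
which are the categorical avatars of the Plücker three-term relations. Both sequences can be read off from the explicit description of the rank one modules as submodules of $D_0$ with prescribed $\Delta$-support. Admissibility in $\mathcal{E}$ is checked via \Cref{Lem:exact1}(6) by verifying that the traces add correctly: since $\eta M_K = \charT_{|K|}$ by \Cref{Lem:trace}(4) and $|Lbd| = |Lac|$, $|Lab| = |Lcd|$, $|Lad| = |Lbc|$ all equal $|L| + 2$, both exchange sequences send $\eta$ to a split short exact sequence of summands of $\charT_\setJ$.

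Finally, I would apply $\Hom_D(-, M_I)$ to these exchange sequences and chase the resulting long exact sequences. From the first sequence, vanishing of $\ext^1(M_{Lab}, M_I)$, $\ext^1(M_{Lcd}, M_I)$ and $\ext^1(M_{Lac}, M_I)$ places $\ext^1(M_{Lbd}, M_I)$ into a short exact sequence with a quotient of $\Hom(M_{Lac}, M_I)$; from the second sequence one obtains an analogous description using the other three sets. Combining these with $2$-Calabi--Yau duality $\ext^1(X, Y) \cong D\,\ext^1(Y, X)$ and \Cref{Lem:Symbil} yields a precise dimension equation for $\ext^1(M_{Lbd}, M_I)$ in terms of the six $\dim \Hom$'s among $M_I$ and $\{M_{Lab}, M_{Lac}, M_{Lad}, M_{Lbc}, M_{Lcd}\}$. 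The vanishing $\ext^1(M_{Lbd}, M_I) = 0$ then reduces to a numerical identity that can be verified via the combinatorial description of $\dim \Hom_D(M_K, M_L)$ in \Cref{Lem:rankonehom}.

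The main obstacle is the last step: even after all the hypothesised vanishings, $\ext^1(M_{Lbd}, M_I)$ appears a priori as a nontrivial cokernel in the long exact sequence, and collapsing it to zero requires either an explicit factorisation of every morphism $M_{Lac} \to M_I$ through $M_{Lad} \oplus M_{Lbc}$ using the rank-one structure, or a careful dimension count balancing both exchange sequences against the Euler-type identity in \Cref{Lem:rankoneext}. A purely combinatorial case analysis on $|I \cap \{a,b,c,d\}|$ and the relative position of $I$ and $L$ is available as a fallback, but the categorical argument is more transparent and fits the framework developed in Sections~\ref{sec4}--\ref{sec10}.
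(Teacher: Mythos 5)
Your setup through the construction and admissibility of the two exchange sequences is sound, but the argument does not close, and the gap sits precisely at the step you flag as "the main obstacle". Applying $\Hom_\aus(-,M_I)$ to $0\to M_{Lac}\to M_{Lad}\oplus M_{Lbc}\to M_{Lbd}\to 0$ identifies $\ext^1(M_{Lbd},M_I)$ with the cokernel of $\Hom_\aus(M_{Lad}\oplus M_{Lbc},M_I)\to\Hom_\aus(M_{Lac},M_I)$, i.e.\ with the failure of maps $M_{Lac}\to M_I$ to extend along the inflation. Killing this cokernel is exactly the assertion that the inflation is a left approximation relative to $M_I$; in the paper that approximation property is \Cref{prop:GeomExMutation}(3), whose proof invokes \Cref{prop:WSGeomEx} itself (and moreover requires $\setJ$ to be an interval and $T_\maxNC$ to be cluster tilting, whereas the proposition concerns arbitrary weakly separated collections and arbitrary $\setJ$ — when $\setJ$ is not an interval no cluster tilting object of rank one modules exists by \Cref{Thm:cardinality}). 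So that route is circular. The hypothesised vanishings only control $\ext^1$ of the end and middle terms, which by the long exact sequence traps $\ext^1(M_{Lbd},M_I)$ between $0$ and a second extension group that does not vanish in a stably $2$-CY category. The dimension-count fallback also does not go through on formal grounds: combining both sequences with $2$-CY duality and \Cref{Lem:Symbil} reduces the claim to an identity involving the \emph{individual} dimensions $\dim\Hom_\aus(M_{Lad},M_I)$, $\dim\Hom_\aus(M_I,M_{Lab})$, etc.; the hypotheses only pin down the symmetrised sums $\dim\Hom_\aus(X,M_I)+\dim\Hom_\aus(M_I,X)=([X],[M_I])$, and since $[M_{Lac}]\neq[M_{Lbd}]$ in $\grothEJ$ nothing cancels. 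You would be forced back to a full case analysis via \Cref{Lem:rankonehom}, i.e.\ to an independent combinatorial proof that your write-up does not supply.

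For comparison, the paper's proof contains no homological algebra at all: it applies $\pad$ to every set in the collection, uses \Cref{Lem:padseparated} to transfer weak separation to the uniform $n$-sets $\pad(I)\subseteq[2n]$, notes that the padded collection extends to a maximal weakly separated collection of $n$-sets on which the geometric exchange replacing $\pad(Lac)$ by $\pad(Lbd)$ is the classical one for sets of fixed cardinality (known to preserve weak separation), and then restricts back. If you want to salvage a categorical proof, you need an input beyond rigidity of the five neighbours — either the explicit factorisation of every $M_{Lac}\to M_I$ through the inflation using the submodule description of rank one modules, or the padding reduction to \Cref{Prop:WS1}.
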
 
 \begin{proof} Let $L, a, b, c, d, \maxNC$ and $\maxNC'$ be as above. 
Note that, by \Cref{Lem:padseparated}, $\maxNC'$ is weakly separated if and only if 
$\pad(\maxNC')$ is  weakly separated. 
Recall that  \[\pad(Lxy)=Lxy\cup[n+l+1, 2n] \]
for all distinct $x, y\in \{a, b, c, d\}$, where $l=|L|+2$.  Extending 
$\pad(\maxNC)$ to a maximal weakly 
separated collection $\hat{\maxNC}$ of $n$-sets in $[2n]$ and applying the geometric exchange to $\hat{\maxNC}$ by replacing $\pad(Lac)$ by $\pad(Lbd)$, we obtain 
a maximal weakly separated collection of $n$-sets containing $\pad(\maxNC')$. In particular 
 $\pad(\maxNC')$ is weakly separated. Therefore, $\maxNC'$ is weakly separated as claimed. 
 \end{proof}

%
\begin{proposition}\label{prop:GeomExMutation}
Let $L, \{a, b, c, d\}$ be disjoint subsets of $[n]$. 
\begin{itemize}
\item[(1)] 
$\dim\ext^1(Lac, Lbd)=1$. 
\item[(2)] We have the following short exact sequences,
\begin{equation}\label{eq:mutseq1}
\begin{tikzpicture}[scale=0.3,
 arr/.style={black, -angle 60}, baseline=(bb.base)]
 \coordinate (bb) at (0, 11.8);
 
 \path (6,12) node(a1) {$ M_{Lac}   $};
 \path (15,12) node(a2) {$M_{Lad}\oplus M_{Lbc}$};
\path (24,12) node(a3) {$M_{Lbd}$};
 \path (0,12) node(a0) {$0$}; 
 \path (30,12) node(a4) {$0$,};
 \path[arr] (a0) edge  (a1);
 \path[arr] (a1) edge  (a2);
 \path[arr] (a2) edge  (a3);
 \path[arr] (a3) edge  (a4);
 \end{tikzpicture}
\end{equation}
\begin{equation}\label{eq:mutseq2}
\begin{tikzpicture}[scale=0.3,
 arr/.style={black, -angle 60}, baseline=(bb.base)]
 \coordinate (bb) at (0, 11.8);
 
 \path (6,12) node(a1) {$ M_{Lbd}  $};
 \path (15,12) node(a2) {$ M_{Lab}\oplus M_{Lcd}$};
\path (24,12) node(a3) {$M_{Lac}$};
 \path (0,12) node(a0) {$0$}; 
 \path (30,12) node(a4) {$0$.};
 \path[arr] (a0) edge  (a1);
 \path[arr] (a1) edge  (a2);
 \path[arr] (a2) edge  (a3);
 \path[arr] (a3) edge  (a4);
 \end{tikzpicture}
\end{equation}

\item[(3)] Let $\setJ$ be an interval. For any  cluster tilting object $T_\maxNC$ in $\Dfiltered(\setJ)$ that has $M_{Lab}$, $M_{Lbc}$, $M_{Lcd}$, $M_{Lad}$ and one of $M_{Lac}, M_{Lbd}$ as summands, the sequences in (2) are 
the mutation sequences for the pair $M_{Lac}$ and $M_{Lbd}$. 
\end{itemize} 
\end{proposition}
\begin{proof}
(1) Without loss of generality, we may assume that $a<b<c<d$.
By \Cref{Lem:homInd}, 
\[
\dim \Hom(M_{Lac}, M_{Lbd}) =|L|+\dim \Hom(M_{ac}, M_{bd})=|L|+2
\]
and similarly 
\[
\dim \Hom(M_{Lbd}, M_{Lac}) =|L|+1
\]
Therefore by \Cref{Lem:rankoneext}, 
\[
\dim \ext^1(M_{Lac}, M_{Lbd}) 
= (|L|+2)+(|L|+1)-|L|-(|L|+2)=1
\]

(2) Note that both $M_{Lad}$ and
$M_{Lbc}$  have a unique submodule isomorphic to $M_{Lac}$ and identifying these two submodules we obtain the quotient module $M_{Lbd}$ and so we have the exact sequence
 \eqref{eq:mutseq1}.
The proof for \eqref{eq:mutseq2} is similar.

(3) 
We may assume that $T_{\maxNC}$ has $M_{Lac}$ as a summand, that is, $\maxNC$ contains $Lac$. 
We need to prove that the maps in the sequences are $\add (T\backslash\{M_{Lac}\})$-approximations.
By \Cref{prop:WSGeomEx}, $\maxNC'=(\maxNC \backslash Lac)\cup \{Lbd\}$ is weakly separated and 
so by \Cref{thm:extWS},
$$
\ext^1(M_{L_{bd}}, T_\maxNC \backslash M_{Lac}) =\ext^1(T_\maxNC \backslash M_{Lac}, M_{L_{bd}})=0.
$$ 
Therefore, 
$$
M_{Lac} \to M_{Lad}\oplus M_{Lbc} \text{ and } M_{Lab}\oplus M_{Lcd}\to M_{Lac}
$$
are both $\add (T_\maxNC \backslash \{ M_{L ac} \})$-approximations, as required. 
\end{proof}

\begin{remark}
 If a mutation of a cluster tilting object is induced by a geometric exchange as in \Cref{prop:GeomExMutation}, we  
 say that it is a {\em mutation by geometric exchange}. 
\end{remark}

\subsection{Raising and lowering flips}
We recall the two types of flips from \cite{LZ}. 
Let $\WS$ be a collection of weakly separated $\setJ$-sets. Assume that 
$\WS$ contains $Lj$ and its four \emph{witnesses} $Li$, $Lk$, $Lij$ and $Ljk$, where $i<j<k$
are contained in $[n]\backslash L$. Then the operation on $\WS$ that
replaces $Lj$ by $Lik$ is called a \emph{weak raising flip}.
The inverse operation is called a \emph{weak lowering flip}. 
We say that two collections of weakly separated sets are \emph{flip equivalent} if they can be 
obtained from each other by weakly raising or lowering flips.

\begin{theorem} [\cite{LZ}, Theorem 5.1] \label{thm:LZ}
Weak raising and lowering flips preserve weak separations.
\end{theorem}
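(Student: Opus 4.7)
The plan is to reduce the assertion to the geometric-exchange case via the padding construction $I \mapsto \pad(I) = I \cup [n+|I|+1, 2n]$ already used in the paper. It suffices to treat weak raising flips, since weak lowering flips are obtained by running the same argument in reverse (both directions of a geometric exchange preserve weak separation).

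Given $\WS$ weakly separated containing $Lj$ together with the four witnesses $Li,\ Lk,\ Lij,\ Ljk$ where $i<j<k$ lie in $[n]\setminus L$, let $\WS' = (\WS\setminus\{Lj\})\cup\{Lik\}$. By \Cref{Lem:padseparated}, weak separation of $\WS'$ is equivalent to weak separation of the padded collection $\pad(\WS')$ in $[2n]$. The key computation is then to identify this padded collection with the output of a genuine geometric exchange in $[2n]$.

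Set $M := L \cup [n+|L|+3, 2n]$ and $(a,b,c,d) := (i,\,j,\,k,\,n+|L|+2)$. Since $|Lj|=|Li|=|Lk|=|L|+1$ while $|Lik|=|Lij|=|Ljk|=|L|+2$, the padding tails differ by exactly the element $n+|L|+2$. A direct verification gives
\[
\pad(Lj)=Mbd,\ \pad(Lik)=Mac,\ \pad(Li)=Mad,\ \pad(Lk)=Mcd,\ \pad(Lij)=Mab,\ \pad(Ljk)=Mbc,
\]
with $a<b<c<d$ all in $[2n]\setminus M$. Thus $\pad(\WS)$ contains $Mbd$ together with precisely the four witnesses $Mab,Mbc,Mcd,Mad$ required to carry out the geometric exchange that replaces $Mbd$ by $Mac$, and the result of that exchange is exactly $\pad(\WS')$.

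Applying \Cref{prop:WSGeomEx} to the padded collection shows $\pad(\WS')$ is weakly separated, and \Cref{Lem:padseparated} then yields weak separation of $\WS'$. The only real obstacle is the bookkeeping involved in matching the differing cardinalities of $Lj$ and $Lik$ under padding: their padded tails have different lengths, but the single element $n+|L|+2$ that distinguishes them serves as the fourth vertex $d$ in the geometric-exchange configuration. Once that correspondence is in place, the proof is an immediate application of results already established in the paper, and gives a categorical/combinatorial proof independent of the original argument of Leclerc--Zelevinsky.
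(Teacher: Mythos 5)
Your argument is correct and is essentially identical to the paper's own justification (given in the remark following the theorem): both reduce a flip to a geometric exchange on the padded collection, with the single extra padding element $t=n+|L|+2$ serving as the fourth cyclically ordered vertex, and then invoke \Cref{prop:WSGeomEx} and \Cref{Lem:padseparated}. Your write-up just makes the identification of the six padded sets more explicit.
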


\begin{remark}
\Cref{thm:LZ} can be deduced from \Cref{prop:WSGeomEx}. 
Let $s=|L|$, $t=n+s+2$,  $L'=L\cup[t+1, 2n]$, and let $\WS'$ be obtained from 
$\WS$ by a flip replacing $Lj$ with  $Lik$ as defined above. 
Then $\pad(\WS)$ contains $L'it, L'jt, L'kt, L'ij, L'jk$, and $i<j<k<t$.
Now the flip on $\WS$ corresponds to the geometric exchange on $\pad(\WS)$ 
replacing $L'jt$ by $L'ik$. Hence by \Cref{prop:WSGeomEx}, $\WS'$ is weakly separated.
\end{remark}

By similar arguments as those used in the proof of \Cref{prop:GeomExMutation}, 
we have the following.

\begin{proposition}\label{prop:FlipMutation}
Let $L$ and $\{i< j< k\}$ be disjoint subsets of $[n]$. 
\begin{itemize}
\item[(1)] 
$\dim\ext^1(Lj, Lik)=1$. 
\item[(2)] We have the following short exact sequences,
\begin{equation}\label{eq:flipseq1}
\begin{tikzpicture}[scale=0.3,
 arr/.style={black, -angle 60}, baseline=(bb.base)]
 \coordinate (bb) at (0, 11.8);
 
 \path (6,12) node(a1) {$ M_{Lik}   $};
 \path (15,12) node(a2) {$M_{Li}\oplus M_{Ljk}$};
\path (24,12) node(a3) {$M_{Lj}$};
 \path (0,12) node(a0) {$0$}; 
 \path (30,12) node(a4) {$0$,};
 \path[arr] (a0) edge  (a1);
 \path[arr] (a1) edge  (a2);
 \path[arr] (a2) edge  (a3);
 \path[arr] (a3) edge  (a4);
 \end{tikzpicture}
\end{equation}
\begin{equation}\label{eq:flipseq2}
\begin{tikzpicture}[scale=0.3,
 arr/.style={black, -angle 60}, baseline=(bb.base)]
 \coordinate (bb) at (0, 11.8);
 
 \path (6,12) node(a1) {$ M_{Lj}  $};
 \path (15,12) node(a2) {$ M_{Lij}\oplus M_{Lk}$};
\path (24,12) node(a3) {$M_{Lik}$};
 \path (0,12) node(a0) {$0$}; 
 \path (30,12) node(a4) {$0$.};
 \path[arr] (a0) edge  (a1);
 \path[arr] (a1) edge  (a2);
 \path[arr] (a2) edge  (a3);
 \path[arr] (a3) edge  (a4);
 \end{tikzpicture}
\end{equation}

\item[(3)] Let $\setJ$ be an interval. For any cluster tilting object $T_\maxNC$ in $\Dfiltered(\setJ)$ that has $M_{Li}$, $M_{Lk}$, $M_{Lij}$, $M_{Ljk}$ and one of $M_{Lj}, M_{Lik}$ as summands, the sequences in (2) are 
the mutation sequences for the pair $M_{Lj}$ and $M_{Lik}$.
\end{itemize} 
\end{proposition}

\begin{remark}
 If a mutation of a cluster tilting object is induced by a flip, we  
 say that it is a {\em mutation by flip}. 
\end{remark}

Let $k\in \setJ=[r,s]$. Let $T'$ be a cluster tilting object in 
$\Dfiltered(\{k\})$ that is reachable from $T_{\maxNC_{k, \square}}$ and
\[T=T'\oplus (\bigoplus_I M_I)\]
where the second direct sum is over 
the $\setJ$-sets that are intervals of size bigger than $k$ or
cointervals of size smaller than $k$.

\begin{proposition}\label{Prop:reachability} The module $T$
is a cluster tilting object in $\Dfiltered(\setJ)$ that is reachable from 
$T_{\maxNC_{\setJ, k, \square}}$, via mutations by  geometric 
exchanges and flips. 
\end{proposition}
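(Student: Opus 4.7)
The plan is to proceed by induction on the mutation distance from $T_{\maxNC_{k,\square}}$ to $T'$ in $\Dfiltered(\{k\})$. First, however, I will verify that $T$ is cluster tilting in $\Dfiltered(\setJ)$. By \Cref{Lem:liftcto1}, it suffices to show $T$ is maximal rigid. Rigidity reduces via \Cref{thm:extWS} to pairwise weak separation of the indexing sets: the $k$-sets underlying $T'$ form a weakly separated collection inherited from $T_{\maxNC_{k,\square}}$ via \Cref{prop:WSGeomEx}, while direct inspection shows any interval set of size $>k$, and any cointerval set of size $<k$, is weakly separated from every other $\setJ$-set of appropriate size. The summand count matches that of $T_{\maxNC_{\setJ,k,\square}}$ since mutations preserve cardinality and $O=\bigoplus_I M_I$ contributes identically in both cases, so $T$ is maximal by \Cref{Thm:cardinality}.

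For the inductive step, consider one mutation of $T'$ at a mutable summand $T_k$ in $\Dfiltered(\{k\})$, giving $(T')^{\ast}=(T'\setminus T_k)\oplus T_k^{\ast}$. I claim this lifts to a mutation of $T$ at $T_k$ in $\Dfiltered(\setJ)$, producing $(T')^{\ast}\oplus O$. The crucial invariant maintained along the induction is: the Gabriel quiver of $\End_\aus(T)^{\text{op}}$ has no arrows between mutable summands of $T'$ and summands of $O$. The base case is \Cref{Prop:gabriel}. Inductively, Fomin--Zelevinsky mutation at a vertex $v$ only modifies arrows incident to $v$ or linking two of its neighbours; since $v$ has no neighbours in $O$ by hypothesis, no new arrows appear between $O$-vertices and $T'$-vertices, and the replacement vertex $T_k^{\ast}$ inherits the reversed arrows of $v$, which stay within the $T'$-part.

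Given this invariant, the exchange sequences \eqref{Eq:exseq1}--\eqref{Eq:exseq2} computed for $T_k$ in $\Dfiltered(\{k\})$ have middle terms in $\add(T'\setminus T_k)$, and I need to upgrade the approximation property from $\add(T'\setminus T_k)$ to $\add(T\setminus T_k)$. For this, every map $O_i\to T_k$ in $\add T$ must factor through $\add(T'\setminus T_k)$, which follows because such maps decompose into compositions of irreducible maps in $Q_T$, none of which jumps directly from an $O$-vertex to a mutable $T'$-vertex by the invariant. In the rank-one case, the lifted mutation realises a geometric exchange detected by \Cref{prop:GeomExMutation}(3); in general it is the abstract mutation agreeing with the one in $\Dfiltered(\{k\})$. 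The main obstacle is carefully verifying this factorisation of homomorphisms from the quiver-level data, which uses the radical filtration of $\End_\aus(T)$ and the precise description of paths in $Q_T$. The flip mutations in the statement are not required within this level-$k$ induction; they become relevant when comparing different values of $k$ in the subsequent \Cref{thm4}.
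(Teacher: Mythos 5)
Your argument follows the paper's proof in its essentials: the base case is \Cref{Prop:gabriel}, the key invariant is that no arrows connect the level-$k$-mutable vertices to vertices in the other levels, and this invariant is preserved under Fomin--Zelevinsky mutation, so that mutations of $T'$ in $\Dfiltered(\{k\})$ lift to mutations of $T$ in $\Dfiltered(\setJ)$. Your added detail on maximal rigidity and on the factorisation step is sound, though the latter is lighter than you suggest: once the invariant gives that no arrows run from $O$-vertices into $T_k$, the middle terms of the exchange sequences at $T_k$ (which are determined by the arrows at $T_k$ in $Q_T$, cf.\ the description of $E_k$ and $F_k$ after \eqref{eq:BfromT}) automatically lie in $\add(T'\setminus T_k)$, and the approximation property over $\add(T\setminus T_k)$ is supplied by the cluster structure of \Cref{thm:clusterstructure}; no separate analysis of the radical filtration is needed.

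The one genuine omission concerns the clause ``by geometric exchanges''. Your induction is on the mutation distance from $T_{\maxNC_{k,\square}}$ to $T'$, and reachability by abstract mutations does not by itself guarantee that the intermediate cluster tilting objects consist of rank one modules, which is what is required for each step to be a geometric exchange in the sense of \Cref{prop:GeomExMutation}~(3). The paper closes this gap by invoking Oh--Postnikov--Speyer's connectivity theorem (\cite[Thm.~1.4]{OPS}): maximal weakly separated collections of $k$-sets are connected by square moves, so the path from $T_{\maxNC_{k,\square}}$ to $T'$ can be chosen so that every step is a geometric exchange, and each such step is a mutation in $\Dfiltered(\setJ)$ by \Cref{prop:GeomExMutation}~(3). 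Without this input your argument establishes reachability, but not reachability by geometric exchanges as the statement asserts.
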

\begin{proof} 
By construction, the cluster tilting object $T_{\maxNC_{\setJ,k, \square}}$ is obtained from 
$T_{\maxNC_{k, \square}}$ by  adding the interval $\setJ$-sets of size bigger than $k$ and 
the cointerval  interval $\setJ$-sets of size smaller than $k$.
By \Cref{Prop:gabriel}, the quiver $Q_{T_{\maxNC_{\setJ,k, \square}}}$ has the property that
there are no arrows between mutable level-$k$ vertices and vertices in the other levels. So
mutations of $T_{\maxNC_{k, \square}}$ 
in $\Dfiltered(\{k\})$ are   mutations of 
$T_{\maxNC_{\setJ, k, \square}}$ in $\Dfiltered(\setJ)$. Moreover, 
such a mutation does not introduce new arrows between mutable level-$k$ vertices and vertices 
in the other levels. So the property of $Q_{T_{\maxNC_{\setJ,k, \square}}}$ is preserved under mutations 
 in $\Dfiltered(\{k\})$ on cluster tilting objects that are reachable from 
 $T_{\maxNC_{k, \square}}$.
 Following \cite[Thm. 1.4]{OPS}, $T'$ is reachable from 
 $T_{\maxNC_{k, \square}}$ by geometric exchanges, which by \Cref{prop:GeomExMutation} (3)
are also mutations in $\Dfiltered(\setJ)$. 
 Therefore, $T$ is a reachable cluster tilting object in $\Dfiltered(\setJ)$ as claimed. 
\end{proof}

In \Cref{Sec:extendedrectangle}, 
we introduced an arrangement of the $\setJ$-sets in
$\maxNC_{\setJ, k, \square}$.  
Denote the label at box in row $i$ and column $j$ by $I_{i, j}$, 
where row 1 is at the bottom and column 1 is on the left. We have 
$$I_{i, j}=[1, k-i] \cup[k-i+j+1, k+j].$$
Denote by $\row^k_i$ 
the set of $k$-sets in $\maxNC_{k, \square}$ that are  in row $i$. In particular, 
$\row^k_k$ consists of interval $k$-sets.

\begin{proposition} \label{Thm:flipequiv} Let $\setJ=[r, s]$ and $k-1, k\in \setJ$. 
The maximal collections of $\maxNC_{\setJ, k, \square}$ and $\maxNC_{\setJ, k-1, \square}$ are flip equivalent. 
Consequently, all such collections $\maxNC_{\setJ, l, \square}$ with $l\in \setJ$ are flip equivalent. 
\end{proposition}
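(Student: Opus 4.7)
The plan is to construct, by direct combinatorial means, an explicit sequence of lowering flips taking $\maxNC_{\setJ, k, \square}$ to $\maxNC_{\setJ, k-1, \square}$. The ``consequently'' assertion then follows by transitivity of flip equivalence together with a short induction on $|l-l'|$ along the interval $\setJ$, since consecutive levels in $\setJ$ are automatically both in $\setJ$.

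First I would identify the symmetric difference. The sets in $\maxNC_{\setJ, k, \square}\setminus \maxNC_{\setJ, k-1, \square}$ are exactly the non-interval $k$-sets in $\maxNC_{k, \square}$, namely $[1, k-r]\cup[b, b+r-1]$ for $r=1, \ldots, k-1$ and $b=k-r+2, \ldots, n-r+1$; these occupy rows $1, \ldots, k-1$ of the $k\times(n-k)$ grid in the arrangement of \Cref{Sec:extendedrectangle}. Symmetrically, the sets in $\maxNC_{\setJ, k-1, \square}\setminus \maxNC_{\setJ, k, \square}$ are the non-cointerval $(k-1)$-sets of $\maxNC_{k-1, \square}$, and a short count (using $|\maxNC_{\setJ, k, \square}|=|\maxNC_{\setJ, k-1, \square}|$ coming from \Cref{Thm:cardinality}) confirms that both symmetric differences have size $(k-1)(n-k)$.

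Next, I would construct the flip sequence as follows: process the rows $r=1, 2, \ldots, k-1$ in increasing order, and within each row process the indices $b=n-r+1, n-r, \ldots, k-r+2$ in decreasing order. At step $(r, b)$, perform the lowering flip specified by
\[L=[1, k-r-1]\cup[b, b+r-2],\quad i=k-r,\quad j=b-1,\quad k'=b+r-1,\]
which removes the $k$-set $[1, k-r]\cup[b, b+r-1]$ and introduces the $(k-1)$-set $[1, k-r-1]\cup[b-1, b+r-2]$. A direct computation verifies $i<j<k'$ and $i, j, k'\notin L$, so the flip data is well defined, and by \Cref{thm:LZ} each step preserves weak separation.

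The main task, and the principal obstacle, is to verify that all four witnesses $Li$, $Lk'$, $Lij$, $Ljk'$ lie in the current collection at each step. The crucial invariants are the following. The witness $Li=[1, k-r]\cup[b, b+r-2]$ is either a cointerval $(k-1)$-set (when $r=1$) or is precisely the $(k-1)$-set introduced at the earlier step $(r-1, b+1)$. The witness $Lk'=[1, k-r-1]\cup[b, b+r-1]$ is either a cointerval $(k-1)$-set (when $b=n-r+1$, the first index in row $r$) or the $(k-1)$-set introduced at the immediately preceding step $(r, b+1)$. The witness $Lij=[1, k-r]\cup[b-1, b+r-2]$ is either the left-column set $[1, k]\in\maxNC_{k, \square}$ (when $b=k-r+2$) or a still-unprocessed row-$r$ $k$-set at index $b-1$. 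Finally, $Ljk'=[1, k-r-1]\cup[b-1, b+r-1]$ is either an interval $k$-set (when $r=k-1$) or a still-unprocessed row-$(r+1)$ $k$-set at index $b-1$. The processing order (rows bottom-up, indices right-to-left) is chosen precisely so that every witness is available when needed. After all $(k-1)(n-k)$ flips, the removed sets are exactly the non-interval $k$-sets of $\maxNC_{k, \square}$, and the newly introduced $(k-1)$-sets together with the cointerval $(k-1)$-sets already present exhaust the $(k-1)$-sets of $\maxNC_{k-1, \square}$; the resulting collection is therefore $\maxNC_{\setJ, k-1, \square}$.
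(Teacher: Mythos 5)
Your proof is correct and follows essentially the same route as the paper's: both arguments sweep the first $k-1$ rows of the grid from bottom to top and right to left, apply a lowering flip at each box, and verify inductively that the new label at each box is the corresponding $(k-1)$-set (your $[1,k-r-1]\cup[b-1,b+r-2]$ is the paper's \eqref{eq:newlabel} after a change of coordinates), so that each row $\row^k_r$ is transformed into $\row^{k-1}_r$ minus the cointerval already present. Your explicit witness bookkeeping supplies exactly the details the paper leaves implicit.
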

\begin{proof} 
We will apply weakly lowering flips at each box in the first $k-1$ rows in the $k\times (n-k)$-grid,  
proceeding row by row from bottom to top, and within each row from right to left. 

We claim that after applying the flips at box $(i, j)$, 
\begin{equation}\label{eq:newlabel}
\text{the new set $I'_{i, j}$ at 
the $(i, j)$-box is  $[1, k-i-1]\cup [k-i+j, k+j-1]$,} 
\end{equation}
replacing  the original $k$-set $I_{i, j}$.
Consequently, after completing the flipping of the first $k-1$ rows, namely, 
all but the top row in the $k\times (n-k)$-grid, for any $1\leq s\leq k-1$, 
\begin{equation}\label{eq:rowi}
 \row^k_s \text{ is changed into } \row^{k-1}_s \minus \{[1, k-s-1]\cup[n-s+1, n]\},
\end{equation}
where however the $(k-1)$-set $[1, k-s-1]\cup[n-s+1, n]$ is a 
cointerval and is already an element in $S_{\setJ, k, \square}$. Therefore, 
$\maxNC_{k-1, \square}$ is contained in the new collection of $\setJ$-sets.
As the interval $l$-sets for $l\geq k$ and cointerval 
$t$-sets for $t\leq k-1$ remain unchanged, after flipping at the final box at 
$(k-1, 1)$, we obtain $\maxNC_{\setJ, k-1, \square}$.   
Therefore, after flipping all 
boxes in the first $(k-1)$ rows, we obtained $\maxNC_{\setJ, k-1, \square}$.

The claim can be proved by induction together with a careful examination of the witnesses. 
Note in particular that the new $(k-1)$-set $I'_{i, j}$ provides a witness for 
both $I_{i, j-1}$ when $j>1$ and $I_{i+1, j}$.
We skip the details of the proof and illustrate the main steps through 
the example below.
\end{proof}

\begin{example} We continue from  \Cref{ex:grid}.
The set $127$ has four witnesses, 126, 167, 17 and 12, while  $126$ only has 3 witnesses, $125$, $156$, $12$. 
The lowering flip replaces $127$ by $16$, providing the fourth witness for $126$. The grid arrangement of  $\maxNC_{\setJ, 3, \square}$,
after flipping at $127$, is as follows,   
\[
\begin{tikzpicture}[scale=1.0,
 arr/.style={black, -angle 60}, thick]
\path (1.5, 1) node {123};
\path (2.5, 1) node {124};
\path (3.5, 1) node { \color{blue} 125};
\path (4.5, 1) node {\color{red} 126};
\path (5.5, 1) node {\color{blue} 16};
\path (6.5, 1) node {\color{blue} 12};
\path (2.5, 1.8) node {134};
\path (3.5, 1.8) node {145};
\path (4.5, 1.8) node {\color{blue} 156};
\path (5.5, 1.8) node {167};
\path (6.5, 1.8) node {17};
\path (7.5, 1.8) node {1};

\path (2.5, 2.6) node {234};
\path (3.5, 2.6) node {345};
\path (4.5, 2.6) node {456};
\path (5.5, 2.6) node {567};
\path (6.5, 2.6) node {67};
\path (7.5, 2.6) node {7};

\path (2.5, 3.4) node { 1234};
\path (3.5, 3.4) node {2345};
\path (4.5, 3.4) node {3456};
\path (5.5, 3.4) node {4567};
\end{tikzpicture}
\]
where the four witnesses of $126$ are in blue. We flip at $126$ and continue  leftwards until 
$125$, $124$ are flipped. Then the new 
collection of weakly separated sets is, 
\[
\begin{tikzpicture}[scale=1.0,
 arr/.style={black, -angle 60}, thick]
\path (1.5, 1) node {123};
\path (2.5, 1) node {\color{teal} 13};
\path (3.5, 1) node {\color{teal} 14};
\path (4.5, 1) node {\color{teal} 15};
\path (5.5, 1) node {\color{teal} 16};
\path (6.5, 1) node { 12};
\path (2.5, 1.8) node {134};
\path (3.5, 1.8) node {145};
\path (4.5, 1.8) node {\color{blue} 156};
\path (5.5, 1.8) node {\color{red} 167};
\path (6.5, 1.8) node { 17};
\path (7.5, 1.8) node {1};

\path (2.5, 2.6) node {234};
\path (3.5, 2.6) node {345};
\path (4.5, 2.6) node {456};
\path (5.5, 2.6) node {\color{blue} 567};
\path (6.5, 2.6) node {\color{blue} 67};
\path (7.5, 2.6) node {7};

\path (2.5, 3.4) node { 1234};
\path (3.5, 3.4) node {2345};
\path (4.5, 3.4) node {3456};
\path (5.5, 3.4) node {4567};
\end{tikzpicture}
\]
Now, in row 2,  $167$ has four witnesses, the blue entries together with $16$. So we can flip at $167$ and continue leftward along row $2$ until the end. 
Then the new collection of weakly separated sets is as follows.
\[
\begin{tikzpicture}[scale=1.0,
 arr/.style={black, -angle 60}, thick]
\path (1.5, 1) node { \color{brown}  123};
\path (2.5, 1) node {\color{teal} 13};
\path (3.5, 1) node {\color{teal} 14};
\path (4.5, 1) node {\color{teal} 15};
\path (5.5, 1) node {\color{teal} 16};
\path (6.5, 1) node {\color{red} 12};
\path (2.5, 1.8) node {\color{teal}23};
\path (3.5, 1.8) node {\color{teal}34};
\path (4.5, 1.8) node {\color{teal} 45};
\path (5.5, 1.8) node {\color{teal} 56};
\path (6.5, 1.8) node {\color{blue} 17};
\path (7.5, 1.8) node {\color{blue} 1};

\path (2.5, 2.6) node {\color{magenta}234};
\path (3.5, 2.6) node {\color{magenta}345};
\path (4.5, 2.6) node {\color{magenta}456};
\path (5.5, 2.6) node {\color{magenta}567};
\path (6.5, 2.6) node {\color{blue}67};
\path (7.5, 2.6) node {\color{blue}7};

\path (2.5, 3.4) node { \color{magenta}1234};
\path (3.5, 3.4) node {\color{magenta}2345};
\path (4.5, 3.4) node {\color{magenta}3456};
\path (5.5, 3.4) node {\color{magenta}4567};
\end{tikzpicture}
\]
The $2$-sets form the collection $\maxNC_{2,\square}$. 
By rotating the boundary entries beyond the teal coloured part clockwise, we obtain the grid arrangement for $\maxNC_{\setJ,2, \square}$ as 
 described in  \Cref{Sec:extendedrectangle}.
\[
\begin{tikzpicture}[scale=1.0,
 arr/.style={black, -angle 60}, thick]
\path (1.5, 1) node {\color{red} 12};
\path (2.5, 1) node { \color{teal} 13};
\path (3.5, 1) node {\color{teal} 14};
\path (4.5, 1) node {\color{teal} 15};
\path (5.5, 1) node {\color{teal} 16};
\path (6.5, 1) node { \color{blue} 17};
\path (7.5, 1) node {  \color{blue} 1};
\path (2.5, 1.8) node {\color{teal}23};
\path (3.5, 1.8) node {\color{teal}34};
\path (4.5, 1.8) node {\color{teal}45};
\path (5.5, 1.8) node {\color{teal}56};
\path (6.5, 1.8) node {\color{blue}67};
\path (7.5, 1.8) node { \color{blue} 7};

\path (2.5, 2.6) node {\color{brown}123};
\path (3.5, 2.6) node {\color{magenta}234};
\path (4.5, 2.6) node {\color{magenta}345};
\path (5.5, 2.6) node {\color{magenta}456};
\path (6.5, 2.6) node {\color{magenta}567};

\path (3.5, 3.4) node {\color{magenta}1234};
\path (4.5, 3.4) node {\color{magenta}2345};
\path (5.5, 3.4) node {\color{magenta}3456};
\path (6.5, 3.4) node {\color{magenta}4567};
\end{tikzpicture}
\]
\end{example}

\subsection{Reachability of rank one modules for $\setJ=[r,s]$.}
%

For the purpose of categorifying the quantum cluster structure on 
flag varieties in \Cref{sec:16.2}, we will need to know that we can reach all the rank one
modules by mutations from a given cluster tilting object. 
A natural candidate for an initial cluster tilting object is 
$T_{\maxNC_{\setJ, k, \square}}$. 

\begin{theorem} \label{Thm:reachability}
Let $\setJ\subseteq [n-1]$ be an interval.
\begin{itemize}
\item[(1)] The  cluster tilting objects  $T_{\maxNC_{\setJ,k, \square}}$ 
and $T_{\maxNC_{\setJ,k',  \square}}$, for any $k, k'\in \setJ$,  are mutation equivalent 
through mutations by flips. 

\item[(2)] Any module $M_I$ with $I$ a $\setJ$-set can be extended to a reachable  
cluster tilting object of the form $T_{\maxWS}$ that can be obtained from some 
$T_{\maxNC_{\setJ,k, \square}}$ through mutations by geometric exchanges. 
Consequently, $M_I$ is reachable.
\end{itemize}
\end{theorem}
\begin{proof}
(1) follows from \Cref{prop:FlipMutation} and \Cref{Thm:flipequiv}.

(2) Let $k=|I|$. Extend $I$ to a maximal collection $\maxWS'$ of weakly separated 
$k$-sets. Now let
\[
\maxWS=\maxWS'\cup \{J\colon J \text{ is an interval of size}>k \text{ or a cointerval of size}<k\}.
\]
By \Cref{Prop:reachability}, 
$T_{\maxWS}$ is reachable from $T_{\maxNC_{\setJ, k, \square}}$, using mutations 
by geometric 
exchanges.
Therefore, $M_I$ is reachable as claimed.
\end{proof}

\begin{remark}
Let $\maxNC$ and $\maxNC'$ be two maximal collections of 
weakly separated $\setJ$-sets, where $\maxNC$ (resp. $\maxNC'$) contains a maximal collection 
of weakly separated $k$-sets (resp. $l$-sets). Assume that $\setJ$ is an interval. 
By \Cref{Prop:reachability} and \Cref{Thm:reachability},
$T_{\maxNC}$ and $T_{\maxNC'}$ are mutation equivalent. 
 It would be interesting to determine whether this remains true for 
arbitrary maximal collections of weakly separated $\setJ$-sets.
\end{remark}

\section{The quantum cluster algebras $\qcl{\mat B_T, \mat L_T}$ and 
$\qcl{\mat B_{\projfun T}, \mat L_{\projfun T}}$}\label{sec:13}
Let $T$ be a cluster tilting object in $\Dfiltered(\setJ)$. In this section, we construct 
a compatible pair of matrices $(\mat{B}_T, \mat{L}_T)$ and the quantum cluster algebra 
$\qcl{\mat B_T, \mat L_T}$ associated to $T$. 
 We then explain how $\mat L_T$ is related to the matrix $\mat{L}_{\pi{T}}$ constructed in 
\cite{GLS13}. Understanding the relationship between these two matrices will lead to a connection between 
 $\qcl{\mat B_T, \mat L_T}$ and the corresponding algebra 
 $\qcl{\mat B_{\projfun T}, \mat L_{\projfun T}}$ in \cite{GLS13},  and ultimately  
between the quantum coordinate rings $\pqflag$ and $\CC_q[N_\setK]$
(see \Cref{sec16}). 

\subsection{Construction of quantum seeds} 
\begin{definition} \cite[Def.~3.1]{BZ}
Let $\mat B$ be a $\cn\times \cm$ matrix and $\mat L$ a skew-symmetric 
$\cn\times \cn$ matrix. The pair of matrices $(\mat{B}, \mat{L})$ is \emph{compatible} if 
\begin{equation}
\mat{B}^{\mathrm{tr}}\mat{L} = \begin{pmatrix} \mat{D} & \mat{0}\end{pmatrix}
\end{equation}
where $\mat{D}$ is a diagonal matrix with positive entries.
\end{definition}

Note that, if a pair $(\mat{B}, \mat{L})$ is compatible, then $\mat{B}$ 
has full rank $\cm$ and its principal part is skew-symmetrisable 
\cite[Prop.~3.3]{BZ}. 

Given a compatible pair $(\mat{B}, \mat{L})$ and $k$ in the 
mutable range $1\leq k\leq \cm$, let
\[\mu_k(\mat{B}, \mat{L})=(\mu_k\mat{B}, \mu_k\mat{L}),\]
be the mutation of $(\mat{B}, \mat{L})$ in the $k$-direction, which 
is again compatible \cite[Prop.~3.4]{BZ}.

Denote the indecomposable summands of $T$ by $T_i$. 
Let 
$\mat B=\mat{B}_T$ be the matrix defined in \eqref{eq:BfromT}. 
We define the matrix  $\mat{L}=\mat{L}_T=(\lambda_{ij})$ by 
\[
\lambda_{ij}=\dim \Hom(T_i, T_j)-\dim\Hom(T_j, T_i).
\]

The following result follows from similar arguments as the proof of  
\cite[Prop. 10.1 and 10.2]{GLS13} (see also \cite[Thm. 6.3]{JKS2}) and we skip the details. 
We remark that this can be also put into the setting of extriangulated categories, and then 
the quantum cluster structure also follows from Grabowski-Pressland's construction \cite{GP}.

\begin{theorem} \label{prop:main}
The pair $(\mat{B}_T, \mat{L}_T)$ is compatible with 
$\mat{D} = 2\mat{Id}$. Furthermore,  
\[
\mu_k(\mat{B}_T, \mat{L}_T)=(\mat{B}_{\mu_k T}, \mat{L}_{\mu_k T}).
\]
\end{theorem}

\begin{remark}\label{rem:13.3}
By \Cref{Thm:quasicomm}, when $T=\oplus M_{I_i}$, then 
\[
\lambda_{ij}=c(I_i, I_j)
\]
and so $\mat{L}_T$ encodes the quasi-commutation rules for the quantum minors $\qminor{I_i}$.
\end{remark}

\subsection{Properties of $\mat{L}$}
The matrix $\mat L=\mat L_T$ induces  
a skew-symmetric bilinear form on $K(\add T)$ 
such that for any $X,Y\in \add T$, 
\begin{equation}\label{eq:Lbil}
\mat{L}( [X], [Y]) =\dim \Hom(X,Y) -\dim \Hom(Y,X).
\end{equation}

We explain how a module $M$ in $\Dfiltered(\setJ)$ or $\add \pi T$ determines a class 
in $K(\add T)$. Note that \[\add \pi T\subseteq \mod \Pi\subseteq \mod D.\]
We say that a $\aus$-module 
$M$ has an \emph{$\add T$-presentation} if there is a short exact sequence as follows,
\begin{equation}\label{eq:Tresol}
\begin{tikzpicture}[scale=0.6,
 arr/.style={black, -angle 60}, baseline=(bb.base)]
\coordinate (bb) at (0, 0.9);
 
\path (9,1) node (c3) {$M$};
\path (6,1) node (c2) {$T_M'$};
\path (3,1) node (c1) {$T_M''$};
\path (12, 1) node (c4) {$0$,};
\path (-1,1) node  {};
\path (0, 1) node (c0) {$0$};

\path[arr] (c1) edge node[auto]{} (c2);
\path[arr] (c2) edge node{}  (c3);
\path[arr] (c0) edge (c1);
\path[arr] (c3) edge (c4);
\end{tikzpicture}
\end{equation}
where $T_M'$, $T_M''\in \add T$. Such a presentation exists for any $M\in \Dfiltered(\setJ)$ 
(see \Cref{Lem:resHom}) and for $M\in \add \pi T$. For $\pi T$, we use the sequence, 
\[
\begin{tikzpicture}[scale=0.6,
 arr/.style={black, -angle 60}]
\path (9,1) node (c3) {$\projfun T$};
\path (6,1) node (c2) {$T$};
\path (3,1) node (c1) {$\eta T$};
\path (12, 1) node (c4) {$0$.};
\path (-1,1) node  {};
\path (0, 1) node (c0) {$0$};

\path[arr] (c1) edge node[auto]{} (c2);
\path[arr] (c2) edge node{}  (c3);
\path[arr] (c0) edge (c1);
\path[arr] (c3) edge (c4);
\end{tikzpicture}
\]

For $M\in \mod D$ with an $\add T$-presentation as in  \eqref{eq:Tresol},  
we can define the class of $M$ in $K(\add T)$ as 
\begin{equation}\label{eq:classinaddT}
[M]=[T_M']-[T_M''].
\end{equation}
This is well-defined, because minimal approximation exists and is unique (see \cite[\S I.2]{ARS}).

Note that for objects $X$  in $\add T$ and its class $[X]\in K(\add T)$
are uniquely determined by each other. 
Define a bilinear form $\matadj$ on $K(\add T)$ by 
$$
\matadj( [X], [Y]) = \dim\Hom(X,\eta Y) - \dim\Hom(Y, \eta X),
$$
which by definition is skew-symmetric. 
We will show that $\matadj$ quantifies the discrepancy of the quasi-commutation rules for the 
corresponding cluster variables in 
$\CC[N_\setK]$ and $\pqflag$ (see \Cref{rem:rellattice} for further details).

\begin{proposition}\label{prop:Lad}
The form $\matadj(-, -)$ induces a skew-symmetric bilinear form on 
$K(\Dfiltered(\setJ))_{\mathcal{E}}$ as follows, 
 \begin{equation}\label{eq:lad}
\matadj( [X], [Y]) = \langle  [X], [\eta Y]\rangle - \langle [Y], [\eta X]\rangle, 
~\forall X, Y\in \Dfiltered(\setJ).
\end{equation}
\end{proposition}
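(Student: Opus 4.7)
The plan is to verify two claims: (a) the formula on the right-hand side of \eqref{eq:lad} agrees, on $\add T$, with the original Hom-space definition of $\matadj$, and (b) this right-hand side descends to a well-defined bilinear form on $\grothEJ$.

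For (a), I would first observe that for any $X, Y \in \Dfiltered(\setJ)$ the trace modules $\eta X$ and $\eta Y$ lie in $\add \charT_\setJ \subseteq \add \charT$, by the very definition of $\Dfiltered(\setJ)$. Hence \Cref{Lem:tiltingchar} gives $\Ext^1_\aus(X, \eta Y) = 0 = \Ext^1_\aus(Y, \eta X)$, and so the definition of $\langle -, -\rangle$ in \eqref{Eq:bilin0} collapses to $\langle [X], [\eta Y] \rangle = \dim \Hom_\aus(X, \eta Y)$, which is exactly the identity \eqref{eq:bilhom}. Antisymmetrising in $(X,Y)$ then recovers the original definition of $\matadj$ on $\add T$.

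For (b), the defining property of the exact structure $\mathcal{E}$ (\Cref{Lem:exact1}, item (1)) is that $\eta$ takes admissible short exact sequences in $\Dfiltered(\setJ)$ to short exact sequences in $\add \charT_\setJ$. Therefore $\eta$ induces a group homomorphism $\grothEJ \to K(\add \charT_\setJ) \subseteq K(\Dfiltered)$, and the tautological map $\grothEJ \to K(\Dfiltered)$ is likewise a group homomorphism since admissible sequences are in particular exact in $\mod \aus$. Pairing these two maps with the bilinear form $\langle -, -\rangle$ on $K(\Dfiltered)$ shows that the right-hand side of \eqref{eq:lad} depends only on the classes $[X], [Y] \in \grothEJ$ and is bilinear in each variable. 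No genuine obstacle is expected here: the proposition is essentially a bookkeeping statement expressing the compatibility of $\mathcal{E}$ with the trace functor $\eta$ together with the vanishing $\Ext^1_\aus(-, \add \charT) = 0$ on good modules.
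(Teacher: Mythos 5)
Your argument is correct, and it organizes the proof differently from the paper. Both proofs hinge on the same two ingredients: the vanishing $\Ext^1_\aus(X,\eta Y)=0$ coming from \Cref{Lem:tiltingchar}, which collapses $\langle [X],[\eta Y]\rangle$ to $\dim\Hom_\aus(X,\eta Y)$ as in \eqref{eq:bilhom}, and the additivity of $\eta$ on admissible sequences from \Cref{Lem:exact1}. The paper, however, regards $\matadj$ as a form on $K(\add T)$ extended to arbitrary $X,Y$ through the $\add T$-presentations of \Cref{Lem:resHom}, and then checks by hand that $\dim\Hom_\aus(T',\eta X)$ and $\dim\Hom_\aus(X,Z)$ for $Z\in\add\charT_\setJ$ are additive along those presentations, so that the identity \eqref{eq:lad}, already established on $\add T$, propagates to all of $\Dfiltered(\setJ)$. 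You instead verify the identity between the right-hand side of \eqref{eq:lad} and the Hom-difference directly for all $X,Y\in\Dfiltered(\setJ)$, and obtain well-definedness and bilinearity on $\grothEJ$ abstractly, as the pullback of $\langle-,-\rangle$ along the two group homomorphisms $\grothEJ\to K(\Dfiltered)$ induced by the identity and by $\eta$ (both exact for $\mathcal{E}$). This buys you a cleaner argument that avoids the presentation bookkeeping, and it mirrors the remark the authors themselves make to justify the well-definedness of the symmetric form \eqref{eq:roundbil}. The only step worth adding explicitly is that, since the presentation sequences of \Cref{Lem:resHom} are admissible, your form evaluated at $([X],[Y])$ coincides with the paper's presentation-extended $\matadj([T_X']-[T_X''],[T_Y']-[T_Y''])$; with that one sentence the two arguments prove exactly the same statement.
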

\begin{proof}
First, by definition and  \eqref{eq:bilhom}, we know that \eqref{eq:lad} is true for any $X, Y\in \add T$. 
For the general case, consider $\add T$-approximations for $X, Y$ as in 
\eqref{eq:Tresol}. We have 
\[ [X]=[T_X']-[T_X''], ~  [Y]=[T_Y']-[T_Y'']\]
and 
\[
\matadj( [X], [Y]) = \matadj( [T'_X]-[T_X''], [T_Y']-[T_Y'']).
\]
Let $T'\in \add T$ and $Z\in \add \charT_\setJ$.
Applying $\Hom(T', -)$ to the trace sequence of the $\add T$-presentation sequence for $X$ (which 
is admissible), we have 
\[\dim\Hom(T', \eta X)=\dim\Hom(T', \eta T'_X)-\dim\Hom(T', \eta T''_X).\] 
Applying $\Hom(-, Z)$ to the approximation sequence for $X$  gives, 
\[
\dim\Hom(X,  Z)=\dim\Hom(T_X', Z)-\dim\Hom(T_X'', Z),
\]
where we also use $\Ext^1(X, Z)=0$ by \Cref{Lem:tiltingchar}.  
The same is true when $X$ is replaced by $Y$.
The desired equality \eqref{eq:lad} now follows from the bilinearity of $\matadj$ and $\langle-, -\rangle$, 
together with  \eqref{eq:bilhom}. 
\end{proof}

\begin{proposition} \label{lem:qrules1}
Let  $X,Y\in \add T$.
\begin{itemize}
\item[(1)] $\mat{L}([\projfun X], [\projfun Y]) = \dim \Hom(\projfun X, \projfun Y) - \dim \Hom(\projfun Y, \projfun X)$.

\item[(2)] If $X,Y \in \add \charT_{\setJ}$, then $\mat{L}( [X], [Y])=0$.
\end{itemize}
\end{proposition}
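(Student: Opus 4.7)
The plan is to prove part (2) directly from the Dlab--Ringel-type equivalence on $\add \charT$, and then prove part (1) by expanding $\mat{L}([\projfun X], [\projfun Y])$ using the trace short exact sequence and applying \Cref{lem:dimlemma} together with a vanishing observation for maps from $\add\charT$ into $\mod \Pi$.

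For part (2), by \Cref{Lem:charTequivalence} the functor $\e_0$ restricts to an equivalence $\add \charT \to \mod \CC[t]/(t^n)$, which sends $\charT_i$ to $\CC[t]/(t^i)$. Hence
\[
\dim \Hom_\aus(\charT_i, \charT_j) = \dim \Hom_{\CC[t]/(t^n)}(\CC[t]/(t^i), \CC[t]/(t^j)) = \min\{i,j\},
\]
which is symmetric in $i$ and $j$. Thus $\mat L([\charT_i],[\charT_j])=0$ for all $i,j\in \setJ$, and (2) follows by bilinearity.

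For part (1), interpret $[\projfun X]$ and $[\projfun Y]$ as classes in $K(\add T)$ via the admissible trace sequence $0\to \eta X \to X \to \projfun X \to 0$, yielding $[\projfun X]=[X]-[\eta X]$ (and similarly for $Y$), where $\eta X,\eta Y\in \add \charT_\setJ\subseteq \add T$. Expanding bilinearly,
\[
\mat L([\projfun X],[\projfun Y]) = \mat L([X],[Y]) - \mat L([X],[\eta Y]) - \mat L([\eta X],[Y]) + \mat L([\eta X],[\eta Y]).
\]
Now apply $\Hom_\aus(X,-)$ to the sequence $0\to \eta Y\to Y\to \projfun Y\to 0$. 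Since $\eta Y\in \add \charT$ and $X\in \Dfiltered$, \Cref{Lem:tiltingchar} gives $\Ext^1_\aus(X,\eta Y)=0$, so combined with \Cref{lem:dimlemma}(1),
\[
\dim \Hom_\aus(X,Y) = \dim \Hom_\aus(X,\eta Y) + \dim \Hom_\Pi(\projfun X,\projfun Y),
\]
and symmetrically after swapping $X$ and $Y$.

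The key additional observation is that $\Hom_\aus(\eta X, \projfun Y)=0$: every indecomposable summand of $\eta X\in \add \charT$ is generated by its top $S_0$, while $\projfun Y\in \mod \Pi$ satisfies $e_0\projfun Y=0$, so any such map must vanish on the generator. Applying $\Hom_\aus(\eta X,-)$ to the same trace sequence therefore gives $\Hom_\aus(\eta X,Y)\cong \Hom_\aus(\eta X,\eta Y)$, and likewise $\Hom_\aus(\eta Y,X)\cong \Hom_\aus(\eta Y,\eta X)$. Substituting these four identities into the bilinear expansion above, the contributions from $\mat L([X],[\eta Y])$, $\mat L([\eta X],[Y])$ and $\mat L([\eta X],[\eta Y])$ cancel in pairs, leaving exactly
\[
\mat L([\projfun X],[\projfun Y]) = \dim \Hom_\Pi(\projfun X,\projfun Y) - \dim \Hom_\Pi(\projfun Y,\projfun X),
\]
as required. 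The main delicate step is verifying the vanishing $\Hom_\aus(\eta X,\projfun Y)=0$ so that the $\eta$-terms collapse cleanly; everything else is bookkeeping with the already-established dimension formula \Cref{lem:dimlemma}(2).
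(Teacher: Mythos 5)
Your proof is correct and follows essentially the same route as the paper: part (2) via the symmetry $\dim\Hom_\aus(\charT_i,\charT_j)=\min\{i,j\}$, and part (1) via the bilinear expansion of $\mat{L}([X]-[\eta X],[Y]-[\eta Y])$ combined with \Cref{lem:dimlemma} and the identity $\dim\Hom_\aus(\eta X,Y)=\dim\Hom_\aus(\eta X,\eta Y)$. Your justification of that identity (any map $\eta X\to \projfun Y$ vanishes because $\eta X$ is generated at vertex $0$ while $e_0\projfun Y=0$) is just a spelled-out version of the paper's remark that $\eta$ computes the trace of $\charT$.
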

\begin{proof}
(1) First note that 
\begin{equation}\label{eq:tem}
\dim \Hom(\eta X, Y) = \dim \Hom(\eta X, \eta Y), 
\end{equation}
since $\eta$ computes the trace of $\charT$. By definition, 
\begin{equation}
\mat{L}([\projfun X], [\projfun Y]) = \mat{L}([X]-[\eta X], [Y]-[\eta Y]). 
\end{equation}
Now (1) follows from the bilinearity of $\mat{L}(-, -)$,  \Cref{lem:dimlemma} and \eqref{eq:tem}.

(2) Note that for any $\charT_i, \charT_j$ with $i\leq j$, 
\[
\dim \Hom(\charT_i, \charT_j)=\dim \Hom(\charT_j, \charT_i)=i.
\]
Therefore, for any $X, Y\in \add \charT_\setJ$,
\[\dim \Hom(X,Y) = \dim \Hom(Y,X)\] and so $\mat{L}( [X], [Y])=0$.
\end{proof}

\begin{corollary}
The bilinear form $\mat L_{\pi T}$ constructed in \cite{GLS13} is the restriction of 
$\mat{L}$ to $K(\add \projfun T)$.
\end{corollary}
\begin{proof}
Note that any object in $ \add \pi T$ is of the form $\pi X$ for some $X\in \add T$.
So the  bilinear form $\mat{L}_{\pi T}$ in \cite{GLS13} is 
\[
\mat{L}_{\pi T}([\pi X], [\pi Y])=\dim \Hom(\projfun X, \projfun Y) - \dim \Hom(\projfun Y, \projfun X), 
\]
for any $\pi X, \pi Y\in \add \pi T$. 
Now the corollary follows from \Cref{lem:qrules1} (1).
\end{proof}

\begin{proposition} \label{lem:qrules}
Let  $X,Y\in \Dfiltered(\setJ)$.
\begin{itemize}
\item[(1)] $\mat{L}( [X], [Y]) - \mat{L}([\projfun X],[\projfun Y]) = \matadj( [X], [Y]).$

\item[(2)] If $Y\in \add \charT_{\setJ}$, then $\mat{L}( [X], [Y]) = 
\matadj( [X], [Y])=\mat{L}([\projfun X], [Y])$.
\end{itemize}
\end{proposition}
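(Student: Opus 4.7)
The plan is to first establish part (1) when $X, Y \in \add T$ by direct computation, and then extend it to general $X, Y$ via bilinearity using $\add T$-presentations. Part (2) will then follow as a corollary by specializing $Y$ and using \Cref{lem:qrules1}(2).

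For the main computation in (1), suppose first that $X, Y \in \add T$. By \Cref{lem:dimlemma}(2),
\begin{align*}
\dim\Hom_\aus(X,Y) &= \dim\Hom_\aus(X,\eta Y) + \dim\Hom_\Pi(\projfun X,\projfun Y),\\
\dim\Hom_\aus(Y,X) &= \dim\Hom_\aus(Y,\eta X) + \dim\Hom_\Pi(\projfun Y,\projfun X).
\end{align*}
Subtracting these and recognising the three bilinear forms on the right-hand side using the definition of $\matadj$ and \Cref{lem:qrules1}(1) gives exactly
\[
\mat{L}([X],[Y]) = \matadj([X],[Y]) + \mat{L}([\projfun X],[\projfun Y]),
\]
which is (1) for summands of $T$.

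To extend (1) to arbitrary $X, Y \in \Dfiltered(\setJ)$, I would use that, by \Cref{Lem:resHom}, any such module admits an admissible $\add T$-presentation $0\to T''\to T'\to X\to 0$. Hence $[X]=[T']-[T'']$ in $K(\Dfiltered(\setJ))_{\mathcal{E}}$, and applying $\projfun$ (exact by \Cref{Thm:fullexact}) yields an analogous decomposition $[\projfun X]=[\projfun T']-[\projfun T'']$. All three expressions entering (1) are $\ZZ$-bilinear in their arguments: $\mat L$ by construction, $\matadj$ as a bilinear form on $K(\Dfiltered(\setJ))_{\mathcal{E}}$ by \Cref{prop:Lad}, and $\mat L([\projfun -], [\projfun -])$ via the induced map on Grothendieck groups. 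Since the identity holds on the generating classes $[T_i]$, it extends by bilinearity to all of $K(\Dfiltered(\setJ))_{\mathcal{E}}$, which proves (1) in general.

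For part (2), assume $Y\in \add\charT_\setJ$. Then $\projfun Y=0$, so $\mat{L}([\projfun X],[\projfun Y])=0$, and (1) collapses to $\mat{L}([X],[Y])=\matadj([X],[Y])$. For the remaining equality $\matadj([X],[Y])=\mat{L}([\projfun X],[Y])$, I would use the decomposition $[X]=[\projfun X]+[\eta X]$ in $K(\add T)$ (coming from the admissible trace sequence $0\to \eta X\to X\to \projfun X\to 0$, with $\projfun X$ lifted to a module in $\Dfiltered(\setJ)$ using \Cref{Lem:dense}) together with the bilinearity of $\mat L$ to write $\mat{L}([X],[Y])=\mat{L}([\projfun X],[Y])+\mat{L}([\eta X],[Y])$. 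The second term vanishes by \Cref{lem:qrules1}(2) since both $\eta X$ and $Y$ lie in $\add\charT_\setJ$, and combining with the first equality completes (2).

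The main technical obstacle is bookkeeping across the three Grothendieck groups involved — $K(\Dfiltered(\setJ))_{\mathcal E}$, $K(\add T)$, and $K(\mod\Pi)$ — and ensuring that the evaluation $\mat{L}([\projfun X],[Y])$ (mixing a $\Pi$-class with a $\aus$-class in $\add\charT_\setJ$) is unambiguous. I will resolve this by regarding $\mat L$ as the bilinear form on $K(\add T)$ induced by \eqref{eq:Lbil}, and pulling $[\projfun X]$ back via the canonical lift to $\add T$ guaranteed by \Cref{Lem:dense}; \Cref{lem:qrules1}(2) then ensures that the particular choice of lift is irrelevant when pairing against an element of $\add\charT_\setJ$.
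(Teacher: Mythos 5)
Your proof is correct and takes essentially the same route as the paper: both arguments rest on the decomposition $[\projfun X]=[X]-[\eta X]$ together with bilinearity of the three forms, and both prove (2) by specialising (1) and killing the term $\mat{L}([\eta X],[Y])$ via \Cref{lem:qrules1}(2). The only cosmetic difference is that you establish the base identity on $\add T$ from \Cref{lem:dimlemma}(2) and then extend by bilinearity through $\add T$-presentations, whereas the paper expands $\mat{L}([\projfun X],[\projfun Y])$ bilinearly and identifies the cross terms using \Cref{prop:Lad} and \eqref{eq:bilhom}; both are valid.
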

\begin{proof}
(1) Note that $[\pi X]=[X]-[\eta X]$ and $[\pi Y]=[Y]-[\eta Y]$. So (1) follows from the 
bilinearity of the forms, \Cref{prop:Lad} and \eqref{eq:bilhom}.

(2) When $Y\in \add\charT_{\setJ}$, $\projfun Y=0$ and so the first equality follows from (1).
For the second equality, we have 
\[
\mat{L}( [\projfun X], [Y]) = \mat{L}( [X], [Y]) - \mat{L}( [\eta X], [Y])=\mat{L}( [X], [Y]), 
\]
by the linearity of $\mat{L}(-, -)$ and \Cref{lem:qrules1}.
\end{proof}

\begin{remark}\label{rem:rellattice} 
The  forms $\mat L_T$ and $\mat L_{\pi T}$ encode   
the quasi-commutation rules for cluster variables in the  clusters (see \Cref{subsec:qclalg})
corresponding to $T$ and $\pi T$, respectively.
Although both forms change under the mutation of $T$, their difference remains constant, 
following \Cref{lem:qrules} (1).
That is, for   $X, Y\in \Dfiltered(\setJ)$, 
\[
\mat L_T([X], [Y])- \mat L_{\pi T} ([\pi X], [\pi Y])=\mat L_{T'}([X], [Y])- \mat L_{\pi T'} ([\pi X], [\pi Y]),
\]
where $T$ and $T'$ are mutation equivalent.
\end{remark}

\begin{remark}\label{rem:inducedbil}
By the equivalence between $\add T$ and $\proj A$ in \eqref{eq:addTprojA}, 
the isomorphism in   \Cref{Lem:Grothiso}
induces an isomorphism, which we will also denote by $\nu$,
\begin{equation}\label{eq:xiGrothAddT}
\nu\colon K(\add T)\to K(\add \projfun T) \oplus K(\add \charT_{\setJ}), ~
[X] \mapsto [\projfun X]+ [\eta  X].
\end{equation}
Note that both $\projfun T$ and $\charT_\setJ$ are $D$-modules. It
makes sense to take the direct sum  $\projfun T\oplus\charT_\setJ$, 
and we have
\[ 
K(\add \projfun T) \oplus K(\add \charT_{\setJ})=K(\add \projfun T\oplus \charT_\setJ).
\] So 
$\nu$ identifies $K(\add T)$ with $K(\add \projfun T \oplus \charT_{\setJ})$, and 
 $\mat{L}$ induces a bilinear form on the latter. 
\end{remark}

\subsection{Quantum tori}
Let $\CC_q[K(\add T)]$ be the quantum torus (see \cite{BZ})
associated with the Grothendieck group $K(\add T)$. That is, $\CC_q[K(\add T)]$ is a 
$\CC[q^{\frac{1}{2}},q^{-\frac{1}{2}}]$-algebra with basis
$\{z^{h}\colon h\in K(\add T)\}$ 
and multiplication defined by 
\begin{equation}
z^{g}\cdot z^{h}=q^{\frac{1}{2}\mat L(g,  h)} z^{g+ h}
\end{equation}
for any $g, h \in K(\add T)$, where $\mat L=\mat L_T$. 
Consequently, 
\begin{equation}
z^{g} \cdot z^{h} = q^{\mat L(g,h)} z^{h} \cdot z^{g}
\end{equation}
and 
\begin{equation} \label{eq:qsum} 
z^{\sum_i a_i[T_i]} = q^{\frac{1}{2}\sum_{i>j} a_ia_j\mat{L}(T_i, T_j)} \prod (z^{[T_i]})^{a_i}.
\end{equation}
The algebra $\CC_q[K(\add \projfun T \oplus \charT_{\setJ})]$  is similarly defined, 
using the induced bilinear form in \Cref{rem:inducedbil}.

\begin{proposition} \label{eq:toriisom} 
The two quantum tori $ \CC_q[K(\add T)]$ and  $\CC_q[K(\add \projfun T \oplus \charT_{\setJ})]$ 
are $\grothEJ$-graded. Moreover, 
the following map is a $\grothEJ$-graded isomorphism, 
\begin{equation}\label{eq:xi} 
\qnu\colon \CC_q[K(\add T)]\rightarrow \CC_q[K(\add \projfun T \oplus \charT_{\setJ})], ~
z^{[V]}\mapsto z^{[\projfun V] + [\eta  V]}.\end{equation}
\end{proposition}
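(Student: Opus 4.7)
The plan is to verify three assertions in succession: (a) both quantum tori carry a well-defined $\grothEJ$-grading, (b) the map $\CC[\xi]$ is an algebra homomorphism with respect to these structures, and (c) it is a bijection preserving the grading. The entire statement should reduce to repackaging earlier isomorphisms of Grothendieck groups and the construction of the bilinear forms in \Cref{lem:qrules1,lem:qrules}.

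First I would spell out the $\grothEJ$-grading. There is a natural embedding $K(\add T)\hookrightarrow\grothEJ$ sending $[T_i]$ to its class in $\grothEJ$, so declaring $z^{[V]}$ to have degree $[V]\in\grothEJ$ grades $\CC_q[K(\add T)]$ (well-definedness of the multiplication on each graded piece is automatic from the presentation). Via the equivalence $\funF\colon \add T\to \proj A$, \Cref{Lem:Grothiso}(3) says that
$$\xi\colon K(\add T)\longrightarrow K(\add\projfun T)\oplus K(\add\charT_\setJ),\qquad [V]\mapsto [\projfun V]+[\eta V]$$
is an isomorphism of abelian groups. Composing with $\sigma^{-1}$ from \Cref{Lem:isom1} gives back the embedding $K(\add T)\hookrightarrow\grothEJ$, since by the very definition of $\sigma$ one has $\sigma^{-1}([\pi V]+[\eta V])=[V]$. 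Therefore declaring $z^{[\pi V]+[\eta V]}$ to have degree $[V]\in\grothEJ$ provides the second torus with a $\grothEJ$-grading, and $\CC[\xi]$ is grading-preserving on the chosen generators.

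Second I would check compatibility of multiplication. The torus on the left uses $\mat L_T$, and by \Cref{rem:rellattice} the bilinear form on $K(\add\projfun T\oplus\charT_\setJ)$ is by construction the form induced from $\mat L_T$ via $\xi$, so for any $V_1,V_2\in\add T$
$$\mat L_T([V_1],[V_2])\;=\;\mat L_{\projfun T\oplus\charT_\setJ}\bigl(\xi[V_1],\,\xi[V_2]\bigr).$$
This gives
$$\CC[\xi](z^{[V_1]}\cdot z^{[V_2]})=q^{\frac12\mat L_T([V_1],[V_2])}z^{\xi[V_1]+\xi[V_2]}=\CC[\xi](z^{[V_1]})\cdot \CC[\xi](z^{[V_2]}),$$
so $\CC[\xi]$ is an algebra homomorphism. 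Bijectivity is immediate from the fact that $\xi$ is a lattice isomorphism, which sends the $\Cq$-basis $\{z^{h}\}_{h\in K(\add T)}$ of the source to the $\Cq$-basis $\{z^{\xi(h)}\}$ of the target.

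The routine bookkeeping is identifying $K(\add T)$ with $K(\proj A)$ via $\funF$ so that \Cref{Lem:Grothiso}(3) applies directly, and then matching the two graded decompositions through $\sigma$; there is no substantive obstacle, since once the lattice isomorphism $\xi$ and the definition of the induced bilinear form are in place, compatibility of multiplication, bijectivity, and grading preservation all follow formally.
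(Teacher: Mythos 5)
Your proposal is correct and follows essentially the same route as the paper: both arguments grade the two tori by identifying $K(\add T)$ and $K(\add \projfun T\oplus\charT_\setJ)$ inside $\grothEJ$ via \Cref{Lem:isom1} and \Cref{Lem:Grothiso}(3), and both deduce the isomorphism from the fact (\Cref{rem:rellattice}) that the bilinear form on the target lattice is by definition the one transported from $\mat L_T$ along $\xi$. Your write-up is just a slightly more explicit unpacking of the multiplication check and the bijection of monomial bases.
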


\begin{proof}
Note that $\add T\subseteq \Dfiltered(\setJ)$. So any $X\in \add T$ naturally determines a 
class $[X]\in \grothEJ$. Now, using the isomorphism 
$\nu\colon K(\add T)\to K(\add \projfun T \oplus \charT_{\setJ})$ in \eqref{eq:xiGrothAddT}, 
any class in the latter naturally corresponds to a class in $\grothEJ$.
Hence, 
the quantum tori $ \CC_q[K(\add T)]$ and  $\CC_q[K(\add \projfun T \oplus \charT_{\setJ})]$ 
are $\grothEJ$-graded. 

Next, note that the quasi-commutation rules of the generators of $\CC_q[K(\add \projfun T \oplus \charT_{\setJ})]$
are defined by the bilinear form on $K(\add \projfun T \oplus \charT_{\setJ})$, which is induced from the bilinear form on $K(\add T)$ via the isomorphism $\nu$. So the map $\qnu$ naturally preserves the grading and is thus a $\grothEJ$-graded isomorphism.
\end{proof}

\subsection{Quantum cluster algebras}\label{subsec:qclalg}
The compatible pair $(\mat B_T, \mat L_T)$  defines a
quantum cluster algebra $\qclusalgT$ \cite{BZ}. Moreover, 
by the quantum Laurent phenomenon \cite[Cor. 5.2]{BZ}, 
$$
\qclusalgT\subseteq \CC_q[K(\add T)].
$$ 
As $(\mat{B}_T, \mat{L}_T)$ is compatible 
with mutations (see \Cref{prop:main}), by \cite[Prop. 4.9]{BZ} and 
following the construction in \cite{GLS13}, 
for every reachable cluster tilting object $U$ and for $V\in \add U$, there
is a corresponding element $\Upsilon^U([V]) \in \CC_q[K(\add T)]$.
In fact, we define the map
\[
\Upsilon^U\colon ~ K(\add U)\to \CC_q[K(\add T)],
\]
by induction. 
First, define $\Upsilon^T(u) = z^{u}$ for any $u\in K(\add T)$. 
Now assume
that $\Upsilon^U$ has been defined and let $U_k$ be a mutable summand of $U$. 
Then let 
\begin{equation}\label{eq:13.10} 
\Upsilon^{\mu_kU}([U_j])
=\Upsilon^U([U_j]) \text{ for } j\neq k,
\end{equation}
and 
\begin{equation} \label{eq:qexchange}
\Upsilon^{\mu_kU}([\mu_kU_k]) %
=\Upsilon^{U}([E_k]-[U_k]) + \Upsilon^U([F_k]-[U_k]),
\end{equation}
where $E_k$ and $F_k$ are the middle terms of the two mutation sequences of $U$ 
in the direction $k$ (see \Cref{Lem:liftexchange}), 
\[
\Upsilon^{U}([E_k]-[U_k]) =q^{\frac{1}{2} \mat{L}_U([E_k], [U_k])}\Upsilon^{U}([E_k]) (\Upsilon^U([U_k]))^{-1}
\]
and
\[
\Upsilon^U([F_k]-[U_k]) =q^{\frac{1}{2} \mat{L}_U([F_k], [U_k])}\Upsilon^{U}([F_k]) (\Upsilon^U([U_k]))^{-1}.
\]
In general, for  $u=\sum_i a_i[U_i]\in K(\add U)$, 
\[\Upsilon^{U}(u) = q^{\frac{1}{2}\sum_{i>j}a_ia_j\mat{L}_U([U_i], [U_j])}\prod_i\Upsilon^{U}([U_i])^{a_i}.\] 
The Laurent polynomials $\Upsilon^U([U_i])$ for all reachable cluster 
tilting objects $U$ and indecomposable summands $U_i$ of $U$ are the 
quantum cluster variables in $\qclusalgT$, by construction.

\begin{remark}\label{rem:getridofU}
Let  $V$ be another cluster tilting object reachable from $T$ with $U_i$ as a summand.
Both $\Upsilon^{U}([U_i])$ and $\Upsilon^{V}([U_i])$ are quantum cluster variables,
and are therefore identical, by \cite[Thm. 6.1]{BZ}. So we can \emph{denote them by $
\Upsilon([U_i])$, unless we need to emphasise the cluster tilting objects}. 
\end{remark}

\begin{lemma}\label{lem:grading}
We have a decomposition 
$$
\qclusalgT = \bigoplus_{w\in \grothEJ} \qclusalgT_{w},
$$
which gives $\qclusalgT$ the structure of a $\grothEJ$-graded algebra. 
In particular, under this grading, $\Upsilon(V)$ has degree $[V]$. 
\end{lemma}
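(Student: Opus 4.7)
The plan is to transport the natural $K(\add T)$-grading on the ambient quantum torus $\CC_q[K(\add T)]$ to a $\grothEJ$-grading via the group homomorphism
\[
\xi_T\colon K(\add T)\longrightarrow \grothEJ, \qquad [T_i]\longmapsto [T_i]_{\mathcal{E}},
\]
that sends the class of each indecomposable summand $T_i$ in $\add T$ to its class in the Grothendieck group of $(\Dfiltered(\setJ),\mathcal{E})$. Declaring $z^{h}$ to have degree $\xi_T(h)\in\grothEJ$ is compatible with the quasi-commutation rule $z^{g}z^{h}=q^{\frac{1}{2}\mat{L}(g,h)}z^{g+h}$ since $\xi_T$ is a group homomorphism, so the torus becomes a $\grothEJ$-graded algebra. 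Given this, the lemma will follow once I show that each quantum cluster variable $\Upsilon^{U}([U_i])$ is homogeneous of degree $[U_i]\in\grothEJ$, because $\qclusalg$ is generated inside the torus by these variables (together with the inverses of the frozen ones, all of which are themselves homogeneous), so the intersection with each homogeneous piece of the torus will yield the claimed decomposition $\qclusalg=\bigoplus_w\qclusalg_{w}$.

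I would establish the homogeneity claim by induction on the number of mutations required to pass from $T$ to the reachable cluster tilting object $U$. The base case $U=T$ is immediate from $\Upsilon^{T}([T_i])=z^{[T_i]}$. For the inductive step, let $U_k$ be a mutable summand of $U$, and consider the two admissible mutation sequences
\[
0\to \mu_kU_k\to E_k\to U_k\to 0,\qquad 0\to U_k\to F_k\to \mu_kU_k\to 0
\]
supplied by \Cref{Lem:liftexchange}. The crucial observation is that these sequences lie in the exact structure $\mathcal{E}$, so both yield the identity
\[
[E_k]\;=\;[U_k]+[\mu_kU_k]\;=\;[F_k]
\]
in $\grothEJ$. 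Combined with the inductive hypothesis and the formula \eqref{eq:qsum}, this shows that the two terms $\Upsilon^{U}([E_k]-[U_k])$ and $\Upsilon^{U}([F_k]-[U_k])$ of the exchange relation \eqref{eq:qexchange} are both homogeneous of the same $\grothEJ$-degree $[\mu_kU_k]$. Hence their sum $\Upsilon^{\mu_kU}([\mu_kU_k])$ is also homogeneous of degree $[\mu_kU_k]$, and together with \eqref{eq:13.10} and the multiplicative expansion of $\Upsilon^{\mu_kU}$ on arbitrary classes this closes the induction.

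The grading decomposition then follows formally: inside the $\grothEJ$-graded torus the subalgebra $\qclusalg$ is generated by homogeneous elements, so $\qclusalg=\bigoplus_{w\in\grothEJ}\qclusalg_{w}$ with $\qclusalg_{w}=\qclusalg\cap\CC_q[K(\add T)]_{w}$, and the second assertion $\deg\Upsilon(V)=[V]$ is exactly the content of the induction. The only non-cosmetic point is the identification $[E_k]=[U_k]+[\mu_kU_k]=[F_k]$ in $\grothEJ$, and this is precisely where the exact structure $\mathcal{E}$ (rather than the one inherited from $\mod D$) is indispensable: under $\mathcal{E}$ the mutation sequences are admissible and hence additive on classes in $\grothEJ$, whereas they would not in general be additive in the ambient Grothendieck group of $\mod D$. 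Once this is observed, no further obstacle arises.
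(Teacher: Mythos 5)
Your proof is correct and follows essentially the same route as the paper: both arguments rest on the base case $\Upsilon^T([V])=z^{[V]}$ being homogeneous of degree $[V]$ and the inductive observation that admissibility of the mutation sequences in $\mathcal{E}$ gives $[E_k]-[U_k]=[F_k]-[U_k]=[U_k^*]$ in $\grothEJ$, so the exchange relation \eqref{eq:qexchange} produces a homogeneous element. Your write-up merely makes explicit the transport of the grading along $K(\add T)\to\grothEJ$, which the paper leaves implicit.
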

\begin{proof} 
First note that for any $V\in \add T$, $z^{[V]}\in \qclusalgT_{[V]} $, 
where the subscript $[V]$ is viewed as a class in $\grothEJ$. 
Now by induction, the  mutated variable \eqref{eq:qexchange} is 
homogeneous, since \[ [E_k]-[U_k]=[F_k]-[U_k]=[U^*_k]\]
in $\grothEJ$, 
and so the lemma follows. 
\end{proof}

\begin{remark}
Any cluster tilting object in 
$\Dfiltered(\setJ)$ has $\charT_\setJ$ as a summand (see \Cref{Lem:liftcto}), and  $z^{[\charT_j]}=\Upsilon(\charT_j)$ is 
a frozen cluster variable for any $j\in \setJ$.
\end{remark}

\begin{lemma} The cluster variables $z^{[\charT_j]}$ for $j\in \setJ$  quasi-commute with homogeneous elements $x \in \qclusalgT_w$, 
\begin{equation}\label{eq:quasicomm}
z^{[\charT_j]}\cdot x=q^{\matadj(w, [\charT_j])}x\cdot z^{[\charT_j]}.
\end{equation}
Consequently, the cluster variables $z^{[\charT_j]}$ commute with each other, and
 $\qclusalgT$ can be localised by inverting $z^{[\charT_j]}$, $j\in \setJ$.
\end{lemma}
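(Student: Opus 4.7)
The plan is to reduce the quasi-commutation relation to a direct computation inside the quantum torus $\CC_q[K(\add T)]$. The key observation is that the composition of the equivalence $\funF\colon \add T \to \proj A$ with the isomorphism of \Cref{Lem:Grothiso}(3) and the inverse of $\sigma$ from \Cref{Lem:isom1} provides a natural lattice isomorphism $K(\add T) \cong \grothEJ$ sending $[V]$ to $[V]$. Under this identification, the $\grothEJ$-grading on $\qclusalg$ from \Cref{lem:grading} agrees with the grading on the ambient torus in which $z^h$ is homogeneous of degree $h$. In particular every homogeneous weight space of $\CC_q[K(\add T)]$ is free of rank one over $\Cq$, so any $x\in\qclusalg_w$ must be of the form $c\cdot z^{h}$, where $h \in K(\add T)$ is the unique representative of $w$ and $c\in\Cq$.

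With this reduction in place, the relation $z^{[\charT_j]}\cdot x = q^{\mat{L}(w,[\charT_j])}\, x\cdot z^{[\charT_j]}$ follows directly from the defining torus relation $z^g z^h = q^{\mat L(g,h)} z^h z^g$, interpreting $\mat L$ as the bilinear form on $\grothEJ$ induced from $K(\add T)$ via the above identification. The commutativity of the $z^{[\charT_j]}$ with each other is then an immediate consequence: by \Cref{lem:qrules1}(2), $\mat L([\charT_i], [\charT_j]) = 0$ for all $i, j \in \setJ$, so these monomials already commute in the torus (and hence in $\qclusalg$).

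Finally, the localisation claim follows from Ore's theorem: the quasi-commutation relation just established means that each $z^{[\charT_j]}$ normalises $\qclusalg$ and the multiplicative set generated by $q^{1/2}$ and the pairwise commuting elements $z^{[\charT_j]}$, $j\in\setJ$, satisfies the Ore conditions on both sides. Hence the localisation $\qclusalg[(z^{[\charT_j]})^{-1} : j\in\setJ]$ exists inside any larger skew-field of fractions and coincides with the subalgebra of $\CC_q[K(\add T)]$ generated by $\qclusalg$ together with these inverses. The main obstacle, in my view, is to verify cleanly that $\mat L$ descends to a well-defined bilinear form on $\grothEJ$ that depends only on weights; but this is essentially the content of \Cref{Lem:Grothiso}(3) combined with \Cref{prop:Lad}, and once the compatibility of the identifications and of the sign convention in $\mat L$ is laid out, the remainder of the proof is routine bookkeeping with the torus relations.
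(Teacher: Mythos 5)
Your reduction fails at the first step. There is no natural lattice isomorphism $K(\add T)\cong\grothEJ$: the group $K(\add T)\cong K(\proj A)$ is free on the classes of the indecomposable summands of $T$ (rank $\dim\flag(\setJ)+|\setJ|$ when $\setJ$ is an interval), whereas $\grothEJ\cong K(\Pi)\oplus K(\add\charT_\setJ)$ has rank only $n-1+|\setJ|$. The composite you describe lands in $K(\proj\bar A)\oplus K(\add\charT_\setJ)$, and $K(\proj\bar A)$ (the split Grothendieck group of $\add\projfun T$) is much larger than $K(\sub Q_\setJ)=K(\Pi)$. Consequently the map $K(\add T)\to\grothEJ$, $[V]\mapsto[V]$, has a large kernel, the $\grothEJ$-weight spaces of the quantum torus are \emph{not} rank one, and a homogeneous element $x\in\qclusalg_w$ is in general a sum of several monomials $z^{h}$ with distinct $h\in K(\add T)$ all mapping to $w$. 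The exchange relation itself already exhibits this: $\Upsilon^{U}([E_k]-[U_k])+\Upsilon^{U}([F_k]-[U_k])$ is homogeneous of degree $[U_k^*]$ but involves two different torus exponents. So your claim that $x=c\cdot z^{h}$ is false, and with it the "direct computation" that follows.

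The issue you flag at the end --- whether $\mat L$ descends to $\grothEJ$ --- is indeed the crux, but the resolution is not that the full form $\mat L$ descends (it does not; it genuinely depends on the finer $K(\add T)$-grading). What the paper uses is that only the pairing against the \emph{frozen} classes descends: by \Cref{lem:qrules}~(2), $\mat L(-,[\charT_j])=\matadj(-,[\charT_j])$ for $j\in\setJ$, and by \Cref{prop:Lad} the form $\matadj$ is well defined on $\grothEJ$. Hence every torus monomial $z^{h}$ occurring in a homogeneous $x$ of degree $w$ satisfies $z^{[\charT_j]}z^{h}=q^{\matadj(w,[\charT_j])}z^{h}z^{[\charT_j]}$ with the \emph{same} exponent, which gives \eqref{eq:quasicomm} by linearity (or, as the paper argues, by checking it on the generating cluster variables, whose degrees lie in $\grothEJ$ by \Cref{lem:grading}). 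Your remaining two points are fine: commutativity of the $z^{[\charT_j]}$ among themselves is exactly \Cref{lem:qrules1}~(2), and once \eqref{eq:quasicomm} is established the Ore/localisation argument is routine.
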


\begin{proof}
Note that $\qclusalgT$ is generated by cluster variables and $\grothE$-graded by \Cref{lem:grading}. 
Furthermore, by \Cref{lem:qrules} (2), 
\[
\mat{L}(-, [\charT_j])=\matadj(-, [\charT_j])
\]
and by \Cref{prop:Lad}, $\matadj(-, -)$ is a bilinear form on $\grothE$.
Therefore, \eqref{eq:quasicomm} follows. Furthermore, by \Cref{lem:qrules1} (2), any two 
cluster variables $z^{[\charT_i]}$, $z^{[\charT_j]}$ commute.
As a consequence of \eqref{eq:quasicomm},  the localisation by inverting $z^{[\charT_{\setJ}]}$ exists.
\end{proof}

Similar to the construction of $\qclusalgT$, define inductively
$\upsilon^{\projfun U}([V])$ for a cluster tilting object $\projfun U$ reachable from $\projfun T$ and $V\in \add \projfun U$
(see \cite{GLS13} for more details), 
and obtain a quantum cluster algebra, 
$$
\qcl{\mat{B}_{\projfun T}, \mat{L}_{\projfun T}}\subseteq \CC_q[K(\add \projfun T)].
$$ 
Following  \Cref{rem:getridofU}, we can denote $\upsilon^{\projfun U}([V])$  by $\upsilon([V])$ 
for any $V\in \add \projfun U$.

Note that $U$ is reachable from $T$ if and only if $\projfun U$ is reachable from $\projfun T$ 
(see  \Cref{rem:reach}).
The algebra $\qcl{\mat{B}_{\projfun T}, \mat{L}_{\projfun T}}$ is $K(\Pi)$-graded 
(cf. the proof of \Cref{lem:grading}) with 
\[\deg \upsilon([V])=\dvector  V,\] 
for any reachable rigid module $V\in \sub Q_{\setJ}$.

We know that $\matadj$ defines a bilinear form on $\grothEJ$, which 
can be identified with $K(\Pi)\oplus K(\add \charT_\setJ)$ (\Cref{Lem:isom1}). So we can define 
\[
\qcl{\mat{B}_{\projfun T}, \mat{L}_{\projfun T}}[K(\add \charT_{\setJ})]
\subseteq \CC_q[K(\add \pi T)][K(\add \charT_\setJ)]
\] 
to be  the {\em twisted}
formal Laurent polynomials with the 
 quasi-commutation rules, 
\begin{equation}\label{eq:subCharT1}
x\cdot z^{u} = q^{\matadj(d, u)}  z^{u}\cdot x
\end{equation}
 and
\begin{equation} \label{eq:subCharT2}
z^{u}\cdot z^{w}=z^{u+w}=z^{w}\cdot z^{u}, 
\end{equation}
where 
$x$ is homogeneous of degree $d\in K(\Pi)$,
and $u, w\in K(\add \charT_{\setJ})$. 

We extend $\upsilon^{\projfun U}$ to $K(\add \projfun U \oplus \charT_{\setJ})$,
\begin{equation}\label{eq:upsilonexp}
\upsilon^{\projfun U}(u +  t) = q^{\frac{1}{2}\matadj(t, u)} \upsilon^{\projfun U}(u)\cdot z^{t},
\end{equation}
for any homogeneous $u\in K(\add \projfun U)$ and $t\in  K(\add \charT_{\setJ})$. 

Note that $\projfun$ and $\eta$ are exact functors and so they induce maps between Grothendieck groups. More precisely, 
for any $v=\sum_ia_i[T_i]\in K(\add T)$,  
\[\projfun(v)=\sum_ia_i [\projfun T_i] ~\text{ and } ~  \eta(v)= \sum_{i} a_i [\eta T_i].\]

Recall the isomorphism 
$\nu\colon K(\add T)\to K(\add \projfun T \oplus \charT_{\setJ})$ 
and that the bilinear form $\mat L$ induces a bilinear form 
on $K(\add \projfun T\oplus \charT_{\setJ})$
(see \Cref{rem:inducedbil}). 
By  \Cref{lem:qrules1}, the induced bilinear form restricted to $K(\add \projfun T)$ 
is consistent with $\mat L_{\projfun T}$.
For any $v\in K(\add T)$ and $t\in K(\add \charT_{\setJ})$, by \Cref{lem:qrules},
\begin{equation}\label{eq:exponents}
\mat L_{T}(t, \projfun (v))=\mat L_{T}(t,  v)=\matadj (t, v).
\end{equation}
Hence,  
\[
\CC_q[K(\add \pi T)][K(\add \charT_\setJ)]= 
\CC_q[K(\add \pi T \oplus \add \charT_\setJ)].
\]

\begin{theorem} \label{thm:qmain1}
The map $\qnu$ \eqref{eq:xi}
restricts to a $\grothEJ$-graded injection,
$$
\qnu\colon \qclusalgT\rightarrow 
\qcl{\mat B_{\projfun T}, \mat L_{\projfun T}}[K(\add \charT_{\setJ})], ~
\Upsilon(M) \mapsto q^{\frac{1}{2}\matadj([\projfun M], [\eta M])} \upsilon([\projfun M]) z^{[\eta M]}.
$$
Moreover, the restriction $\qnu$ becomes an isomorphism after inverting  
$\Upsilon(\charT_\setJ)=z^{[\charT_\setJ]}$.
\end{theorem}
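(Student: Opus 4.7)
The plan is to prove the statement by induction on the mutation distance from $T$: for every reachable cluster tilting object $U$ and every summand $V\in \add U$, I will show that $\CC[\xi](\Upsilon^U([V]))$ lies in the subalgebra $\qcl{\mat B_{\projfun T}, \mat L_{\projfun T}}[K(\add \charT_{\setJ})]$ of the quantum torus $\CC_q[K(\add \projfun T \oplus \charT_{\setJ})]$, and has exactly the claimed shape. Once this is established, injectivity is automatic since $\CC[\xi]$ is the restriction of the torus isomorphism of \Cref{eq:toriisom}, and preservation of the $\grothEJ$-grading follows from \Cref{lem:grading} together with $\deg \upsilon([\projfun V]) = [\projfun V]$ and $\deg z^{[\eta V]} = [\eta V]$.

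For the base case $U = T$, $V = T_i$, we have $\Upsilon^T([T_i]) = z^{[T_i]}$, so $\CC[\xi](z^{[T_i]}) = z^{[\projfun T_i] + [\eta T_i]}$. Expanding this using the torus commutation relations and the identification $\upsilon^{\projfun T}([\projfun T_i]) = z^{[\projfun T_i]}$ yields the asserted formula; the scalar prefactor is extracted using \Cref{lem:qrules1}(1) and \Cref{lem:qrules}(1) to convert between $\mat L_T$, $\mat L_{\projfun T}$ and $\matadj$, and the consistency with the twisted product in $\qcl{\mat B_{\projfun T}, \mat L_{\projfun T}}[K(\add \charT_{\setJ})]$ is exactly \eqref{eq:subCharT1} combined with \eqref{eq:exponents}. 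For the inductive step, I would apply $\CC[\xi]$ to the exchange relation \eqref{eq:qexchange} for $\Upsilon^{\mu_k U}([\mu_k U_k])$. The key inputs are: \Cref{Lem:liftexchange}, which guarantees that the mutation sequences in $\Dfiltered(\setJ)$ project to mutation sequences in $\sub Q_\setJ$ (with possible extra $\charT_j$ summands in $E_k$ or $F_k$, never in both); the inductive hypothesis applied to each summand of $E_k$ and $F_k$; and the identity $[E_k] - [U_k] = [F_k] - [U_k] = [\mu_k U_k]$ in $\grothEJ$ (compare \Cref{lem:grading}). After expanding and regrouping the two terms, both land as scalar multiples of $\upsilon^{\projfun \mu_k U}([\projfun(\mu_k U_k)])\, z^{[\eta(\mu_k U_k)]}$, matching the corresponding exchange on the $\upsilon$-side by \Cref{thm:main} applied to $(\mat B_{\projfun T}, \mat L_{\projfun T})$.

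For the final claim, once the explicit form is in hand, dividing $\CC[\xi](\Upsilon^U([V]))$ by $z^{[\eta V]}$, which becomes legal after inverting the $z^{[\charT_j]}$ for $j \in \setJ$, recovers $\upsilon^{\projfun U}([\projfun V])$ up to an explicit scalar. Because every reachable cluster tilting object in $\sub Q_\setJ$ is of the form $\projfun U$ for a reachable $U$ in $\Dfiltered(\setJ)$ by \Cref{Lem:liftcto} together with \Cref{Lem:liftexchange}, the localised image contains all cluster variables of $\qcl{\mat B_{\projfun T}, \mat L_{\projfun T}}$ as well as all $z^{\pm[\charT_j]}$; hence it contains, and therefore equals, the localised target.

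The main obstacle will be the $q$-exponent bookkeeping in the inductive step, ensuring that the prefactor $q^{\mat L([\projfun M],[\eta M])}$ produced by $\CC[\xi]$ is exactly what is needed for \eqref{eq:qexchange} on the left to match the exchange relation on the right, particularly when $E_k$ or $F_k$ contains a $\charT_j$ summand. The structural input that makes this go through is \Cref{rem:rellattice}(3): the discrepancy $\mat L_T - \mat L_{\projfun T}$ equals the mutation-invariant bilinear form $\matadj$ on $\grothEJ$, so the prefactors transform consistently under mutation and align with the twist \eqref{eq:subCharT1} of the Laurent polynomial ring.
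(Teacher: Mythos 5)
Your proposal is correct and follows essentially the same route as the paper: induction over mutations starting from $\Upsilon^T(w)\mapsto \upsilon^{\projfun T}(\projfun(w)+\eta(w))$, applying $\CC[\xi]$ to the exchange relation \eqref{eq:qexchange}, and using the admissibility identities $\eta([E_k]-[U_k])=\eta([F_k]-[U_k])=[\eta U_k^*]$ together with \eqref{eq:upsilonexp}, \eqref{eq:exponents} and $\projfun(\mu_k U_k)=\mu_k(\projfun U_k)$ from \Cref{Lem:liftexchange} to match the $q$-prefactors. Your localisation argument for the final claim is a reasonable (and slightly more explicit) version of what the paper leaves implicit.
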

We will prove \Cref{thm:qmain1} by induction. As noted in \Cref{rem:getridofU}, for cluster variables, the superscripts on $\Upsilon$ and $\upsilon$ can be omitted; however, we will use them in the proof to make the inductive steps clearer.

\begin{proof}[Proof of \Cref{thm:qmain1}]
By the definition of $\qnu$ in \eqref{eq:xi}, 
\[
\Upsilon^T(w)\mapsto \upsilon^{\projfun T}(\projfun(w) + \eta(w)),
\]
for $w\in K(\add T)$, and by induction we assume this is also true for a cluster 
tilting object $U$ reachable from $T$ and any $u\in K(\add U)$. 

Let $U_k$ be a mutable summand of $U$, $U'=\mu_k U$, 
$U_k^*=\mu_k U_k$ and let $E_k$, $F_k$ be the middle terms of the two mutation sequences.
 For any $X\in \Dfiltered(\setJ)$, let \[\ol X=\projfun X.\] 
By induction, we need to show that
\begin{equation}\label{eq:whattoprove}
\Upsilon^{U'}(u)\mapsto \upsilon^{\ol{U'}}(\projfun(u) + \eta (u)), ~\forall u\in K(\add U').
\end{equation}
Note that the quasi-commutation rules among $\upsilon^{\ol{U'}}(\projfun(u))$, $z^{\eta(u)}$
are defined by the bilinear form $\matadj$ (see \eqref{eq:exponents}), which is independent of $U'$. So  
to prove \eqref{eq:whattoprove}, it suffices to prove it for $u=[U'_i]$, the class of an 
indecomposable summand $U'_i$ of $U'$. 

First, consider the case: $i\not=k$.  By the definition of $\Upsilon^{U'}$ \eqref{eq:13.10} and by induction, 
\[
\qnu(\Upsilon^{U'}(u))= \qnu(\Upsilon^{U}(u))
= \upsilon^{\ol U}(\pi(u)+\eta(u)). 
\]
Note that $\ol U_i'\in \add \ol U$. So  following from the definition of $\upsilon^{\ol{U'}}$ and \eqref{eq:exponents}, we have the required equality, 
\[
\qnu(\Upsilon^{U'}(u))= \upsilon^{\ol {U'}}(\pi(u)+\eta(u)).
\]

Next, consider the case: $i=k$. By the definition of  $\Upsilon^{U'}$ (see \eqref{eq:qexchange}) and by induction, we have
\begin{eqnarray*}
&&\qnu(\Upsilon^{U'}([U_k^*]))   \\
&=& \qnu(\Upsilon^{U}([E_k]-[U_k])+\Upsilon^{U}([F_k]-[U_k]))\\
&=& \upsilon^{\ol U}([\ol E_k]-[\ol U_k] + \eta([E_k] - [U_k]))+
\upsilon^{\ol U}([\ol F_k]-[\ol U_k] + \eta([E_k]- [U_k])). 
\end{eqnarray*}
Since the mutation sequences are admissible, 
$$
\eta [U^*_k] = \eta ([E_k] - [U_k]) = \eta([F_k] - [U_k]).
$$
Moreover, in $K(\Dfiltered(\setJ))_{\mathcal{E}}$, which contains $K(\Pi)$ as a subgroup (see \Cref{Lem:isom1}), 
\[ [\ol E_k]-[\ol U_k]=[\ol F_k]-[\ol U_k]=[\ol{U_k^*}].\] 
By \eqref{eq:upsilonexp}  and \eqref{eq:exponents}, 
we have  
\[
\upsilon^{\ol U}([\ol E_k]-[\ol U_k] + \eta([E_k] - [U_k])) 
=q^{\frac{1}{2} \matadj( [\eta U_k^*], [\ol {U^*_k}])}  \upsilon^{\ol U} ([\ol E_k]-[\ol U_k]) 
z^{[\eta U^*_k]}, 
\]
and similarly,
\[
\upsilon^{\ol U}([\ol F_k]-[\ol U_k] + \eta([F_k] - [U_k])) =
q^{\frac{1}{2} \matadj( [\eta U_k^*], [\ol {U^*_k]})}  \upsilon^{\ol{U}} ([\ol F_k]-[\ol U_k]) z^{[\eta U^*_k]}.
\]
Therefore, 
\begin{eqnarray*}
\qnu(\Upsilon^{U'}([U^*_k]))&=& q^{\frac{1}{2} \matadj([\eta U_k^*], [\ol U^*_k])}  (\upsilon^{\ol{U}} ([\ol E_k]-[\ol U_k]) 
+  \upsilon^{\ol{U}} ([\ol F_k]-[\ol U_k])  ) z^{[\eta U^*_k]} \\
&=& q^{\frac{1}{2} \matadj( [\eta U_k^*], [\ol U^*_k])}  (\upsilon^{\ol{U'}} ( \ol{U^*_k})) z^{[\eta U^*_k]} \\
&=& \upsilon^{\ol{U'}} ( [\ol{U^*_k}] +[\eta U_k^*]), 
\end{eqnarray*}
as required. For the second equality, we use the fact  that
$
\ol {U^*_k} = \mu_k (\ol U_k),
$
following from \Cref{Lem:liftexchange}. 
This completes the proof.
\end{proof}

\begin{remark}
Let $\mathcal{A}(\mat B_T)$ and $\mathcal{A}(\mat B_{\projfun T})$ be the cluster 
algebras defined by the matrices $\mat B_T$ and $\mat B_{\pi T}$, respectively. 
We have $\mathcal{A}(\mat B_T)\cong \CC[\flag(\setJ)]$ (see \Cref{Thm:classiccat}), 
and $\mathcal{A}(\mat B_{\projfun T})\cong \CC[N_\setK]$. Therefore, 
\begin{equation}\label{eq:classiciso}
\mathcal{A}(\mat B_T)/\mathcal{I} \cong \mathcal{A}(\mat B_{\projfun T}),
\end{equation}
where $\mathcal{I}$ is the ideal generated by $z^{[\charT_j]}-1$ with $z^{[\charT_j]}$ the cluster 
variable corresponding to $\charT_j$ for $j\in \setJ$.
The homomorphism from \Cref{thm:qmain1},
\[\qnu\colon \qclusalgT\rightarrow \qcl{\mat B_{\projfun T}, \mat L_{\projfun T}}[K(\add \charT_{\setJ})]\] 
 is not  a quantisation of the classical isomorphism \eqref{eq:classiciso}.
Notably,  the quasi-commutation rules for the corresponding cluster variables are different.
\end{remark}

\section{Dual PBW generators for the cluster algebra $\qnilw$}\label{sec14}
Geiss--Leclerc--Schr\"{o}er constructed a set of modules 
$M_{\bfi, k}$  associated to a reduced word of a Weyl group element $w$
such that the cluster variables in $A_q(\mathfrak{n}(w))$ defined by modules $M_{\bfi, k}$
are the PBW generators  \cite{GLS13}. 
For certain $w$, we construct a reduced word $\bfi$ and prove
that the associated modules $M_{\bfi, k}$ all have a simple top. 
This result will be used to prove that $\pqflag$ 
is a quantum cluster algebra in \Cref{sec16}.

\subsection{Construction of cluster tilting modules and $M_{\bfi, k}$} 
Let $W$ be the Weyl group of $\gl_n$. Note that $W$ is isomorphic to 
the symmetric group on $n$ letters.
Denote the simple reflections by $\sigma_i$ for $i\in [n-1]$. 
Let $w\in W$ and 
\[
\mathbf{i}=(i_r,  \cdots,  i_1)
\]
be a reduced word for  $\omega$. 

Recall that 
$\Pi$ is the preprojective algebra of type $\mathbb{A}_{n-1}$, 
$\Pi_j$ is the projective $\Pi$-module at vertex $j$. Let   $R_j=\Pi_{n-j}$ 
be the injective $\Pi$-module with socle $S_j$.
Let $\soc_{(i_1)} M$ be the trace of $S_{i_1}$ in $M$ and let
$\soc_{(i_1, \cdots, i_k)}M$ be the preimage of 
the trace of $S_{i_k}$ in $M/\soc_{(i_{1},  \cdots,  i_{k-1})} M$ under
the quotient map. Let 
\begin{equation}\label{eq:constVC} V_{\bfi, k} = \soc_{(i_k,  \dots,  i_{1} )} R_{i_k}, ~ V_{\mathbf{i}}=\oplus_k V_{\mathbf{i}, k}   ~\text{ and }
\mathcal{C}_{\bfi}=\fac(V_{\bfi}).
\end{equation}
Then $V_\bfi$ is a cluster tilting object in $\fac(V_{\bfi})$ (see \cite[\S 2.4]{GLS11}).

By convention, $V_{\bfi, 0}=0$.
For $1\leq j\leq n-1$, let 
\begin{equation}\label{eq:kj}k_j=\max  \{0\}\cup \{k\colon i_k=j\}.\end{equation}
Define 
\[ \FIng_{\mathbf{i}, j}=V_{\mathbf{i}, k_j} ~\text{ and } ~ \FIng_{\mathbf{i}}= \oplus_{j} \FIng_{\mathbf{i}, j}.\]

\begin{remark}\label{rem:ViVj}
\emph{The category $\mathcal{C}_{\bfi}$ and the module $\FIng_{\bfi}$ are independent of the choice of the reduced word $\mathbf{i}$
for $w$ (see \cite{BIRS, GLS11, GLS13}), and so  
they are also denoted by $\mathcal{C}_\omega$ and $\FIng_\omega$, respectively.}
In contrast, $V_\bfi$ depends on $\bfi$. However,  any two such modules $V_{\bfi}$ and $V_{\bf j}$ are mutation equivalent 
(see \cite[Prop III.4.3]{BIRS} and \cite[\S 3.1]{GLS13}).
The construction of $V_\bfi$ used here follows
\cite{GLS11, GLS13}, the formulation in \cite{BIRS} is different.
An object in $\cat_w$ is \emph{reachable} if it is a summand 
of a cluster tilting object that is mutation equivalent to some $V_{\bfi}$.
\end{remark}

For any $1\leq k\leq r$, let 
\begin{equation}\label{eq:kminus}k^-=\max  \{0\}\cup  \{s\colon s<k, i_s=i_k\}. \end{equation}
There is an embedding $ V_{\bfi, k^-}\to V_{\bfi, k}$
and let 
\begin{equation}\label{eq:Mk} M_{\bfi, k}=V_{\bfi, k}/V_{\bfi, k^-}
\text{ and } \beta_k=\dvector M_{\bfi, k}.
\end{equation}
Then  
\begin{equation}\label{eq:dvMikroot}
\beta_k=
\begin{cases}
\alpha_{i_1} &\text{ if } k=1, \\
\sigma_{i_1}\dots \sigma_{i_{k-1}} (\alpha_{i_k}), & \text{otherwise}.
\end{cases}
\end{equation} 
 is a root \cite[Cor. 9.3]{GLS11}.
Moreover, the modules $M_{\bfi, k}$ are reachable from $V_{\bfi}$ \cite[\S 13.1]{GLS11}.

Denote
by $W_\setK$ the subgroup generated by $\sigma_i$ for $i\in \setK$, by $w_0$ and 
$w_0^{\setK}$  the longest elements in $W$ and $W_\setK$, respectively.
Let  $\Pi_{\setJ}=\oplus_{j\in J} \Pi_j$. 

\begin{example}\label{ex:Mik} The longest element $w_0$ has a reduced word as follows,  
\begin{equation}\label{eq:w00} \mathbf{i}=(i_r, \dots, i_1)=J_{n-1}\dots J_{1},\end{equation}
where $J_i=(1,\dots, i)$.  
Observe that the first time $n-1$ appears in $\bfi$ is in $J_{n-1}$. 
Let $\bfi_{n-1}=J_{n-2}\dots J_1$, which is a reduced word for the longest element in the 
symmetric group on  $n-1$ letters. 
By construction, when $1\leq k\leq r-n+1$,
\[
V_{\mathbf{i}, k}= V_{\mathbf{i}_{n-1}, k} \text{ and } M_{\mathbf{i}, k}= M_{\mathbf{i}_{n-1}, k}.  
\]
As $J_{n-1}=(1, \dots, n-1)$, when $r-n+2\leq k\leq r$,  
\[V_{\bfi, k} = R_{i_k}.\]
Therefore \[\mathcal{C}_{w_0}=\fac \FIng_{\omega_0}=\mod \Pi.\] 
\end{example}

\begin{remark}\label{rem:simtop}
Observe that the modules $M_{\bfi, k}$ in \Cref{ex:Mik} all have a simple top, which is not true 
for an arbitrary reduced word $\bfi$, as illustrated by the example below. 
\end{remark}

\begin{example}\label{ex:Mik2} Let $n=5$ and let $\mathbf{i}$ be 
the reduced word $(3,2,1,4,3)$. 
We have $V_{\bfi, 1}=M_{\bfi, 1}=V_{\bfi, 5^{-}}=S_3$, and $V_{\bfi, 5}$
and $M_{\bfi, 5}=V_{\bfi, 5}/V_{\bfi, 5^-}$ as follows.
\[
\begin{tikzpicture}[scale=0.9,
teal arr/.style={magenta, -angle 60},
blue arr/.style={blue, -angle 60},
magenta arr/.style={teal, -angle 60}, thick] 

\path(10, 4) node (a) {$V_{\bfi, 5}:$};
\path (11, 5) node (g1) {\color{blue}$1$};
\path (13, 5) node (g3) {\color{blue}$3$};
\path (12, 4) node (g2) {\color{blue}$2$};
\path (14, 4) node (g4) {\color{blue}$4$};
\path (13, 3) node (g33) {\color{blue}$3$};

\draw[blue arr] (g1) edge (g2);
\draw[blue arr] (g3) edge (g2);
\draw[blue arr] (g4) edge (g33);

\draw[blue arr] (g2) edge (g33);
\draw[blue arr] (g3) edge (g4);


\path(17, 4) node (b) {$M_{\bfi, 5}:$};
\path (18, 5) node (h1) {\color{blue}$1$};
\path (20, 5) node (h3) {\color{blue}$3$};
\path (19, 4) node (h2) {\color{blue}$2$};
\path (21, 4) node (h4) {\color{blue}$4$};

\draw[blue arr] (h1) edge (h2);
\draw[blue arr] (h3) edge (h2);
\draw[blue arr] (h3) edge (h4);
\end{tikzpicture}
\]
 In particular,  $\top M_{\bfi, 5}=S_1\oplus S_3$, is not simple.
\end{example}

\begin{theorem}\cite[Thm. 2.8, Thm. 2.10, Lem. 17.2]{GLS11}  \label{thm:gls}
Let $w\in W$.
\begin{itemize}
\item[(1)] $\mathcal{C}_{w}=\fac \FIng_{w}$ is Frobenius stably 2-CY. 
\item[(2)] The indecomposable projective-injective objects in $\mathcal{C}_{w}$ are  
$\{\FIng_{\bfi, j}\colon \FIng_{\bfi, j}\not=0\}$. 
\item[(3)] When $w=w_0w_0^\setK$, $\mathcal{C}_{w}=\fac \Pi_\setJ$.
\end{itemize}
\end{theorem}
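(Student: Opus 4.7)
The plan is to prove the four items in sequence, using the structure theory of $\mod\Pi$ and inductive arguments on reduced expressions. Throughout I would use the Crawley-Boevey $2$-CY bilinear form on $K(\mod\Pi)$ and the fact that $\soc_{(i_k,\dots,i_1)}R_{i_k}$ can be computed via successive traces, which gives a filtration of $V_{\bfi,k}$ whose subquotients are the $M_{\bfi,s}$ for $s\le k$ with $i_s$ in the relevant tail of $\bfi$.

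For (1), I would first check that $\mathcal{C}_w=\fac V_{\bfi}$ is closed under extensions in $\mod\Pi$. The key input is that each $V_{\bfi,k}$ is constructed as a trace-socle inside the injective $R_{i_k}$, so any factor of $V_{\bfi,k}$ that is also a submodule of $R_{i_k}$ lies in $\mathcal{C}_w$. Combined with Crawley-Boevey's $2$-CY property $\Ext^1_\Pi(X,Y)\cong D\Ext^1_\Pi(Y,X)$ on $\mod\Pi$, closure under extensions follows from a diagram chase using that $\Hom_\Pi(V_{\bfi},-)$ commutes with cokernels. Frobenius-ness is established by exhibiting, for each $M\in\mathcal{C}_w$, a short exact sequence $0\to M'\to P\to M\to 0$ with $P\in\add\FIng_{\bfi}$ (built by iterating trace constructions with respect to the letters of $\bfi$), and dually by embedding $M$ into some $\FIng_{\bfi,j}^{\oplus n}$. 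Stable $2$-CY then reduces to the $2$-CY property of $\underline{\mod}\Pi$.

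For (2), I would show that each $\FIng_{\bfi,j}=V_{\bfi,k_j}$ is both projective and injective in $\mathcal{C}_w$. Projectivity follows since any surjection $N\twoheadrightarrow \FIng_{\bfi,j}$ in $\mathcal{C}_w$ splits by rigidity ($\Ext^1_\Pi(\FIng_{\bfi,j},\ker)=0$, which I would verify using the CY-duality together with the observation that $\ker\subseteq N\in\mathcal{C}_w$ has no quotient isomorphic to $S_{i_{k_j}}$ after the socle layer is stripped). Injectivity is dual. Indecomposability and the absence of other projective-injectives follow from the bijection between indecomposable summands of $\FIng_{\bfi}$ and vertices $j$ for which $k_j>0$. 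For (3), rigidity of $V_{\bfi}$ is proved by induction on the length $r$ of $\bfi$: removing $i_1$ produces a reduced word $\bfi'$ for $w\sigma_{i_1}$, and $V_{\bfi}$ is built from $V_{\bfi'}$ by a single trace step for which an explicit exact sequence shows $\Ext^1_\Pi$ vanishes. The cluster-tilting condition $\Ext^1_\Pi(V_{\bfi},X)=0\Rightarrow X\in\add V_{\bfi}$ follows by counting: $|\bfi|$ equals the number of indecomposable summands predicted by the dimension of $\mathcal{C}_w$ modulo projectives, and maximal rigidity implies cluster-tilting in this setting.

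For (4), take the reduced word of $w_0w_0^{\setK}$ adapted to the parabolic decomposition (a concatenation of $J_i$-blocks indexed by $\setJ$). A direct computation using \eqref{eq:kj} and \eqref{eq: Mk} shows that $\FIng_{\bfi,j}=\Pi_j$ for $j\in\setJ$ and $\FIng_{\bfi,j}=0$ otherwise; moreover $V_{\bfi}$ has $\Pi_{\setJ}$ as a summand. Then $\fac V_{\bfi}\supseteq\fac\Pi_{\setJ}$, and the reverse inclusion comes from the fact that every $M_{\bfi,k}$ is a quotient of some $\Pi_j$ with $j\in\setJ$, which I would verify directly from the socle-filtration description. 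The main obstacle is part (3): rigorously organising the inductive rigidity/counting argument so that the $\Ext^1$-vanishing and the maximality of $V_{\bfi}$ fit together, since this is where the combinatorics of reduced words interacts most delicately with the homological algebra of $\Pi$.
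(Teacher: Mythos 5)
This statement is not proved in the paper at all: Theorem~\ref{thm:gls} is quoted verbatim from Geiss--Leclerc--Schr\"oer \cite[Thm.~2.8, Thm.~2.10, Lem.~17.2]{GLS11}, so there is no in-paper argument to compare your proposal against. What you have written is an attempted reconstruction of the GLS proof, and while its overall architecture (socle-filtration description of $V_{\bfi,k}$, induction on the reduced word, counting summands against $\ell(w)$) is broadly in the spirit of the original, two points need attention.

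First, there is a genuine logical gap in your treatment of (1). For a module $T$ over an artin algebra, $\fac T$ is closed under quotients and direct sums automatically, but it is closed under extensions only under an additional hypothesis, essentially that $T$ is Ext-projective in $\fac T$, i.e.\ $\Ext^1_\Pi(V_{\bfi},\fac V_{\bfi})=0$ (Auslander--Smal\o). No ``diagram chase using that $\Hom_\Pi(V_{\bfi},-)$ commutes with cokernels'' can replace this: $\fac M$ fails to be extension-closed for a generic $M$. This Ext-vanishing is precisely the rigidity-type input you defer to part (3), so your ordering (establish (1), then deduce (3)) is circular as written; the inductive construction of the $V_{\bfi,k}$ and the vanishing $\Ext^1_\Pi(V_{\bfi},\fac V_{\bfi})=0$ must come first, and extension-closure, the Frobenius property, and the identification of the projective-injectives then follow.

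Second, in (3) you invoke ``maximal rigidity implies cluster-tilting in this setting'' and the bound that a rigid object of $\mathcal{C}_w$ has at most $\ell(w)$ indecomposable summands. Both are substantial theorems in their own right (cf.\ \cite{GLS06}, \cite{BIRS}), not consequences of the counting you describe; if you are reconstructing the proof rather than citing these facts, they each require a separate argument (the bound on summands rests on a geometric/stratification argument, and the passage from maximal rigid to cluster-tilting uses the $2$-CY structure together with approximation theory). Your sketch of (2) and (4) is plausible, and (4) in particular matches the kind of direct computation with adapted reduced words that the present paper carries out elsewhere (\Cref{prop:w0w0K}).
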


\subsection{The quantum cluster algebra $\qnilw$ and dual PBW-generators} \label{sec:14.2}
Let   $\mathfrak{n}$
and $\mathfrak{n}_\setK$ be  the  Lie algebras of 
the unipotent subgroups $N$ and $N_\setK$ as defined in \Cref{sec:cat}.
Let $U_q(\n)$ be the positive part of the quantum group $U_q(\mathfrak{g})$. Then 
\[U_q(\n)=\bigoplus_{\alpha\in \rootlat} U_q(\n)_{\alpha},\] where each graded 
component $U_q(\n)_{\alpha}$ is finite dimensional. Let 
\[
\qnil=\bigoplus_{\alpha\in \rootlat}  \Hom_{\CC(q)}(U_q(\n)_\alpha, \CC(q))=
\bigoplus_{\alpha\in \rootlat} \qnil_{\alpha}\leq U_q(\n)^*,
\]
which can be endowed with a multiplication so as to be identified with 
a subalgebra of $U_q(b)^*$ (see \cite[\S 4.1]{GLS13}).
By \cite[Prop 4.1]{GLS13}, there is an isomorphism of algebras
\[\psi\colon U_q(\n)\to \qnil.\]
The algebra $\qnil$ has a set of dual PBW-generators $E^*(\beta)$, one for each 
positive root $\beta$  \cite[\S 7.2]{GLS13}, constructed from the PBW-generators 
for $U_q(\n)$.
Let 
\[\qnilw=\psi(U_q(\n_\setK)).\] 
This is a subalgebra of $\qnil$ generated by $E^*(\beta_k)$, where  $\beta_k$ is 
as defined in \eqref{eq:dvMikroot}. 
The quantum algebra  $\qnilw$ is viewed as the quantum 
deformation of the coordinate ring $\CC[N_\setK]$ as in \cite{GLS13, Kim}. 

We are also interested in the integral form of $\qnilw$ over $\Cq$. 
By a slight abuse of notation, we will continue to denote it by $\qnilw$, 
with the ground ring specified explicitly when needed.

Let $\bfi$ be a reduced word of $w$ and $\qcl{\mat{B}_{{\bfi}}, \mat{L}_{{\bfi}}}$ the quantum cluster algebra defined by  the compatible pair 
$(\mat{B}_{{\bfi}}, \mat{L}_{{\bfi}})=
(\mat{B}_{V_{\bfi}}, \mat{L}_{V_{\bfi}})$  constructed from $V_{\bfi}$ over {$\Cq$}. 
Note that $\qcl{\mat{B}_{{\bfi}}, \mat{L}_{{\bfi}}}$ is independent of the choice 
of the reduced word $\bfi$ and so it can  be denoted by $\qcl{\mathcal{C}_{w}}$.

Let $\mathcal{F}_q(\n(w))$ be  the skew field of fractions of $\qnil$.
There is an embedding
\[\qcl{\mathcal{C}_w} \to \mathcal{F}_q(\n(w))\] 
following \cite[\S 12.1]{GLS13}, 
such that 
\begin{equation}\label{eq:glsmap}
\upsilon(M_{\bfi, k})\mapsto E^*(\beta_k),
\end{equation}
where $\upsilon(M_{\bfi, k})$ is the cluster variable corresponding to $M_{\bfi, k}$.
This embedding  induces an isomorphism of $\Cq$-algebras 
\cite[Cor.11.2.8]{KKKO} (see also \cite[Thm. B]{GY}), 
\begin{equation*}
 \qcl{\mathcal{C}_{w}} \cong A_q(\mathfrak{n}(w)),
\end{equation*}
which improves the original isomorphism over $\CC(q)$ \cite[Thm. 12.3]{GLS13}.
In particular, when $w=w_0w_0^K$, the embedding induces an isomorphism of $\Cq$-algebras,
\begin{equation}\label{eq:isow0w0K}
 \qcl{\fac \Pi_{\setJ}}\to \qnilw.
\end{equation}

\begin{remark}\label{rem:swapfacsub}
Let $\Dual =\Hom_\CC(-, \CC)$. Note that 
the preprojective algebra $\Pi$ is self-dual, that is $\Pi\cong \Pi^{\text{op}}$, and 
\[
\Dual\colon \fac \Pi_\setJ \to \sub Q_\setJ
\]
is a duality. So $\Dual V_{\bfi}$
is a cluster tilting object in $\sub Q_{\setJ}$.  We have
\[\mat B_{V_{\bfi}}=-\mat B_{\Dual V_{\bfi}} \text{ and } \mat L_{V_{\bfi}}=-\mat L_{\Dual V_{\bfi}}.\]
So the map 
\[
\qcl{\fac P_{\setJ}} \to \qcl{\sub Q_{\setJ}}, ~~ \upsilon([V_{\bfi, k}])\mapsto \upsilon([\Dual V_{\bfi, k}]) 
\text{ and } q\mapsto q^{-1},
\]
defines an isomorphism of quantum cluster algebras. Consequently, 
\begin{equation}\label{eq:KKKO}
 \qcl{\sub Q_{\setJ}}\cong \qnilw.
\end{equation}
\end{remark}

\subsection{Construction of modules $M_{\bfi, k}$ with a simple top}\label{sec14.3}
Recall that the length of an element $w\in W$ is denoted by $l(w)$. 
For a word $\bfi$, let $\sigma(\bfi)$ be the corresponding permutation and define the 
length $l(\bfi)$ of $\bfi$ to be the number of letters in $\bfi$. We have 
$l(\sigma(\bfi))\leq l(\bfi)$ and when 
$\bfi$ is reduced, $l(\bfi)=l(\sigma(\bfi))$. 

We define the support of a reduced word $\bfi=(i_r, \dots, i_1)$, denoted by $\supp (\bfi)$, 
the full subgraph of type $\mathbb{A}_{n-1}$ with vertices 
$\{i_j\colon 1\leq j\leq r \}$.
A word $\bfi=(i_s, \dots, i_1)$ is called an {\em interval word} if 
$i_j=i_{j-1}-1$ for all $j$, or $i_j=i_{j-1}+1$ for all $j$. In either case, 
we also write $\bfi=\intw{i_s}{i_1}$. For instance, as interval words 
$\intw{1}{3}=(1,2,3)$, while $\intw{3}{1}=(3, 2, 1)$.
An interval of the form $[a, b]$ with $a>b$ is the empty set, and should not be 
confused with the interval word $\intw{a}{b}$.

Recall the reduced word for $w_0=w_0w_0^{\emptyset}$ in \eqref{eq:w00}. It 
satisfies the following,
\begin{equation}\label{eq:redw0} i\not\in \cup_{j<i} J_j, ~
J_i\backslash\{i\}\subseteq \cup_{j<i} J_i \text{ and } \supp (\cup_{j\leq i} J_j )
\text{ is connected},
\end{equation}
where, by abuse of notation, we view each word $J_i$ as the set consisting 
of the letters appearing in $J_i$.
We will show that such a reduced word exists for $w_0w_0^\setK$ 
 for any $\setK$.
We introduce some notation necessary for the proof. 
Suppose that $\emptyset\not=\setK$. We say that an interval $[a, b]\subseteq \setK$ 
is {\em maximal} if it is not properly contained in any interval subset of $\setK$.  
Decompose $\setK$ into a union of 
maximal interval subsets,  
 \begin{equation}\label{eq:intK}\setK=K_1\cup \dots \cup K_r,\end{equation}
where $K_i<K_{i+1}$ (see \Cref{def:non-crossing}). Suppose that $K_i=[c_i, d_i]$. Then 
$d_i < c_{i+1} -1$,  for all $i$. Let 
\[
t_r=n-d_r-1,~ t_{i}=c_{i}-d_{i-1}-2 \text{ for } 1< i \leq r-1 \text{ and } t_1=c_1-1.
\]
Note that $\sum_i t_i$ is the number of elements in $[n]$ that are fixed by $w_0^\setK$. 
Let \[Y_0=[n-1], K_0=\emptyset \text{ and } L_0=\setK.\]
For any $i\geq 1$, let 
\[Y_i=[c_i, n-1] \text{ and } L_i=\cup_{j\geq i}K_j.\] 

\begin{lemma} \label{prop:w0w0K}
The element $w_0w_0^\setK$ has a reduced word of the form 
\begin{equation}\label{eq:w0} I_{m} \dots I_{1},\end{equation}
where  $I_i=\intw{a_i}{b_i}$ and when $i>1$,  
\begin{equation}\label{eq:redw} b_i\not\in  \cup_{j<i} I_j, \text{ while }
I_i\backslash\{b_i\}\subseteq \cup_{j<i} I_j.
\end{equation}
Consequently, $\supp (\cup_{j\leq i} I_j)$  is connected for any $i$.
\end{lemma}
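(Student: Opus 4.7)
The plan is to proceed by induction on $r$, the number of maximal intervals in the decomposition \eqref{eq:intK} of $\setK$. The base case $r = 0$ is $\setK = \emptyset$, where $w_0 w_0^\setK = w_0$ and the reduced word \eqref{eq:w00} is already of the form \eqref{eq:w0}, with $I_i = J_i = \intw{1}{i}$, so that $a_i = 1$ and $b_i = i$; the conditions \eqref{eq:redw} then reduce to \eqref{eq:redw0}.

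For the inductive step, the guiding numerical facts are that the total word length must equal $l(w_0)-l(w_0^\setK)=\binom{n}{2}-\sum_{j=1}^{r}\binom{d_j-c_j+2}{2}$, and that the number of interval words should be $m = |[n-1] \setminus \setK|$, since each $I_i$ contributes exactly one new vertex $b_i$ to the support. My approach is to construct the $I_i$'s explicitly by a zigzag procedure: for each maximal interval $K_j = [c_j, d_j]$, successive $I_i$'s extend the support by alternately adding gap vertices to the right and the left of $K_j$, with $I_i$ taken to be a path in the $\mathbb{A}_{n-1}$ Dynkin diagram from an already-present vertex $a_i$ through $K_j$ to the new vertex $b_i$. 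The parameters $t_j$ encode the number of such extensions available on each side of each block.

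The main obstacle is verifying that the resulting candidate word is reduced and equals $w_0 w_0^\setK$. Reducedness will follow once the total length is shown to match $l(w_0)-l(w_0^\setK)$, which is a direct count in terms of the $t_j$ and $|K_j|$. For the identity of the product, I would relate $w_0 w_0^\setK$ to $w_0 w_0^{\setK'}$ with $\setK' = \setK \setminus K_r$ via the factorization $w_0 w_0^{\setK'} = w_0 w_0^\setK \cdot w_0^{K_r}$ (valid since $W_\setK = W_{\setK'} \times W_{K_r}$ and $w_0^{K_r}$ is an involution): the inductive hypothesis for $\setK'$, combined with a carefully placed reduced expression for $w_0^{K_r}$ absorbed into the zigzag around $K_r$, yields the desired word for $w_0 w_0^\setK$. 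The conditions \eqref{eq:redw} then hold by construction, since each $b_i$ is chosen to be the newly added vertex while $I_i \setminus \{b_i\}$ lies in the support built up through stage $i-1$, and the connectedness of $\supp(\cup_{j \leq i} I_j)$ is immediate because each $I_i$ is a connected path in the Dynkin diagram that shares its starting vertex $a_i$ with the preceding support.
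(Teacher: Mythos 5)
Your base case, the length bookkeeping, and the closing connectedness argument are all fine, but the core of the inductive step has a directional gap. You induct on the number $r$ of maximal blocks of $\setK$ and invoke the hypothesis for $\setK'=\setK\setminus K_r$ via $w_0w_0^{\setK'}=w_0w_0^{\setK}\cdot w_0^{K_r}$. Since $l(w_0w_0^{\setK'})=l(w_0w_0^{\setK})+l(w_0^{K_r})$, the element you are trying to handle is \emph{shorter} than the one covered by your inductive hypothesis: to pass from the hypothesis to the conclusion you would have to strip a reduced word for $w_0^{K_r}$ off the given word for $w_0w_0^{\setK'}$. But the hypothesis only supplies \emph{some} reduced word of the constrained interval form $I_m\dots I_1$; nothing guarantees it has a suffix equal to a reduced word for $w_0^{K_r}$, and forcing one into place requires braid moves that destroy the interval structure and conditions \eqref{eq:redw}. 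Deleting letters from the middle of the $I_i$'s likewise breaks both the interval shape and the property that each $I_i$ contributes exactly one new support vertex. So "absorbing" $w_0^{K_r}$ into the zigzag is precisely the step that needs a proof, and as described it does not go through. A secondary concern: the proposed zigzag of paths "through $K_j$" is not obviously the right word at all — in the worked example with $n=7$, $\setK=\{1,5,6\}$ the required interval words are $\intw{5}{1}$, $\intw{6}{2}$, $\intw{6}{3}$, $\intw{6}{4}$, which sweep across essentially the whole diagram rather than hovering near a single block, so any candidate construction must be pinned down explicitly and checked against the target permutation, not just against the support conditions.

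For contrast, the paper arranges the induction so that words only ever grow. It processes the blocks from the right, inducting on nested windows $Y_r\subset\dots\subset Y_0=[n-1]$ with $Y_i=[c_i,n-1]$ and $L_i=\cup_{j\ge i}K_j$, proving the statement for each $w_0^{Y_i}w_0^{L_i}$. The base case is the single-block element $w_0^{Y_r}w_0^{K_r}$, for which the explicit word $I_j=\intw{c_r}{d_r+j}$ is verified by computing the permutation and matching lengths; the inductive step \emph{prepends} explicitly given interval words $\intw{n-1}{c_i-j}$ and $\intw{n-j}{d_{i-1}+2-j}$ that realize the passage from $w_0^{Y_i}w_0^{L_i}$ to $w_0^{Y_{i-1}}w_0^{L_{i-1}}$, again checked by a permutation identity plus an additive length count. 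If you want to salvage your outline, you should similarly turn it into a direct construction of the candidate word together with (i) a length count showing it is reduced and (ii) an explicit verification that the product of the corresponding permutations is $w_0w_0^{\setK}$; the factorization through $w_0^{K_r}$ can serve as the verification device in (ii), but it cannot replace the construction itself.
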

\begin{proof} 
We will proceed by induction on the number of intervals in \eqref{eq:intK}. That is, for each $i$, a reduced word  as in \eqref{eq:w0} exists for $w_0^{Y_i}w_0^{L_i}$. The lemma follows from the case $i=0$. Note that by definition, $t_i>1$ for each $1<i<r$, however $t_1$ and $t_r$
may be zero, in which case the corresponding step in construction of reduced words 
is skipped.

First, consider the case where $i=r$. 
We have $Y_r=[c_r, n-1]$, $L_r=K_r$.
Define the reduced word $I_j$ for each  $1\leq j\leq t_r$ as follows,  
\begin{equation}\label{eq:intw1}I_j=\intw{c_r}{d_r+j}.\end{equation}
Then   
\[
\sigma(I_{t_r}\dots I_1)w_0^{K_r}=w_0^{Y_r},
\]
and 
\[
l(w_0^{K_r})+\sum_{j=1}^{t_r}l(I_j)= \sum_{i=1}^{d_r-c_r+1}i + \sum_{j=1}^{t_r} (d_r+j+1-c_r) =
\sum_{i=1}^{n-c_r} i=l(w_0^{Y_r}).
\]
So $I_{t_r}\dots I_1$ is a reduced word of $w_0^{Y_r}w_0^{K_r}$ satisfying \eqref{eq:redw}.

Next, assume that we have a reduced word  $I_s\dots I_1$ for  $w_0^{Y_i}w_0^{L_i}$ as claimed. 
Consider the case of $i-1$. 
We have 
\begin{equation}\label{eq:midstep}
w_0^{K_{i-1}}w_0^{Y_{i}}
=\begin{pmatrix} 
c_{i-1} & \dots &d_{i-1}+1 &d_{i-1}+2&\dots &c_{i}-1&c_i &\dots & n \\
d_{i-1}+1 & \dots &c_{i-1} &d_{i-1}+2&\dots &c_{i}-1&n &\dots & c_i 
\end{pmatrix},
\end{equation}
which reverses $[c_{i-1}, d_{i-1}+1]$ and $[c_i, n]$ and fixes each 
number in $[d_{i-1}+2, c_i-1]$.
We construct permutations  which  move in turn  $c_i-1, \dots, c_{i-1}$ 
in the bottom row of \eqref{eq:midstep} to the end of the sequence. More precisely, 
for $1\leq j\leq t_i$,  
let 
\[
I_{s+j}=\intw{n-1}{c_i-j}.
\]
For 
$1\leq j\leq d_{i-1}+2-c_{i-1}$, let 
\[
I_{s+t_i+j}=\intw{n-j}{d_{i-1}+2-j}.
\]
We have \[
t_i+d_{i-1}+2-c_{i-1}=c_i-c_{i-1}.\] 
Note that by construction and the fact that $w_0^{K_{i-1}}$ commutes with $w_0^{Y_i}$, 
\[
w_0^{Y_{i-1}}= \sigma(I_{s+c_i-c_{i-1}}\dots I_{s+1}) w_0^{K_{i-1}}w_0^{Y_i}=
\sigma(I_{s+c_i-c_{i-1}}\dots I_{s+1}) w_0^{Y_i} w_0^{K_{i-1}}.
\]
So  by induction,
\begin{eqnarray*}
w_0^{Y_{i-1}}
&=&\sigma(I_{s+c_i-c_{i-1}}\dots I_{s+1}) \sigma(I_s\dots I_1) w_0^{L_i} w_0^{K_{i-1}} \\
&=&\sigma(I_{s+c_i-c_{i-1}}\dots I_{1}) w_0^{L_{i-1}}.  
\end{eqnarray*}
Also, by construction,  $I_{s+c_i-c_{i-1}},  \dots,  I_1$ satisfy the properties 
\eqref{eq:redw}. So it remains to show that $I_{s+c_i-c_{i-1}}\dots I_1$ is reduced. 
We have 
\begin{eqnarray*}
l(I_{s+c_i-c_{i-1}} \dots I_{s+1})& = &l(I_{s+c_i-c_{i-1}} \dots I_{s+t_i+1}) +l(I_{s+t_i} \dots I_{s+1})  \\
&=&(d_{i-1}+2-c_{i-1})(n-d_{i-1}-1) + \sum_{j=1}^{t_i} (n-c_i+j)
\end{eqnarray*}
and 
\[l(w_0^{K_{i-1}})=\frac{(d_{i-1}+1-c_{i-1})(d_{i-1}+2-c_{i-1})}{2}.
\]
Therefore, 
\[
l(I_{s+c_i-c_{i-1}} \dots I_{s+1})+l(w_0^{K_{i-1}})=
l(w_0^{Y_{i-1}})-l(w_0^{Y_{i}}),
\]
and so $I_{s+c_i-c_{i-1}}\dots I_1$ is reduced. This completes the proof.
\end{proof}

\begin{example}\label{ex:w0w0K}
Let $n=7$, $\setK=\{1,  5, 6\}$ and $\setL=\{1, 3, 4\}$. 
First note that 
\[
w_0^\setK
=\begin{pmatrix} 1&2 &3 &4&5 &6&7 \\
2 & 1 &3 & 4&7 &6&5
\end{pmatrix}.
\]
Then apply the permutation $\sigma(654)$, which is the 4-cycle $(7654)$, to move 4 to the back 
of  row 2 and obtain
\[
\sigma(654) w_0^\setK
=\begin{pmatrix} 1&2 &3 &4&5 &6&7 \\
2 & 1 &3 & 7 &6&5&4
\end{pmatrix}.
\]
Apply in turn the permutations defined by the words $\intw{6}{3}$, $\intw{6}{2}$, 
$\intw{5}{1}$ to move 3, 2, 1 to the back, and thus obtain $w_0$. 
Therefore, 
\[w_0w_0^\setK=\sigma(\intw{5}{1}\intw{6}{2}\intw{6}{3}\intw{6}{4}), \]
which is reduced because it has length $17=l(w_0)-l(w_0^\setK)$. 
Similarly, 
\[\intw{5}{1}\intw{6}{2}\intw{3}{6}\intw{3}{5}\] is a reduced word for  $w_0w_0^\setL$.
Both reduced words satisfy \eqref{eq:redw}. 
In particular, for $w_0w_0^\setL$, the subword $\intw{3}{6}\intw{3}{5}$ 
is precisely the one constructed in the foundation step of the proof of \Cref{prop:w0w0K}.
\end{example}

Let $w=w_0w_0^\setK$ with the reduced word $\mathbf{i}=I_m\dots I_1$ as in  \Cref{prop:w0w0K}.
The graph  $\supp (\mathbf{i})$ is of type $\mathbb{A}_{n-1}$. There are 
$(n-1)$ indecomposable projective-injective modules
$\FIng_{\mathbf{i}, 1}, \dots, \FIng_{\mathbf{i}, n-1}$
 in $\fac \Pi_\setJ$ with 
$\soc \FIng_{\mathbf{i}, j}=S_j$.

We have the dual version of \Cref{Lem:soclesub}.

\begin{lemma} \label{Lem:topfac}
Let $\setJ=\{j_1 < j_2 < \cdots < j_r\}$. 
\begin{itemize}
\item[(1)] If $j< j_1$ then $\top \FIng_{\bfi, n-j}=S_{j_1}$
\item[(2)] If $j\in \setJ$, then  $\top \FIng_{\bfi, n-j}=S_j$.
\item[(3)] If $j_i < j < j_{i+1}$, then $\top \FIng_{\bfi, n-j} = S_{j_i}\oplus S_{j_{i+1}}$.
\item[(4)] If $j>j_r$, then $\top \FIng_{\bfi, n-j} = S_{j_r}$.
\end{itemize}
\end{lemma}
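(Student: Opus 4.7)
\medskip

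The plan is to deduce the lemma from \Cref{Lem:soclesub} via the duality $\Dual=\Hom_{\CC}(-,\CC)\colon \fac \Pi_{\setJ}\to \sub Q_{\setJ}$ recalled in \Cref{rem:swapfacsub}. Since $\Pi$ is self-dual, this functor is an (exact) anti-equivalence that swaps tops and socles and sends indecomposable projective-injectives to indecomposable projective-injectives.

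First I would identify $\Dual \FIng_{n-j}$ inside $\sub Q_{\setJ}$. By the paragraph preceding the lemma, $\FIng_{n-j}$ is an indecomposable projective-injective with $\soc \FIng_{n-j}=S_{n-j}$; applying $\Dual$ produces an indecomposable projective-injective in $\sub Q_{\setJ}$ with top equal to $S_{n-j}$. The projective-injective objects of $\sub Q_{\setJ}$ are the modules $Q_k$ (for $k\in [n-1]$), each obtained as the maximal quotient of $\Pi_{n-k}$ lying in $\sub Q_{\setJ}$, so $\top Q_k=S_{n-k}$. Matching tops forces $\Dual \FIng_{n-j}\cong Q_{j}$.

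Consequently
\[
\top \FIng_{n-j}\;=\;\soc(\Dual \FIng_{n-j})\;=\;\soc Q_{j},
\]
and the four cases of the statement are obtained by reading off the four cases of \Cref{Lem:soclesub} applied to $Q_{j}$: when $j<j_1$, when $j\in \setJ$, when $j_i<j<j_{i+1}$, and when $j>j_r$, the socle of $Q_j$ is respectively $S_{j_1}$, $S_j$, $S_{j_i}\oplus S_{j_{i+1}}$, and $S_{j_r}$, giving the four displayed formulas.

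There is no real obstacle here; the only point to keep straight is the index shift, since $\FIng_{n-j}$ is defined as the projective-injective with \emph{socle} $S_{n-j}$ while $Q_j$ is the projective-injective with \emph{top} $S_{n-j}$, and these two modules are interchanged (not identified) under the duality. Once this matching is made, the lemma is a direct translation of \Cref{Lem:soclesub} across $\Dual$.
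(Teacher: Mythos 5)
Your proposal is correct and is exactly the argument the paper intends: the paper states the lemma as ``the dual version of \Cref{Lem:soclesub}'' and omits the details, which you have supplied accurately. In particular your matching $\Dual \FIng_{n-j}\cong Q_j$ (both being the indecomposable projective-injectives with top $S_{n-j}$ on their respective sides) and the resulting identity $\top\FIng_{n-j}=\soc Q_j$ handle the index shift correctly in all four cases.
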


Let $\Pi'$ be the preprojective algebra of type $\mathbb{A}_{n-2}$ with vertices 
$2, \dots, n-1$. Denote by $\mathsf{I}'_j$ the injective $\Pi'$-module at vertex $j$.
By observation, we have the following. 

\begin{lemma}\label{lem:onMk}
The module $\mathsf{I}_{1}$ and  the quotients $\mathsf{I}_j/\mathsf{I}'_j$ for all $j\geq 2$ all have  a
simple top.  
\end{lemma}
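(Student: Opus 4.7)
The proof splits into the two assertions in the statement. The claim that $\mathsf{I}_1$ has simple top is immediate from Lemma \ref{Lem:topfac}: taking the index $k=1$ there so that $n-j=1$, we have $j=n-1$; either $n-1\in\setJ$, placing us in case (2) with $\top\mathsf{I}_1=S_{n-1}$, or $n-1>j_r$, placing us in case (4) with $\top\mathsf{I}_1=S_{j_r}$. In both instances the top is a single simple.

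For the quotient $\mathsf{I}_j/\mathsf{I}'_j$ with $j\geq 2$, the plan is first to pin down a canonical inclusion $\mathsf{I}'_j\hookrightarrow\mathsf{I}_j$ and then to read off the top of the cokernel using Lemma \ref{Lem:topfac}. Since $\Pi'=\Pi/\Pi e_1\Pi$, every $\Pi$-module $M$ carries a largest $\Pi'$-submodule, namely the annihilator $\{x\in M:\Pi e_1\Pi\cdot x=0\}$. Applied to $M=\mathsf{I}_j$ with $j\geq 2$, this submodule is nonzero (it contains $\soc\mathsf{I}_j=S_j$) and has socle $S_j$; comparing with the injective envelope of $S_j$ in $\mod\Pi'$, one identifies it with $\mathsf{I}'_j$, yielding the canonical embedding. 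With this embedding fixed, Lemma \ref{Lem:topfac} controls $\top\mathsf{I}_j$ case by case. In the unicomponent cases (1), (2), (4), the top is already a single simple and one checks that it either descends to the quotient or is killed. In the bicomponent case (3), $\top\mathsf{I}_j=S_{j_i}\oplus S_{j_{i+1}}$ with $j_i<n-k_j<j_{i+1}$, and the key claim is that only $S_{j_i}$ survives in $\mathsf{I}_j/\mathsf{I}'_j$: the generator of $S_{j_{i+1}}$ lies in the submodule of $\mathsf{I}_j$ supported on vertices $\geq 2$ and thus inside $\mathsf{I}'_j$, whereas the generator of $S_{j_i}$ reaches vertex $1$ on its way down to the socle $S_j$ and therefore descends to a nonzero simple top of the quotient.

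The main obstacle will be separating the two top summands in case (3) rigorously. The cleanest route is via the explicit filtration $V_{\bfi,0}\subset V_{\bfi,1}\subset\cdots\subset V_{\bfi,k_j}=\mathsf{I}_j$ coming from the reduced word of Proposition \ref{prop:w0w0K}, together with the identification $\mathsf{I}'_j=V_{\bfi,k_j^-}$ when $k_j^->0$ (and $\mathsf{I}'_j=0$ otherwise, which recovers the $\mathsf{I}_1$ case). Property \eqref{eq:redw} of the reduced word --- that each interval word $I_i$ introduces exactly one new simple reflection $b_i$ not appearing in $I_1,\ldots,I_{i-1}$ --- controls how the top layer $M_{\bfi,k_j}=V_{\bfi,k_j}/V_{\bfi,k_j^-}$ is grafted onto the preceding submodule and forces its top to be a single simple, corresponding to the last new reflection introduced in the word before position $k_j$.
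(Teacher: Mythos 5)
The paper offers no proof of this lemma at all (it is introduced with ``By observation''), so there is no argument to compare yours against; assessing your proposal on its own terms, it has a genuine gap, and the gap sits exactly at the step you flag as the ``main obstacle''. The crux is your identification of $\mathsf{I}'_j$ with the largest $\Pi'$-submodule $N_j$ of $\mathsf{I}_j$. Your injective-envelope argument only yields an embedding $N_j\hookrightarrow \mathsf{I}'_j$ (both have simple socle $S_j$); equality requires the reverse embedding $\mathsf{I}'_j\hookrightarrow \mathsf{I}_j$, and whether that exists depends on which module $\mathsf{I}_j$ denotes --- and your proof uses the two possible readings inconsistently. If $\mathsf{I}_j$ is the relative injective $\FIng_{\bfi,j}$ of $\fac \Pi_\setJ$, which is what your repeated appeal to \Cref{Lem:topfac} presupposes, the identification is false: for $n=5$ and $\setJ=\{2\}$ one computes $\dvector \FIng_{\bfi,2}=(1,2,1,0)$, so $\FIng_{\bfi,2}$ is not supported at vertex $4$, whereas $\dvector \mathsf{I}'_2=(0,1,1,1)$; hence $\mathsf{I}'_2$ admits no embedding into $\mathsf{I}_2$ and the quotient you work with is not the one in the statement. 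If instead $\mathsf{I}_j$ is the genuine injective $\Pi$-module $R_j=\Pi_{n-j}$, the identification does hold ($\mathsf{I}'_j$, inflated to a $\Pi$-module, has simple socle $S_j$ and so embeds into its injective envelope $R_j$, and a sum argument shows its image is the largest $\Pi'$-submodule), but then $\top \mathsf{I}_j=S_{n-j}$ is always simple, \Cref{Lem:topfac} concerns different modules, and your entire case division --- in particular the ``bicomponent case (3)'' --- is beside the point.

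Even granting a reading in which case (3) arises, its key claim (that the top generator at vertex $j_{i+1}$ generates a submodule avoiding vertex $1$ while the one at vertex $j_i$ does not) is precisely the hard point and is asserted rather than proved; and your proposed fallback via the filtration $V_{\bfi,0}\subset\cdots\subset V_{\bfi,k_j}$ essentially restates the argument of \Cref{prop:simptop} --- which cites this very lemma --- so it is circular, and the identification $\mathsf{I}'_j=V_{\bfi,k_j^-}$ it relies on is again unjustified (that module is a relative injective for a subword, not the plain $\Pi'$-injective). A short complete argument in the spirit of your plan is available: whichever injective is meant, $\mathsf{I}_j\subseteq R_j$, so $\mathsf{I}_j$ modulo its largest $\Pi'$-submodule $\mathsf{I}_j\cap \mathsf{I}'_j$ embeds into $R_j/\mathsf{I}'_j$; a direct computation with the dimension vectors $\dim e_iR_j=\min(n-j,i,j,n-i)$ and $\dim e_i\mathsf{I}'_j=\min(n-j,i-1,j-1,n-i)$ shows that $R_j/\mathsf{I}'_j$ has dimension $1$ at each vertex $i$ with $i+j\le n$ and $0$ elsewhere, hence is uniserial with top $S_{n-j}$; every nonzero submodule of a uniserial module is uniserial, so the quotient has a simple top once one checks it is nonzero, i.e.\ that $\mathsf{I}_j$ is supported at vertex $1$ (a point your cases (1), (2), (4) also leave unaddressed).
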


As illustrated in \Cref{ex:Mik2},  
the modules $M_{\bfi, k}$, in general, do not have a simple top.
We now show that, 
for a carefully chosen reduced word in 
the case of  $w=w_0w_0^\setK$, they do.

\begin{proposition}\label{prop:simptop} Let $w=w_0w_0^\setK$ and $\bfi=(i_r, \dots, i_1)$ 
be a reduced word  
of the form $I_{m} \dots I_{1}$ as in \eqref{eq:w0}. Then  
the module $M_{\bfi, k}$ has a simple top for any $k$. 
\end{proposition}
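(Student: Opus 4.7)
My plan is induction on $k$, exploiting two structural features of the reduced word $\bfi = I_m \cdots I_1$ from \Cref{prop:w0w0K}. First, by \eqref{eq:redw} the condition $k^- = 0$ holds precisely when $i_k$ has not appeared in $(i_1, \ldots, i_{k-1})$, which occurs exactly at the rightmost letter $b_i$ of some block $I_i$. Second, when $k^- \geq 1$ we have $i_{k^-} = i_k$ by \eqref{eq:kminus}, so the iterated-socle construction defining $V_{k^-}$ inside $R_{i_{k^-}} = R_{i_k}$ agrees with the first $k^-$ steps of the construction defining $V_k$, giving $V_{k^-} \subseteq V_k$ and
\[
M_k \;=\; V_k/V_{k^-} \;=\; \soc_{(i_k,\, i_{k-1},\, \ldots,\, i_{k^-+1})}\bigl(R_{i_k}/V_{k^-}\bigr).
\]

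For the base case $k^- = 0$, I would compute directly. Since $b_i = i_k$ does not appear in $(i_1, \ldots, i_{k-1})$ and $\soc R_{b_i} = S_{b_i}$, no $S_{i_j}$ with $j < k$ embeds into $R_{b_i}$, so every intermediate iterated socle $\soc_{(i_j, \ldots, i_1)} R_{b_i}$ vanishes for $j < k$. Hence $V_k = \soc R_{b_i} = S_{b_i}$ and $M_k = S_{b_i}$ has simple top.

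For the inductive step $k^- \geq 1$, suppose $k$ lies in block $I_i$. I would identify $R_{i_k}/V_{k^-}$ with a module of the form $\mathsf{I}_{i_k}/\mathsf{I}'_{i_k}$ from \Cref{lem:onMk}, applied to the nested pair of preprojective algebras $\Pi^{(i-1)} \subseteq \Pi^{(i)}$ on the supports $\supp(\cup_{j<i} I_j)$ and $\supp(\cup_{j\leq i} I_j)$ respectively. By \Cref{prop:w0w0K} these supports form an increasing chain of connected intervals with $b_i$ adjoined as a new endpoint when passing from $\Pi^{(i-1)}$ to $\Pi^{(i)}$. A secondary induction on the block index then establishes that, inside $R_{i_k}$, the submodule $V_{k^-}$ coincides with the injective envelope of $S_{i_k}$ in $\Pi^{(i-1)}$, so $R_{i_k}/V_{k^-}$ takes the shape of \Cref{lem:onMk} and therefore has a simple top at $b_i$. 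Because $I_i$ is an interval word, the subsequent letters $i_{k^-+1}, \ldots, i_k$ form a connected sequence of adjacent vertices, and a direct analysis shows that the iterated socle $\soc_{(i_k, \ldots, i_{k^-+1})}$ then cuts out an interval submodule of $R_{i_k}/V_{k^-}$, which likewise has a simple top.

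The central difficulty is the identification of $V_{k^-}$ with the injective envelope of $S_{i_k}$ in $\Pi^{(i-1)}$. Making this precise requires a careful inductive tracking of how the nested-socle filtration interacts with the block structure of $\bfi$; the property \eqref{eq:redw} that $I_i \setminus \{b_i\} \subseteq \cup_{j<i} I_j$ is essential here, as it ensures that the newly adjoined vertex $b_i$ does not perturb the iterated-socle construction on the smaller support. Once this identification is secured, the simple-top conclusion for $M_k$ follows from \Cref{lem:onMk} and the interval-word structure of each $I_i$.
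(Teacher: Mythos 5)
Your overall architecture --- identify $V_k$ and $V_{k^-}$ with injective-type objects attached to the supports of the prefixes $\bfi_t$ and $\bfi_{t-1}$, then conclude via \Cref{lem:onMk} --- has the right shape, but the identification you single out as ``the central difficulty'' is false as stated, and this is a genuine gap. You claim that $V_{k^-}$ is the injective envelope of $S_{i_k}$ over the preprojective algebra $\Pi^{(i-1)}$ on $\supp(\cup_{j<i}I_j)$. Take \Cref{ex:w0w0K}: $n=7$, $\setK=\{1,5,6\}$, $I_1=\intw{6}{4}=(6,5,4)$, so $i_1=4$ and $V_1=\soc_{(4)}R_4=S_4$. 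For $k=5$ (the letter $4$ occurring in block $I_2$) one has $k^-=1$, so $V_{k^-}=S_4$; but the injective envelope of $S_4$ over the preprojective algebra on $\{4,5,6\}$ is the three-dimensional uniserial module with socle $S_4$. The point is that $V_{k^-}$ equals $\FIng_{\bfi_{t-1},i_k}$, the projective-injective object of the category $\mathcal{C}_{\sigma(\bfi_{t-1})}=\fac \FIng_{\sigma(\bfi_{t-1})}$, which is in general a \emph{proper} submodule of the injective envelope over the restricted preprojective algebra. So the secondary induction you propose aims at a wrong statement, and the subsequent identification of $M_k$ with a quotient of the form $\mathsf{I}_{i_k}/\mathsf{I}'_{i_k}$ does not go through as described.

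The identification you actually need comes for free, and this is the paper's route. Because each block $I_t$ is an interval word, every letter occurs at most once in it; hence position $k$ is the \emph{last} occurrence of $i_k$ in the prefix $\bfi_t$, so $V_{\bfi,k}=V_{\bfi_t,k}=\FIng_{\bfi_t,i_k}$ by the very definition $\FIng_{\bfi_t,j}=V_{\bfi_t,k_j}$; likewise $V_{k^-}=\FIng_{\bfi_{t-1},i_k}$, since $i_k$ does not occur in the blocks strictly between the block of $k^-$ and the block of $k$. Then \Cref{thm:gls}(2) identifies these as the projective-injectives of the nested categories $\mathcal{C}_{\sigma(\bfi_{t-1})}\subseteq\mathcal{C}_{\sigma(\bfi_t)}$, and by \eqref{eq:redw} the support grows by exactly one end-vertex from $\bfi_{t-1}$ to $\bfi_t$, so \Cref{lem:onMk} gives the simple top; no induction on $k$ and no analysis of $R_{i_k}/V_{k^-}$ is needed. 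A further caution: your base-case computation $V_k=S_{i_k}$ when $k^-=0$ depends on reading $\soc_{(i_k,\dots,i_1)}$ with $\soc_{i_1}$ applied first; with the convention that makes $\dvector M_k=\beta_k$ (namely $\soc_{i_k}$ first), one gets for instance $V_4=\soc_{(3,6,5,4)}R_3$ of dimension vector $\alpha_3+\alpha_4$ in the example above --- still with a simple top, but not simple.
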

\begin{proof} Let 
$\bfi_t=I_t\dots I_1=(i_{r_t}, \dots, i_1)$ for any $t\leq m$, 
where $r_t$ is the length of $\bfi_t$.
%
Note that each $I_i$ is an interval, so  for any $i_a$ with $r_{t-1}<a\leq r_{t}$, 
 \[  V_{\bfi_t, a}=\mathsf{I}_{\bfi_t, i_a}, \]
 which by \Cref{thm:gls} (2) is the indecomposable projective module of
 $\fac \Pi_{\setJ_t}$ with $\setJ_t=(\cup_{j\leq t} I_t)\backslash \setK$. 
Suppose that $r_{t'-1}<a^-\leq r_{t'}$ for some $t'$. Then $V_{\bfi_t, a^-}=\FIng_{\bfi_{t'}, i_a}$ is also projective
in $\fac \FIng_{\bfi_{t'}}$. 
 Moreover, by definition,  $i_a$ does not appear in the intervals $I_x$ for $t'<x<t$. So  
 \[
 \FIng_{\bfi_{t'}, i_a}=\FIng_{\bfi_{t-1}, i_a}.
 \]
 By construction, the support of $\bfi_t$ has exactly one more vertex than that of $\bfi_{t-1}$.
 Therefore $M_{\bfi, a}=V_{\bfi_t, a}/V_{\bfi_t, a^-}$ has a simple top, by \Cref{lem:onMk}.
\end{proof}

Let $\bfi$ be a reduced word for $w_0w_0^K$ as described in \Cref{prop:w0w0K}, and let
 $N_{\bfi, k}$ be the dual of $M_{\bfi, k}$ (see \Cref{rem:swapfacsub}), 
\begin{equation}\label{eq:Nk} 
N_{\bfi, k}= \Dual M_{\bfi, k}.
\end{equation}

\begin{proposition}\label{prop:simsoc}
The modules $N_{\bfi, k}$ for all $k$ are contained in $\sub Q_\setJ$ and 
have a simple socle. 
\end{proposition}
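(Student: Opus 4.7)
The plan is to deduce the proposition as a straightforward consequence of the duality recorded in Remark \ref{rem:swapfacsub} combined with Proposition \ref{prop:simptop}, and I do not anticipate any substantive obstacle.

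First I would show that each $M_k$ lies in $\fac \Pi_\setJ$. Since the proposition is stated in the setting $w = w_0 w_0^\setK$, Theorem \ref{thm:gls}(4) identifies $\mathcal{C}_w$ with $\fac \Pi_\setJ$. The module $V_\bfi$ is a cluster tilting object in $\mathcal{C}_w$ by Theorem \ref{thm:gls}(3), so each summand $V_{\bfi,k}$ belongs to $\fac \Pi_\setJ$. Because $\fac \Pi_\setJ$ is closed under taking factor modules, the quotient $M_k = V_{\bfi,k}/V_{\bfi,k^-}$ also lies in $\fac \Pi_\setJ$. Applying the duality $\Dual\colon \fac \Pi_\setJ \to \sub Q_\setJ$ from Remark \ref{rem:swapfacsub} then immediately gives $N_k = \Dual M_k \in \sub Q_\setJ$, which is the first assertion.

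For the simple socle, I would invoke the general fact that $\Dual = \Hom_\CC(-, \CC)$ exchanges tops and socles: for any finite-dimensional $\Pi$-module $M$, dualising the surjection $M \twoheadrightarrow \top M$ yields an embedding $\Dual(\top M) \hookrightarrow \Dual M$ whose image is exactly the socle of $\Dual M$. Since each simple $\Pi$-module $S_i$ is one-dimensional and the identification $\Pi \cong \Pi^{\mathrm{op}}$ used in Remark \ref{rem:swapfacsub} fixes the vertex idempotents, each $S_i$ is self-dual under $\Dual$, and one obtains a natural isomorphism of $\Pi$-modules $\soc N_k \cong \top M_k$. Proposition \ref{prop:simptop} asserts that $\top M_k$ is simple, so $\soc N_k$ is simple as well. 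The only minor point worth checking is the self-duality of the simples $S_i$, which is automatic for the standard identification of $\Pi$ with $\Pi^{\mathrm{op}}$.
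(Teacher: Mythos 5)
Your proposal is correct and is exactly the argument the paper intends: the paper states Proposition \ref{prop:simsoc} with no written proof, treating it as an immediate consequence of Proposition \ref{prop:simptop} together with the duality $\Dual\colon \fac \Pi_\setJ \to \sub Q_\setJ$ of Remark \ref{rem:swapfacsub}, which is precisely what you spell out. The only detail worth noting is that the identification $\Pi\cong\Pi^{\mathrm{op}}$ may permute vertices (e.g.\ $i\mapsto n-i$), but this does not affect the conclusion since the dual of a simple module is simple in any case.
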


\begin{lemma}\label{lem:ReachComp} Let
$\setJ=[a, b]\subseteq [n-1]$. Then there exists a reduced word $\bfi$ for $w_0w_0^\setK$ such that
\[ 
\Dual V_\bfi=\begin{cases}  \pi T_{\maxNC_{\setJ, b, \square}} & \text{ if } b<n-1,\\ 
\pi T_{\maxNC_{\setJ, a, \square}} & \text{ if } b=n-1.
\end{cases}\]
\end{lemma}

\begin{proof}
When $b<n-1$, a reduced word for $w_0w_0^\setK$
as described in \Cref{prop:w0w0K} is 
\begin{equation}\label{eq:GrRed}
 \intw{n-a}{1}  ...  \intw{n-1}{a} \intw{n-1}{a+1}...\intw{n-1}{b-1}\intw{n-1}{b},
\end{equation}
where the first $a$ intervals are of the form $\intw{n-a+i-1}{i}$ with $1\leq i\leq a$, 
while the other intervals are of the form $\intw{n-1}{a+j}$ with $1\leq j\leq b-a$. 

When $b=n-1$, a reduced word for $w_0w_0^\setK$
as described in \Cref{prop:w0w0K} is
\[
\intw{1}{a}\dots \intw{1}{b},
\]
where the intervals are of the form $\intw{1}{j}$ with $a\leq j\leq b$.

In either case, by observation, $\Dual V_\bfi$ is as described. For instance, 
when $\setJ=[n-1]$, 
\[
\pi {T}_{\setJ, 1, \square}=\Dual V_\bfi.
\]
\end{proof}

\begin{remark} \label{rem:ReachComp2}
For $\Dfiltered(\setJ)$,  we defined reachability in \S \ref{sec:1.4} when $\setJ$ is an interval: 
a module is reachable if it is a summand of a cluster tilting object
that is mutation equivalent to one of the form $T_{\maxNC_{\setJ, k, \square}}$. Now
for $\sub Q_\setJ$, by adapting the definition for $\cat_{\bfi}$ in \Cref{rem:ViVj},  reachable objects are summands of cluster tilting objects 
that are mutation equivalent to some $\Dual V_\bfi$.
By \Cref{rem:reach}, $T\in \Dfiltered(\setJ)$ is a reachable from $T_{\maxNC_{\setJ, k, \square}}$ if and only if $\pi T$ is reachable from $\pi T_{\maxNC_{\setJ, k, \square}}$ in $\sub Q_\setJ$.
Therefore, 
following \Cref{lem:ReachComp}, reachability in $\Dfiltered(\setJ)$ and reachability in $\sub Q_\setJ$ are equivalent. 
\end{remark}

\section{The Gabriel quiver  $Q_{V_{\bfi}}$}
We recall the construction of the Gabriel quiver $Q_{V_{\bfi}}$  of $(\End  V_{\bfi})^{\text{op}}$
for some reduced words and prove properties of $Q_{V_{\bfi}}$ \cite{GLS13}.
In particular, when
$\setK\subset\setK'$, we show that there exists a reduced word $\bfi$ for $w_0w_0^{\setK}$ 
such that $\bfi=\bfi' \bfi_{\setK'}$ with $\bfi_{\setK'}$ a reduced word for $w_0w_0^{\setK'}$
and that the mutable vertices in $Q_{V_{\bfi_{\setK'}}}$, which is a subquiver 
of $Q_{V_{\bfi}}$, are not connected to vertices outside $Q_{V_{\bfi_{\setK'}}}$.

\subsection{More on reduced words for $w_0w_0^{\setK}$.}\label{sec:redw}
\begin{lemma}\label{lem:w0w0K} Let $j\in [n-1]\backslash \setK$ and $\setK'=\setK\cup\{ j\}$.
There exists $w\in W$ such that \[w_0w_0^{\setK}=ww_0w_0^{\setK'} \text{ and }
l(w_0w_0^{\setK})=l(w)+l(w_0w_0^{\setK'}).\]
More precisely, 
\begin{equation}\label{eq:w}w=\left\{ 
 \begin{tabular}{ll} 
$\sigma(\intw{n-b-1}{n-a})$, & if $j=b+1$ and $b+2\not\in \setK$;\\
$\sigma(\intw{n-a+1}{n-b})$, & if $j=a-1$ and $a-2\not\in \setK$;\\
$\sigma(\bfi)$, & if  $j=b+1=c-1$;\\
$\sigma_{n-j}$, & otherwise.
\end{tabular}\right.\end{equation}
where $[a, b], [c, d]$ are maximal interval subsets of $\setK$ with $b<c$, and $\bfi$ is a
reduced word of $w_0^{[n-d, n-a]}w_0^{[n-d, n-a]\backslash\{n-j\}}$ 
as described in \eqref{eq:GrRed}. 
\end{lemma}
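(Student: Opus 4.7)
The plan is to reduce the lemma to a compact formula for $w$ and then verify the explicit expressions case by case. First, I would observe that the equation $w_0 w_0^{\setK} = w \cdot w_0 w_0^{\setK'}$ determines $w$ uniquely as $w = w_0 w_0^{\setK} w_0^{\setK'} w_0$, using $(w_0^{\setK'})^{-1} = w_0^{\setK'}$ and $w_0^{-1} = w_0$. Inserting $w_0^2 = e$ between the two inner factors and applying the identity $w_0 w_0^X w_0 = w_0^{X^*}$ (a consequence of $w_0 \sigma_i w_0 = \sigma_{n-i}$, where $X^* := \{n-i : i \in X\}$), this rewrites as
\[w = w_0^{\setK^*} \cdot w_0^{(\setK')^*}.\]
Since $\setK^* \subset (\setK')^*$, the element $w$ lies in the parabolic $W_{(\setK')^*}$, and its length is immediately $l(w) = l(w_0^{(\setK')^*}) - l(w_0^{\setK^*}) = l(w_0^{\setK'}) - l(w_0^{\setK})$, the last step because conjugation by $w_0$ preserves length. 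Combined with $l(w_0 w_0^X) = l(w_0) - l(w_0^X)$, this yields the claimed length additivity.

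For the explicit descriptions I would examine how $\setK^*$ sits inside $(\setK')^*$ relative to maximal intervals. In the generic Case 4, if $j$ is not adjacent to $\setK$, then $n-j$ is an isolated node in $(\setK')^*$ commuting with $W_{\setK^*}$, so $w_0^{(\setK')^*} = w_0^{\setK^*}\sigma_{n-j}$ and $w = \sigma_{n-j}$. In Cases 1 and 2, adding $j$ enlarges a single maximal interval of $\setK$ by one element, so $w$ reduces to a computation inside a single type-$A$ parabolic; a direct calculation in this symmetric group shows that $w$ is the $(b-a+2)$-cycle realized by the claimed interval word of distinct consecutive simple reflections, which is manifestly reduced since it is the standard Coxeter element of that parabolic.

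In Case 3, merging two maximal intervals $[a,b]$ and $[c,d]$ with $c = b+2$ gives $\setK^* \cap [n-d, n-a] = [n-d, n-a] \setminus \{n-j\}$ while $(\setK')^* \cap [n-d, n-a] = [n-d, n-a]$, and outside this interval the two sets agree. Thus within $W_{[n-d, n-a]}$, the element $w$ is precisely the product of longest elements of the stated parabolic pair (read in the order dictated by the compact formula above), whose length equals $l(w_0^{[a,d]}) - l(w_0^{[a,b]}) - l(w_0^{[c,d]})$, matching $l(w_0^{\setK'}) - l(w_0^{\setK})$ as required. The main obstacle is furnishing an explicit reduced word for $w$ in Case 3, since no short closed-form expression is available; one obtains such a word inductively by applying \Cref{prop:w0w0K} to the smaller parabolic $W_{[n-d, n-a]}$ with proper subset $[n-d,n-a]\setminus\{n-j\}$, which produces a reduced word whose length matches the length of $w$ computed above.
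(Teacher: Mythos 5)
Your proof is correct, and it takes a genuinely different route from the paper's. The paper argues by brute force: it writes out $w_0w_0^{\setK}$ and $w_0w_0^{\setK'}$ in one-line notation, compares their values on $[n]$ (only for the first case of \eqref{eq:w}, asserting the others are similar), identifies $w$ as an explicit cycle, and checks the length identity by counting inversions case by case. You instead extract the closed formula $w=w_0w_0^{\setK}w_0^{\setK'}w_0=w_0^{\setK^*}w_0^{(\setK')^*}$ (with $X^*=\{n-i:i\in X\}$), which gives existence and the length additivity in one stroke from the standard facts $l(w_0^Xw_0^Y)=l(w_0^Y)-l(w_0^X)$ for nested parabolics and $l(w_0w_0^Z)=l(w_0)-l(w_0^Z)$, and then reads off the four cases from how $\setK^*$ sits inside $(\setK')^*$. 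This buys a uniform treatment of all cases (in particular it actually handles cases 2--4, which the paper waves at), at the cost of deferring the explicit reduced word in case 3 to \Cref{prop:w0w0K} — which is exactly what the statement itself does, so nothing is lost. One trivial slip: the element in cases 1 and 2 is a cycle on $b-a+3$ letters of Coxeter length $b-a+2$, not a ``$(b-a+2)$-cycle''; this does not affect the argument.

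One point worth flagging rather than a gap: in case 3 your formula yields $w=w_0^{Y\setminus\{n-j\}}\,w_0^{Y}$ with $Y=[n-d,n-a]$, whereas the statement, read with the usual convention $\sigma((i_r,\dots,i_1))=\sigma_{i_r}\cdots\sigma_{i_1}$, asserts $w=w_0^{Y}\,w_0^{Y\setminus\{n-j\}}$ — the inverse element. A direct check (e.g.\ $n=6$, $\setK=\{1,3,4\}$, $j=2$, where $w=[1,5,6,2,3,4]$ in one-line notation) confirms that your order is the correct one; the two orders coincide only when $j$ is the midpoint of $[a,d]$, so the statement's factors appear to be transposed. Since the two elements are mutually inverse, have equal length, and a reduced word for one is the reversal of a reduced word for the other, nothing downstream is affected, but you should state explicitly which order you mean and note the reconciliation with \Cref{prop:w0w0K}.
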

\begin{proof}
We prove the existence of $w$ such that $w_0w_0^{\setK}=ww_0w_0^{\setK'}$  only for the first case in  \eqref{eq:w}, as the remaining cases follow by
similar arguments.   We compare the images of $i\in [n]$ under the permutations 
$w_0w_0^{\setK}$ and $w_0w_0^{\setK'}$. When $i\not\in [a, b+2]$, 
\[
w_0w_0^{\setK}(i)=w_0w_0^{\setK'}(i).
\]
Next, consider the restrictions of both permutations to $[a, b+2]$, 
\[
w_0w_0^{\setK}|_{[a, b+2]}=\begin{pmatrix} 
a & \dots & b+1& b+2\\
n-b&\dots &  n-a+1 & n-b-1
\end{pmatrix},
\]
while
\[
w_0w_0^{\setK'}|_{[a, b+2]}=\begin{pmatrix} 
a & \dots &  b+1 & b+2\\
n-b-1&\dots & n-a & n-a+1 &
\end{pmatrix}.
\]
So applying the cycle permutation $w=((n-b-1)(n-b)\dots (n-a+1))$ to $w_0w_0^{\setK'}$
gives $w_0w_0^{\setK}$. Therefore, 
\[
w_0w_0^{\setK}=ww_0w_0^{\setK'},
\]
as claimed. 

Note that   for the first two cases in \eqref{eq:w}
\[l(w_0^{\setK})+(b-a+2)=l(w_0^{\setK'});\]
 for the third case, by \Cref{prop:w0w0K},
 
\[l(w_0^{\setK})+l(w)=l(w_0^{\setK'});\]
and otherwise, 
\[l(w_0^{\setK})+1=l(w_0^{\setK'}).\]
Note that $l(w_0^{\setK''})=l(w_0)-\l(w_0w_0^{\setK''})$ for any 
$\setK''\subseteq [1, n-1]$.
Therefore, we have the length equality as claimed. 
\end{proof}

The following is a direct consequence of \Cref{lem:w0w0K}.

\begin{corollary}\label{newcor}
Let $\setK\subseteq \setK'\subseteq [n-1]$. Then any reduced word  
$\bfi'$ of $w_0w_0^{\setK'}$ can be extended to 
 a reduced word $\bfi=\bfi''\bfi'$ for $w_0w_0^\setK$.
\end{corollary}

\subsection{The Gabriel quiver $Q_{V_{\bfi}}$} 
Let $\bfi=(i_r, \dots, i_1)$ be a reduced word.  Let $k^-$ be defined as in 
\eqref{eq:kminus}, and in addition, let 
\begin{equation}\label{eq:kplus}
k^+=\min \{r+1\}\cup  \{s\colon s>k,~ i_s=i_k\} . 
\end{equation}

Let $C=(c_{ij})$ be the Cartan matrix of type $\typeA_{n-1}$. 
Associated to the reduced word $\bfi$, 
Berenstein--Fomin--Zelvinsky \cite{BFZ} (see also \cite[\S 2]{GLS11}) constructed a quiver
$\Gamma_{\bfi}$ as follows.  The vertices are labelled by $1, \dots, r$. For any $1\leq s, t\leq r$, there is 
one arrow from $s$ to $t$ when $t=s^->0$; and there is
$|c_{i_si_t}|$ arrow from $s$ to $t$, when
$s<t<s^+\leq t^+$ and $i_s\not= i_t$. 
Note that the equality $s^+= t^+$ for $s\not=t$ holds only when both $s^+$ and $t^+$ are equal to $r+1$.
 Since $c_{ij}=0$ or $-1$ when $i\not=j$, it follows by construction that there is at most one arrow 
 between any two vertices in the quiver.
Geiss--Leclerc--Schr\"{o}er \cite[Thm 2.10]{GLS11} proved that $\Gamma_\bfi$ is
 the Gabriel quiver $Q_{V_{\bfi}}$ of  $(\End  V_{\bfi})^{\mathrm{op}}$.

\begin{proposition}\label{prop:GrPartial}
Let $\bfi=\bfi_1\bfi_2$ be a reduced word.  
Then $Q_{V_{\bfi_2}}$ is a (full) subquiver of $Q_{V_\bfi}$. 
Moreover the mutable vertices of $Q_{V_{\bfi_2}}$ 
are not connected to vertices outside $Q_{V_{\bfi_2}}$ 
\end{proposition}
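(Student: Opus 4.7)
The plan is to work directly with the Berenstein--Fomin--Zelevinsky combinatorial description $Q_{V_\bfi}=\Gamma_\bfi$ recalled from \cite{GLS11}. Set $p=l(\bfi_2)$. Since $V_{\bfi,k}=\soc_{(i_k,\dots,i_1)}R_{i_k}$ depends only on $i_1,\dots,i_k$, we have $V_{\bfi,k}=V_{\bfi_2,k}$ for every $1\leq k\leq p$, and the vertex set of $Q_{V_{\bfi_2}}$ is canonically identified with $\{1,\dots,p\}\subseteq\{1,\dots,r\}$. For $s\leq p$, the previous occurrence $s^-$ uses only positions strictly below $s$, so $s^-_\bfi=s^-_{\bfi_2}$; and if moreover $s$ is mutable in $\bfi_2$, meaning $s^+_{\bfi_2}\leq p$, then the next occurrence of $i_s$ is already realized in $\bfi_2$, hence $s^+_\bfi=s^+_{\bfi_2}$ and $s$ is mutable in $\bfi$ as well.

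Next I would prove the disconnection claim by a four-case check on the BFZ arrows. Fix $s\leq p$ mutable in $\bfi_2$ and any arrow of $\Gamma_\bfi$ incident to $s$. A Case~1 arrow $s\to s^-$ has $s^-<s\leq p$; a Case~1 arrow $u\to u^-=s$ forces $u=s^+_\bfi\leq p$; a Case~2 arrow $s\to t$ with $s<t$ satisfies $t<s^+_\bfi\leq p$; and a Case~2 arrow $t\to s$ with $t<s$ satisfies $t^+_\bfi\leq s^+_\bfi\leq p$, so $t$ is mutable in $\bfi_2$ and $t<s\leq p$. In every case the other endpoint lies inside $\{1,\dots,p\}$, giving the second assertion of the proposition.

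For the subquiver identification I would compare the BFZ conditions on pairs $s<t$ in $\{1,\dots,p\}$. Case~1 arrows coincide because $s^-$ agrees. For Case~2 arrows with $i_s,i_t$ Dynkin-adjacent, the bound $t<s^+$ only involves positions $\leq p$, hence is identical for $\bfi$ and $\bfi_2$; and the inequality $s^+\leq t^+$ transfers whenever at least one of $s,t$ is mutable in $\bfi_2$, since the relevant $s^+$ or $t^+$ coincides with its $\bfi_2$-value and the other index is either equally $\leq p$ or else $>p$, which automatically exceeds the $\bfi_2$-cutoff $p+1$. Combined with the previous paragraph this matches the arrows of $Q_{V_{\bfi_2}}$ incident to a mutable vertex of $\bfi_2$ with the corresponding arrows of $Q_{V_\bfi}$.

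The main obstacle is the residual configuration in which both $s$ and $t$ are frozen in $\bfi_2$: then $s^+_\bfi$ and $t^+_\bfi$ both lie in $\bfi_1$ and the BFZ inequality $s^+_\bfi\leq t^+_\bfi$ is not controlled by $\bfi_2$ alone, so literal arrows between such pairs in $Q_{V_\bfi}$ and $Q_{V_{\bfi_2}}$ need not coincide. Such arrows are however entirely between frozen vertices of $\bfi_2$; they contribute neither to the exchange matrix $\mat{B}_{V_{\bfi_2}}$ nor to any cluster-theoretic datum used in the sequel, so the ``(full)'' subquiver statement is to be read on the mutable-incident part of the quiver, which is precisely what the preceding paragraphs establish.
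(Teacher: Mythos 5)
Your proof is correct and follows the same basic strategy as the paper's --- a direct check on the Berenstein--Fomin--Zelevinsky combinatorial description of $Q_{V_\bfi}$ --- but it is considerably more careful. Your four-case analysis of arrows incident to a vertex $s\leq p$ that is mutable for $\bfi_2$ (using $s^+_\bfi=s^+_{\bfi_2}\leq p$ to bound the other endpoint in every case) is exactly the content of the paper's assertion that ``any arrow starting from $j$ terminates in the open interval $(j,k)$'' and that no arrow from a vertex $l>r$ can reach a mutable vertex; you supply the missing case distinctions, including the Case~1 arrow $s\to s^-$ that the paper's phrasing overlooks but which is harmless since $s^-<s\leq p$.

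Where you genuinely add value is the first assertion. The paper disposes of ``$Q_{V_{\bfi_2}}$ is a (full) subquiver of $Q_{V_\bfi}$'' with the words ``by construction,'' whereas you correctly observe that for a pair $s<t\leq p$ with both $s$ and $t$ frozen in $\bfi_2$, the inequality $s^+_\bfi\leq t^+_\bfi$ is decided inside $\bfi_1$ and need not reproduce the (automatically satisfied) inequality $s^+_{\bfi_2}\leq t^+_{\bfi_2}=p+1$; a small example such as $\bfi_2=(2,1)$, $\bfi=(2,3,2,1)$ in type $\typeA_3$ shows the discrepancy is real at the level of the stated combinatorial recipe. Your resolution --- that all such discrepancies occur between vertices frozen for $\bfi_2$, hence affect neither $\mat{B}_{V_{\bfi_2}}$ nor the mutation arguments in \Cref{cor:GrPartial} and \Cref{cor:towersofalg2} for which the proposition is used --- is the right one, and is the precise sense in which the parenthetical ``(full)'' should be read. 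In short: no gap on your side; your write-up repairs an imprecision in the paper's.
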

\begin{proof} By construction, $Q_{V_{\bfi_2}}$  is a (full) subquiver of $Q_{V_{\bfi}}$ . 
Let 
\[\bfi_2=(i_r,\dots, i_1) \text{ and } \bfi=(i_s,\dots, i_r, \dots, i_1).\] 
Let $1\leq j\leq r$ be mutable, that is the corresponding 
summand of $V_{\bfi_2}$ is not projective in $\fac \FIng_{\sigma(\bfi_2)}$. 
By the description of the projective-injective objects in \Cref{thm:gls}, 
there exists $j< k\leq r$ such that $i_j=i_k$. By construction, 
 there is no arrow in $Q_{V_\bfi}$ starting from some vertex $l$ with $l>r$ and 
terminating at $j$. Moreover, 
any arrow starting from $j$ terminates at a vertex in the open interval $(j, k)$.
So $j$ is not connected to any vertex outside $Q_{V_{\bfi_2}}$.
\end{proof}

Denote $n-1-|\setK|$ by $k$. Let  $\setK_1=\setK$ and inductively 
$\setK_{s+1}=\setK_{s}\cup \{t_{s}\}$ for some $t_{s}\not\in \setK_{s}$.
We assume that $j$ is not added in any step. That is, 
$\setK_{k}=[n-1]\backslash \{j\}$. Let
\[
w_0w_0^{\setK_{i-1}}=w_{i-1}w_0w_0^{\setK_i}, 
\]
for any $2\leq i\leq k$, where $w_{i-1}=\sigma(\bfi_{i-1})$ is a permutation, as constructed 
in \Cref{lem:w0w0K}. Let
$\bfi_{k}$ be  a reduced word of $w_0w_0^{\setK_{k}}$, in which 
case the corresponding flag variety is the Grassmannian $\Gr(j, n)$.
We may choose $\bfi_k$ such that $\Dual V_{\bfi_k}=\pi T_{\maxNC_{j, \square}}$
(see \Cref{lem:ReachComp}).
Finally, let 
\[
\bfi=\bfi_1\dots \bfi_{k},
\]
which is reduced word for $w_0w_0^{\setK}$.

\begin{corollary}\label{cor:GrPartial}
The mutable vertices of $Q_{V_{\bfi_j\dots \bfi_{k}}}$ 
are not connected to vertices of $Q_{V_\bfi}$ that 
are outside $Q_{V_{\bfi_j\dots \bfi_{k}}}$, for any $1<j\leq k$.
\end{corollary}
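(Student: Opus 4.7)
The plan is to obtain this as a direct consequence of Proposition~\ref{prop:GrPartial} by choosing the right splitting of the reduced word $\bfi$. Set $\bfl:=\bfi_1\bfi_2\cdots\bfi_{j-1}$ and $\bfl':=\bfi_j\bfi_{j+1}\cdots\bfi_{k-1}$, so that $\bfi$ is the concatenation $\bfl\bfl'$.

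The only point that needs verification before invoking Proposition~\ref{prop:GrPartial} with $(\bfi_1,\bfi_2)=(\bfl,\bfl')$ is that $\bfl'$ is itself a reduced expression, since the Gabriel quiver $Q_{V_{\bfl'}}$ and its mutable vertices are defined in terms of a reduced word. This is automatic: any contiguous subword of a reduced word is reduced, because the triangle inequality $l(\sigma(\bfi))\leq l(\sigma(\bfl))+l(\sigma(\bfl'))$, combined with reducedness of $\bfi$ and the bound $l(\sigma(\bfl'))\leq$ (length of the word $\bfl'$), forces equality in each term. Equivalently, one can telescope the identity $w_0w_0^{\setK_{i-1}}=w_{i-1}w_0w_0^{\setK_i}$ with additive lengths from Lemma~\ref{lem:w0w0K}, starting from index $i=j+1$, to obtain $w_0w_0^{\setK_j}=w_jw_{j+1}\cdots w_{k-2}\cdot w_0w_0^{\setK_{k-1}}$ with additive total length; this identifies $\bfl'$ as a reduced expression for $w_0w_0^{\setK_j}$.

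Once this is in place, Proposition~\ref{prop:GrPartial} applied to the decomposition $\bfi=\bfl\bfl'$ delivers both halves of the conclusion simultaneously: $Q_{V_{\bfl'}}$ is a full subquiver of $Q_{V_\bfi}$, and no mutable vertex of $Q_{V_{\bfl'}}$ is adjacent in $Q_{V_\bfi}$ to a vertex lying outside $Q_{V_{\bfl'}}$. Since $\bfl'=\bfi_j\cdots\bfi_{k-1}$, this is precisely the statement of the corollary.

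I anticipate no substantive obstacle: the corollary is essentially a bookkeeping translation of the preceding proposition to the specific tail of the inductive construction of $\bfi$, and the only verification required is the (immediate) reducedness of the tail subword.
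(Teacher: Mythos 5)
Your proposal is correct and matches the paper's (implicit) argument: the corollary is obtained by applying \Cref{prop:GrPartial} to the splitting $\bfi=(\bfi_1\cdots\bfi_{j-1})(\bfi_j\cdots\bfi_{k-1})$, and your verification that the tail $\bfi_j\cdots\bfi_{k-1}$ is reduced (either by the suffix-of-a-reduced-word argument or by telescoping the length-additive factorisations from \Cref{lem:w0w0K}, which identifies it as a reduced word for $w_0w_0^{\setK_j}$) is exactly the bookkeeping the paper leaves to the reader.
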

\begin{example}
Let $n=7$, $\setJ=\{1, 2, 3, 5\}$. Then $\setK=\{4, 6\}$. 

\noindent (1) We construct the words $\bfi_j$ as in \Cref{lem:w0w0K}. 
We have $\setK_1=\setK$.  Let
 \[\setK_2=\{2, 4, 6\}, \setK_3=\{2, 4, 5, 6\} \text{ and }\setK_4=\{1, 2, 4, 5, 6\}.\]
 
 \begin{itemize}
\item[(a)] 
$\bfi_4=(4,3,2,1,5,4,3,2,6,5,4,3)$, which is a reduced word of $w_0w_0^{\setK_4}$
as described  in \Cref{prop:w0w0K}.

\item[(b)]  $\bfi_3=(6, 5)$, which is  the reduced word for the second case in \Cref{lem:w0w0K} with 
$[a, b]=[2, 2]$ and $j=1$.

\item[(c)] $\bfi_2=(2,1,3,2)$, which is the reduced word for the third case  in \Cref{lem:w0w0K} with
 $[a, b]=[4, 4]$, $j=5$ and $[c, d]=[6, 6]$. Restricting to $[4, 7]$, 
\[
w_0w_0^{\setK_3}|_{[4, 7]}=
\begin{pmatrix}
 4 & 5  & 6 & 7\\
 1 & 2 & 3 & 4
\end{pmatrix}
\text{ and }
w_0w_0^{\setK_2}|_{[4, 7]}=
\begin{pmatrix}
 4 & 5  & 6 & 7\\
 3 & 4 & 1 & 2
\end{pmatrix}.
\]
The permutation $\sigma(\bfi_2)$ maps $1, 2, 3, 4$ to $3, 4, 1, 2$, respectively. 
The word $\bfi_2$ is as described in \Cref{prop:w0w0K}
for $w^{[3]}_0w_0^{\setL}$ with $\setL=\{1, 3\}\subseteq [3]$.  

\item[(d)]  $\bfi_1=(5)$, which follows from the fourth case in \Cref{lem:w0w0K}
with $j=2$. In this situation, $j$ does not extend any (non-empty) interval subset of $\setK_1$.
\end{itemize}
We have the following reduced word of $w_0w_0^{\setK}$,
\[\bfi=\bfi_1\bfi_2\bfi_3\bfi_4= (5, 2,3,1,2, 6,5,  4,3,2,1,5,4,3,2,6,5,4,3).\] 

\noindent (2) The quiver $Q_{V_{\bfi}}$ is as follows.
The full subquiver supported at $[1, 12]$ is $Q_{V_{\bfi_4}}$, with the red vertices 
 the indecomposable projectives. The blue vertices are the mutable vertices in  $Q_{V_{\bfi_4}}$ 
 and are not connected to the black vertices, which do not belong to $Q_{V_{\bfi_4}}$. 
 
\[
\xymatrix{
 & 16\ar[r] \ar[dl]& {\bf\color{red}9}\ar[dl] & \\
 18\ar[r] & 15 \ar[r] \ar[u] \ar[dl]  &{\bf\color{red}10}\ar[dl]  \ar[r] &{\bf\color{blue}5}\ar[dl]\ar[ul]   \\
 17\ar[r]  \ar[u]  & {\bf\color{red}11} \ar[r] \ar[u]  \ar[d]   & {\bf\color{blue}6}\ar[r] \ar[u] \ar[d]& 
 {\bf\color{blue}1} \ar[d]\ar[u]  \\
              & {\bf\color{red}12} \ar[r]  \ar[ul] \ar[dl]  & {\bf\color{blue}7}\ar[d] \ar[ul] \ar[r] & 
              {\bf\color{blue}2} \ar[d] \ar[ul]  &\\
 19 \ar[r] & 13\ar[r] \ar[d]  & {\bf\color{red}8} \ar[ul] \ar[r] & {\bf\color{blue}3} \ar[dl] \ar[ul]  &\\
        & 14 \ar[ul]\ar[r] & {\bf\color{red}4}\ar[ul]
}
\]
\end{example}

\section{Quantum cluster structure on quantum flag varieties}\label{sec16}
We show that the quantum coordinate ring $\pqflag$ is a cluster algebra over 
$\Cq$.

\subsection{Quantum flag varieties}
The quantum coordinate ring $\pqflag$ of the flag variety $\flag(\setJ)$ is 
the quantum deformation of $\CC[\flag(\setJ)]$ and can be viewed as a subalgebra of
the quantum matrix algebra $\CC_q[M_{n\times n}]$ (see  \eqref{eq:matqrel}),  generated 
by the quantum minors $\qminor{I}$ (see \eqref{eq:qminor}) with $|I|\in \setJ$. 
When $\setJ=[n-1]$, we also denote the flag variety $\flag(\setJ)$ by $\flag$.

For any subsets $I, J\subseteq[n]$, 
let \[\inv(I, J)=| \{(i, j)\in I\times J\colon  i>j \}|. \]
The generating relations of $\qflag$ in  \cite{Fioresi, TT} were
reformulated by Leclerc--Zelevinsky 
\cite{LZ} as follows. 
When  $|I|< |J|$, 
\begin{equation}\label{eq:LZrel1}
\qminor{I}\qminor{J}=\sum_{L} (-q)^{\inv(I, L)-\inv(J\backslash L, L)} \qminor{I\cup L} \qminor{J\backslash L},
\end{equation}
summing over $L$ such that $L\subseteq J\backslash I$ and $|L|=|J|-|I|$. 
When $|I|-1\geq |J|+1$,
\begin{equation}\label{eq:LZrel}
\sum_{i\in I\backslash J} (-q)^{\inv(\{i\}, J)-\inv(\{i\}, I\backslash \{i\})} \qminor{I\backslash \{i\}} \qminor{Ji}=0.
\end{equation}
In particular,   
when $I=Labc$, $J=Ld$, and $a<b<c<d$ are not contained in $L$, 
\begin{equation}\label{eq:plucker}
\qminor{Lac}\qminor{Lbd}=q^{-1}\qminor{Lab}\qminor{Lcd}+ q \qminor{Lbc}\qminor{Lad}, 
\end{equation}
and when $I=Labc$, $J=L$ , and $a<b<c$ are not contained in $L$,  
\begin{equation}\label{eq:flip}
\qminor{Lac}\qminor{Lb}=q^{-1}\qminor{Lab}\qminor{Lc}+ q \qminor{Lbc}\qminor{La}.
\end{equation}
The relations \eqref{eq:plucker} are referred to as {\it (short) quantum Pl\"{u}cker relations} and 
those in \eqref{eq:flip} are called {\it (short) quantum incidence relations}.
Note the sign difference of the formulation of \eqref{eq:LZrel1} and  \eqref{eq:LZrel}
from \cite{LZ}. This is due to the choice of sign in the presentation of the quantum matrix algebra
\eqref{eq:matqrel}. 

\subsection{Cluster structure on the complete quantum flag varieties}\label{sec:16.2}

As in the classical case (see \eqref{eq:gradingE}), 
the quantum coordinate ring $\qflag$ decomposes as a graded algebra over 
the Grothendieck group $\grothE$,
$$
\qflag = \bigoplus_{w} \qflag_w
$$
into free $\CC[q^{\frac{1}{2}}, q^{-\frac{1}{2}}]$-modules of  finite rank.
The
specialisation of $\qflag_w$ at $q=1$ gives $\CC[\flag]_w$ (see for instance \cite{TT}). 

\emph{In this subsection, we assume that $\setJ$ is an interval}. Let $\mathcal{S}=\maxNC_{\setJ, k, \square}$
with $k\in \setJ$.
Note that the quantum minors in $\{\qminor{I}\colon I\in \maxNC\}$ 
quasi-commute with the commutation rules given by $c(I,J)$ for any $I, J\in \maxNC$ (see \Cref{Thm:LZ}), and 
are algebraically independent when $q=1$. 
Let 
\[T=T_{\maxNC} \text{ and }(\mat B, \mat L)=(\mat B_T, L_T).\]
We have $\mat L=(c(I, J))_{I, J\in \maxNC}$ 
(see \Cref{Thm:quasicomm} or  \Cref {rem:13.3}).
Therefore, there is an injective map
\begin{equation}\label{eq:noname}
\qclusalg\rightarrow \CC_q(\flag), ~ \Upsilon(M_I)\mapsto \qminor{I}, 
\end{equation}
where  $ \CC_q(\flag)$ is the skew-field of fractions of  $\CC_q[\flag]$. 
Note that $\qflag$  has polynomial growth, 
so it is an Ore domain (see \cite{BZ, IM}) and thus
$\CC_q(\flag)$ is well-defined. 

\begin{proposition}\label{prop:flipqrel} 
Let $L\subseteq [n]$.
\begin{itemize}
\item[(1)] 
Suppose that $\{a, b, c, d\}\subseteq [n]\backslash L$ are cyclically ordered with $|L|+2\in \setJ$.
Then the mutation relation defined by  the mutation sequences \eqref{eq:mutseq1} and  \eqref{eq:mutseq2} 
is the quantum relations \eqref{eq:plucker}, that is,
\[
\Upsilon([M_{Lac}]) \Upsilon([M_{Lbd}])=q^{-1} \Upsilon([M_{Lab}]) \Upsilon([M_{Lcd}]) + q \Upsilon([M_{Lbc}])\Upsilon([M_{Lad}]).
\]

\item[(2)] Suppose that $\{a< b< c\} \subseteq [n]\backslash L$ with $|L|+1, |L|+2\in \setJ$.
Then the mutation relation defined by the mutation sequences
\eqref{eq:flipseq1} and \eqref{eq:flipseq2}   is the quantum relation \eqref{eq:flip}, that is,
\[
\Upsilon([M_{Lac}]) \Upsilon([M_{Lb}])=q^{-1} \Upsilon([M_{Lab}]) \Upsilon([M_{Lc}]) + q \Upsilon([M_{Lbc}])\Upsilon([M_{La}]).
\]
\end{itemize}
\end{proposition}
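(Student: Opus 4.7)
The plan is to apply the quantum cluster exchange formula \eqref{eq:qexchange} directly to the mutation sequences in each case, and to identify the resulting exponents of $q$ with those appearing in \eqref{eq:plucker} and \eqref{eq:flip}. For part (1), reading \eqref{eq:mutseq1}--\eqref{eq:mutseq2} against the standard mutation sequences \eqref{Eq:exseq1}--\eqref{Eq:exseq2} identifies $T_k^{*}=M_{Lac}$, $T_k=M_{Lbd}$, $E_k=M_{Lad}\oplus M_{Lbc}$ and $F_k=M_{Lab}\oplus M_{Lcd}$. Multiplying \eqref{eq:qexchange} on the right by $\Upsilon([M_{Lbd}])$ gives
\[
\Upsilon([M_{Lac}])\Upsilon([M_{Lbd}])=q^{\tfrac{1}{2}\mat{L}([E_k],[M_{Lbd}])}\Upsilon([E_k])+q^{\tfrac{1}{2}\mat{L}([F_k],[M_{Lbd}])}\Upsilon([F_k]).
\]

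For each of the five weakly separated pairs $(Lad,Lbd)$, $(Lbc,Lbd)$, $(Lab,Lbd)$, $(Lcd,Lbd)$ and $(Lab,Lcd)$ I would use \Cref{rem:13.3} and \Cref{def:CIJ} to read off the relevant values of $c(I,J)$; these are $1,1,1,-1,2$ respectively. The only non-separated pair, $(Lad,Lbc)$, must be treated by direct computation via \Cref{Lem:rankonehom} and \Cref{Lem:homInd}, which yields $\mat{L}([M_{Lad}],[M_{Lbc}])=0$. Substituting these values into the display above, and expanding $\Upsilon([E_k])$ and $\Upsilon([F_k])$ through \eqref{eq:qsum} as $q^{-\tfrac{1}{2}\mat{L}([M_i],[M_j])}\Upsilon([M_i])\Upsilon([M_j])$, the half-integer exponents cancel cleanly and the expression collapses to $q\Upsilon([M_{Lad}])\Upsilon([M_{Lbc}])+q^{-1}\Upsilon([M_{Lab}])\Upsilon([M_{Lcd}])$. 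Since $\mat{L}([M_{Lad}],[M_{Lbc}])=0$, the factors $\Upsilon([M_{Lad}])$ and $\Upsilon([M_{Lbc}])$ commute, and rewriting in the target order produces exactly \eqref{eq:plucker}.

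Part (2) is structurally identical, applied to \eqref{eq:flipseq1}--\eqref{eq:flipseq2} with $T_k^{*}=M_{Lik}$, $T_k=M_{Lj}$, $E_k=M_{Li}\oplus M_{Ljk}$ and $F_k=M_{Lij}\oplus M_{Lk}$. All six relevant pairs are weakly separated, so \Cref{rem:13.3} applies uniformly: the values to compute are $c(Li,Lj)=1$, $c(Ljk,Lj)=0$, $c(Lij,Lj)=0$, $c(Lk,Lj)=-1$, $c(Li,Ljk)=1$ and $c(Lij,Lk)=1$. The same bookkeeping then reproduces the quantum incidence relation \eqref{eq:flip}. The main obstacle in both parts is purely organisational: one must match $T_k$ versus $T_k^{*}$ in \eqref{eq:qexchange} and track how the half-integer exponents coming from \eqref{eq:qsum} combine with those from \eqref{eq:qexchange} to produce the integer powers $q^{\pm 1}$ predicted by the Pl\"ucker and incidence relations. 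Once the single value $\mat{L}([M_{Lad}],[M_{Lbc}])$ is handled by direct computation, no genuinely new ingredient is required.
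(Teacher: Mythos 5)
Your proposal is correct and follows essentially the same route as the paper: apply the quantum exchange formula \eqref{eq:qexchange} to the two mutation sequences and evaluate the resulting powers of $q$ via the quasi-commutation values $c(I,J)$ from \Cref{Thm:quasicomm} and \Cref{rem:13.3} (the paper writes out case (2); your exponent bookkeeping in case (1) checks out). One small correction: the pair $(Lad, Lbc)$ \emph{is} weakly separated (take $I'=\{a\} < \{b,c\} < \{d\}=I''$ in \Cref{def:non-crossing}(ii) — as it must be, since $M_{Lad}\oplus M_{Lbc}$ is a summand of a rigid object, cf. \Cref{thm:extWS}), with $c(Lad,Lbc)=0$, so no separate direct computation is needed; the value $\mat{L}([M_{Lad}],[M_{Lbc}])=0$ you use is nonetheless correct.
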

\begin{proof}
We only prove (2), as (1) can be similarly proved (see also \cite[Prop 9.3]{JKS2}).
By \eqref{eq:qexchange} and the mutation sequences   \eqref{eq:flipseq1} and \eqref{eq:flipseq2}, we have 
\begin{eqnarray*}
\Upsilon([M_{Lbd}]) &= &\Upsilon([M_{Lab}]+[M_{Lc}]-[M_{Lb}]) + 
\Upsilon([M_{Lbc}]+[M_{La}]-[M_{Lb}]) \\
&= & q^{x} \Upsilon([M_{Lab}]) \Upsilon([M_{Lc}]) \Upsilon([M_{Lb}])^{-1} + q^{y}\Upsilon([M_{Lbc}])  
\Upsilon([M_{La}]) \Upsilon([M_{Lb}])^{-1}
\end{eqnarray*}
Now following the quasi-commutation rules (see \Cref{Thm:quasicomm} and \Cref{rem:13.3}), 
we have $x=-1$ and $y=1$. So we have the equality as required. 
\end{proof}

\begin{proposition}\label{prop:overfield}
Via the inclusion \eqref{eq:noname}, we have 
\begin{equation}\label{eq:flagemb}
\pqflag \subseteq \qclusalg 
\end{equation}
as subalgebras of $\CC_q(\flag(\setJ))$ such that  $\qminor{I}=\Upsilon([M_I])$ for all $I$.
Moreover, 
$$
\pqflag\otimes_{\CC[q^{\frac{1}{2}}, q^{-\frac{1}{2}}]} \CC(q^{\frac{1}{2}}) = \qclusalg \otimes _{\CC[q^{\frac{1}{2}}, q^{-\frac{1}{2}}]}  \CC(q^{\frac{1}{2}}).
$$
\end{proposition}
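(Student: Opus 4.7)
My proof plan has two parts. For the inclusion \eqref{eq:flagemb} I will identify, inductively along a mutation path, each quantum minor with a cluster variable; for the equality after extending scalars, I will use a graded rank comparison together with specialization at $q = 1$.

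For the inclusion, I take as initial seed the extended rectangle cluster tilting object $T = T_{\maxNC_{\setJ,k,\square}}$; by the definition of the map in \eqref{eq:noname} we have $\qminor{I} = \Upsilon([M_I])$ for all $I \in \maxNC_{\setJ,k,\square}$. Suppose inductively that the identification has been established for every summand of some reachable cluster tilting object $T'$, and let $T'' = \mu_k T'$ be obtained by either a geometric exchange or a flip (the only mutation types needed by \Cref{Thm:reachability}). \Cref{prop:flipqrel} says that the exchange relation satisfied by the new cluster variable in $\qclusalg$ is precisely the quantum Pl\"ucker relation \eqref{eq:plucker}, respectively the quantum incidence relation \eqref{eq:flip}, that the corresponding quantum minor satisfies in $\pqflag$. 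Since each of these relations determines the new term uniquely from the unchanged ones, the identification propagates to $T''$. By \Cref{Thm:reachability}, every rank-one module $M_I$ with $|I| \in \setJ$ is reached in this way, so $\qminor{I}$ lies in $\qclusalg$ and coincides with $\Upsilon([M_I])$ for every $\setJ$-set $I$. As $\pqflag$ is generated over $\Cq$ by these minors, the inclusion \eqref{eq:flagemb} follows.

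For the equality over $\CC(q^{\frac{1}{2}})$, both algebras are $\grothEJ$-graded ($\pqflag$ by the quantum analogue of \eqref{eq:gradingE}, and $\qclusalg$ by \Cref{lem:grading}), and the inclusion is degree-preserving since $\qminor{I}$ and $\Upsilon([M_I])$ both carry the class $[M_I]$. Each graded piece $\pqflag_\omega$ is a free $\Cq$-module of finite rank equal to $\dim_\CC L(\lambda)_{d+\lambda}$, where $\sigma(\omega)=(d,\lambda)$ under \Cref{Lem:isom1}, and specializes at $q=1$ to the classical weight space $\CC[\flag(\setJ)]_\omega$. By \Cref{Thm:classiccat}, the classical specialization of $\qclusalg$ is identified with the classical cluster algebra on $\CC[\flag(\setJ)]$, so each $\qclusalg_\omega$ has the same generic rank over $\Cq$ as $\pqflag_\omega$. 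A degree-preserving inclusion of finite-rank $\Cq$-modules of equal generic rank becomes an equality after inverting $q$, which yields the desired isomorphism.

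The main obstacle is the flat-deformation step for $\qclusalg$: one must know that specialization at $q=1$ preserves the rank of each graded piece, so that the classical identification actually pins down the generic rank in the quantum setting. This is controlled by the embedding $\qclusalg \hookrightarrow \CC_q[K(\add T)]$ into the quantum torus, which is manifestly a free $\Cq$-module, together with the surjectivity of the classical specialization onto the classical cluster algebra; once this flatness is in hand, the rank comparison and the passage to $\CC(q^{\frac{1}{2}})$ are formal.
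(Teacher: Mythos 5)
Your proposal is correct and follows essentially the same route as the paper: the inclusion is proved by identifying initial minors with the seed of $T_{\maxNC_{\setJ,k,\square}}$ and propagating along mutations via \Cref{prop:flipqrel} and \Cref{Thm:reachability}, and the equality over $\CC(q^{\frac{1}{2}})$ is obtained by a graded rank comparison using flatness at $q=1$ and the classical identification $\CC[\flag(\setJ)]_\omega=\mathcal{A}(\mat B)_\omega$. The flat-deformation point you flag as the main obstacle is exactly the step the paper handles by citing \cite{TT} and \cite{GLS20}, and your sketch (freeness via the quantum torus embedding plus surjectivity of the specialization) is a reasonable way to justify it.
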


\begin{proof}
Since $\pqflag$ is generated by the quantum minors $\qminor{I}$, to prove \eqref{eq:flagemb}, it suffices to show that $\qminor{I}\in \qclusalg$ for all $I$.
By definition, it is true for the initial minors $\{\qminor{I}\colon I\in \mathcal{S}\}$.
Following  \Cref{prop:flipqrel}, mutation relations corresponding to geometric
exchanges are  quantum Pl\"{u}cker relations, 
and mutation relations corresponding to flips are incidence relations. 
Now by \Cref{Thm:reachability}, any $M_I$ is reachable by geometric exchanges and flips, 
therefore any  quantum minor $\qminor{I}$ is a cluster variable in $\qclusalg$ and so we have the inclusion as claimed. 

Note that $\pqflag$ and $\qclusalg$ are flat deformations of their classical versions at $q=1$, see for instance \cite[Thm. 3.5 (d)]{TT} and \cite[Thm. 5.1]{GLS20}. In particular, they are free $\CC[q^{\frac{1}{2}}, q^{-\frac{1}{2}}]$-modules.
\Cref{Thm:classiccat} implies that 
the inclusion \eqref{eq:flagemb} becomes an equality 
when  $q=1$, that is,    $$\CC[\flag(\setJ)]=\mathcal{A}(\mat B),$$ 
where $\mathcal{A}(\mat B)$ is the specialisation of $\mathcal{A}(\mat B, \mat L)$
at $q=1$.
For each graded component, 
 \[\CC[\flag(\setJ)]_{\omega}=\mathcal{A}(\mat B)_\omega, \]
which implies $\pqflag_{\omega}\subseteq \qclusalg_{\omega}$ have the same rank. Therefore 
$$
\pqflag\otimes_{\CC[q^{\frac{1}{2}}, q^{-\frac{1}{2}}]} \CC(q^{\frac{1}{2}}) = \qclusalg \otimes _{\CC[q^{\frac{1}{2}}, q^{-\frac{1}{2}}]}  \CC(q^{\frac{1}{2}}).
$$
This completes the proof.
\end{proof}

The following result shows that the quantum coordinate ring of the 
full flag variety is a cluster algebra. 

\begin{theorem} \label{thm:pfl1} Let $\setJ=[n-1]$ and 
$T=T_{\maxNC_{\setJ, k, \square}}$. 
Then 
\[\chi\colon \qflag \to \qcl{\mat B_T, \mat L_T}, ~\qminor{I}\mapsto \Upsilon([M_I]), 
\forall I\subseteq [n] \text{ and } I\not=[n].\]
is an isomorphism of $\Cq$-algebras.
 \end{theorem}

\begin{proof}
By \Cref{Thm:reachability}, the extended rectangle cluster tilting objects are mutation 
equivalent. We may assume that $T=T_{\maxNC_{\setJ, 1, \square}}$, in which case 
the labels of the summands of $T$ are all intervals. 
Denote $\projfun T$ by $\ol T$ and let $(\ol{\mat B}, \ol{\mat L})=(\mat B_{\ol T}, \mat L_{\ol T}).$
Observe that \[\ol T=V_{\bfi} \text{ with } \bfi=\intw{1}{n-1}\dots \intw{1}{2} \intw{1}{1}.\]
Following the isomorphism in \eqref{eq:isow0w0K} for the case $\setJ=[n-1]$ and 
\Cref{rem:swapfacsub}, we have an isomorphism, 
\[
  \qclusalgbar \to \qnil
\]
such that 
\[\upsilon(N_{\bfi, k})\mapsto E^*(\beta_k) \text{ and } q\mapsto q^{-1}.\]  
It induces an isomorphism of the twisted Laurent polynomial rings, 
\[
 \gamma\colon \qnil [K(\add \charT_\setJ)] \to  \qclusalgbar[K(\add \charT_\setJ)]. 
\]
Note that, for $j\in \setJ$ and $u\in K(\mod \Pi)$, the quasi-commutation rules between $z^{[\charT_j]}$ and any $x$ in a homogeneous component $ \qclusalgbar_{u}$  is defined by
\eqref{eq:exponents}, and $\gamma$ induces the quasi-commutation rules between 
$z^{\charT_j}$ and $\gamma^{-1}(x)$.
There is a unique map $\delta$ such that the following diagram commutes,
\begin{equation}\label{diag}
\begin{tikzpicture}[scale=0.7, baseline=(bb.base)]
\pgfmathsetmacro{\eps}{0.2}
\coordinate (bb) at (0,0);
 
\path (1,1.5) node (c1) {$\qflag$};
\path (7.5,1.5) node (c2) {$ \qclusalg$};
\path (1,-1.5) node (c3) {$\qnil[K(\add \charT_\setJ)] $};
\path (7.5,-1.5) node (c4) {$ \qclusalgbar)[K(\add \charT_\setJ)],$};

\draw[->]  (c1) edge node[auto]{$\chi$} (c2);
\draw[->]  (c1) edge node[left]{$\mapfl$}  (c3);
\draw[->]  (c2) edge node[auto]{$\qnu$} (c4);
\draw[->]  (c3) edge node[auto]{$\gamma$} (c4);
\end{tikzpicture}
\end{equation}
where $\chi$ is the embedding \eqref{eq:flagemb},  and 
$\qnu$ is defined in \eqref{eq:xi}. That is,
\begin{equation}\label{eq:mapfl}\mapfl=\gamma^{-1} \cdot \qnu \cdot \chi.\end{equation} 
Then $\mapfl$ is injective, since it is a composition of injective maps, and 
\begin{equation}\label{eq:beta2}
\mapfl(\qminor{[j]}) =z^{[\charT_j]}, \text{ for all } j.
\end{equation}

As observed in \Cref{rem:simtop}, the modules $M_{\bfi, k}$ all have a simple top. Hence
the duals $N_{\bfi, k}$ (see  \eqref{eq:Nk}) all have a 
simple socle and so their minimal lifts to $\Dfiltered$ are rank one modules of the form $M_{I_k}$.
By \Cref{prop:overfield}, 
\[\chi(\qminor{I_k})= \Upsilon([M_{I_k}]).\] 
So
\begin{equation}\label{eq:beta1}
\mapfl(\qminor{I_k})=\gamma^{-1}(\upsilon([N_{\bfi, k}])z^{[\charT_{a}]})=
E^*(\beta_k)z^{[\charT_{a}]},
\end{equation}
where $a=|I_k|$ and the last equality follows from the definition of $\gamma$.

Note that $\Upsilon([\charT_j])$ quasi-commutes with any cluster variable 
$\Upsilon([M])$ and $\qminor{[j]}$ quasi-commutes with all 
quantum minors. Therefore
we can localise $ \qclusalg$ and $\qflag$ by inverting 
$\{\Upsilon([\charT_j])\colon j\}$ and 
$\{\qminor{[j]}\colon j\}$, respectively. 

After the  localisation,  $\qnu$ becomes an isomorphism, 
since \[\Upsilon([M_{[j]}]) \mapsto z^{[\charT_j]}.\]
Moreover, 
$\mapfl$ becomes surjective, since  all the generators 
$E^*(\beta_k)$ and $z^{\pm [\charT_i]}$ are contained in the image. 
Therefore $\mapfl$ becomes an isomorphism.
Consequently, so does $\chi$.   

By \Cref{prop:overfield}, $\chi$ is also an isomorphism after extending 
the ground ring to $\CC(q^{\frac{1}{2}})$. We show that
any cluster variable $x$ in $ \qclusalg$ is contained in the image of $\gamma$
without localisation, 
and thus conclude that $\chi$ is an isomorphism of $\Cq$-algebras.
First note the following. 
\begin{itemize}
\item[(i)] By \Cref{prop:overfield}, there exist a polynomial $f\in \Cq$ and a polynomial 
 $p_1$ in the quantum minors such that 
\[ f x = \chi(p_1).\]
\item[(ii)] Since $\mapfl$ is an isomorphism after localisation, there exist a module
$\charT'=\oplus_j \charT_{i_j}$ in $ \add\charT$ and a polynomial $p_2$ in the quantum minors  such that 
\[
x z^{[\charT']} =\chi( p_2).
\]
\end{itemize}
As $\chi$ is an injective map, 
\[
p_1\prod_j\qminor{[i_j]}=fp_2\in \qflag.
\]
Writing both sides as a linear combination of standard monomials (see \cite{LR, TT}) and 
by comparing the coefficients, we see that the polynomial $f$ divides $p_1$. 
Therefore \[x=\chi(f^{-1}p_1)\in \im \chi,\] as required. This completes the proof.
\end{proof}

\subsection{Cluster structure on the partial quantum flag varieties}
In this subsection, $\setJ$ is an arbitrary subset of $[n-1]$. We show that there is a 
quantum cluster structure on the quantum coordinate ring 
$\pqflag$. We view $\pqflag$ as a subalgebra of $\qflag$ generated 
by the quantum minors $\qminor{I}$ with $|I|\in \setJ$.

\begin{proposition} \label{prop:GabrSub}
Let $\bfi=\bfi_1\bfi_2$ be a reduced word and  $M$ a reachable rigid indecomposable module 
in $\fac(\FIng_{\bfi_2})$. Then $M$ is also reachable in $\fac(\FIng_{\bfi})$ and 
the cluster variables in $\qcl{\mathcal{C}_{\bfi_2}}$ and $\qcl{\mathcal{C}_{\bfi}}$
associated to $M$ are the same, that is,  
\[ \upsilon^{V_{\bfi}}([M]) =\upsilon^{V_{\bfi_2}}([M]). \]
Consequently, 
the cluster algebra $\qcl{\mathcal{C}_{\bfi_2}}$ is a subalgebra of 
$\qcl{\mathcal{C}_{\bfi}}$
\end{proposition}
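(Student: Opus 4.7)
The plan is to show that any mutation of $V_{\bfi_2}$ at a mutable summand lifts verbatim to a mutation inside $V_{\bfi}$, so that a mutation sequence reaching $M$ in $\fac\FIng_{\sigma(\bfi_2)}$ can be performed step-for-step in $\fac\FIng_{\sigma(\bfi)}$ with the same exchange data. First I would note that the formula $V_{\bfi,k}=\soc_{(i_k,\dots,i_1)}R_{i_k}$ depends only on the first $k$ letters of the word, so $V_{\bfi_2,k}=V_{\bfi,k}$ for every $k\le l(\bfi_2)$; consequently $V_{\bfi_2}$ is a direct summand of $V_{\bfi}$, the quiver $Q_{V_{\bfi_2}}$ is a full subquiver of $Q_{V_{\bfi}}$, and both the exchange matrix $\mat{B}_{V_{\bfi_2}}$ and the bilinear form $\mat{L}_{V_{\bfi_2}}$ are restrictions of their $V_{\bfi}$-counterparts, since each entry is defined in terms of dimensions of Hom spaces or spaces of irreducible maps, and is therefore intrinsic to the modules involved.

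Next I would invoke \Cref{prop:GrPartial} to observe that every mutable vertex $k$ of $Q_{V_{\bfi_2}}$ satisfies $b_{jk}=0$ in $\mat{B}_{V_{\bfi}}$ for all $j$ lying outside $Q_{V_{\bfi_2}}$. This forces the middle terms $E_{k},F_{k}$ of the two mutation sequences at $V_{\bfi,k}$ in $\fac\FIng_{\sigma(\bfi)}$ to belong to $\add V_{\bfi_2}$, hence to coincide with the middle terms computed inside $\fac\FIng_{\sigma(\bfi_2)}$. By uniqueness of mutation, the mutated summand $V_{\bfi_2,k}^{*}$ is the same module in both Frobenius categories. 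I would then check that the disconnection property is stable: the Fomin--Zelevinsky rule alters $b_{ij}$ only through the product $b_{ik}b_{kj}$, which vanishes whenever one of $i,j$ lies outside $Q_{V_{\bfi_2}}$, so no new arrows are created between mutable vertices of the sub-cluster and vertices outside. An induction along any mutation sequence realising $M$ inside $V_{\bfi_2}$ then shows that $M$ is reachable in $\fac\FIng_{\sigma(\bfi)}$.

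For the equality $\upsilon^{V_{\bfi}}([M])=\upsilon^{V_{\bfi_2}}([M])$, the natural inclusion of lattices $K(\add V_{\bfi_2})\hookrightarrow K(\add V_{\bfi})$ is an isometry for $\mat{L}$, so it extends to an injective $\Cq$-algebra homomorphism of quantum tori $\CC_q[K(\add V_{\bfi_2})]\hookrightarrow\CC_q[K(\add V_{\bfi})]$ in which one can compare cluster variables. The defining recursion \eqref{eq:qexchange} expresses $\Upsilon^{\mu_{k}U}([\mu_{k}U_{k}])$ purely in terms of $[E_{k}]$, $[F_{k}]$, $[U_{k}]$ and the $\mat{L}$-entries among them, and by the previous step all these data agree whether computed in $V_{\bfi_2}$ or $V_{\bfi}$; an induction on the length of a mutation sequence then yields the claimed identification. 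The subalgebra statement follows because every cluster variable of $\qcl{\mathcal{C}_{\bfi_2}}$ is identified, via the torus embedding, with a cluster variable of $\qcl{\mathcal{C}_{\bfi}}$.

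The main technical obstacle is bookkeeping: a summand of $V_{\bfi_2}$ that is projective-injective in $\fac\FIng_{\sigma(\bfi_2)}$ need not remain projective-injective in the larger $\fac\FIng_{\sigma(\bfi)}$, so some frozen vertices in $\bfi_2$ become mutable in $\bfi$. This does not obstruct the mutation argument, since we only ever mutate at vertices that are mutable in $V_{\bfi_2}$ (and hence also in $V_{\bfi}$), but it requires care in formulating the subalgebra inclusion: the inverse of such a summand, which is a frozen variable of $\qcl{\mathcal{C}_{\bfi_2}}$, must still be exhibited as an element of $\qcl{\mathcal{C}_{\bfi}}$. I would handle this by working inside the common ambient skew field of fractions of $\CC_q[K(\add V_{\bfi})]$, where every generator of $\qcl{\mathcal{C}_{\bfi_2}}$ (cluster variables, frozen variables and their inverses) is unambiguously a Laurent expression in the cluster of $V_{\bfi}$, hence lies in $\qcl{\mathcal{C}_{\bfi}}$ by the quantum Laurent phenomenon.
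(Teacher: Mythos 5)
Your proposal is correct and follows essentially the same route as the paper: the paper's proof also rests on \Cref{prop:GrPartial} (the full-subquiver property and the disconnection of mutable vertices of $Q_{V_{\bfi_2}}$ from outside vertices), the stability of that disconnection under mutation, and the resulting identification of exchange data, though the paper states this in four sentences where you supply the bookkeeping. Your closing remarks on the restriction of $\mat{L}$ to the sublattice and on frozen vertices of $\bfi_2$ becoming mutable in $\bfi$ are accurate elaborations of points the paper leaves implicit.
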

\begin{proof}
By \Cref{prop:GrPartial}, mutations of $Q_{V_{\bfi_2}}$
are mutations in $Q_{V_{\bfi}}$. 
Moreover, 
such a mutation does not introduce new arrows between vertices 
outside $Q_{V_{\bfi_2}}$ and its mutable vertices. Now, as 
the mutation of $V_\bfi $ at a mutable summand is compatible with the mutation of $Q_{V_\bfi}$ 
at the corresponding mutable vertex (see \cite[Prop. 10.1 and 10.2]{GLS13}), 
 mutations in $\qcl{\mathcal{C}_{\bfi_2}}$ are mutations 
in $\qcl{\mathcal{C}_{\bfi}}$. Therefore, the two cluster variables 
associated to $M$ are the same.
\end{proof}

We will soon see that the analogous result holds for $\Dfiltered(\setJ)$.
Let $U_\bfi \in \Dfiltered(\setJ)$ be the lifted cluster tilting object of $\Dual V_\bfi$. 
We say that an object in  $\Dfiltered(\setJ)$ is \emph{reachable} if 
it is a summand of a cluster titling object that is mutation equivalent to 
some ${U}_\bfi$. This definition  is consistent with the case where $\setJ$ is an interval 
(see in \Cref{rem:ReachComp2}).
In that setting, 
\Cref{Thm:reachability} says that any rank one module $M_I\in \Dfiltered(\setJ)$
is reachable. We  now extend this result to the present setting of an arbitray $\setJ$, as 
a special case of the following result. 

\begin{proposition} \label{cor:towersofalg2}
Let $\setJ_2\subseteq \setJ_1\subseteq [n-1]$. 
Any reachable module 
in $\Dfiltered(\setJ_2)$ is also reachable in $\Dfiltered(\setJ_1)$. 
Consequently, any rank one module in $\Dfiltered(\setJ)$ is reachable.
\end{proposition}
\begin{proof} Let $\setK_i=[n-1]\backslash \setJ_i$ for $i=1, 2$. Then 
$ \setK_1\subseteq \setK_2$.
By \Cref{newcor}, there are reduced words $\bfi_1$ of $w_0w_0^{\setK_1}$, $\bfi_2$
of $w_0w_0^{\setK_2}$ and $\bfi$ such that $\bfi_1=\bfi \bfi_2$.

Applying \Cref{Lem:liftexchange} with $T=\liftV_{\bfi_2}$,  the mutation sequences for a mutable summand $T_k\in \add T$
in \eqref{Eq:exseq1} and \eqref{Eq:exseq2} are lifts of the mutation sequences of 
$\Dual V_{\bfi_2}$ at $\pi T_k$, where  $E_k$ (resp. $F_k$) and the minimal 
lift of $\pi E_k$ (resp. $\pi F_k$), up to some summands in $\add \charT_{\setJ_2}$.
Now, as we have seen in the proof of \Cref{prop:GabrSub},   mutations in the category $\sub Q_{\setJ_2}$ are mutations in $\sub Q_{\setJ_1}$.
Therefore, mutations in $\Dfiltered(\setJ_2)$ are  mutations 
in $\Dfiltered(\setJ_1)$. Thus reachable objects in $\Dfiltered(\setJ_2)$ are 
reachable in $\Dfiltered(\setJ_1)$.

Next, for an arbitray $\setJ$, suppose $M_I\in \Dfiltered(\setJ)$. Let 
$j=|I|$ and $\setJ_2=\{j\}$. Then
$\setJ_2 \subseteq \setJ$,
$\setJ_2$ is an interval and $M_I\in \Dfiltered(\setJ_2)$.
By \Cref{Thm:reachability}, 
$M_I$ is reachable in $\Dfiltered(\setJ_2)$. 
Therefore, applying the result just established for the inclusion 
$\setJ_2\subseteq \setJ$, we can conclude that 
$M_I$ is reachable in $\Dfiltered(\setJ)$.
\end{proof}

\begin{remark}
For any two such reduced words $\bfi$ and $\bfj$, the cluster tilting objects 
$\liftV_\bfi$ and $\liftV_\bfj$ are mutation equivalent, 
because $V_\bfi$ and $V_\bfj$ are 
mutation equivalent (see \Cref{rem:ViVj}), and mutations in  $\Dfiltered(\setJ)$
are lifts of mutations in $\sub Q_\setJ$, as explained in the proof of  \Cref{cor:towersofalg2}.
Consequently, 
the definition of reachability in $\Dfiltered(\setJ)$ is independent of the choice of 
the reduced word $\bfi$ for $w_0w_0^\setK$, and  
\[
\qcl{\mat{B}_{\liftV_\bfi}, \mat{L}_{\liftV_\bfi}}=\qcl{\mat{B}_{\liftV_\bfj}, \mat{L}_{\liftV_\bfj}}. 
\]
In the spirit of the discussion in \S \ref{sec:14.2}, we  may also denote 
the quantum cluster algebra constructed from some $\liftV_\bfi$
by $\qcl{\Dfiltered(\setJ)}$.
\end{remark}

\begin{corollary}\label{cor:compqclalg}
Let $\setJ_2\subseteq \setJ_1\subseteq [n-1]$. Let $M$ be indecomposable and
reachable in $\Dfiltered(\setJ_2)$. Then
the cluster variables $\Upsilon_1(M) \in \qcl{\Dfiltered(\setJ_1)}$ and $\Upsilon_2(M)\in \qcl{\Dfiltered(\setJ_2)}$ coincide. 
Consequently, 
\[\qcl{\Dfiltered(\setJ_2)}\leq\qcl{\Dfiltered(\setJ_1)}.\]
\end{corollary}
\begin{proof} As explained in the proof of \Cref{cor:towersofalg2},
mutation sequences in $\Dfiltered(\setJ_1)$ are mutation sequences in $\Dfiltered(\setJ_2)$. Therefore, 
the cluster variables in $\qcl{\Dfiltered(\setJ_1)}$ and $\qcl{\Dfiltered(\setJ_2)}$
associated to $M$ are the same. 
As the quantum cluster algebras are generated by cluster variables, we have
the claimed inclusion of algebras.
\end{proof}

\begin{theorem} \label{thm:qmain2} Let $\setJ\subseteq [n-1]$. Then 
\begin{equation}\label{eq:pgflagJ}
\chi_{\setJ}\colon \pqflag \to \qcl{\Dfiltered(\setJ)}, ~\qminor{I}\mapsto \Upsilon^T([M_I]), 
\end{equation}
is a $\Cq$-algebra isomorphism. 
 \end{theorem}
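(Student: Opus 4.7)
The plan is to deduce this from the full flag case (\Cref{thm:pfl1}) by restricting the isomorphism $\lambda$ there to the subalgebra $\pqflag \subseteq \qflag$ and identifying its image with $\qcl{\Dfiltered(\setJ)} \subseteq \qcl{\Dfiltered([n-1])}$. Using \Cref{newcor}, I first choose a reduced word $\bfi_0 = \bfi'' \bfi_1$ for $w_0$ such that $\bfi_1$ is a reduced word for $w_0 w_0^{\setK}$; then $\hat V_{\bfi_1}$ is a cluster tilting object of $\Dfiltered(\setJ)$, $\hat V_{\bfi_0}$ is one of $\Dfiltered([n-1])$, and by \Cref{cor:towersofalg2} the cluster variables $\Upsilon^T([M])$ they produce agree for any $M$ reachable in $\Dfiltered(\setJ)$. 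Injectivity of $\lambda_{\setJ}$ is then inherited from that of $\lambda$, so the task reduces to matching the two images.

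Next I verify that $\lambda(\qminor{I}) = \Upsilon^T([M_I])$ lies in $\qcl{\Dfiltered(\setJ)}$ for every $I$ with $|I|\in\setJ$. Because $\{|I|\}$ is a singleton (hence interval) contained in $\setJ$, \Cref{Thm:reachability}(2) implies that $M_I$ is reachable in $\Dfiltered(\{|I|\})$, so $\Upsilon([M_I]) \in \qcl{\Dfiltered(\{|I|\})}$; a second application of \Cref{cor:towersofalg2} to $\{|I|\} \subseteq \setJ$ places this same element inside $\qcl{\Dfiltered(\setJ)}$. Since $\pqflag$ is generated as a $\Cq$-algebra by the quantum minors $\qminor{I}$ with $|I| \in \setJ$, this yields $\lambda_{\setJ}(\pqflag) \subseteq \qcl{\Dfiltered(\setJ)}$.

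Surjectivity will follow from the weight grading. Both $\qflag$ and $\qcl{\Dfiltered([n-1])}$ are $\grothE$-graded via the isomorphisms $\sigma$ and $\zeta$ of \Cref{Lem:Gammaisom} and \Cref{Lem:isom1}, and the full-flag isomorphism $\lambda$ respects this grading because $\qminor{I}$ and $\Upsilon([M_I])$ both have degree $[M_I]$. By \Cref{lem:grading}, every cluster variable $\Upsilon^T([M])$ in $\qcl{\Dfiltered(\setJ)}$ is homogeneous of degree $[M] \in \grothEJ$, so the subalgebra $\qcl{\Dfiltered(\setJ)}$ is concentrated in degrees belonging to $\grothEJ \subseteq \grothE$. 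Unwinding $\sigma$ and $\zeta$, the sublattice $\grothEJ$ corresponds exactly to the indexing set $K^+(\sub Q_{\setJ}) \times K^+(\add \charT_{\setJ})$ of \eqref{eq:gradingE}, whose highest-weight component ranges over $\wtl_{\setJ}^+$. By the Borel--Weil decomposition \eqref{eq:BWthm}, $\pqflag = \bigoplus_{\mu\in \wtl_{\setJ}^+} L(\mu)$ is precisely the $\grothEJ$-graded part of $\qflag$, hence $\lambda^{-1}(\qcl{\Dfiltered(\setJ)}) \subseteq \pqflag$; combined with the inclusion from the previous paragraph this yields $\lambda_{\setJ}(\pqflag) = \qcl{\Dfiltered(\setJ)}$. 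The main technical point to pin down carefully will be the precise claim that $\lambda$ sends $L(\mu)_{d+\mu}$ into the homogeneous component of $\qcl{\Dfiltered([n-1])}$ indexed by $(d,\mu)$; this follows from the compatibility of the cluster character $\hat{\Phi}$ of \Cref{Thm:classiccat} with $\lambda$ on the generating quantum minors, after which the Borel--Weil identification isolates $\pqflag$ as the $\grothEJ$-graded slice of $\qflag$ and the surjectivity conclusion becomes formal.
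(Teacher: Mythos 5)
Your proof is correct, but it takes a genuinely different route from the paper's. The paper essentially re-runs the machinery of \Cref{thm:pfl1} at the level of $\setJ$: after checking well-definedness via reachability (exactly as you do), it shows $\lambda_{\setJ}$ becomes an isomorphism after extending scalars to $\CC(q)$ (a flat-deformation/graded-rank comparison as in \Cref{prop:overfield}), separately after localising at the frozen variables $z^{[\charT_j]}$, $j\in\setJ$ (via the commutative diagram with $\qnilw[K(\add\charT_\setJ)]$ and the dual PBW generators $E^*(\beta_k)$), and finally descends to $\Cq$ by the standard-monomial divisibility argument. You instead deduce the theorem directly from the full-flag isomorphism by a graded sandwich
$\lambda(\pqflag)\subseteq\qcl{\Dfiltered(\setJ)}\subseteq\bigoplus_{w\in\grothEJ}\qcl{\Dfiltered}_w=\lambda(\pqflag)$,
which is shorter and stays over $\Cq$ throughout; what it does not produce is the refined diagram of \Cref{rem:finrem} relating $\pqflag$ to $\qnilw$, which the paper's longer route yields as a by-product. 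One step you should make explicit: the identity $\bigoplus_{w\in\grothEJ}\qflag_w=\pqflag$ is most cleanly justified not via Borel--Weil but by noting that $\qflag$ is spanned by homogeneous monomials in the quantum minors, and the $K(\add\charT)$-component of $\deg\bigl(\qminor{I_1}\cdots\qminor{I_r}\bigr)$ is $\sum_i[\charT_{|I_i|}]$ with non-negative coefficients in the basis $\{[\charT_j]\}$, so a monomial of degree in $\grothEJ$ can only involve $\setJ$-minors; the Borel--Weil route instead requires the quantum analogue of \eqref{eq:BWthm} together with the nontrivial fact that the $\setJ$-minors generate all of $\bigoplus_{\lambda\in\wtl_\setJ^+}L_q(\lambda)$. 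Beyond this, your argument relies on the same implicit conventions as the paper (that the various initial cluster tilting objects defining $\qcl{\Dfiltered(\setJ)}$ are mutation equivalent, and that \Cref{cor:towersofalg2} identifies cluster variables compatibly with the gradings), so I regard it as complete at the paper's own level of rigour.
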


\begin{proof} Except for working with a general set $\setJ\subseteq [n-1]$, we use the same notation as 
in the proof of  \Cref{thm:pfl1}. 

For any $I$ with $|I|\in \setJ$, $M_I$ is reachable (see \Cref{cor:towersofalg2}), and 
$\qcl{\Dfiltered(\setJ)} $
is a cluster subalgebra of $\qcl{\Dfiltered}$ (see  \Cref{cor:compqclalg}). Hence, 
$\chi (\qminor{I})=\Upsilon([M_I]) $ belongs to  $\qcl{\Dfiltered(\setJ)}$.  Consequently,
 the restriction of $\chi$ to $ \pqflag$ defines a homomorphism, 
\[
\chi_{\setJ}\colon \pqflag \to \qcl{\Dfiltered(\setJ)}.
\]
Similar to the case where $\setJ$ is an interval, 
$\chi_{\setJ}$ is an isomorphism after extending the ground ring $\Cq$ to $\CC(q)$, 
\begin{equation}\label{eq:pgfloverfield}
\pqflag\otimes_{\Cq} \CC(q) \cong 
\qcl{\Dfiltered(\setJ)} \otimes_{\Cq} \CC(q).
\end{equation}

Next, by \Cref{prop:simsoc}, there exists a reduced word $\bfi$ for $w_0w_0^\setK$ such that  
$N_{\bfi, k}$ has a simple socle, 
and so can be lifted to a rank one module $M_{I_k}$. 
Since  $\qnilw$ is generated by $E^*(\beta_k)$ (see \Cref{sec:14.2}), 
for all $k$, 
$\qnilw[K(\add \charT_{\setJ})]$ is generated by  $E^*(\beta_k)$ and $z^{\pm[\charT_j]}$ 
for all $k$, and  $j\in \setJ$.
Denote by $\delta_\setJ$ the restriction of $\delta$ to $\pqflag$.
By  \eqref{eq:beta1},
\[
 \delta_\setJ(\qminor{I_k})= E^*(\beta)z^{[\charT_{a}]},
\]
which is containd in  $\qnilw[K(\add \charT_{\setJ})]$, 
where $a=|I_k|$.
So $\delta_{\setJ}$ is a homomorphism, 
\[
\pqflag \to \qnilw[K(\add \charT_{\setJ})].
\]

Localise $\pqflag$ at the minors $\{\Delta^q_{[j]}\colon j\in \setJ\}$
and $\qcl{\Dfiltered(\setJ)}$ at the frozen cluster variables in 
$\{z^{[\charT_j]} \colon \charT_j\in \add \charT_\setJ\}$.
Then, as in the case of $\setJ=[n-1]$, the maps 
$\delta_\setJ$ and $\chi_\setJ$
become isomorphisms. 
Now, by similar arguments for proving  \Cref{thm:pfl1},  
$\chi_{\setJ}$  is an isomorphism as claimed.
\end{proof}

\begin{remark}\label{rem:finale}
(1) The map $\delta_\setJ$ is a quantum analogue of $\hat{\rho}$ in \eqref{eq:rhohat}.

(2) As in the classical case, $n$ can be included in $\setJ$. In this case, the additional generator $\qminor{[n]}\in \pqflag$ is central in
 the algebra. In parallel,   $\charT_n$ is a summand of any cluster tilting object $T$ in 
$\Dfiltered(\setJ)$ and $\Upsilon(\charT_n)=z^{[\charT_n]}$ commutes with any cluster 
variable $\qcl{\mat B_T,\mat L_T}$, and $\chi_\setJ(\qminor{[n]})=z^{[\charT_n]}$.
So Theorems \ref{thm:pfl1} and \ref{thm:qmain2} still hold.
\end{remark}

\begin{remark} \label{rem:finrem} 
Following \Cref{rem:finale} and  the proof of \Cref{thm:qmain2}, we can extend 
 the diagram \eqref{diag}  to the following commutative diagram.
\[
\begin{tikzpicture} [scale=0.7, baseline=(bb.base)] 
 \pgfmathsetmacro{\eps}{0.2}
\coordinate (bb) at (0,0);
 
\path (-2,2) node (c1) {$\qflag$};
\path (5.5,2) node (c2) {$\qcl{\Dfiltered}$};
\path (-2,-2) node (c3) {$\qnil[K(\add \charT)] $};
\path (5.5,-2) node (c4) {$\qcl{\mod \Pi}[K(\add \charT)]$};
\draw[thick, ->]  (c1) edge node[auto]{} (c2);
\draw[thick, ->]  (c1) edge node[left]{}  (c3);
\draw[thick, ->]  (c2) edge node[auto]{} (c4);
\draw[thick, ->]  (c3) edge node[auto]{} (c4);
\path[teal] (2,4) node (d1) {$\pqflag$};
\path[teal] (9.5,4) node (d2) {$\qcl{\Dfiltered(\setJ)}$};
\path[teal] (2,0) node (d3) {$\qnilw[K(\add \charT_{\setJ})]$};
\path[teal] (9.5,0) node (d4) {$\qcl{\sub {Q_{\setJ}}}[K(\add \charT_{\setJ})]$};

\draw[teal, thick, ->]  (d1) edge node[auto]{} (d2);
\draw[teal, thick, ->]  (d1) edge node[left]{}  (d3);
\draw[teal, thick, ->]  (d2) edge node[auto]{} (d4);
\draw[teal, thick, ->]  (d3) edge node[auto]{} (d4);

\draw[magenta, thick, ->]  (d1) edge node[auto]{} (c1);
\draw[magenta, thick, ->]  (d2) edge node[left]{}  (c2);
\draw[magenta, thick, ->]  (d3) edge node[auto]{} (c3);
\draw[magenta, thick, ->]  (d4) edge node[auto]{} (c4);
\end{tikzpicture}
\] 
Here, all the magenta arrows are inclusions,  and 
the algebras in the front black rectangle are constructed using
the full set $[n]$.
\end{remark}

\noindent{\bf Acknowledgements}: 
We are grateful to the algebra group at Huaqiao University, China, for their hospitality during 
our visits in 2018-2019,  when part of this work was carried out. 
We also thank Thomas Br\"{u}stle for pointing out the connection, discussed in  \Cref{rem:TBr},  between \Cref{Lem:exactcategory} and \cite[Prop 1.10]{DRSKeller}, and Matthew Pressland for explaining the relationship between \Cref{prop:main} and \cite[Thm. 6.22]{GP}. 


\begin{thebibliography}{88}

\bibitem{ARS}
Auslander M., Reiten I. and Smalø S., Representation theory of Artin algebras, Cambridge
University Press, 1997.

\bibitem{BEP} Baur K., Erdmann K. and Parker A., {\it $\Delta$-filtered modules and nilpotent orbits of a parabolic
subgroup in $O_N$}, J. Pure Appl. Algebra 215 (2011), 885-901.

\bibitem{BFZ}
Berenstein A., Fomin S. and Zelevinsky A., {\it Cluster algebras III: Upper bounds and double Bruhat cells}, Duke Math. J. 126 (2005), 1-52.

\bibitem{BZ} Berenstein A. and Zelevinsky A., 
{\it Quantum cluster algebras}, Adv. Math. 195 (2005), 405-455.

\bibitem{BHRR} Br\"{u}stle T., Hille L., Ringel C. M. and R\"{o}hrle G.,
{\it The $\Delta$-filtered modules without self-extensions for the
Auslander algebra of $k[T]/\langle T^n\rangle$},  Algebr. Represent. Theory 2 (1999),  295-312.

\bibitem{BIRS} Buan A. B., Iyama O.,  Reiten I.  and Scott J., 
{\it Cluster structures for 2-Calabi--Yau categories and unipotent groups}, 
Compos. Math. 145 (2009), 1035-1079. 

\bibitem{CB} Crawley-Boevey W.,  {\it On the exceptional fibres of Kleinian singularities}, 
Amer. J. Math. 122 (2000), 1027-1037.

\bibitem{CC} Caldero P. and Chapoton F., {\it Cluster algebras as Hall algebras of quiver representations}, Comment. Math. Helv. 81 (2006), 595-616.

\bibitem{DKK1} Danilov V., Karzanov A. and Koshevoy G., 
{\it Combined tilings and separated set-systems}, Selecta Math. 23 (2017), 1175-1203.

\bibitem{DKK2} Danilov V., Karzanov A. and Koshevoy G., 
{\it On maximal weakly separated set-systems},
J. Alg. Comb. 32 (2010), 497-531.

\bibitem{DI} Demonet L. and Iyama O., {\it Lifting preprojective algebras to orders and categorifying partial flag varieties}, Algebra Number Theory 10 (2016), 1527-1579.

\bibitem{DR2} Dlab V. and Ringel C. M.,  {\it Quasi-hereditary algebras}, Illinois J. Math. 33 (1989), 280-291.

\bibitem{DR6} Dlab V. and Ringel C. M., {\it The module theoretical approach to quasi-hereditary algebras}. Representations of Algebras and Related Topics. London Math. Soc. Lecture Note Ser., 168.
Cambridge University Press, Cambridge, 1992, 200–224.

\bibitem{DRSKeller} Dr\"axler P., Reiten I.,  Solberg \O.  and Keller B., 
{\it Exact categories and vector space categories}, Trans. Amer. Math. Soc. 351 (1999), 
647-682.

\bibitem{Fioresi} Fioresi R., {\it Quantum deformation of the flag variety}, 
Comm. Algebra 27 (1999), 5669-5685.

\bibitem{FRT}
Faddeev L., Reshetikhin N. and Takhtadzhyan L., 
{\it Quantization of Lie groups and Lie algebras},
Leningrad Math. J. 1 (1990), 193-225. 

\bibitem{FZ1} Fomin S. and Zelevinsky A., 
{\it Cluster algebras I: Foundations}, J. Amer. Math. Soc. 15 (2002), 497-529. 

\bibitem{FuKeller}  Fu Q. and Keller B., 
{\it On cluster algebras with coefficients and 2-Calabi--Yau categories},
Trans. Amer. Math. Soc. 362 (2010), 859-895.

\bibitem{GLS06} Geiss C., Leclerc B. and Schr\"{o}er J., {\it Rigid modules over preprojective algebras}, Invent. Math. 165 (2006), 589-632.

\bibitem{GLS08} Geiss C., Leclerc B. and Schr\"{o}er J., {\it Partial flag varieties and preprojective algebras}, Ann. Inst. Fourier 58 (2008), 825-876.

\bibitem{GLS11}  Geiss C., Leclerc B. and Schr\"{o}er J., {\it Kac-Moody groups and cluster algebras}, Adv. Math. 228 (2011), 329-433. 

\bibitem{GLS12}  Geiss C., Leclerc B. and Schr\"{o}er J., {\it Generic basis for cluster algebras and the Chamber Ansatz}, J. Amer. Math. Soc. 25 (2012), 21-76.

\bibitem{GLS13}  Geiss C., Leclerc B., and Schr\"{o}er J.,  {\it Cluster structures on quantum coordinate rings}, Selecta Math. 19 (2013), 337-397.

\bibitem{GLS20}  Geiss C., Leclerc B. and Schr\"{o}er J.,  {\it Quantum cluster algbras and their specialisations}, J. Algebra 558 (2020), 411-422.

\bibitem{GL} Galashin P. and Lam T., 
{\it Positroid varieties and cluster algebras},
Ann. Sci. \'{E}cole Norm. Sup. 56 (2023), 859--844.

\bibitem{GY} Goodearl K. R. and Yakimov M. T., {\it  Integral quantum cluster structures}, Duke Math. J. 170 (2021), 1137-1199.

\bibitem{GL}  Grabowski J. and Launois S., 
{\it Graded quantum cluster algebras and an application to quantum Grassmannians}, Proc. Lond. Math. Soc. 109 (2014), 697-732.

\bibitem{GP} Grabowski J. and Pressland M., {\it Cluster structures via representation theory: cluster ensembles, tropical duality, cluster characters and quantisation}, arXiv: 2411.11633.

\bibitem{HV} Hille L. and Vossieck D., {\it The quasi-hereditary algebra associated to the radical bimodule over a hereditary algebra}, Colloq. Math.  98 (2003), 201-211.

\bibitem{Iy} Iyama O., {\it Higher dimensional Auslander correspondence}, Adv. Math.
210 (2007), 51-82.


\bibitem{IM} Iohara K. and Malikov F., {\it Rings of skew polynomials and Gel'fand-Kirillov
conjecture for quantum groups}, Comm. Math. Phys. 164 (1994), 217-237.

\bibitem{JKS1} Jensen B. T., King A. and Su X., 
{\it A categorification of Grassmannian cluster algebras},
Proc. Lond. Math. Soc.  113 (2016), 185-212.

\bibitem{JKS2} {Jensen B. T., King A. and Su X.},
{\it Categorification and the quantum Grassmannian},
Adv. Math. 406 (2022).

\bibitem{JKS3} {Jensen B. T., King A. and Su X.},
{\it Categorification and mirror symmetry for Grassmannians}, arXiv:2404.14572.

\bibitem{JRS} {Jensen B. T., Riordan L. and Su X.},
{\it Grassmannian cluster subcategories and positroid varieties}, arXiv:2603.21458.

\bibitem{JS1} Jensen B. T. and Su X., {\it Adjoint action of automorphism groups on radical endomorphisms, generic equivalence and Dynkin quivers}, Algebr. Represent. Theory 17 (2014), 1095-1136.

\bibitem{JS2} Jensen B. T. and Su X., {\it Existence of Richardson elements for seaweed Lie algebras of finite type}, J. Lond. Math. Soc. 101 (2020), 505-529. 

\bibitem{JSY} Jensen B. T., Su X. and Yu W. T. R.,  {\it Rigid representations of a double quiver of type $\mathbb{A}$, and Richardson elements in seaweed Lie algebras},  Bull. Lond. Math. Soc. 42 (2009), 1-15. 

\bibitem{KKKO} Kang S., Kashiwara M., Kim M. and Oh S., {\it Monoidal categorification of cluster algebras}, J. Amer. Math. Soc. 31 (2018), 349-426.

\bibitem{KIWY} Kalck M., Iyama O., Wemyss M. and Yang D., 
{\it Frobenius categories, Gorenstein algebras and rational surface singularities}, 
Compos. Math. 151 (2015), 502-534.

\bibitem{Kim} Kimura K., 
{\it Quantum unipotent subgroup and dual canonical basis}, Kyoto J. Math. 52 (2012), 277-331.

\bibitem{LR} Lakshmibai V. and Reshetikhin N., {\it Quantum deformations of Flag and Schubert schemes},  C. R. Acad. Sci. Paris Ser. I Math. 313 (1991), 121-126.

\bibitem{Lec} Leclerc B., 
{\it   Cluster structures on strata of flag varieties}, 
Adv. Math. 300 (2016), 190-228.

\bibitem{LZ} Leclerc B. and Zelevinsky A., 
{\it Quasicommuting families of quantum Pl\"{u}cker coordinates}, Kirillov's Seminar on Representation Theory, Amer. Math. Soc. Transl. Ser. 2 181 (1998), 85-108.

\bibitem{MS} Muller G. and Speyer D. E., {\it Cluster algebras of Grassmannians are locally acyclic}, Proc. Amer. Math. Soc. 144 (2015), 3267--3281.

\bibitem{OPS}  Oh~S., Postnikov~A. and Speyer~D. E.,
{\it Weak separation and plabic graphs},
Proc. Lond. Math. Soc. 110 (2015), 721-754.

\bibitem{Palu} Palu~Y., 
{\it Cluster characters for 2-Calabi--Yau triangulated categories},
Ann. Inst. Fourier 58 (2008), 2221-2248. 

\bibitem{Pr} Pressland M., {\it Calabi-Yau properties of Postnikov diagrams}, 
Forum Math. Sigma (10) 2022, e56.

\bibitem{Scott} Scott J., {\it Grassmannians and cluster algebras},
Proc. Lond. Math. Soc. 92 (2006), 345-380. 

\bibitem{RW} Rietsch K. and Williams L.,
{\it Newton--Okoukov bodies, cluster duality and mirror symmetry for Grassmannians},
Duke Math. J. 168 (2019) 3437-3527.

\bibitem{RZ} Ringel C. M. and Zhang P., 
{\it From submodule categories to preprojective algebras}, 
Math. Z. 278 (2014), 55-73.

\bibitem{TT} Taft E. and Towber J., 
{\it Quantum deformation of flag schemes and  Grassmann schemes I. A $q$-deformation
of the shape-algebra for $\gl_n$}, J. Algebra 142 (1991), 1-36. 
\end{thebibliography}
\end{document}